\numberwithin{equation}{section} 
\numberwithin{figure}{section} 
\theoremstyle{plain}
\newtheorem{thm}{Theorem}[section]
  \theoremstyle{plain}
  \newtheorem{prop}[thm]{Proposition}
  \theoremstyle{plain}
  \newtheorem{lem}[thm]{Lemma}
  \theoremstyle{remark}
  \newtheorem*{rem*}{Remark}
  \theoremstyle{remark}
  \newtheorem{rem}[thm]{Remark}
  \theoremstyle{definition}
  \newtheorem{defn}[thm]{Definition}
\def\ms{\text{-}}
\begin{document}

\title{\textup{Determinantal Quintics and Mirror Symmetry of Reye Congruences}}

\author{Shinobu Hosono and Hiromichi Takagi}
\begin{abstract}
We study a certain family of determinantal quintic hypersurfaces in
$\mathbb{P}^{4}$ whose singularities are similar to the well-studied
Barth-Nieto quintic. Smooth Calabi-Yau threefolds with Hodge numbers
$(h^{1,1},h^{2,1})=(52,2)$ are obtained by taking crepant resolutions
of the singularities. It turns out that these smooth Calabi-Yau threefolds
are in a two dimensional mirror family to the complete intersection
Calabi-Yau threefolds in $\mathbb{P}^{4}\times\mathbb{P}^{4}$ which
have appeared in our previous study of Reye congruences in dimension
three. We compactify the two dimensional family over $\mathbb{P}^{2}$
and reproduce the mirror family to the Reye congruences. We also determine
the monodromy of the family over $\mathbb{P}^{2}$ completely. Our
calculation shows an example of the orbifold mirror construction with
a trivial orbifold group. 
\end{abstract}
\maketitle
{\small \tableofcontents{}}{\small \par}

\section{\textbf{\textup{Introduction}}}

Quintic hypersurfaces in the projective space $\mathbb{P}^{4}$ have
been invaluable testing grounds for the interesting mathematical ideas
coming from the string theory. This has been for long since the historical
discovery of an exact solution of N=2 superconformal field theory/string
theory and its profound relations to the quintic hypersurfaces \cite{Gepner}.
The idea of mirror symmetry of Calabi-Yau manifolds, for example,
has been verified first by translating a special involution in the
set of N=2 theories into some operation, now-called orbifold mirror
construction, in the algebraic geometry of quintic hypersurfaces \cite{Greene-Pressor},
\cite{essay}, and also the surprising applications of the mirror
symmetry to Gromov-Witten theory have been started from the Hodge
theoretical investigations of the mirror quintic hypersurfaces \cite{Candelas1}. 

In this paper, we will be concerned with certain special types of
quintic hypersurfaces in $\mathbb{P}^{4}$ which are called determinantal
quintics. The determinantal quintics are interesting not only from
the viewpoint of the mirror symmetry but also from the viewpoint of
the classical projective geometry. In fact these quintics have appeared
in our previous study of the so-called Reye congruences in dimension
three \cite{HoTa}, where a beautiful interplay between the mirror
symmetry and the classical projective geometry has been observed.
Historically, Reye congruences represent certain Enriques surfaces,
called nodal Enriques surfaces \cite{Cossec}, \cite{Tyurin},  and
their study goes back to the 19th century, where the term `congruence`
arose in relation to the geometry of the Grassmannian $G(2,4)$. They
naturally come with K3 surfaces which admit fixed point free involutions.
In dimension three, the corresponding Reye congruences turn out to
be Calabi-Yau manifolds with non-trivial fundamental groups \cite{Oliva},
and they also come with Calabi-Yau threefolds equipped with fixed
point free involutions which we call covering Calabi-Yau threefolds
of the Reye congruences.

In our previous work \cite{HoTa}, we have studied the mirror symmetry
of the three dimensional Reye congruences through the covering Calabi-Yau
threefolds using the methods in the toric geometry \cite{BatBo}.
In this paper, we will reconsider the mirror symmetry based on the
orbifold mirror construction and will observe that the projective
geometries of certain singular determinantal quintics come into play
in an interesting way. Also we find that, in our case, the so-called
orbifold group is a trivial group, $G_{orb}=\{{\rm id}\}$. The last
property naturally leads us to a problem that how is the mirror involution
in the corresponding N=2 string theory realized in such cases, although
we will not discuss the problem in this article.

\medskip{}

The construction of this paper is as follows: In the next section
we will summarize the geometries of the Reye congruences following
the previous work. There, after setting up the notation and the problems
in details, we describe the main results of this paper. In Section
3, we calculate the topological Euler numbers of certain (singular)
determinantal quintic hypersurfaces. In Section 4, we describe the
details about the calculations of some Euler numbers needed in Section
3. In Section 5, we will obtain the mirror family to the covering
Calabi-Yau threefolds of the Reye congruences. In Section 6, we will
determine completely the monodromy properties of the mirror family.
Taking the fixed point free involution into account, we construct
the mirror family to the Reye congruences. In Section 7, we will discuss
some geometry of the singular Hessian quintics. 

\vspace{1cm}

\begin{flushleft}
\textbf{Acknowledgements:} The authors would like to thank Prof. B.
van Geemen for his kind and helpful correspondence to their question
about the \'etale cohomology. They also would like to thank Prof. C. Vafa for
his correspondence. This work is supported in part by Grant-in Aid
Scientific Research (C 22540041, S.H.) and Grant-in Aid for Young
Scientists (B 30322150, H.T.).
\par\end{flushleft}

\bigskip{}
\bigskip{}
\bigskip{}
\bigskip{}
\bigskip{}
\bigskip{}
\bigskip{}
\bigskip{}
\bigskip{}

\section{\textbf{\textup{\label{sec:Some-background} Backgrounds and summary
of main results}}}

\subsection{\label{sub:summary-Reye-I}Three dimensional Reye congruences}

Let us consider the product $\mathbb{P}^{4}\times\mathbb{P}^{4}$
of the complex projective spaces with its bi-homogeneous coordinate
$([z],[w])$. We consider a generic complete intersection $\tilde{X}_{0}$
of five $(1,1)$ divisors in the product. In terms of the bi-homogeneous
coordinates, $\tilde{X}_{0}$ may be written by $f_{1}=\cdots=f_{5}=0$
in $\mathbb{P}^{4}\times\mathbb{P}^{4}$, where we set $f_{k}:=\,^{t}zA_{k}w$
with $5\times5$ matrices $A_{1},...,A_{5}$ over $\mathbb{C}$. When
$A_{k}$ are generic, $\tilde{X}_{0}$ defines a smooth Calabi-Yau
threefold with its Hodge numbers $h^{11}(\tilde{X}_{0})=2,h^{21}(\tilde{X}_{0})=52$.
Despite this simple descriptions$,$ $\tilde{X}_{0}$ has interesting
birational geometries which we summarize in the following diagram:
\begin{equation}
\begin{matrix}\xymatrix{\tilde{X}_{0}\ar[dr]^{\pi_{2}} &  & \ar[dl]_{p_{1}}\tilde{X}_{2}\ar[dr]^{p_{2}}\\
 & Z_{2} &  & \tilde{X}_{0}^{\sharp}\;,}
\end{matrix}\label{eq:HTdiagram1}\end{equation}
where $Z_{2}$ and $\tilde{X}_{0}^{\sharp}$ are determinantal quintics
defined by \[
Z_{2}=\left\{ \;[w]\in\mathbb{P}^{4}\mid{\rm det}(A_{1}wA_{2}w...A_{5}w)=0\right\} ,\]
and \[
\tilde{X}_{0}^{\sharp}=\left\{ [\lambda]\in\mathbb{P}_{\lambda}^{4}\biggl|\text{det}\bigl(\sum_{k=1}^{5}\lambda_{k}A_{k}\bigr)=0\right\} ,\]
respectively, and $\tilde{X}_{2}$ is defined by \[
\tilde{X}_{2}=\left\{ [w]\times[\lambda]\in\mathbb{P}^{4}\times\mathbb{P}_{\lambda}^{4}\vert\, A_{\lambda}w=0\right\} ,\text{ where }A_{\lambda}=\sum_{k=1}^{5}\lambda_{k}A_{k}.\]
$\mathbb{P}_{\lambda}^{4}$ is the projective space defined from the
$\mathbb{C}$-vector space spanned by the matrices $A_{k}(k=1,...,5)$.
The maps $\pi_{2}$ and $p_{i}\,(i=1,2)$ in the diagram (\ref{eq:HTdiagram1})
are defined by the natural projections; the projection to the second
factor $\mathbb{P}^{4}\times\mathbb{P}^{4}\rightarrow\mathbb{P}^{4}$
for $\pi_{2}$, and the projections from $\mathbb{P}^{4}\times\mathbb{P}_{\lambda}^{4}$
to the first and the second factors for $p_{i}\,(i=1,2)$, respectively.
As we can see in the definitions, both $Z_{2}$ and $\tilde{X}_{0}^{\sharp}$
are quintic hypersurfaces in the respective projective spaces, and
$\tilde{X}_{2}$ is a complete intersection of five $(1,1)$ divisors
in the product $\mathbb{P}^{4}\times\mathbb{P}_{\lambda}^{4}$. When
the matrices $A_{k}$ are generic, the both $Z_{2}$ and $\tilde{X}_{0}^{\sharp}$
determine generic determinantal varieties in $\mathbb{P}^{4}$ and
$\mathbb{P}_{\lambda}^{4}$, respectively. Generic determinantal varieties
are known to be singular along codimension three loci, where the matrices
have corank two (see \cite[Lemma 3.2]{HoTa} for example). In our
case, the degree of the singular loci is 50. Hence generic $Z_{2}$
and $\tilde{X}_{0}^{\sharp}$ are singular at 50 points, where the
rank of the relevant matrices decreases to three, and actually these
consist of 50 ordinary double points {[}\textit{ibid.} Proposition
3.3{]}. We also note that $\tilde{X}_{0}$ and $\tilde{X}_{2}$ are
birational but not isomorphic in general {[}\textit{ibid.} Sect.(3-2){]}. 

The geometries of $\tilde{X}_{0}$ and $\tilde{X}_{0}^{\sharp}$ in
the above diagram fit well to the classical projective duality, since
the projective dual $(\mathbb{P}^{4}\times\mathbb{P}^{4})^{*}$ to
the\textcolor{black}{{} Segre }variety $\mathbb{P}^{4}\times\mathbb{P}^{4}\hookrightarrow\mathbb{P}^{24}$
is naturally given by the determinantal variety in the dual projective
space $(\mathbb{P}^{24})^{*}$ and $\tilde{X}_{0}^{\sharp}$ is given
by a linear section of this determinantal variety. Based on this,
in {[}\textit{\textcolor{black}{ibid.}} Sect. (3-1){]} we have called
the determinantal quintic $\tilde{X}_{0}^{\sharp}$ as the \textit{Mukai
dual of $\tilde{X}_{0}$. }

The diagram (\ref{eq:HTdiagram1}) shows further interesting properties
if we require the matrices $A_{k}$ to be symmetric. When we identify
these symmetric matrices with quadrics in $\mathbb{P}^{4}$, the projective
space $\mathbb{P}_{\lambda}^{4}$ is nothing but the 4-dimensional
linear system of the quadrics spanned by $A_{k}$, which we denote
by $P=|A_{1},A_{2},...,A_{5}|$. In general, an $n$-dimensional linear
system of quadrics in $\mathbb{P}^{n}$ is called regular if it is
base point free and satisfies a further condition \cite{Cossec},
\cite{Tyurin}. In our present case, for a regular linear system of
quadrics $P=|A_{1},A_{2},...,A_{5}|$, we have a smooth Calabi-Yau
threefold $\tilde{X}=\tilde{X}_{0}$ which admits a fixed point free
involution; $\sigma:([z],[w])\mapsto([w],[z])$. Unlike the 2-dimensional
case, this involution preserves the holomorphic three form and we
obtain a Calabi-Yau threefold $X=\tilde{X}/\langle\sigma\rangle$,
which is called a Reye congruence in dimension three \cite{Oliva}.
$X$ has the Hodge numbers $h^{1,1}(X)=1,h^{2,1}(X)=26$ and degree
$35.$ Corresponding to the diagram (\ref{eq:HTdiagram1}), we have
\begin{equation}
\begin{matrix}\xymatrix{\tilde{X}\ar[dr]\ar[d]_{/\langle\sigma\rangle} &  & \ar[dl]U\ar[dr] & Y\ar[d]^{2:1}\\
X & \quad S\quad &  & \quad H\quad.}
\end{matrix}\label{eq:ReyeX-Y-diagram}\end{equation}
Here we have adopted the historical notations $S$ and $H$ for the
determinantal varieties of symmetric matrices; $S$ will be called
the Steinerian quintic and $H$ the Hessian quintic. These belong
to the special families of the previous determinantal quintics $Z_{2}$
and $\tilde{X}_{0}^{\sharp}$, i.e., the Steinerian quintic is defined
by the equations $\det(A_{w})=0$ with $A_{w}:=(A_{1}wA_{2}w...A_{5}w$)
and similarly for the Hessian quintic with $A_{\lambda}$. However,
for the generic regular linear system $P$, $A_{w}$ is not symmetric
while $A_{\lambda}$ is. Due to this, $S$ has generically $50$ ordinary
double points while $H$ is singular along a (smooth) curve of genus
$26$ and degree 20. In our previous work, guided by the calculations
from mirror symmetry, we have found: \medskip{}

\noindent\textbf{Theorem} (\cite[Theorem 3.14]{HoTa}) There exists
a double covering $Y$ of the Hessian quintic $H$ branched along
the singular locus, which is a smooth curve of genus 26 and degree
20. $Y$ is a smooth Calabi-Yau threefold with the Hodge numbers $h^{1,1}(Y)=1,h^{2,1}(Y)=26$
and degree $10$ with respect to $\mathcal{O}_{Y}(1)$.

\medskip{}

An explicit description of the covering $Y$ will be given in Definition$\,$\ref{def:def-Ysp}
and Remark$\,$\ref{rem:def-covering-Y}. 

We can observe an interesting projective duality behind the diagram
(\ref{eq:ReyeX-Y-diagram}). This time the projective dual we start
with is the dual $({\rm Sym}^{2}\mathbb{P}^{4})^{*}$ associated to
the embedding ${\rm Sym}^{2}\mathbb{P}^{4}\hookrightarrow\mathbb{P}^{14}$
by the Chow form. This duality has quite similar properties to that
of the Grassmannians under $G(2,n)\hookrightarrow\mathbb{P}^{\frac{1}{2}n(n-1)-1},$
which appeared in \cite{Rod}, \cite{BoCa}, \cite{Kuz}. Observing
this similarity, and also from the mirror symmetry, it has been conjectured
that the Calabi-Yau threefolds $X$ and $Y$ in the diagram have the
equivalent derived categories of coherent sheaves although they are
not birational (see \cite{Hori}, \cite{Jo} and references therein
for physical arguments on this). 

\medskip{}

Our main objective in this paper is to construct 'the mirror diagrams'
to the two diagrams (\ref{eq:HTdiagram1}) and (\ref{eq:ReyeX-Y-diagram}).
For this, we start with the orbifold mirror construction of $\tilde{X}_{0}$. 

\medskip{}

\global\long\def\Ssp{Z_{2}^{sp}}
\global\long\def\Usp{\tilde{X}_{2}^{sp}}
\global\long\def\Xsp{\tilde{X}_{0}^{sp}}

\global\long\def\Sspp{Z_{1}^{sp}}
\global\long\def\Uspp{\tilde{X}_{1}^{sp}}

\global\long\def\Hsp{\tilde{X}_{0}^{sp,\sharp}}

\global\long\def\cXsp{\tilde{X}_{0}^{*}}

\global\long\def\Xspi{\tilde{X}_{0}^{sp,(1)}}
\global\long\def\Xspii{\tilde{X}_{0}^{sp,(2)}}
\global\long\def\Xspiii{\tilde{X}_{0}^{sp,(3)}}

\subsection{Orbifold mirror construction of $\tilde{X}_{0}$}

Orbifold mirror constructions in general consist of the following
three main steps: Given a generic complete intersection Calabi-Yau
manifold (CICY) in a product of (weighted) projective spaces, we first
consider it in its deformation family. Then, secondly we try to find
a suitable special family of the generic deformation family. In general,
we encounter singularities in the generic members of the special family.
We may seek crepant resolutions of them at this point or defer them
to the next step since crepant resolutions may not exist at this point.
As the third step, we try to find a suitable finite group $G_{orb}$
which acts on generic fibers of the family and preserves holomorphic
three forms on them. $G_{orb}$ is required to have the property that
we have the mirror relations in the Hodge numbers when we take the
quotient (orbifold) of the generic fibers and after making crepant
resolutions of the singularities, if any. 

Apart from the hypersurfaces of Fermat type in the weighted projective
spaces \cite{Greene-Pressor}, \cite{BatPdual}, the existence of
the suitable special family and also $G_{orb}$ is based on case-by-case
studies for general CICY's (see \cite{BergHub} for Calabi-Yau hypersurfaces
of non-Fermat type).

In our case of the complete intersection $\tilde{X}_{0}$, we first
consider the following special (two dimensional) family of the complete
intersection:

\begin{align}
z_{1}w_{1}+a\, z_{2}w_{1}+b\, z_{1}w_{2}=0, & \quad z_{2}w_{2}+a\, z_{3}w_{2}+b\, z_{2}w_{3}=0,\nonumber \\
z_{3}w_{3}+a\, z_{4}w_{3}+b\, z_{3}w_{4}=0, & \quad z_{4}w_{4}+a\: z_{5}w_{4}+b\: z_{4}w_{5}=0,\label{eq:defeqsCICY}\\
z_{5}w_{5}+a\, z_{1}w_{5}+b\, z_{5}w_{1}=0,\nonumber \end{align}
where $a$ and $b$ are the parameters of the family. In what follows
in this paper, by $f_{k}$=$\,^{t}zA_{k}w\;(k=1,..,5)$ we represent
the above defining equations, i.e., we set \begin{align*}
A_{1} & =\left(\begin{smallmatrix}1 & b & 0 & 0 & 0\\
a & 0 & 0 & 0 & 0\\
0 & 0 & 0 & 0 & 0\\
0 & 0 & 0 & 0 & 0\\
0 & 0 & 0 & 0 & 0\end{smallmatrix}\right),\; A_{2}=\left(\begin{smallmatrix}0 & 0 & 0 & 0 & 0\\
0 & 1 & b & 0 & 0\\
0 & a & 0 & 0 & 0\\
0 & 0 & 0 & 0 & 0\\
0 & 0 & 0 & 0 & 0\end{smallmatrix}\right),\; A_{3}=\left(\begin{smallmatrix}0 & 0 & 0 & 0 & 0\\
0 & 0 & 0 & 0 & 0\\
0 & 0 & 1 & b & 0\\
0 & 0 & a & 0 & 0\\
0 & 0 & 0 & 0 & 0\end{smallmatrix}\right),\\
 & \qquad\; A_{4}=\left(\begin{smallmatrix}0 & 0 & 0 & 0 & 0\\
0 & 0 & 0 & 0 & 0\\
0 & 0 & 0 & 0 & 0\\
0 & 0 & 0 & 1 & b\\
0 & 0 & 0 & a & 0\end{smallmatrix}\right),\; A_{5}=\left(\begin{smallmatrix}0 & 0 & 0 & 0 & a\\
0 & 0 & 0 & 0 & 0\\
0 & 0 & 0 & 0 & 0\\
0 & 0 & 0 & 0 & 0\\
b & 0 & 0 & 0 & 1\end{smallmatrix}\right).\end{align*}
We consider the above family over $(\mathbb{C}^{*})^{2}$ by taking
$(a,b)\in(\mathbb{C}^{*})^{2}$, and denote by $\Xsp$ a general fiber
of this family. This special form of the defining equations has been
chosen so that period integrals of $\Xsp$ calculated in terms of
$f_{k}$ reproduce the period integrals from the toric mirror construction
\cite{BatBo}, \cite{HKTY}, see Sect.\ref{sec:Picard-Fuchs-equations}.
The validity of this choice will be confirmed by the mirror symmetry
among the Hodge numbers (see Theorem \ref{thm:mirror-Hodge}). 

We may consider the restriction $\Xsp|_{(\mathbb{C}^{*})^{8}}$ of
$\Xsp$ to $({\bf \mathbb{C}}^{*})^{4}\times(\mathbb{C}^{*})^{4}\subset\mathbb{P}^{4}\times\mathbb{P}^{4}$. 
\begin{prop}
\label{pro:disc-Xsp}The restriction $\Xsp|_{(\mathbb{C}^{*})^{8}}$
is smooth for generic $(a,b)\in(\mathbb{C}^{*})^{2}$ and becomes
singular when the following discriminant vanishes: \begin{equation}
dis(\Xsp|_{(\mathbb{C}^{*})^{8}})=\prod_{k,l=0}^{4}(\mu^{k}\, a+\mu^{l}\, b+1),\;\;(\mu^{5}=1,\mu\not=1).\label{eq:discXandU}\end{equation}
\end{prop}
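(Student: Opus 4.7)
The plan is to pass to multiplicative coordinates on the torus, where the five defining equations decouple into a particularly simple linear form, and then extract the singular locus by a direct Lagrange-multiplier computation.

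On $(\mathbb{C}^{*})^{4}\times(\mathbb{C}^{*})^{4}\subset\mathbb{P}^{4}\times\mathbb{P}^{4}$, I would introduce the cyclic ratios $t_{i}:=z_{i+1}/z_{i}$ and $s_{i}:=w_{i+1}/w_{i}$ (indices mod $5$), which live on the four-dimensional tori $\{t:\prod_{i}t_{i}=1\}$ and $\{s:\prod_{i}s_{i}=1\}$. Dividing the $i$-th equation $f_{i}=0$ by $z_{i}w_{i}$ turns it into $1+at_{i}+bs_{i}=0$, so each $s_{i}=-(1+at_{i})/b$ is forced. Substituting into $\prod_{i}s_{i}=1$ collapses the whole system to the single equation
\[
F(t)\;:=\;\prod_{i=1}^{5}(1+at_{i})+b^{5}\;=\;0
\]
on the four-dimensional torus $T:=\{t\in(\mathbb{C}^{*})^{5}:\prod_{i}t_{i}=1\}$. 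Thus $\Xsp|_{(\mathbb{C}^{*})^{8}}$ is isomorphic to the hypersurface $V:=\{F=0\}\subset T$, and it suffices to locate $\mathrm{Sing}(V)$.

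A point $t\in V$ is singular iff the logarithmic gradient $(t_{i}\,\partial_{t_{i}}F)_{i}$ is proportional to $(1,\ldots,1)$ (the Lagrange condition coming from the toric constraint $\prod_{i}t_{i}=1$). A direct computation gives
\[
t_{i}\,\partial_{t_{i}}F\;=\;\frac{a\,t_{i}}{1+a\,t_{i}}\,\prod_{j=1}^{5}(1+a\,t_{j}),
\]
and since $\prod_{j}(1+at_{j})=-b^{5}\neq 0$ on $V$, proportionality reduces to $t_{i}/(1+at_{i})$ being independent of $i$. Solving, all $t_{i}$ must coincide; call the common value $t$. Then $\prod_{i}t_{i}=1$ forces $t=\mu^{k}$ with $\mu:=e^{2\pi i/5}$, $0\le k\le 4$, and imposing $F=0$ yields $(1+a\mu^{k})^{5}+b^{5}=0$, which by the identity $u^{5}+v^{5}=\prod_{l=0}^{4}(u+\mu^{l}v)$ factors as $\prod_{l=0}^{4}(1+a\mu^{k}+b\mu^{l})=0$. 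Letting $k$ range over $\{0,\ldots,4\}$ assembles the full discriminant
\[
\prod_{k,l=0}^{4}(1+a\mu^{k}+b\mu^{l}),
\]
whose non-vanishing is precisely the condition for $V$ (hence $\Xsp|_{(\mathbb{C}^{*})^{8}}$) to be smooth.

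The main obstacle is the singular-locus computation itself: it must be phrased using logarithmic derivatives adapted to the toric constraint $\prod_{i}t_{i}=1$ rather than partial derivatives in an ambient $\mathbb{C}^{5}$, and one must verify that the relation $t_{i}/(1+at_{i})=\mathrm{const}$ actually forces all $t_{i}$ equal (the only alternative, $1+at_{i}=0$ for some $i$, is excluded on $V$ by $\prod_{j}(1+at_{j})=-b^{5}\ne 0$). Once this is established, the rest is a clean factorization argument using $u^{5}+v^{5}$.
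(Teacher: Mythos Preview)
Your argument is correct and complete. The reduction to a single hypersurface $F(t)=\prod_i(1+at_i)+b^5=0$ on the torus $T=\{\prod t_i=1\}$ is valid (the projection to the $t$-coordinates is an isomorphism onto $V$, since $b\ne 0$ forces $s_i=-(1+at_i)/b$, and on $V$ the product condition $\prod s_i=1$ is exactly $F=0$ while $s_i\ne 0$ is automatic from $\prod_j(1+at_j)=-b^5\ne 0$). The Lagrange-multiplier step is also sound: in logarithmic coordinates the constraint has gradient $(1,\dots,1)$, the case $\lambda=0$ would force $t_i=0$, and for $\lambda\ne 0$ the relation $at_i/(1+at_i)=\mathrm{const}$ uniquely determines $t_i$, so all $t_i$ agree. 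The factorization $(1+a\mu^k)^5+b^5=\prod_l(1+a\mu^k+\mu^l b)$ then yields exactly the stated discriminant.

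By contrast, the paper does not carry out any of this by hand: its proof simply forms the Jacobian ideal of the five bilinear equations, adjoins $z_1\cdots z_5=1$ and $w_1\cdots w_5=1$ to restrict to the torus, and eliminates the projective coordinates using \emph{Macaulay2}, reading off the discriminant from the output. Your route is therefore genuinely different. It is more transparent---it explains \emph{why} the 25 linear factors appear (one for each pair of fifth roots of unity arising from $t^5=1$ and the factorization of $u^5+v^5$)---and it avoids any reliance on computer algebra. The paper's approach, on the other hand, generalizes mechanically to the related discriminants of $\Ssp$, $\Hsp$, and $\Usp$ computed later, where the defining equations are less symmetric and a clean change of variables is not available.
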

\begin{proof}
The form of the discriminant follows from the Jacobian ideal by eliminating
the homogeneous coordinates of the projective spaces. To implement
the restriction to $({\bf \mathbb{C}}^{*})^{4}\times(\mathbb{C}^{*})^{4}\subset\mathbb{P}^{4}\times\mathbb{P}^{4}$,
we impose additional relations $z_{1}z_{2}z_{3}z_{4}z_{5}=1$ and
$w_{1}w_{2}w_{3}w_{4}w_{5}=1$ to the Jacobian ideal. The eliminations
may be done by \texttt{\textit{\textcolor{black}{Macauley2}}} \cite{M2}.
\end{proof}
In the sections \ref{sub:sing-loci-Zsp} and \ref{sub:sing-loci-Xsp},
we will derive the following property (see Proposition \ref{pro:Singular-tilde-Xosp-A1s}
for details):
\begin{prop}
\label{pro:Xosp-20-A1}For $(a,b)\in(\mathbb{C}^{*})^{2}$ with non-vanishing
discriminant (\ref{eq:discXandU}), the complete intersection $\Xsp$
is singular along 20 lines of $A_{1}$-singularity which intersect
at 20 points. Local geometries about the intersections are classified
into two types, which we call $(3A_{1},\mathcal{U}_{1})$ and $(2A_{1},\mathcal{U}_{2})$,
with the 20 points being split into 10 points for each. 
\end{prop}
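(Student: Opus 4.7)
The plan is to apply the Jacobian criterion to the five bihomogeneous forms $f_{1},\ldots,f_{5}$ defining $\Xsp$ in $\mathbb{P}^{4}\times\mathbb{P}^{4}$, and to exploit the manifest cyclic symmetry $(z_{k},w_{k})\mapsto(z_{k+1},w_{k+1})$ (indices mod $5$) together with the involution $\sigma:(z,w)\mapsto(w,z)$ to cut the case analysis drastically. Concretely, the singular locus of $\Xsp$ is carved out on the complete intersection by the vanishing of all $5\times5$ minors of the $5\times 10$ Jacobian $\bigl(\partial f_{k}/\partial z_{i}\,\bigl|\,\partial f_{k}/\partial w_{j}\bigr)$, modulo the irrelevant ideals of the two $\mathbb{P}^{4}$'s.

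The first step is to catalogue the coordinate strata on which the defining equations collapse. Since each $f_{k}=z_{k}w_{k}+a\,z_{k+1}w_{k}+b\,z_{k}w_{k+1}$ involves only the $k$- and $(k+1)$-indices, imposing enough vanishings among the $z_{i},w_{j}$ causes several $f_{k}$ to vanish identically and leaves only a few linear conditions on the remaining coordinates. A direct enumeration, carried out only up to $\mathbb{Z}/5$-rotation and up to $\sigma$, shows that these residual conditions always cut out $\mathbb{P}^{1}$'s embedded in coordinate subspaces of $\mathbb{P}^{4}\times\mathbb{P}^{4}$. Orbiting these representatives produces exactly $20$ candidate lines contained in $\Xsp$, and a rank computation of the Jacobian along each line shows that the rank drops from $5$ to $4$ generically, with the transverse Hessian nondegenerate of rank $2$; this is the $A_{1}$-assertion, and by symmetry needs to be verified at essentially one generic point on one representative line.

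The second step is to determine the intersection combinatorics. Two of the $20$ lines meet precisely when the associated coordinate strata are compatible, and counting these pairs modulo the cyclic symmetry should give $20$ intersection points split into two orbits. At each intersection $P$, the local germ of $\Xsp$ can be computed by centering an affine chart at $P$, expanding $f_{1},\ldots,f_{5}$ to lowest order, and reading off the tangent cone: in codimension $5$ the result is a union of coordinate-like lines of $A_{1}$-singularities through the origin, and an explicit leading-order computation distinguishes the two local models $\mathcal{U}_{1}$ (three concurrent lines of $A_{1}$'s) and $\mathcal{U}_{2}$ (two such lines meeting transversely). The $\mathbb{Z}/5\rtimes\langle\sigma\rangle$-equivariance of the configuration then forces the advertised $10$ vs.\ $10$ split.

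The main obstacle is not the local Jacobian or Hessian calculation, which is mechanical once a good chart is chosen, but rather the \emph{completeness} of the list of singular lines: one must rule out hidden components of the singular locus that do not lie on any of the coordinate strata identified above. I would handle this by a Gr\"obner-basis elimination of $f_{1},\ldots,f_{5}$ together with the $5\times5$ Jacobian minors, paralleling the discriminant computation in Proposition \ref{pro:disc-Xsp}; after saturation by the two irrelevant ideals and by the discriminant factor $\prod(\mu^{k}a+\mu^{l}b+1)$, the resulting ideal should decompose, ideally confirmed symbolically in \texttt{Macaulay2}, into the $20$ lines and nothing more.
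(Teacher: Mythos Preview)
Your overall strategy is sound and would likely succeed, but it differs substantially from the paper's route, and one of your symmetry reductions needs a small correction.

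First, the correction: the involution $\sigma:(z,w)\mapsto(w,z)$ is \emph{not} an automorphism of $\Xsp$ for generic $(a,b)$; swapping $z\leftrightarrow w$ in $f_{k}=z_{k}w_{k}+a\,z_{k+1}w_{k}+b\,z_{k}w_{k+1}$ interchanges $a$ and $b$. What you actually have is an isomorphism between the fibres over $(a,b)$ and $(b,a)$, which still halves your case analysis but is a symmetry of the family rather than of a single fibre. (It becomes a genuine involution only on the diagonal $a=b$, which the paper exploits later for the Reye-congruence mirror.)

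On the method itself: the paper explicitly warns that the direct Jacobian-ideal computation on $\Xsp$ is ``complicated'' and does \emph{not} proceed as you propose. Instead it passes to the two birational projections $\pi_{1}:\Xsp\to\Sspp$ and $\pi_{2}:\Xsp\to\Ssp$ onto the determinantal quintics. Since each $\pi_{i}$ is an isomorphism off a known locus (the five coordinate lines $q_{i}$, over which the fibre is $\mathbb{P}^{1}$ or $\mathbb{P}^{2}$), the singular locus of $\Xsp$ is forced into the inverse images of $\mathrm{Sing}(Z_{i}^{sp})$. Those are hypersurface singular loci in $\mathbb{P}^{4}$, and the Jacobian ideal of a single quintic is vastly easier to decompose than the $5\times5$-minor ideal of a $5\times10$ Jacobian in bihomogeneous coordinates: one finds $15$ lines in each $Z_{i}^{sp}$ (five of $A_{2}$-type, ten of $A_{1}$-type), and the $20$ lines in $\Xsp$ are recovered as specific curves inside those inverse images (Proposition~\ref{pro:Singular-tilde-Xosp-A1s}). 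In particular the lines $Q_{i},\tilde{Q}_{i}$ sit inside the $\mathbb{P}^{1}\times\mathbb{P}^{1}$ exceptional divisors over the coordinate lines $q_{i},\tilde{q}_{i}$ with a genuinely nontrivial parametrisation; your ``coordinate-stratum'' enumeration would find them, but only after restricting to the right $\mathbb{P}^{2}\times\mathbb{P}^{1}$ substratum and solving the residual equations, which is essentially the paper's computation rephrased.

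What each approach buys: yours is self-contained and symmetry-driven, but its completeness step (Gr\"obner elimination plus saturation of a large ideal in ten variables with two parameters) is precisely the computation the paper found intractable enough to avoid. The paper's detour through the determinantal quintics trades one hard primary decomposition for two easy ones plus some fibre analysis, and as a bonus clarifies how the $A_{2}$-lines on $\Ssp$ get partially resolved to pairs of $A_{1}$-lines in $\Xsp$ --- geometry that is invisible from the raw Jacobian ideal.
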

We determine the singular loci above essentially by the Jacobian criterion,
however straightforward calculations do not work since the Jacobian
ideal turns out to be complicated. We avoid this complication by studying
the singular loci of the determinantal quintics which are naturally
associated to $\Xsp$ (see the next subsection). Detailed analysis
will be given in Sect.\ref{sec:A-Crepant-resolution}. There, the
type of the singularities and also the configuration of them will
be determined (see Fig.\ref{fig:FigResolution-S}). The configuration
of the singular loci, consisting of 20 lines of $A_{1}$-singularity,
is similar to the Barth-Nieto quintic studied in \cite{Barth-Nieto}
(see also \cite{HulekEtAl}). While the local geometry $(3A_{1},\mathcal{U}_{1})$
has the corresponding geometry in the Barth-Nieto quintic, the geometry
$(2A_{1},\mathcal{U}_{2})$ (and also $(\partial A_{1},\mathcal{U}_{3})$
which will be introduced in Sect.$\,$\ref{sec:A-Crepant-resolution})
is new in our case. For the resolution of the singularities, as in
\cite{Barth-Nieto} (see also \cite{HulekEtAl}), we start with the
blowing-up at the 10 points of $(3A_{1},\mathcal{U}_{1})$ singularity
and continue the blowing-up along the strict transforms of the lines
in the prescribed way in Sect.\ref{sec:A-Crepant-resolution}. Then
we finally obtain the following result: 

\bigskip{}
\noindent\textbf{Main Result 1.} (Theorem \ref{thm:Thm-crepant-e100},
Theorem \ref{thm:mirror-Hodge}) \textit{For $(a,b)\in(\mathbb{C}^{*})^{2}$
with non-vanishing discriminant (\ref{eq:discXandU}), there exists
a crepant resolution $\cXsp\rightarrow\Xsp$ with the Hodge numbers:
\[
h^{1,1}(\cXsp)=h^{2,1}(\tilde{X}_{0})=52,\;\; h^{2,1}(\cXsp)=h^{1,1}(\tilde{X}_{0})=2.\]
Namely, the resolution $\cXsp$ is a mirror Calabi-Yau threefold to
$\tilde{X}_{0}$. In particular, we have a trivial finite group $G_{orb}=\{id\}$
for the orbifold mirror construction. }

\vfill\bigskip{}
\pagebreak{}

\subsection{Special determinantal quintics $\Ssp$ and $\Hsp$ }

As in the diagram (\ref{eq:HTdiagram1}), we obtain two determinantal
quintics $\Ssp$ and $\Hsp$ from $\Xsp$, which can be arranged into
the following diagram: \begin{equation}
\begin{matrix}\xymatrix{\tilde{X}_{0}^{*}\ar[r] & \Xsp\ar[dr] &  & \ar[dl]\Usp\ar[dr]\\
 &  & \Ssp &  & \Hsp,}
\end{matrix}\label{eq:SHdiagam1}\end{equation}
where we define $\Usp$ as $\tilde{X}_{2}$ in (\ref{eq:HTdiagram1}).
The first determinantal quintic $\Ssp$ is defined by the map $\pi_{2}:\Xsp\rightarrow\Ssp$
associated with the projection to the second factor $\mathbb{P}^{4}\times\mathbb{P}^{4}\rightarrow\mathbb{P}^{4}$.
The defining equation $\det(A_{1}w\, A_{2}w...A_{5}w)=0$ is given
by the following quintic:\begin{equation}
\begin{alignedat}{1} & {\rm det}{\rm \left(\begin{matrix}w_{1}+bw_{2} & 0 & 0 & 0 & aw_{5}\\
aw_{1} & w_{2}+bw_{3} & 0 & 0 & 0\\
0 & aw_{2} & w_{3}+bw_{4} & 0 & 0\\
0 & 0 & aw_{3} & w_{4}+bw_{5} & 0\\
0 & 0 & 0 & aw_{4} & w_{5}+bw_{1}\end{matrix}\right)}\\
 & =a^{5}w_{1}w_{2}w_{3}w_{4}w_{5}\\
 & \quad+\bigl(w_{1}+bw_{2}\bigr)\bigl(w_{2}+bw_{3}\bigr)\bigr(w_{3}+bw_{4}\bigl)\bigr(w_{4}+bw_{5}\bigr)\bigl(w_{5}+bw_{1}\bigr).\end{alignedat}
\label{eq:defEqS}\end{equation}
Similarly, the second determinantal quintic $\Hsp$ is defined by
$\det(\sum_{k}\lambda_{k}A_{k})=0$ with\begin{equation}
\begin{alignedat}{1} & \begin{aligned}{\rm det}\end{aligned}
\left(\begin{matrix}\lambda_{1} & b\lambda_{1} & 0 & 0 & a\lambda_{5}\\
a\lambda_{1} & \lambda_{2} & b\lambda_{2} & 0 & 0\\
0 & a\lambda_{2} & \lambda_{3} & b\lambda_{3} & 0\\
0 & 0 & a\lambda_{3} & \lambda_{4} & b\lambda_{4}\\
b\lambda_{5} & 0 & 0 & b\lambda_{4} & \lambda_{5}\end{matrix}\right)\\
 & =\bigl(1+a^{5}+b^{5}\bigr)\lambda_{1}\lambda_{2}\lambda_{3}\lambda_{4}\lambda_{5}\\
 & \quad+\, a^{2}b^{2}\bigl(\lambda_{1}\lambda_{2}^{2}\lambda_{4}^{2}+\lambda_{2}\lambda_{3}^{2}\lambda_{5}^{2}+\lambda_{3}\lambda_{4}^{2}\lambda_{1}^{2}+\lambda_{4}\lambda_{5}^{2}\lambda_{2}^{2}+\lambda_{5}\lambda_{1}^{2}\lambda_{3}^{2}\bigr)\\
 & \quad-ab\bigl(\lambda_{1}\lambda_{2}\lambda_{3}\lambda_{4}^{2}+\lambda_{2}\lambda_{3}\lambda_{4}\lambda_{5}^{2}+\lambda_{3}\lambda_{4}\lambda_{5}\lambda_{1}^{2}+\lambda_{4}\lambda_{5}\lambda_{1}\lambda_{2}^{2}+\lambda_{5}\lambda_{1}\lambda_{2}\lambda_{3}^{2}\bigr).\end{alignedat}
\label{eq:def-eq-HessAB}\end{equation}

\begin{prop}
\label{pro:dis-X-S-H}The singular loci of $\Ssp$ are in $\mathbb{P}^{4}\setminus({\bf \mathbb{C}}^{*})^{4}$
for generic $(a,b)\in(\mathbb{C}^{*})^{2}$, i.e., the restriction
$\Ssp|_{(\mathbb{C}^{*})^{4}}$ of $\Ssp$ to the torus $({\bf \mathbb{C}}^{*})^{4}\subset\mathbb{P}^{4}$
is smooth. $\Ssp|_{(\mathbb{C}^{*})^{4}}$ becomes singular for $a,b$
on the discriminant $\{dis(\Ssp|_{(\mathbb{C}^{*})^{4}})=0\}\subset(\mathbb{C}^{*})^{2}$,
where \begin{equation}
dis(\Ssp|_{(\mathbb{C}^{*})^{4}})=a^{5}\prod_{k,l=0}^{4}(\mu^{k}\, a+\mu^{l}\, b+1)\;\;\;(\mu^{5}=1,\mu\not=1).\label{eq:dis-Z}\end{equation}
Similar restriction $\Hsp|_{(\mathbb{C}^{*})^{4}}$ of $\Hsp$ is
smooth for generic $(a,b)\in(\mathbb{C}^{*})^{2}$ and becomes singular
for the values on the discriminant $\{dis(\Hsp|_{(\mathbb{C}^{*})^{4}})=0\}\subset(\mathbb{C}^{*})^{2}$,
where \begin{equation}
dis(\Hsp|_{(\mathbb{C}^{*})^{4}})=\prod_{k,l=0}^{4}(\mu^{k}\, a+\mu^{l}\, b+1)\times\prod_{k=0}^{4}(a-\mu^{k}b)^{2}\;\;\;(\mu^{5}=1,\mu\not=1).\label{eq:discriminant-Hessian-quintic}\end{equation}
\end{prop}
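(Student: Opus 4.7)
The plan is to follow the same strategy as in Proposition \ref{pro:disc-Xsp}: invoke the Jacobian criterion for each hypersurface, impose the torus restriction as an auxiliary relation, and then eliminate the homogeneous coordinates to obtain a polynomial in $a,b$ whose zero locus detects singularities on the open torus. The claim that the singular loci of $\Ssp$ lie in $\mathbb{P}^{4}\setminus (\mathbb{C}^{*})^{4}$ for generic $(a,b)$ is then equivalent to showing that the resulting elimination ideal in $\mathbb{C}[a,b]$ is proper, which is exhibited by producing the explicit nonzero generator (\ref{eq:dis-Z}).

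Concretely, for $\Ssp$ I would take the quintic $F_{S}(w;a,b)$ given by the right-hand side of (\ref{eq:defEqS}), form the ideal
\[
J_{S}\;=\;\bigl(F_{S},\;\partial_{w_{1}}F_{S},\ldots,\partial_{w_{5}}F_{S},\;w_{1}w_{2}w_{3}w_{4}w_{5}-1\bigr)\;\subset\;\mathbb{C}[a,b,w_{1},\ldots,w_{5}],
\]
and eliminate $w_{1},\ldots,w_{5}$ by a Gr\"obner basis computation in \texttt{Macaulay2} (\cite{M2}), exactly as in the proof of Proposition \ref{pro:disc-Xsp}. For $\Hsp$ I would do the analogous computation starting from $F_{H}(\lambda;a,b)$ in (\ref{eq:def-eq-HessAB}), the partial derivatives in $\lambda_{i}$, and the torus relation $\lambda_{1}\lambda_{2}\lambda_{3}\lambda_{4}\lambda_{5}-1$. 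The cyclic $\mathbb{Z}/5\mathbb{Z}$ symmetry $(z_{i},w_{i},\lambda_{i})\mapsto(z_{i+1},w_{i+1},\lambda_{i+1})$ (indices mod $5$) of the matrices $A_{k}$, together with the hidden $b\leftrightarrow a$ symmetry visible in (\ref{eq:def-eq-HessAB}), accounts a priori for the appearance of the factors $\mu^{k}a+\mu^{l}b+1$ indexed by the $\mathbb{Z}/5\mathbb{Z}$-orbits of pairs of fifth roots of unity, and matches the shared factor with the discriminant (\ref{eq:discXandU}) of the ambient complete intersection.

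The main obstacle is to verify that the elimination output is \emph{exactly} the displayed product and to account for the extra factors beyond those inherited from (\ref{eq:discXandU}), namely the $a^{5}$ in (\ref{eq:dis-Z}) and the $\prod_{k}(a-\mu^{k}b)^{2}=(a^{5}-b^{5})^{2}$ in (\ref{eq:discriminant-Hessian-quintic}). These additional factors cannot come from the ambient CICY geometry, and instead must reflect an intrinsic degeneration of the determinantal structure itself: a further drop of corank of the matrix $(A_{1}w\cdots A_{5}w)$ in the Steinerian case and of $A_{\lambda}$ in the Hessian case. A useful a posteriori check, therefore, is to specialize $(a,b)$ along each extra factor (e.g.\ $a=\mu^{k}b$ in the Hessian case) and verify directly that the relevant matrix acquires a new rank-drop locus that meets the torus; the squaring of the factor $(a-\mu^{k}b)$ should correspond to a codimension-two rank-drop on $\Hsp|_{(\mathbb{C}^{*})^{4}}$, consistent with the curve of singularities of the generic Hessian quintic noted in Section \ref{sub:summary-Reye-I}. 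Combined with the computer algebra elimination, these geometric specializations pin down both the support and the multiplicity of the discriminant, giving the formulas (\ref{eq:dis-Z}) and (\ref{eq:discriminant-Hessian-quintic}).
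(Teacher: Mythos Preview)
Your proposal is correct and follows essentially the same approach as the paper: form the Jacobian ideal, adjoin the torus relation $w_{1}\cdots w_{5}=1$ (resp.\ $\lambda_{1}\cdots\lambda_{5}=1$), and eliminate the homogeneous coordinates via \texttt{Macaulay2}. The paper's proof is in fact terser than yours---it simply cites the procedure of Proposition~\ref{pro:disc-Xsp} and reports the outcome---so your additional remarks on the $\mathbb{Z}/5\mathbb{Z}$ symmetry and the geometric origin of the extra factors $a^{5}$ and $(a^{5}-b^{5})^{2}$ are helpful elaborations rather than departures.
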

\begin{proof}
As in Proposition \ref{pro:disc-Xsp}, we impose the restrictions
to $({\bf \mathbb{C}}^{*})^{4}$ by adding the equations $w_{1}w_{2}...w_{5}=1$
or $\lambda_{1}\lambda_{2}...\lambda_{5}=1$ to the Jacobian ideals.
By the eliminations, we obtain the claimed forms of the discriminants. 
\end{proof}
$\tilde{X}_{2}^{sp}$ is defined by special forms of five $(1,1)$-divisors
in $\mathbb{P}^{4}\times\mathbb{P}_{\lambda}^{4}.$ We can verify
that the restriction to the tori $(\mathbb{C}^{*})^{8}=(\mathbb{C}^{*})^{4}\times(\mathbb{C}^{*})^{4}$
is smooth and has the following form of the discriminant:\begin{equation}
dis(\tilde{X}_{2}\vert_{(\mathbb{C}^{*})^{8}})=\prod_{k,l=0}^{4}(\mu^{k}\, a+\mu^{l}\, b+1),\;\;(\mu^{5}=1,\mu\not=1).\label{eq:dis-U}\end{equation}

\begin{prop}
\label{pro:singular-loci-S-and-H}1) For $(a,b)\in(\mathbb{C}^{*})^{2}$
with non-vanishing discriminant (\ref{eq:dis-Z}), the determinantal
quintic $\Ssp$ is singular along 5 coordinate lines, each of them
is of $A_{2}$ type, and singular also along 10 lines of $A_{1}$
singularity. These lines intersect at 15 points. 2) For $(a,b)\in(\mathbb{C}^{*})^{2}$
with nonvanishing (\ref{eq:discriminant-Hessian-quintic}), the determinantal
quintic $\Hsp$ is singular along 5 coordinate lines, each of which
is of $A_{3}$ type, and singular also along 5 additional lines of
$A_{1}$ singularity. These lines intersect at 10 points.\end{prop}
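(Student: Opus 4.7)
The plan is to apply the Jacobian criterion to the explicit polynomials \eqref{eq:defEqS} and \eqref{eq:def-eq-HessAB}, using the $\mathbb{Z}/5$ cyclic symmetries $w_i \mapsto w_{i+1}$ and $\lambda_i \mapsto \lambda_{i+1}$ to cut the case analysis, and invoking Proposition \ref{pro:dis-X-S-H} to confine the singular locus to the toric boundary $\bigcup \{w_i = 0\}$ (resp.\ $\bigcup \{\lambda_i = 0\}$). I stratify this boundary by the number of vanishing coordinates and work down to the one-dimensional components stratum by stratum.

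For $\Ssp$, each plane $\{w_i = w_{i+1} = 0\}$ lies inside $\Ssp$ by direct substitution into \eqref{eq:defEqS}. On this plane the three tangential partials $\partial_{w_{i+2}} F$, $\partial_{w_{i+3}} F$, $\partial_{w_{i+4}} F$ vanish identically, while the two transverse partials $\partial_{w_i} F$ and $\partial_{w_{i+1}} F$ are mutually proportional and their common zero locus factors as $w_{i+2}\, w_{i+4}\,(w_{i+2}+b w_{i+3})(w_{i+3}+b w_{i+4})$, producing two coordinate lines and two non-coordinate lines per plane. Cyclic shifts of $i$ yield $5$ distinct coordinate lines (each shared by two adjacent planes) and $10$ distinct non-coordinate lines. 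To identify the transverse types along a coordinate line, I fix affine coordinates with $w_i = 1$, take $w_{i+1}$ as the line parameter, and expand $F$ in the three transverse variables: the transverse quadratic form has rank $2$ by a short $3 \times 3$ determinant check, and the restriction of $F$ to the one-dimensional kernel line of this form equals $-a^{5} b^{3} w_{i+1}\, t^{3} + O(t^{4})$, coming solely from the term $a^{5} w_{1} \cdots w_{5}$ of \eqref{eq:defEqS}. After solving the two non-kernel Jacobian partials for the two non-kernel transverse variables, the remaining partial produces $c\, t^{2} \in I$ with $c \neq 0$; the Milnor algebra is $\mathbb{C}\{t\}/(t^{2})$, so the transverse singularity is $A_{2}$. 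Along each non-coordinate line the transverse quadratic form is instead rank $3$, yielding an ordinary node. The $15$ intersection points are then enumerated directly from the explicit line equations.

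The Hessian $\Hsp$ is handled by the same template applied to \eqref{eq:def-eq-HessAB}. Each plane $\{\lambda_i = \lambda_{i+1} = 0\}$ lies in $\Hsp$; here $\partial_{\lambda_i} F$ vanishes identically on the plane while $\partial_{\lambda_{i+1}} F$ factors as $ab\, \lambda_{i+2}\, \lambda_{i+4}^{2}\,(ab\, \lambda_{i+2} - \lambda_{i+3})$, cutting out two coordinate lines (with $\lambda_{i+4}$ appearing with multiplicity two, reflecting a higher tangency) together with a single non-coordinate line; cyclic shifts give $5$ coordinate lines and $5$ non-coordinate lines. The transverse analysis on each coordinate line parallels that for $\Ssp$, but this time $F$ restricted to the kernel line vanishes identically to all orders (every monomial in \eqref{eq:def-eq-HessAB} carries a positive power of at least one non-kernel transverse variable); solving the two non-kernel partials for those variables as $O(t^{2})$-functions of the kernel coordinate and substituting into the remaining partial produces, via cross terms, a relation $c\, t^{3} \in I$ with $c$ a nonvanishing polynomial in $a, b$ on the complement of \eqref{eq:discriminant-Hessian-quintic}; the Milnor algebra is $\mathbb{C}\{t\}/(t^{3})$, so the transverse type is $A_{3}$. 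The non-coordinate lines give $A_{1}$, and the $10$ intersection points are counted directly.

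The main obstacle is the Milnor-algebra computation for $\Hsp$: because $F$ itself restricts to zero on the kernel line, the cubic kernel-coefficient of the substituted Jacobian partial must be extracted from the quadratic corrections to the non-kernel variables, which requires tracking cross terms in the two-variable elimination one order further than in the $\Ssp$ case. The discriminants \eqref{eq:dis-Z} and \eqref{eq:discriminant-Hessian-quintic} are exactly what ensure that the resulting leading kernel-direction coefficients do not vanish, so the singularity type is constant along each line for generic $(a,b)$.
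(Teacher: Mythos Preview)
Your approach is correct and is essentially the same strategy as the paper's: apply the Jacobian criterion to locate the singular lines, then pass to transverse local coordinates to read off the $A_n$ type. The paper carries out the location step by computer-algebra primary decomposition of the Jacobian ideal (see Proposition~\ref{pro:Lemma-S} and Proposition~\ref{pro:singular-loci-Hsp}), whereas you exploit the cyclic symmetry and Proposition~\ref{pro:dis-X-S-H} to do it by hand; your version is more transparent about why each line arises and why the kernel direction behaves differently for $\Ssp$ versus $\Hsp$.

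One point to make explicit in your write-up: the stratum-by-stratum argument as presented only treats the five planes $\{w_i=w_{i+1}=0\}$ (resp.\ $\{\lambda_i=\lambda_{i+1}=0\}$), but exhaustiveness also requires ruling out singularities on the open codimension-one strata $\{w_i=0,\ \prod_{j\neq i}w_j\neq 0\}$ and on the five non-adjacent planes $\{w_i=w_{i+2}=0\}$. These checks are easy---for instance, on the open part of $\{w_1=0\}\cap\Ssp$ one finds $\partial_{w_1}F=a^5w_2w_3w_4w_5\neq 0$---but they should be stated, since this completeness is exactly what the paper's primary decomposition delivers for free.
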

\begin{proof}
We present the details of 1) in Sect.$\,$\ref{sec:A-Crepant-resolution}
and Fig.$\,$\ref{fig:FigResolution-S}. The property of 2) is obtained
in \ref{pro:singular-loci-Hsp} (see Fig.$\,\ref{fig:U-Hessian}).$
\end{proof}
The complete intersections $\Xsp$ and $\Usp$ give partial crepant
resolutions of $\Ssp$ and $\Hsp$, respectively. In fact, all the
singularities along the 15 lines in $\Ssp$ are (partially) resolved
to the singularities of $A_{1}$ type along the 20 lines in Proposition
\ref{pro:Xosp-20-A1}. The similar property also holds for the projection
$\mbox{\ensuremath{\Usp}}\rightarrow\Hsp$ (cf. Fig.$\,$\ref{fig:U-Hessian}).

Precisely, the crepant resolution $\cXsp$ of $\tilde{X}_{0}^{sp}$
(claimed in Main Result 1) is valid for $(a,b)\in(\mathbb{C}^{*})^{2}$
being away from the zero-loci of the discriminant (\ref{eq:discXandU})
in $(\mathbb{C}^{*})^{2}$. It is easy to see that $\tilde{X}_{0}^{sp}$
with two different values of $(a,b)$ and $(\mu^{k}a,\mu^{l}b)$ $(\mu^{5}=1)$
are isomorphic to each other by a simple coordinate change. Based
on this, we introduce the affine variables $x=-a^{5}$, $y=-b^{5}$
to have a smooth family over $(\mathbb{C}^{*})^{2}\ni(x,y)$, which
will be compactified to a family over $\mathbb{P}^{2}$ (see Sect.$\,\ref{sec:Picard-Fuchs-equations})$.
\smallskip{}

\noindent\textbf{Main Result 2.} (Propositions \ref{pro:Monodromy-Matrices},
\ref{pro:connection-matrix-C}, \ref{pro:Monodromy-relations-1)-3)})
\textit{Let ${\normalcolor {\normalcolor \tilde{\mathfrak{X}}^{*}}}$
be the family of Calabi-Yau manifolds $\cXsp$ over $\mathbb{P}^{2}$,
and consider the period integrals of the family. Then, the integral
and symplectic basis of the period integrals are generated by the
cohomology-valued hypergeometric series defined in \cite[Conj. 2.2]{CentralCh}
and \cite[Prop.1]{IIAmonod}. }

\smallskip{}
We remark that our crepant resolution $\tilde{X}_{0}^{*}\rightarrow X_{0}^{sp}$
is valid also for $a=b\in\mathbb{C}^{*}$ as far as we have non-vanishing
discriminant (\ref{eq:discXandU}). Hence we can consider the restriction
of the family $\tilde{\mathfrak{X}}^{*}$ over $\mathbb{P}^{2}$ to
a family over $\{x=y\}\cong\mathbb{P}^{1}$ and have the following
properties:\textit{ }\smallskip{}

\noindent\textbf{Main Result 3.}\textit{ }(Proposition \ref{pro:mirror-Reye-X})\textit{
Over the 'diagonal' $\{x=y\}\cong\mathbb{P}^{1}$, except $x=y=\frac{1}{32}$,
the family $\tilde{\mathfrak{X}}^{*}\rightarrow\mathbb{P}^{2}$ admits
a fiberwise fixed point free involution found in \cite[Prop. 2.9]{HoTa}.
By taking the fiberwise unramified quotient under this involution
over $\mathbb{P}^{1}$, we obtain the mirror family $\mathfrak{X}_{\mathbb{P}^{1}}^{*}$
of the Reye congruence $X$. In particular, the period integrals and
the monodromy matrices from Main Result 2 reproduce the previous results
obtained in \cite[Prop. 2.10, 3)]{HoTa}. }

\medskip{}
We summarize the geometries of the generic fiber $X^{*}$ of\textit{
$\mathfrak{X}_{\mathbb{P}^{1}}^{*}\rightarrow\mathbb{P}^{1}$} as
follows: 

\begin{equation}
\begin{matrix}\xymatrix{ & \tilde{X}_{0}^{*}\ar[r]\ar[d]^{/\mathbb{Z}_{2}} & \tilde{X}_{0}^{sp}\ar[dr] &  & \ar[dl]U_{sp}\ar[dr]\\
 & X^{*} &  & S_{sp} &  & H_{sp}\;,}
\end{matrix}\label{eq:diag-Reye-X-mirror}\end{equation}
where $S_{sp}$ and $H_{sp}$ are the special forms of the Steinerian
quintic and the Hessian quintic defined by (\ref{eq:defEqS}) and
(\ref{eq:def-eq-HessAB}) with $a=b$, respectively. 

\medskip{}
We close this section noting some properties of the Hessian quintic
$H_{sp}$. When $a=b$, the discriminant (\ref{eq:discriminant-Hessian-quintic})
of the Hessian $H_{sp}$ vanishes. In Sect.\ref{sec:Xs-and-Ys-discussions},
we will explain this (Proposition \ref{pro:singular-loci-Hsp}) by
observing that an elliptic normal quintic appears as a new component
of the singular loci of $\Hsp$ when $a=b$. There we will also discuss
that the Hessian quintic $H_{sp}$ admits a double covering $Y_{sp}$
ramified along its singular loci. From the mirror symmetry considerations
given in \cite{HoTa}, it is expected that there is a crepant resolution
$Y_{sp}^{*}$ of $Y_{sp}$ which gives a mirror Calabi-Yau threefold
$Y^{*}(=Y_{sp}^{*})$ to the $Y$ of the Reye congruence $X$. Namely
we expect that the pair $(X,Y)$ of Calabi-Yau manifolds associated
with the Reye congruence is mirrored to another pair $(X^{*},Y^{*})$
of the mirror Calabi-Yau manifolds. Here $Y^{*}$ can be either birational
to $X^{*}$ or a Fourier-Mukai partner to $X^{*}$. Both cases are
consistent with the homological mirror symmetry \cite{Ko}. The construction
of $Y_{sp}^{*}$ is left for future study. 

\vspace{2cm}

\section{\textbf{\textup{\label{sec:The-Euler-numbersSH}The Euler numbers
$e(\Ssp)$ and $e(\Hsp)$}}}

In this section, we determine the Euler numbers of the determinantal
quintics. Since these quintics are singular, we invoke to a topological
method. We assume that $(a,b)\in(\mathbb{C}^{*})^{2}$ is away from
the zero of the discriminant (\ref{eq:dis-Z}) and (\ref{eq:discriminant-Hessian-quintic}),
respectively, for the determinantal quintics $\Ssp$ and $\Hsp$.

\subsection{Euler number $e(\Ssp)$}

We compute the Euler numbers of the singular determinantal quintic
$\Ssp$ by considering the intersections of $\Ssp$ in (\ref{eq:defEqS})
with the following affine line $l\subset\mathbb{P}^{4}$ such that
$l\cup\{v_{0}\}=\mathbb{P}^{1}$ with $v_{0}=[0:0:0:0:1]\in\Ssp$:
\[
l:\;[w_{1}:w_{2}:w_{3}:w_{4}:w_{5}]=[x_{1}:x_{2}:x_{3}:x_{4}:t]\;\;\;(t\in{\bf {\bf \mathbb{C}}}).\]
Substituting the coordinates of this line into the defining equation
(\ref{eq:defEqS}), we obtain \[
f(t)=c_{2}t^{2}+c_{1}t+c_{0,}\]
where\begin{equation}
\begin{aligned}c_{2}=b(x_{1}+bx_{2})(x_{2}+bx_{3})(x_{3}+bx_{4}),\quad\\
c_{1}=a^{5}x_{1}x_{2}x_{3}x_{4}+\frac{1}{b}c_{2}(b^{2}x_{1}+x_{4}),\;\; c_{0}=x_{1}x_{4}c_{2}\;.\end{aligned}
\label{eq:ckS}\end{equation}
 The equation $f(t)=0$ determines the intersection of $l$ with $\Ssp$
as the fiber over each point $[x_{1}:x_{2}:x_{3}:x_{4}:0]$ associated
to the projection $\mathbb{P}^{4}\setminus\{v_{0}\}\rightarrow\mathbb{P}^{3}$.
Then, by counting the numbers of the solutions, we can calculate the
Euler number $e(\Ssp)$. The fiber over each point varies depending
on the values of $c_{2},c_{1,}c_{0}$, and the followings are two
extreme cases: 1) $c_{2}=c_{1}=c_{0}=0$, and 2) $c_{2}=c_{1}=0$
but $c_{0}\not=0$. We regard that the fiber over the former loci
is $\mathbb{P}^{1}=l\cup\{v_{0}\}$. The fiber over 2) is empty, however
we may regard it as the point at infinity $\{v_{0}\}$. We see that,
in the present case, 2) does not occur since $c_{2}=0$ implies $c_{0}=0$,
however, the following arguments are not restricted to such cases.
For other cases than 1) and 2), the numbers of solutions of the equation
$f(t)=0\,(t\in{\bf \mathbb{C})}$ are either 2 or 1. Depending on
the numbers of solutions we define the following subsets in $\mathbb{P}^{3}$:
\begin{align*}
U_{\mathbb{P}^{1}} & =\bigl\{[x]\mid c_{2}=c_{1}=c_{0}=0\bigr\},\;\; U_{2}=\bigl\{[x]\mid c_{2}\not=0,\; c_{1}^{2}-4c_{2}c_{0}\not=0\bigr\},\\
 & U_{1}=\bigl\{[x]\mid c_{2}\not=0,\; c_{1}^{2}-4c_{2}c_{0}=0\bigr\}\sqcup\bigl\{[x]\mid c_{2}=0,c_{1}\not=0\bigr\}.\end{align*}
Then the Euler number is evaluated by \begin{equation}
e(\Ssp)=2\, e(U_{2})+e(U_{1})+\bigl(e(\mathbb{P}^{1})-1\bigr)\, e(U_{\mathbb{P}^{1}})+e(v_{0}).\label{eq:eulerN}\end{equation}

We denote the discriminant surface by $D_{s}$, i.e., $D_{s}=\bigl\{[x]\in\mathbb{P}^{3}\mid c_{1}^{2}-4c_{2}c_{0}=0\bigr\}.$
The subset $U_{1}$ consists of those points in an open subset of
$D_{s}$ or where $f(t)$ becomes linear. Consider the inclusions:
\begin{equation}
\{c_{2}=0\}\supset\{c_{2}=c_{1}=0\}\supset\{c_{2}=c_{1}=c_{0}=0\},\label{eq:inclusionsA}\end{equation}
and denote these by $V_{c_{2}}\supset V_{c_{2},c_{1}}\supset V_{c_{2,}c_{1},c_{0}}$
with the obvious definitions. 
\begin{lem}
$\,$\begin{equation}
e(\Ssp)=2\, e(\mathbb{P}^{3})+1-e(D_{s})-e(V_{c_{2}})+e(V_{c_{2},c_{1},c_{0}}).\label{eq:EulerA}\end{equation}
\end{lem}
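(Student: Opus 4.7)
The proof is pure Euler-characteristic bookkeeping: rewrite the formula (\ref{eq:eulerN}) by expressing $e(U_1)$ and $e(U_2)$ in terms of the loci appearing in (\ref{eq:inclusionsA}) and the discriminant $D_s$, then cancel using additivity of the topological Euler number over constructible partitions.

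First I would simplify $e(v_0)=1$ and $e(\mathbb{P}^1)-1=1$ so that (\ref{eq:eulerN}) reads $e(\Ssp)=2e(U_2)+e(U_1)+e(V_{c_2,c_1,c_0})+1$. Next I would compute $e(U_2)$ and $e(U_1)$ by intersecting with the decomposition $\mathbb{P}^3=\{c_2\neq 0\}\sqcup V_{c_2}$. For $U_2$, since $U_2=\{c_2\neq 0\}\setminus(D_s\cap\{c_2\neq 0\})$, one gets
\[
e(U_2)=\bigl(e(\mathbb{P}^3)-e(V_{c_2})\bigr)-\bigl(e(D_s)-e(D_s\cap V_{c_2})\bigr).
\]
For $U_1$, the disjoint decomposition in its definition gives
\[
e(U_1)=e(D_s\cap\{c_2\neq 0\})+e(V_{c_2}\setminus V_{c_2,c_1})=e(D_s)-e(D_s\cap V_{c_2})+e(V_{c_2})-e(V_{c_2,c_1}).
\]

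The one geometric input needed beyond additivity is the identification $D_s\cap V_{c_2}=V_{c_2,c_1}$. This follows immediately from the explicit shape of the coefficients in (\ref{eq:ckS}): because $c_0=x_1x_4\,c_2$, the vanishing of $c_2$ forces $c_0=0$, so on $V_{c_2}$ the discriminant $c_1^2-4c_2c_0$ reduces to $c_1^2$, hence $D_s\cap V_{c_2}=\{c_2=c_1=0\}=V_{c_2,c_1}$. Substituting this identification into the two formulas above yields
\[
2e(U_2)+e(U_1)=2e(\mathbb{P}^3)-e(V_{c_2})-e(D_s).
\]

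Finally, adding the contributions from $U_{\mathbb{P}^1}=V_{c_2,c_1,c_0}$ and $v_0$ gives the claimed identity (\ref{eq:EulerA}). The only step that is not purely formal is the observation $c_0=x_1x_4c_2$, but once this is noted the computation is straightforward; there is no substantial obstacle, and no assumption on the nature of the singularities of $\Ssp$ is used at this stage. The actual values $e(D_s),\,e(V_{c_2}),\,e(V_{c_2,c_1,c_0})$ will be computed separately (presumably in Section \ref{sec:A-Crepant-resolution}) and substituted afterwards.
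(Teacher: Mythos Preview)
Your argument is correct and follows essentially the same route as the paper: both proofs use additivity of the Euler characteristic, the identification $D_s\cap V_{c_2}=V_{c_2,c_1}$, and direct substitution into (\ref{eq:eulerN}). One small remark: the observation $c_0=x_1x_4c_2$ is true but not actually needed for $D_s\cap V_{c_2}=V_{c_2,c_1}$, since $c_1^2-4c_2c_0$ already reduces to $c_1^2$ once $c_2=0$ regardless of $c_0$; and the values $e(D_s),\,e(V_{c_2}),\,e(V_{c_2,c_1,c_0})$ are computed in the lemmas immediately following this one in the same section, not in Section~\ref{sec:A-Crepant-resolution}.
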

\begin{proof}
By definition, we have $U_{1}=\bigl(D_{s}\setminus(D_{s}\cap V_{c_{2}})\bigr)\sqcup(V_{c_{2}}\setminus V_{c_{2},c_{1}})$
and $D_{s}\cap V_{c_{2}}=V_{c_{2},c_{1}}.$ Since the union is disjoint,
we have \[
e(U_{1})=\bigl(e(D_{s})-e(V_{c_{2},c_{1}})\bigr)+\bigl(e(V_{c_{2}})-e(V_{c_{2},c_{1}})\bigr)=e(D_{s})+e(V_{c_{2}})-2\, e(V_{c_{2},c_{1}}).\]
Similarly, we have $U_{2}=\mathbb{P}^{3}\setminus(D_{s}\cup V_{c_{2}})$
and hence\[
e(U_{2})=e(\mathbb{P}^{3})-e(D_{s}\cup V_{c_{2}})=e(\mathbb{P}^{3})-e(D_{s})-e(V_{c_{2}})+e(V_{c_{2},c_{1}}).\]
Also note that $U_{\mathbb{P}^{1}}=V_{c_{2},c_{1},c_{0}}$ holds.
Substituting all these expressions into (\ref{eq:eulerN}), the claimed
formula follows.
\end{proof}
Let us introduce the ${\bf \mathbb{C}}$-bases $e_{1},...,e_{5}$
of $\mathbb{C}^{5}$ by which we can write $[x_{1}:x_{2}:...:x_{5}]=[x_{1}e_{1}+x_{2}e_{2}+...+x_{5}e_{5}]$
for the projective space $\mathbb{P}^{4}=\mathbb{P}(\mathbb{C}^{5})$.
We define coordinate (projective) lines $L_{ij}$ and also coordinate
(projective) planes $L_{ijk}$ by \begin{align*}
L_{ij}=\langle e_{i},e_{j}\rangle,\;\; & L_{ijk}=\langle e_{i},e_{j},e_{k}\rangle,\end{align*}
where $\langle e_{i_{1}},e_{i_{2}},..,e_{i_{k}}\rangle$ represents
the projective space spanned by the vectors $e_{i_{1}},e_{i_{2}},$
$..,e_{i_{k}}$. We also define the following projective lines and
planes:\begin{align*}
L_{i,jk}=\langle e_{i,}be_{j}-e_{k}\rangle, & \;\; L_{ij,mn}=\langle e_{i},e_{j},be_{m}-e_{n}\rangle.\end{align*}

\begin{lem}
We have $e(V_{c_{2}})=4$ and $e(V_{c_{2},c_{1},c_{0}})=3.$\end{lem}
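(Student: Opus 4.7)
For $e(V_{c_2})$, I would first factor $c_2 = b(x_1+bx_2)(x_2+bx_3)(x_3+bx_4)$; since $b \neq 0$, this exhibits $V_{c_2}$ as a union $H_1 \cup H_2 \cup H_3$ of three projective planes in $\mathbb{P}^3$. Inclusion-exclusion, after checking that the three pairwise intersections $H_i \cap H_j$ are distinct projective lines and that the triple intersection $H_1 \cap H_2 \cap H_3$ is the single point $[-b^3:b^2:-b:1]$, then gives $e(V_{c_2}) = 3 \cdot 3 - 3 \cdot 2 + 1 = 4$.

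For $e(V_{c_2,c_1,c_0})$, I would first reduce the system of equations. Since $c_0 = x_1 x_4 c_2$, the locus $V_{c_2}$ is automatically contained in $V_{c_0}$, so $V_{c_2,c_1,c_0} = V_{c_2,c_1}$. Restricting $c_1$ to $V_{c_2}$ collapses it to $a^5 x_1 x_2 x_3 x_4$, which (as $a \neq 0$) vanishes precisely when one of the $x_i$ is zero. Hence
\[
V_{c_2,c_1,c_0} \;=\; V_{c_2} \cap \{x_1 x_2 x_3 x_4 = 0\} \;=\; \bigcup_{j=1}^{3}\bigcup_{i=1}^{4}\bigl(H_j \cap \{x_i=0\}\bigr),
\]
a union of twelve projective lines in $\mathbb{P}^3$. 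For each $j$ one has $H_j \cap \{x_j=0\} = H_j \cap \{x_{j+1}=0\} = \{x_j=x_{j+1}=0\}$, so three pairs among the twelve coincide and what remains is a union of nine distinct lines $\ell_1,\dots,\ell_9$.

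To finish, I would compute the Euler number of this union of nine lines via the stratification formula $e = 2N - \sum_p m_p + P$, where $N=9$ is the number of lines, $P$ is the number of pairwise intersection points, and $m_p$ is the number of lines through each such point $p$. A systematic case analysis of the $\binom{9}{2}=36$ pairs shows that $P=9$: the intersection points are the four coordinate vertices $[1:0:0:0]$, $[0:1:0:0]$, $[0:0:1:0]$, $[0:0:0:1]$ together with five ``mixed'' points $[-b:1:0:0]$, $[0:-b:1:0]$, $[0:0:-b:1]$, $[b^2:-b:1:0]$, $[0:b^2:-b:1]$. Tabulating multiplicities: the two vertices $[0:0:0:1]$ and $[1:0:0:0]$ each lie on 4 lines, the two points $[-b:1:0:0]$ and $[0:0:-b:1]$ each lie on 3 lines, and the remaining five points on 2 lines each, so $\sum_p m_p = 2\cdot 4 + 2\cdot 3 + 5\cdot 2 = 24$. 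Therefore $e(V_{c_2,c_1,c_0}) = 2\cdot 9 - 24 + 9 = 3$.

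The main obstacle is the last combinatorial enumeration: two of the nine lines can be coplanar via a plane that is not among the seven ``natural'' planes $\{x_i=0\}$ and $H_j$. For instance $\{x_1 = x_2+bx_3 = 0\}$ and $\{x_3 = x_1+bx_2 = 0\}$ both pass through $[0:0:0:1]$ yet share no common plane from that list, so the intersection structure must be checked pair-by-pair rather than merely by scanning for coincidences among the obvious ambient planes.
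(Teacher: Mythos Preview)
Your proposal is correct and follows essentially the same route as the paper. The paper likewise factors $c_{2}$ to write $V_{c_{2}}$ as a normal crossing union of three planes, reduces $V_{c_{2},c_{1},c_{0}}$ to $V_{c_{2},c_{1}}=V_{c_{2}}\cap\{x_{1}x_{2}x_{3}x_{4}=0\}$, and identifies this with the same configuration of nine lines (described there as the three ``boundary triangles'' $\partial L_{12,34}\cup\partial L_{41,23}\cup\partial L_{34,12}$); where the paper simply says ``inspecting the intersection points of the $9$ lines carefully, we evaluate $e(V_{c_{2},c_{1}})=3$'', you have supplied the explicit enumeration of the nine intersection points and their multiplicities.
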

\begin{proof}
From the form of $c_{2}$ we have the following decomposition into
planes: \[
V_{c_{2}}=L_{12,34}\cup L_{41,23}\cup L_{34,12},\]
where three components are normal crossing in $\mathbb{P}^{3}$. From
this, we have $e(V_{c_{2}})=3\; e(\mathbb{P}^{2})-3\; e(\mathbb{P}^{1})+1=4.$
Observe that $V_{c_{2},c_{1}}=V_{c_{2}}\cap\{x_{1}x_{2}x_{3}x_{4}=0\}$.
From this, we deduce that \[
V_{c_{2},c_{1}}=\partial L_{12,34}\cup\partial L_{41,23}\cup\partial L_{34,12},\]
where $\partial L_{12,34}$ represents the union of the boundary 3
lines $L_{12}\cup L_{2,34}\cup L_{1,34}$, and similarly for $\partial L_{41,23}$
and $\partial L_{34,12}.$ Inspecting the intersection points of the
9 lines carefully, we evaluate $e(V_{c_{2},c_{1}})=3$. Note that
in the present case, we have $V_{c_{2},c_{1},c_{0}}=V_{c_{2},c_{1}}$,
hence $e(V_{c_{2},c_{1},c_{0}})=3.$\end{proof}
\begin{prop}
\label{pro:Ds}For $(a,b)\in(\mathbb{C}^{*})^{2}$ with non-vanishing
discriminant (\ref{eq:dis-Z}), the discriminant $D_{s}$ is an irreducible,
singular octic surface in $\mathbb{P}^{3}$ with its Euler number
$e(D_{s})=18$.\end{prop}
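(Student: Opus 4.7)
The plan is to establish the three assertions --- octic degree, irreducibility, and $e(D_s) = 18$ --- in turn. First, from the explicit formulas in (\ref{eq:ckS}) one reads off $\deg c_2 = 3$, $\deg c_1 = 4$, $\deg c_0 = 5$, and since $c_0 = x_1 x_4 c_2$ the defining equation of $D_s$ simplifies to
\begin{equation*}
c_1^2 - 4 c_2 c_0 \;=\; c_1^2 - 4\, x_1 x_4\, c_2^2,
\end{equation*}
which is homogeneous of degree $8$ in $x_1,\ldots,x_4$, so $D_s$ is an octic surface in $\mathbb{P}^3$.

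For irreducibility, I would apply a standard specialization argument to $F(x;a,b) := c_1^2 - 4 x_1 x_4 c_2^2 \in \mathbb{C}[a,b][x_1,\ldots,x_4]$. Any factorization $F = G \cdot H$ in $\mathbb{C}(a,b)[x]$ can, by Gauss' lemma, be arranged with $G, H \in \mathbb{C}[a,b][x]$ of fixed partial degrees in $x$, and then descends to $F(x; a_0, b_0) = G(x;a_0,b_0) \cdot H(x;a_0,b_0)$ at every sufficiently generic $(a_0, b_0)$. Since there are only finitely many partitions $8 = d_1 + d_2$ to exclude, it suffices to verify irreducibility of $F(x; a_0, b_0)$ at a single numerical $(a_0, b_0) \in (\mathbb{C}^{*})^{2}$ off the zero-locus of (\ref{eq:dis-Z}), which is a finite computation in \textit{Macaulay2}, in the style used for Proposition~\ref{pro:disc-Xsp}.

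The Euler-number statement is the main work, and its detailed execution is deferred to the subsequent section. The strategy is a stratified topological computation via additivity $e(D_s) = \sum_i e(S_i)$. I would proceed in three stages: (i) apply the Jacobian criterion to $F = c_1^2 - 4 x_1 x_4 c_2^2$ to identify $\mathrm{Sing}(D_s)$, which is controlled by the simultaneous vanishing of $c_1$ with one of $c_2$, $x_1$, or $x_4$, further refined by the three linear factors $x_1+bx_2$, $x_2+bx_3$, $x_3+bx_4$ of $c_2$; (ii) decompose $D_s$ into locally closed strata of constant transverse singularity type --- these will be unions of coordinate lines $L_{ij}$, the special lines $L_{i,jk}$, smooth plane curves cut out by the linear factors of $c_2$, and smooth open pieces --- each of whose Euler number is computable directly from its degree; and (iii) combine by inclusion-exclusion using the incidence data among the strata, in the manner already illustrated for $V_{c_2}$ and $V_{c_2,c_1,c_0}$ in Lemma~3.2. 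The principal obstacle is combinatorial: the coordinate planes, singular curves of $D_s$, and the special loci of (\ref{eq:inclusionsA}) intersect in non-transverse configurations whose intersection multiplicities must be tracked case by case in order to arrive at the stated value $e(D_s) = 18$.
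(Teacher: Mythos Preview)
Your treatment of the degree and irreducibility is fine and matches what the paper tacitly uses. For the Euler number, however, your route diverges substantially from the paper's, and your outline contains an imprecision that would bite if you actually carried it out.

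The paper does \emph{not} stratify $D_s$ by its singularities. Instead it iterates the same line-projection trick already used for $e(\Ssp)$: project $D_s\subset\mathbb{P}^3$ from the point $[0:0:0:1]\in D_s$ to $\mathbb{P}^2$, so that the fibre over $[y_1:y_2:y_3]$ is cut out by a quartic $g(t)=d_4t^4+\cdots+d_0$. One then stratifies $\mathbb{P}^2$ by the root-multiplicity type of $g(t)$ (types $1{+}1{+}1{+}1$, $2{+}1{+}1$, $2{+}2$, $3{+}1$, $4$, plus the degenerations where $d_4=0$ or $g\equiv 0$). The generic degeneration locus is a degree-$9$ plane curve $C$ with singularities $2\times A_1$, $3\times E_6$, $2\times E_{12}$, giving $e(C)=-10$; a careful bookkeeping of the remaining strata yields $e(D_s)=18$. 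The virtue of this approach is that it reduces a singular-surface computation to plane-curve combinatorics, where Euler numbers are read off from degree and singularity data.

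Your direct stratification of $D_s$ by transverse singularity type is in principle viable, but it is considerably heavier: you would need not just $\mathrm{Sing}(D_s)$ but the local Euler contribution along each stratum (equivalently, the Euler number of the open smooth part of a degree-$8$ surface), and your description of $\mathrm{Sing}(D_s)$ as ``$c_1=0$ together with one of $c_2,x_1,x_4$'' is too coarse --- for instance, $c_1=x_1=0$ with $c_2\neq 0$, $x_4\neq 0$ gives $\nabla F=-4c_2^2 x_4\,e_1\neq 0$, so that locus is not singular. The paper's fibration method sidesteps all of this by never computing $\mathrm{Sing}(D_s)$ at all.
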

\begin{proof}
We defer the detailed calculations to the next section.
\end{proof}
Using the above Proposition and the preceding two Lemmas, we evaluate
the Euler number $e(\Ssp)=9-18-4+3=-10$. We remark that the arguments
above are still valid for non-vanishing $a=b$ as long as $a,b$ are
away from the zero of the discriminant (\ref{eq:dis-Z}).
\begin{prop}
\label{pro:EulerS}For $(a,b)\in(\mathbb{C}^{*})^{2}$ with non-vanishing
discriminant (\ref{eq:dis-Z}), the determinantal quintic $\Ssp$
(\ref{eq:defEqS}) has its topological Euler number $e(\Ssp)=-10.$
Also for $a=b\in\mathbb{C}^{*}$ with the same property, we have $e(S_{sp})=-10$.
\vspace{1cm}

\end{prop}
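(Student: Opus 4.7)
The plan is to assemble the pieces that have just been established. First, I would invoke the stratification formula from the first Lemma, which expresses $e(\Ssp)$ in terms of Euler numbers of strata in $\mathbb{P}^{3}$ obtained by projecting $\Ssp$ from the point $v_{0}=[0:0:0:0:1]$:
\begin{equation*}
e(\Ssp) = 2\, e(\mathbb{P}^{3}) + 1 - e(D_{s}) - e(V_{c_{2}}) + e(V_{c_{2},c_{1},c_{0}}).
\end{equation*}
All four Euler characteristics on the right-hand side have been determined in the material preceding the statement.

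Next, I would substitute the values: $e(\mathbb{P}^{3})=4$ is standard; the second Lemma gives $e(V_{c_{2}})=4$ (via the normal-crossing decomposition of the zero locus of $c_{2}$ into the three projective planes $L_{12,34}\cup L_{41,23}\cup L_{34,12}$) and $e(V_{c_{2},c_{1},c_{0}})=3$ (by inspecting the nine boundary lines and their intersection pattern); and Proposition \ref{pro:Ds} supplies $e(D_{s})=18$ for the octic discriminant surface. Plugging these in yields
\begin{equation*}
e(\Ssp) = 8 + 1 - 18 - 4 + 3 = -10,
\end{equation*}
which is the claimed value.

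Finally, I would verify the assertion for the diagonal case $a=b\in\mathbb{C}^{*}$. Inspection of the defining equation (\ref{eq:defEqS}) and of the expressions (\ref{eq:ckS}) for $c_{0},c_{1},c_{2}$ shows that setting $a=b$ does not affect the decomposition of $V_{c_{2}}$ into coordinate planes, does not alter $V_{c_{2},c_{1},c_{0}}$, and does not change the degree or the reasoning leading to Proposition \ref{pro:Ds}. As long as $a=b$ lies away from the zero locus of the discriminant (\ref{eq:dis-Z}), every step of the above computation goes through verbatim, giving $e(S_{sp})=-10$ as well.

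The only substantive input needed is Proposition \ref{pro:Ds}, and that is clearly where the real work lies: establishing irreducibility of the octic $D_{s}$ in $\mathbb{P}^{3}$, identifying its singular locus, and computing its Euler number are all geometric questions about a concretely given but unwieldy degree-eight surface. The computations in the present argument are otherwise purely combinatorial inclusion-exclusion on explicit coordinate subspaces, so the main obstacle is deferred to the analysis of $D_{s}$ in the following section.
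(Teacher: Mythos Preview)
Your proposal is correct and follows essentially the same approach as the paper: you invoke the stratification formula (\ref{eq:EulerA}), substitute the values $e(V_{c_2})=4$, $e(V_{c_2,c_1,c_0})=3$, and $e(D_s)=18$ from the preceding two Lemmas and Proposition \ref{pro:Ds}, obtain $9-18-4+3=-10$, and then observe that nothing changes along the diagonal $a=b$. This is exactly what the paper does, including the remark that the substantive work is deferred to the computation of $e(D_s)$.
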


\subsection{Euler number $e(\Hsp)$}

For $(a,b)\in(\mathbb{C}^{*})^{2}$ with non-vanishing discriminant
(\ref{eq:discriminant-Hessian-quintic}), similar calculations apply
to the determinantal quintic $\Hsp$ given in (\ref{eq:def-eq-HessAB}).
Let us first consider the affine line \[
l:\;[\lambda_{1}:\lambda_{2}:\lambda_{3}:\lambda_{4}:\lambda_{5}]=[x_{1}:x_{2}:x_{3}:x_{4}:t]\;\;\;(t\in\mathbb{C})\]
such that $l\cup\{v_{0}\}=\mathbb{P}^{1}$ with $v_{0}=[0:0:0:0:1]\in\Hsp$.
Substituting the coordinates into the defining equation of $\Hsp$,
we obtain $f(t)=c_{2}t^{2}+c_{1}t+c_{0}$ with \begin{align*}
c_{2}= & a^{2}b^{2}(x_{2}x_{3}^{2}+x_{4}x_{2}^{2})-\; ab\, x_{2}x_{3}x_{4},\\
c_{1}= & (1+a^{5}+b^{5})x_{1}x_{2}x_{3}x_{4}+a^{2}b^{2}x_{1}^{2}x_{3}^{2}-\; ab(x_{3}x_{4}x_{1}^{2}+x_{4}x_{1}x_{2}^{2}+x_{1}x_{2}x_{3}^{2}),\\
c_{0}= & a^{2}b^{2}(x_{1}x_{2}^{2}x_{4}^{2}+x_{3}x_{4}^{2}x_{1}^{2})-\; ab\, x_{1}x_{2}x_{3}x_{4}^{2}.\end{align*}
This time, it turns out that the discriminant surface $D_{s}=\{c-4c_{2}c_{0}=0\}$
in $\mathbb{P}^{3}$ consists of two irreducible components $D_{s}^{1}$
and $D_{s}^{2}:=\{x_{1}=0\}$, where the component $D_{s}^{1}$ is
an irreducible, singular septic in $\mathbb{P}^{3}.$ We may verify
these properties by\texttt{\textit{ Macaulay2.}} The general formula
(\ref{eq:EulerA}) is still valid for the present case of $e(\Hsp)$,
since it is topological. However we see some complications in the
necessary calculations, which we will sketch briefly below. 

We use \texttt{\textit{Macaulay2}} for the calculations $e(V_{c_{2}})$and
$e(V_{c_{2},c_{1},c_{0}}).$ For these Euler numbers, we make suitable
primary decompositions of the ideals of $V_{c_{2}}$ and $V_{c_{2},c_{1},c_{0}}$,
respectively. From the decompositions, we obtain \begin{equation}
V_{c_{2}}=L_{134}\cup Cone([e_{1}],C_{0}),\label{eq:A2A21decomp}\end{equation}
where $C_{0}$ is a plane conic defined by $C_{0}:=V(x_{1},ab\, x_{3}^{2}+ab\, x_{2}x_{4}-x_{3}x_{4})$
in $\mathbb{P}^{3}$ and $Cone([e_{1}],C_{0})$ is the cone over $C_{0}$
from the vertex $[e_{1}]\in\mathbb{P}^{3}$. Also we have \[
V_{c_{2},c_{1},c_{0}}=C_{0}\cup L_{12}\cup L_{14}\cup L_{34}\cup\{q_{1,}q_{2}\},\]
where the set of two points $\{q_{1,}q_{2}\}$ is given by the intersection
of the plane $a^{2}b^{2}\; x_{2}-(a^{5}+b^{5})x_{4}=0$ with the (space)
conic $Q$ in $\mathbb{P}^{3}$ defined by \[
Q=V\bigl(a^{2}b^{2}x_{1}-(a^{5}+b^{5})x_{3},ab\, x_{3}^{2}+abx_{2}x_{4}-x_{3}x_{4}\bigr).\]

\begin{prop}
\label{pro:Ds-Hessian}For $(a,b)\in(\mathbb{C}^{*})^{2}$ with non-vanishing
discriminant (\ref{eq:discriminant-Hessian-quintic}), the topological
Euler number of the determinantal quintic $\Hsp$ is given by $e(\Hsp)=11-e(D_{s})$,
where $e(D_{s})$ is the Euler number of the (reducible) discriminant
octic surface. \end{prop}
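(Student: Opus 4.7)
The plan is to apply the topological formula (\ref{eq:EulerA}) derived in the preceding subsection: that formula rests only on the fact that the polynomial $f(t)$ is at most quadratic in $t$, which for $\Hsp$ follows from $v_{0}$ being a triple point of the quintic (directly visible in the defining equation (\ref{eq:def-eq-HessAB})). Substituting $e(\mathbb{P}^{3})=4$, the proposition reduces to establishing $e(V_{c_{2}})=3$ and $e(V_{c_{2},c_{1},c_{0}})=5$, after which
\[
e(\Hsp)=9-e(D_{s})-3+5=11-e(D_{s}).
\]

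For $e(V_{c_{2}})$ I would start from the decomposition $V_{c_{2}}=L_{134}\cup\mathrm{Cone}([e_{1}],C_{0})$. First check that $C_{0}$ is a smooth plane conic whenever $ab\neq 0$ (its gradient $(ab\,x_{4},\;2ab\,x_{3}-x_{4},\;ab\,x_{2}-x_{3})$ has no common zero in $\{x_{1}=0\}$), so $C_{0}\cong\mathbb{P}^{1}$ and the projective cone stratifies as the vertex together with an $\mathbb{A}^{1}$-bundle over $C_{0}$, contributing Euler characteristic $1+2=3$. The plane $L_{134}$ has $e=3$, and the scheme-theoretic intersection, obtained by imposing $x_{2}=0$ in the cone equation to get $x_{3}(ab\,x_{3}-x_{4})=0$, is the union of two lines in $L_{134}$ meeting only at $[e_{1}]$, hence also of Euler characteristic $3$. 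Inclusion--exclusion gives $e(V_{c_{2}})=3+3-3=3$.

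For $e(V_{c_{2},c_{1},c_{0}})$ I would enumerate the pairwise intersections of the four positive-dimensional components: $L_{12}\cap L_{14}=\{[e_{1}]\}$, $L_{14}\cap L_{34}=\{[e_{4}]\}$, $L_{12}\cap L_{34}=\emptyset$, $C_{0}\cap L_{12}=\{[e_{2}]\}$, $C_{0}\cap L_{14}=\{[e_{4}]\}$, and $C_{0}\cap L_{34}=\{[e_{4}],[0\!:\!0\!:\!1\!:\!ab]\}$. Thus the four-component union has exactly four crossing points, with three components ($L_{14},L_{34},C_{0}$) coincident at $[e_{4}]$. Stratifying into those four points (contributing $4$), the three twice-punctured coordinate lines (each an $\mathbb{A}^{1}\setminus\{\mathrm{pt}\}$, contributing $0$), and the thrice-punctured conic (contributing $-1$), additivity gives Euler characteristic $3$ for the four-component union. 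Under a generic condition on $(a,b)$ the two extra points $q_{1},q_{2}$ are distinct from each other and from the above union, so $e(V_{c_{2},c_{1},c_{0}})=3+2=5$.

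The principal step I expect to need care is validating the quoted \texttt{Macaulay2} primary decompositions of the ideals $(c_{2})$ and $(c_{2},c_{1},c_{0})$ for all parameter values compatible with non-vanishing discriminant (\ref{eq:discriminant-Hessian-quintic}), and sharpening the genericity hypothesis on $(a,b)$ so that $q_{1}\neq q_{2}$, neither $q_{i}$ lies on $C_{0}\cup L_{12}\cup L_{14}\cup L_{34}$, and $\mathrm{Cone}([e_{1}],C_{0})$ is irreducible. Each of these excludes only a finite union of proper subvarieties of $(\mathbb{C}^{*})^{2}$, which can be absorbed into the discriminant locus if necessary.
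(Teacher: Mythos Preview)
Your proposal is correct and follows essentially the same route as the paper: both invoke formula (\ref{eq:EulerA}), compute $e(V_{c_{2}})=3$ from the decomposition $L_{134}\cup\mathrm{Cone}([e_{1}],C_{0})$ with intersection $L_{14}\cup L'_{1,34}$, and compute $e(V_{c_{2},c_{1},c_{0}})=5$ from the five listed components. The only cosmetic difference is that the paper counts the overcounting correction at the crossing points as $3+2=5$ (the point $[e_{4}]$, where three components meet, contributes to both summands), whereas you stratify directly into four crossing points plus punctured curves; the two bookkeeping schemes are equivalent.
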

\begin{proof}
For the numbers $e(V_{c_{2}})$ and $e(V_{c_{2},c_{1},c_{0}})$, it
suffices to see the intersections of each component of the respective
irreducible decompositions. For the former, we see\[
L_{134}\cap Cone([e_{1}],C_{0}))=L_{14}\cup L'_{1,34},\]
where $L'_{i,jk}=\langle e_{i},e_{j}+ab\, e_{k}\rangle$ represents
the lines generated by the two vectors indicated. Using this, we obtain
\begin{align*}
e(V_{c_{2}}) & =e(L_{134}\cup Cone([e_{1}],C_{0}))\\
 & =e(L_{134})+e(Cone([e_{1}],C_{0}))-e(L_{14}\cup L'_{1,34}),\end{align*}
which we evaluate as $3+3-3=3$. For the latter $e(V_{c_{2},c_{1},c_{0}})$,
we note that the two points $q_{1}$ and $q_{2}$ do not lie on any
other components for the values of $a,b$. We also note \[
C_{0}\cap(L_{12}\cup L_{14}\cup L_{34})=\{[e_{2}],[e_{4}],[e_{3}+ab\, e_{4}]\}.\]
Looking the configurations of the lines $L_{12}\cup L_{14}\cup L_{34}$,
we see two intersection points among the lines. Taking into account
$3+2=5$ intersection points in total, we finally evaluate the Euler
number as \[
e(V_{c_{2},c_{1},c_{0}})=e(C_{0}\cup L_{12}\cup L_{14}\cup L_{34}\cup\{q_{1,}q_{2}\})=2\times4+2-5=5.\]
 The claim follows from the general formula (\ref{eq:EulerA}), i.e.,
$e(\Hsp)=9-e(D_{s})-3+5$.\end{proof}
\begin{rem*}
In Proposition$\,\ref{pro:Ds-Hessian}$, we assumed non-vanishing
discriminant (\ref{eq:discriminant-Hessian-quintic}) and $(a,b)\in(\mathbb{C}^{*})^{2}$.
However, as we see in the arguments above, Proposition$\,\ref{pro:Ds-Hessian}$
holds also for $a=b\in\mathbb{C}^{*}$ as long as the factor $\prod_{k,l=0}^{4}(\mu^{k}a+\mu^{l}b+1)$
of the discriminant does not vanish. $\qquad${[}{]}\end{rem*}
\begin{prop}
\label{pro:Ds3}The reducible octic surface $D_{s}=D_{s}^{1}\cup D_{s}^{2}$
has the topological number $e(D_{s})=21$ (resp. $16$) for $(a,b)\in(\mathbb{C}^{*})^{2}$
with non-vanishing discriminant (\ref{eq:discriminant-Hessian-quintic})
(resp. for $a=b\in\mathbb{C}^{*}$ with $\prod_{k,l=0}^{4}(\mu^{k}a+\mu^{l}b+1)\not=0$).
Hence we have $e(\Hsp)=-10$ and also $e(H_{sp})=-5$.\end{prop}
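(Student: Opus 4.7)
My plan is to compute $e(D_{s})$ via inclusion--exclusion on the two irreducible components, combined with the projection technique already employed in Proposition~\ref{pro:Ds}. Since $D_{s}=D_{s}^{1}\cup D_{s}^{2}$ with $D_{s}^{2}=\{x_{1}=0\}\cong\mathbb{P}^{2}$ and $e(D_{s}^{2})=3$, we have
\[
e(D_{s})=e(D_{s}^{1})+3-e(D_{s}^{1}\cap D_{s}^{2}),
\]
so the task reduces to finding the Euler number of the septic $D_{s}^{1}$ and of the plane curve $D_{s}^{1}\cap D_{s}^{2}\subset\mathbb{P}^{2}$.

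I would first extract an explicit equation for $D_{s}^{1}$ by expanding $c_{1}^{2}-4c_{2}c_{0}$ (with the $c_{i}$ listed before Proposition~\ref{pro:Ds-Hessian}) and dividing out the factor of $x_{1}$ that accounts for $D_{s}^{2}$. Each of $c_{0}$ and $c_{1}$ carries $x_{1}$ as a common factor, which confirms that $x_{1}$ divides the discriminant exactly once and hence that $D_{s}^{1}$ has degree seven. For $e(D_{s}^{1})$ I would then mimic the proof of Proposition~\ref{pro:Ds}: pick a projection $D_{s}^{1}\dashrightarrow\mathbb{P}^{2}$ from a generic point off the septic so that the fibers are the zero-sets of a quadratic polynomial $g(t)$, and stratify the base $\mathbb{P}^{2}$ according to whether $g\equiv0$, has a double root, has two simple roots, or degenerates to a linear polynomial. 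The stratum contributions then assemble into $e(D_{s}^{1})$ through an analogue of~(\ref{eq:EulerA}), and each auxiliary Euler number is read off from a primary decomposition performed with \texttt{Macaulay2}, in the spirit of the decomposition~(\ref{eq:A2A21decomp}).

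For $e(D_{s}^{1}\cap D_{s}^{2})$ I would substitute $x_{1}=0$ into the equation of $D_{s}^{1}$ and analyze the resulting plane curve in $\mathbb{P}^{2}$. I expect it to decompose into a small number of coordinate lines and residual low-degree curves whose pairwise intersection points can be enumerated directly, yielding $e(D_{s}^{1}\cap D_{s}^{2})$ in closed form. Substituting into the displayed inclusion--exclusion formula should produce $e(D_{s})=21$, and then Proposition~\ref{pro:Ds-Hessian} gives $e(\Hsp)=11-21=-10$.

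The specialization $a=b\in\mathbb{C}^{*}$ is handled by rerunning the same steps on the degenerated discriminant. Now the extra factor $\prod_{k=0}^{4}(a-\mu^{k}b)^{2}$ vanishes, so several of the components and intersection points identified above either coincide or gain new incidences. The main obstacle, as I see it, will be the combinatorial bookkeeping: keeping track of precisely which strata merge at $a=b$, which new intersection points appear, and checking that the cumulative shift in Euler numbers is exactly $-5$, so that $e(D_{s})=16$ and hence $e(H_{sp})=11-16=-5$ as asserted. No single calculation should be hard in isolation, but assembling the specialized count without double-counting incidences is where I expect the real work to lie.
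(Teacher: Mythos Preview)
Your inclusion--exclusion setup is fine, but the projection step contains a real error. A generic line through a point \emph{off} a degree-$7$ surface meets that surface in $7$ points, so the fiber polynomial $g(t)$ would have degree $7$, not $2$. The only way to force a low-degree $g(t)$ is to project from a point of high multiplicity \emph{on} the surface; this is exactly what the paper does in Section~4.1 for the octic $D_{s}$ of $\Ssp$ (multiplicity $4$ at $[0{:}0{:}0{:}1]$, giving a quartic). For the septic $D_{s}^{1}$ you would need a point of multiplicity $5$ to get a quadratic, and there is no reason to expect one. Projecting instead from $[0{:}0{:}0{:}1]$, which has multiplicity $4$ on $D_{s}^{1}$, yields a cubic $g(t)$ --- workable, but not what you wrote.

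The paper bypasses the inclusion--exclusion altogether. It projects the full reducible octic $D_{s}\subset\mathbb{P}^{3}$ from $[0{:}0{:}0{:}1]$ (multiplicity $5$ on $D_{s}$), obtaining directly a cubic $g(t)=d_{3}t^{3}+d_{2}t^{2}+d_{1}t+d_{0}$ whose roots record all further intersections of the pencil of lines with $D_{s}$. One then stratifies $\mathbb{P}^{2}$ by the root pattern of this cubic (simple, $2{+}1$, triple, and the degenerations where leading coefficients vanish). The main stratum for generic $(a,b)$ is an irreducible singular degree-$9$ curve over which $g$ has a double root; for $a=b$ this curve splits into two smooth cubics, which is precisely where the drop $21\to 16$ comes from. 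This is less bookkeeping than your route: a cubic has only a handful of degeneration types, and there is no separate intersection curve $D_{s}^{1}\cap D_{s}^{2}$ to analyze.
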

\begin{proof}
We briefly sketch the calculations of $e(D_{s})$ in the next section.
We evaluate the Euler numbers by $e(\Hsp)=11-e(D_{s})$.
\end{proof}
\vspace{2cm}

\section{\textbf{\textup{Calculations of the Euler numbers $e(D_{s})$}}}

This section is devoted to rather technical calculations of the Euler
number $e(D_{s})$ appeared in Propositions \ref{pro:Ds}, \ref{pro:Ds-Hessian}
and \ref{pro:Ds3}. Our method is essentially based on a similar formula
to (\ref{eq:eulerN}) which counts the number of solutions for a given
equation. Since the degrees of the relevant polynomial equations become
higher, the necessary calculations are more involved than the previous
section. For readers' convenience, we briefly summarize the technical
details required to do the calculations.

\subsection{$e(D_{s})$ for $\Ssp$}

Let us consider the determinantal quintic $\Ssp$. The octic surface
$D_{s}$ is defined as the discriminant of $f(t)=c_{2}t^{2}+c_{1}t+c_{0}:$
\[
D_{s}:\;(c_{1}^{2}-4c_{2}c_{1}=0)\subset\mathbb{P}^{3},\]
with the definitions of $c_{i}=c_{i}(x_{1},x_{2},x_{3},x_{4})$ as
in (\ref{eq:ckS}). We first note that $[0:0:0:1]\in\mathbb{P}^{3}$
is a point on $D_{s}$. We then consider an affine line $\ell:[y_{1};y_{2}:y_{3}:t]\;(t\in\mathbb{C})$
such that $\ell\cup[0:0:0:1]=\mathbb{P}^{1}$. As before, we understand
that $t=\infty$ represents $[0:0:0:1]$. The number of the intersection
points $\ell\cap D_{s}$ is determined by the number of solutions
of $g(t)=0$ with \[
g(t)=d_{4}t^{4}+d_{3}t^{3}+d_{2}t^{2}+d_{1}t+d_{0},\]
where the coefficients $d_{i}=d_{i}(y_{1},y_{2},y_{3})$ are read
from the defining octic equation of $D_{s}$. As in the previous section,
we can determine $e(D_{s})$ by carefully analyzing the numbers of
solutions of the quartic equation $g(t)=0$ parametrized by $[y_{1}:y_{2}:y_{3}]\in\mathbb{P}^{2}$.
There may appear several possibilities for the equation $g(t)=0$.
If $d_{4}\not=0$, then $g(t)=0$ is an quartic equation which has
4 roots admitting following types of multiple roots: $2+1+1,\;2+2,\;3+1,\;4.$
For each type of the multiple roots, we can determine the corresponding
component of the discriminant of $g(t)$ as follows: As an example,
consider the case of $2+1+1,$ i.e., one double roots and two simple
roots. We assume the following forms for $g(t)$: \[
g(t)=d_{4}(t-\alpha)^{2}(t-\beta)(t-\gamma)=d_{4}t^{4}+d_{3}t^{3}+d_{2}t^{2}+d_{1}t+d_{0},\]
and read an ideal in $\mathbb{C}[\alpha,\beta,\gamma,y_{1},y_{2},y_{3}]$
by comparing the coefficients of $t^{k}(k=0,..,4)$ in the second
equality. Then the elimination ideal in $\mathbb{C}[y_{1},y_{2,}y_{3}]$
determines the Zariski closure of the components where we have multiple
roots of type $2+1+1$. The loci of the other types of multiple roots
can be analyzed in a similar way. 
\begin{lem}
For $(a,b)\in(\mathbb{C}^{*})^{2}$ with non-vanishing (\ref{eq:dis-Z}),
the equation $g(t)=0$ has multiple roots of type $2+1+1$ over the
generic points of a plane curve $C$ of degree 9. $C$ is singular
at 2 points of $A_{1}$singularity, 3 points of $E_{6}$ singularity,
and 2 points of $E_{12}$ singularity (according to Arnold's classification
\cite{AGV}). 
\end{lem}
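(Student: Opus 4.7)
The plan is to carry out the elimination procedure sketched just before the lemma and then analyze the resulting curve using Macaulay2 together with local normal-form theory.

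First, I would compute
$$g(t) = d_4 t^4 + d_3 t^3 + d_2 t^2 + d_1 t + d_0$$
explicitly by substituting $(x_1, x_2, x_3, x_4) = (y_1, y_2, y_3, t)$ into the octic $c_1^2 - 4 c_2 c_0$, with $c_i$ as in \eqref{eq:ckS}. Each $d_i \in \mathbb{C}[y_1, y_2, y_3]$ is homogeneous of degree $8 - i$. Next, comparing coefficients of $t^k$ in the factorization $g(t) = d_4 (t-\alpha)^2 (t-\beta)(t-\gamma)$ produces four polynomial relations in $\mathbb{C}[\alpha, \beta, \gamma, y_1, y_2, y_3]$, generating an ideal $I$. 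Its elimination $J = I \cap \mathbb{C}[y_1, y_2, y_3]$, computed with Macaulay2 \cite{M2}, cuts out the Zariski closure of the $2+1+1$ stratum in $\mathbb{P}^2$. After discarding spurious components stemming from $d_4 = 0$ (which corresponds to lines $\ell$ tangent to $D_s$ at $[0:0:0:1]$) and from the discriminant locus \eqref{eq:dis-Z}, the residual generator $F_C(y_1, y_2, y_3)$ is expected to be an irreducible homogeneous polynomial of degree $9$ cutting out $C$.

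The singular locus of $C$ is then obtained by a primary decomposition of $V\bigl(F_C, \partial_{y_1} F_C, \partial_{y_2} F_C, \partial_{y_3} F_C\bigr) \subset \mathbb{P}^2$. To identify the type at each singular point $p$, I would translate $p$ to the origin of an affine chart, compute the local Milnor number $\mu_p = \dim_\mathbb{C} \mathcal{O}_{\mathbb{P}^2, p}/(\partial_{y_1} F_C, \partial_{y_2} F_C)$, determine the tangent cone, and count analytic branches. The two $A_1$ nodes are recognised by $\mu_p = 1$ together with a tangent cone consisting of two distinct lines; the three $E_6$ points should satisfy $\mu_p = 6$, admit a triple-line tangent cone and a single analytic branch, with local equation reducible to Arnold's normal form $x^3 + y^4$; finally the two $E_{12}$ points should have $\mu_p = 12$, a triple-line tangent cone, one branch, and local form reducible to the unimodal normal form $x^3 + y^7 + a x y^5$ of \cite{AGV}. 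As an internal consistency check, the $\delta$-invariants sum to $2 \cdot 1 + 3 \cdot 3 + 2 \cdot 6 = 23$, giving geometric genus $\binom{9-1}{2} - 23 = 5$, a non-negative integer.

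The main obstacle is the identification of the $E_{12}$ points: since $E_{12}$ is a unimodal (non-simple) singularity, the Milnor number alone does not distinguish it from $A_{12}$, $D_{12}$, or other competitors with $\mu = 12$. Ruling these out requires either an explicit Newton-polygon analysis verifying that the Newton diagram of the local equation at $p$ consists of the edge joining $(3, 0)$ and $(0, 7)$, or successive formal coordinate changes reducing the local form to Arnold's representative. This step is mechanical but algebraically heavy once the explicit degree-$9$ polynomial $F_C$ produced in the first paragraph is substituted.
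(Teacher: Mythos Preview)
Your proposal is correct and follows essentially the same approach as the paper. The paper in fact gives no proof of this lemma beyond the elimination procedure described immediately before the statement, and simply asserts the singularity types (presumably verified by computer algebra); your more detailed plan for distinguishing $E_{12}$ from other $\mu=12$ singularities via the Newton diagram or formal coordinate changes is a useful elaboration that the paper omits entirely.
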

\begin{figure}
\includegraphics[scale=0.3]{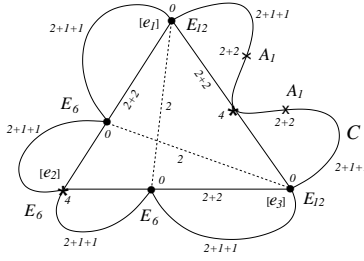}

\caption{\label{fig:eDs}Several components of the discriminant of $g(t)$
are drawn in $\mathbb{P}^{2}=\langle e_{1},e_{2},e_{3}\rangle$. The
singular plane curve $C$ of degree 9 is the main component where
we have quartics $g(t)=0$ with multiple roots of type $2+1+1$. Over
the broken lines, $g(t)=0$ becomes quadrics with double roots. Also
over the 4 points, indicated by $\bullet,$ the equation $g(t)=0$
becomes an identity $g(t)\equiv0$. Over the complement of the discriminant
in $\mathbb{P}^{2}$, we have quartics $g(t)=0$ with only simple
roots. See the text for more details.}

\end{figure}

Doing similar calculations, we can stratify the discriminant loci
of the equation $g(t)=0\,(d_{4}\not=0)$. Incorporating the cases
where $d_{4}=0,$ we have summarized the entire picture of the degeneracies
of the solutions for the equation $g(t)=0$ in Fig.\ref{fig:eDs}:
For the generic points on the nonic curve $C,$ the equation has the
multiplicity $2+1+1$ and this changes at special points as shown.
Since the equation of $C$ is lengthy, we refrain from writing it
here. Over the other components, the multiplicities may be seen from
the following forms of the polynomial $g(t)$: Over the coordinate
lines $\langle e_{1},e_{2}\rangle,\langle e_{2},e_{3}\rangle,\langle e_{3},e_{1}\rangle,$
respectively, $g(t)$ are given by $g(t)=b^{2}y_{2}^{2}(y_{1}+by_{3})^{2}t^{2}(t-b^{2}y_{1})^{2},\; b^{2}y_{2}^{2}(y_{2}+by_{3})^{2}t^{2}(y_{3}+bt)^{2}$
and $b^{2}y_{1}^{2}y_{3}^{2}(t-b^{2}y_{1})^{2}(bt+y_{2})^{2}.$ Over
the (broken) lines $\langle e_{1},e_{23}\rangle$ and $\langle e_{3},e_{12}\rangle,$
respectively, $g(t)$ becomes quadrics of the form $a^{10}b^{2}y_{1}^{2}y_{3}^{4}t^{2}$
and $a^{10}b^{2}y_{2}^{4}y_{3}^{2}t^{2}.$ 
\begin{prop}
We have $e(D_{s})=18$. \end{prop}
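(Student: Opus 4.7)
The plan is to repeat the fibration technique of Section 3 on the octic surface $D_s \subset \mathbb{P}^{3}$ itself. Since $v_0 = [0:0:0:1] \in D_s$, linear projection from $v_0$ fibers $D_s \setminus \{v_0\}$ over $\mathbb{P}^{2} = \langle e_1, e_2, e_3 \rangle$ via the pencil of lines $\ell_{[y]} : t \mapsto [y_1:y_2:y_3:t]$, and the fiber over $[y]$ is the zero locus in $\mathbb{C}$ of the quartic $g(t) = d_4 t^{4} + d_3 t^{3} + d_2 t^{2} + d_1 t + d_0$ with $d_i = d_i(y_1, y_2, y_3)$. Writing $N_k \subset \mathbb{P}^{2}$ for the locally closed stratum where $g(t) = 0$ has exactly $k$ distinct roots, and $N_{\mathbb{C}}$ for the locus where $g \equiv 0$, additivity of Euler characteristics together with $e(\mathbb{C}) = 1$ yields
\begin{equation*}
e(D_s) \;=\; 1 \,+\, 4\,e(N_4) \,+\, 3\,e(N_3) \,+\, 2\,e(N_2) \,+\, e(N_1) \,+\, e(N_{\mathbb{C}}).
\end{equation*}

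First I would read off the strata from Fig.$\,$\ref{fig:eDs}: $N_{\mathbb{C}}$ is the set of the $4$ marked dots; $N_1$ is the union of the two broken lines $\langle e_1, e_{23}\rangle$ and $\langle e_3, e_{12}\rangle$, where $g$ reduces to a quadric with a double root; $N_2$ contains the smooth parts of the three coordinate lines (on which $g$ factors with two distinct double roots) together with the $A_1$-nodes of $C$, where two independent double-root conditions coincide to produce multiplicity type $2+2$; and $N_3$ is the smooth part of the nonic $C$ where the generic type $2+1+1$ prevails. I expect the $E_6$ and $E_{12}$ singularities of $C$ to sit in $N_2$ as well, coming from multiplicity types $3+1$ and $4$ respectively.

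The crux of the calculation is $e(C)$. I would compute it via the normalization $\nu : \tilde C \to C$. A smooth plane nonic has arithmetic genus $p_a = \binom{8}{2} = 28$; Milnor's formula $\mu = 2\delta - r + 1$ gives delta invariants $\delta(A_1) = 1$, $\delta(E_6) = 3$, $\delta(E_{12}) = 6$, whence
\begin{equation*}
g(\tilde C) \;=\; 28 \,-\, 2(1) \,-\, 3(3) \,-\, 2(6) \;=\; 5,
\end{equation*}
so $e(\tilde C) = -8$. Only the two $A_1$-nodes have more than one local branch, so the gluing relation $e(C) = e(\tilde C) - \sum_p (r_p - 1)$ yields $e(C) = -8 - 2 = -10$.

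The remaining stratum $N_4$ is the open complement of $C \cup (\text{broken lines}) \cup (\text{coordinate lines}) \cup \{\text{4 dots}\}$ in $\mathbb{P}^{2}$. Its Euler number, and those of $N_1, N_2, N_3$, follow from inclusion-exclusion once the pairwise incidences of these components are tabulated from the explicit factorizations of $g(t)$ listed in the excerpt; in particular, one needs to locate the $2 + 3 + 2 = 7$ singular points of $C$ relative to the coordinate and broken lines and to the $4$ dots. This bookkeeping is the main obstacle, since each singular point of $C$ and each of the $4$ dots contributes simultaneously to several strata and must be carefully reassigned. Once this is done, substituting the resulting Euler numbers into the displayed formula is expected to produce $e(D_s) = 18$.
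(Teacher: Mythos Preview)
Your setup is exactly the paper's: projection from $v_0=[0:0:0:1]$, stratification of $\mathbb{P}^2$ by the number of distinct roots of the quartic $g(t)$, and the additive formula
\[
e(D_s)=1+4\,e(N_4)+3\,e(N_3)+2\,e(N_2)+e(N_1)+e(N_{\mathbb{C}}).
\]
Your normalization computation of $e(C)=-10$ is correct and agrees with the paper's vanishing-cycle count.

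Where your proposal falls short is in the stratum assignment, which is the whole content of the proof. First, a slip: multiplicity type $4$ means a single root, so if the $E_{12}$ points of $C$ really carried type $4$ they would lie in $N_1$, not $N_2$. Second, the guessed correspondence ``$E_6\leftrightarrow 3{+}1$, $E_{12}\leftrightarrow 4$'' is not what the paper finds. In the paper's actual accounting (see Fig.~\ref{fig:eDs}) only the two $A_1$ nodes of $C$ are added to $N_2$ as isolated points; two further isolated points (marked $\ast$) land in $N_1$; and the stratum $N_3$ is $C$ minus \emph{eight} points, not the seven singular points of $C$. The deeper singularities of $C$ sit at intersections of $C$ with the coordinate and broken lines, so they are absorbed into those line strata rather than contributing as isolated points in the way you propose. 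Until this incidence bookkeeping is actually carried out (the paper does it from the explicit factorizations of $g$ over each line), the formula cannot be evaluated; your ``expected to produce $18$'' is precisely the step that needs to be done, and with your current assignments it would not.
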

\begin{proof}
We first calculate the Euler number of the curve $C$ as $e(C)=-10.$
We can determine this number by representing $\mathbb{P}^{2}$ as
the cone from $[0:0:1]$ over $\mathbb{P}^{1}=\{z=0\}$. Or one can
obtain the same number by taking into account the vanishing cycles
of the singularities $3\times E_{6},2\times E_{12}$ and $2\times A_{1}$
to the Euler number of smooth plane curve of degree 9: $e(C)=2-2g+3\times6+2\times12+2\times1=-10.$ 

Now we count the numbers of the solutions $g(t)=0$ with forgetting
multiplicities from the preceding Lemma and Fig.$\,\ref{fig:eDs}$.
Four solutions are possible only for $g(t)$ being a quartic with
only simple roots. This occurs over $\mathbb{P}^{2}\setminus\bigl(C\cup(5\text{ lines})\bigr)$,
where 5 lines are those depicted in the figure. The case of three
solutions are given over $C\setminus(8\text{ point}s$) as we see
in the figure. The case of two solutions occurs over three coordinate
lines $\langle e_{1},e_{2}\rangle,\langle e_{2},e_{3}\rangle,\langle e_{3},e_{1}\rangle$
except three points for each, and also two points of $A_{1}$ singularity
on $C$. The case of one solution occurs over the two broken lines
in the figure except two points ($\bullet$'s ) for each, and also
over the two points indicated by $\mathbf{*}\;$. Over the four points
shown by $\bullet$ in the figure, we have $g(t)\equiv0$, i.e., the
entire $\mathbb{P}^{1}$ as the 'solutions'. 

For each case above, we evaluate the Euler number of the corresponding
loci. Summing up all the cases, we evaluate $e(D_{s})$ as \begin{align*}
e(D_{s}) & =4\bigl\{ e(\mathbb{P}^{2})-e(C)-5(e(\mathbb{P}^{1})-3)-1\bigr\}+3\bigl\{ e(C)-8\bigr\}\\
 & \quad+2\bigl\{3(e(\mathbb{P}^{1})-3)+2\bigr\}+1\bigl\{2+2(e(\mathbb{P}^{1})-3)+1\bigr\}+4e(\mathbb{P}^{1})-3\\
 & =4(3+10+4)+3(-18)+2(-1)+1+8-3=18.\end{align*}

\end{proof}

\subsection{$e(D_{s})$ for $\Hsp$}

For this case, we consider again an affine line $\ell:[y_{1}:y_{2}:y_{3}:t](t\in\mathbb{C})$
such that $\ell\cup[0:0:0:1]=\mathbb{P}^{1}$. This time we have $g(t)=d_{3}t^{3}+d_{2}t^{2}+d_{1}t+d_{0}$
for the equation $g(t)=0$ which determines the intersection $\ell\cap D_{s}$.
Although $g(t)$ looks simpler than the previous section, the stratification
of the discriminant of the equation $g(t)=0$ turns out to be more
complicated. For example, for $(a,b)\in(\mathbb{C}^{*})^{2}$ with
non-vanishing (\ref{eq:discriminant-Hessian-quintic}), we have a
singular irreducible curve of degree 9 for the locus of the multiplicity
$2+1$ which intersects with other components at many points in a
rather complicated way. For $a=b\in\mathbb{C}^{*}$ with $\prod_{k,l=0}^{4}(\mu^{k}a+\mu^{l}b+1)\not=0$,
this irreducible curve split into two smooth cubics and simplifies
the stratification slightly.

Since the calculations are essentially the same as in the previous
subsection, we omit the details here. After careful analysis, we obtain: 
\begin{prop}
We have $e(D_{s})=21$ (resp.$16$) for $(a,b)\in(\mathbb{C}^{*})^{2}$
with non-vanishing (\ref{eq:discriminant-Hessian-quintic}) (resp.
for $a=b\in\mathbb{C}^{*}$ with $\prod_{k,l=0}^{4}(\mu^{k}a+\mu^{l}b+1)\not=0$)
.
\end{prop}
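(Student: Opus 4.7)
The plan is to adapt the projection-and-counting strategy that produced $e(D_{s})=18$ in the preceding subsection. Pick the point $[0{:}0{:}0{:}1]\in D_{s}$ and consider the affine line $\ell:[y_{1}{:}y_{2}{:}y_{3}{:}t]$ with $t\in\mathbb{C}$, so that $\ell\cup\{[0{:}0{:}0{:}1]\}\cong\mathbb{P}^{1}$ is a member of the pencil giving $\mathbb{P}^{3}\setminus\{[0{:}0{:}0{:}1]\}\to\mathbb{P}^{2}$. Substituting these coordinates into the defining octic of $D_{s}$ yields a cubic $g(t)=d_{3}t^{3}+d_{2}t^{2}+d_{1}t+d_{0}$ in $t$ whose coefficients $d_{i}(y_{1},y_{2},y_{3})$ control $|\ell\cap D_{s}|$. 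Then $e(D_{s})$ is assembled via the formula analogous to (\ref{eq:eulerN}) by weighting each stratum of $\mathbb{P}^{2}$ on which $g(t)=0$ has a given root pattern (three simple, $2{+}1$, triple, plus the lower-degree degenerations arising when $d_{3}$ or more coefficients vanish) by the corresponding number of solutions.

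The first step is to produce this stratification. Following the recipe of the previous subsection, the Zariski closure of each multiplicity type is the elimination ideal of a parametrization such as $g(t)=d_{3}(t-\alpha)^{2}(t-\beta)$, and the higher-codimension strata are extracted by further specialization and primary decomposition in \texttt{Macaulay2}. For generic $(a,b)\in(\mathbb{C}^{*})^{2}$ with non-vanishing discriminant (\ref{eq:discriminant-Hessian-quintic}), the codimension-one locus of multiplicity $2{+}1$ is an irreducible singular plane curve of degree $9$. For $a=b\in\mathbb{C}^{*}$ with $\prod_{k,l}(\mu^{k}a+\mu^{l}b+1)\neq0$, this nonic becomes reducible and splits into two smooth plane cubics. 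This splitting is the single most important geometric difference between the two cases and is what ultimately separates the answers $21$ and $16$.

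The second step is to compute the Euler number of each stratum. For the irreducible nonic in the generic case I proceed exactly as before: either slice $\mathbb{P}^{2}$ by a generic pencil of lines and count intersections fiberwise, or start from the arithmetic value $2-2p_{a}=-54$ for a smooth plane nonic and add the Milnor numbers of its singularities after classifying them by Arnold's ADE tables. In the $a=b$ case each smooth cubic has $e=0$, so only their mutual intersections and intersections with the residual components need to be tracked. The Euler numbers of the remaining one- and zero-dimensional strata follow by inclusion-exclusion from their irreducible decompositions, in the same spirit as the earlier identifications $V_{c_{2}}=L_{12,34}\cup L_{41,23}\cup L_{34,12}$ and its refinements.

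The main obstacle is not conceptual but combinatorial: the nonic (or its two cubic components) meets the coordinate lines, the locus $\{d_{3}=0\}$, and the higher-multiplicity strata at many points whose incidence pattern depends sensitively on $(a,b)$, and these patterns rearrange when one specializes to the diagonal $a=b$. Once the incidences are correctly enumerated, summing each stratum weighted by the number of roots of $g(t)=0$ on it (together with the contribution $e(\mathbb{P}^{1})-1=1$ over the locus where $g\equiv0$ and the contribution $1$ of the base point) produces $e(D_{s})=21$ and $e(D_{s})=16$, and Proposition$\,$\ref{pro:Ds-Hessian} then yields $e(\Hsp)=-10$ and $e(H_{sp})=-5$.
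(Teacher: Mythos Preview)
Your proposal is correct and follows essentially the same approach as the paper: projection from $[0{:}0{:}0{:}1]$ yielding the cubic $g(t)$, stratification of $\mathbb{P}^{2}$ by root multiplicities with the key locus being an irreducible singular nonic in the generic case that splits into two smooth cubics when $a=b$, and assembly of $e(D_{s})$ by weighting strata. The paper itself omits the details of this calculation, noting only that ``the calculations are essentially the same as in the previous subsection,'' so your write-up is in fact more explicit than the paper's.
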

\vspace{2cm}

\section{\label{sec:A-Crepant-resolution}\textbf{\textup{Crepant resolutions
$\cXsp\rightarrow\Xsp$}}}

To study the resolution of $\Xsp$ it will be convenient to extend
our diagram (\ref{eq:SHdiagam1}) to

\begin{equation}
\begin{matrix}\xymatrix{ & \ar[dl]\,\Uspp\ar[dr] &  & \ar[dl]_{\pi_{1}}\Xsp\ar[dr]^{\pi_{2}} &  & \ar[dl]\Usp\ar[dr]\\
\Hsp &  & \Sspp &  & \Ssp &  & \Hsp,}
\end{matrix}\label{eq:SHdiagram2}\end{equation}
where $\pi_{1},\pi_{2}$ represent the projections to the first and
the second factors of $\mathbb{P}^{4}\times\mathbb{P}^{4}$, respectively,
and \begin{align*}
\Sspp & =\left\{ \;[z]\in\mathbb{P}^{4}\mid{\rm \det}(\,^{t}zA_{1}\,\,^{t}zA_{2}...\,^{t}zA_{5})=0\right\} ,\\
\Uspp & =\bigl\{\;[z]\times[\lambda]\in\mathbb{P}^{4}\times\mathbb{P}^{4}\mid\,^{t}zA_{\lambda}=0\bigr\}.\end{align*}
Note that the same quintic hypersurface $\Hsp=\{\det(A_{\lambda})=0\}$
appears twice in the diagram. Note also that the defining equation
of $\Sspp$ may be obtained from $\Ssp$ by simply exchanging $a$
and $w_{i}$ with $b$ and $z_{i}$, respectively.

\subsection{\label{sub:sing-loci-Zsp}Singular loci of $\Sspp$ and $\Ssp$ }

As introduced in Proposition \ref{pro:singular-loci-S-and-H}, the
determinantal quintic $\Ssp$ is singular along 15 lines and so is
$\Sspp$. To write down all these lines and their configurations,
we denote as before by $[e_{i}]$ the coordinate points of the projective
space $\mathbb{P}^{4}=\langle e_{1},e_{2},...,e_{5}\rangle$, which
is the second factor in the product $\mathbb{P}^{4}\times\mathbb{P}^{4}$.
Similarly, we use the notation $[\tilde{e}_{i}]$ for the first factor
$\mathbb{P}^{4}=\langle\tilde{e}_{1},...,\tilde{e}_{5}\rangle$ of
the product $\mathbb{P}^{4}\times\mathbb{P}^{4}$. For these projective
spaces, the coordinate lines are the projective lines spanned by the
coordinate points, i.e., $\langle e_{i,}e_{j}\rangle$ and $\langle\tilde{e}_{i},\tilde{e}_{j}\rangle$.
More generally, we use the notation $\langle v_{i,}v_{j}\rangle$,
$\langle v_{i,}v_{j},v_{k}\rangle,$ etc. to describe the projective
lines, planes, etc. spanned by the vectors indicated. Using this,
we define the following lines:

\begin{align*}
\tilde{q}_{i} & =\langle\tilde{e}_{i},\tilde{e}_{i+1}\rangle,\; & q_{i} & =\langle e_{i,}e_{i+1}\rangle,\\
\tilde{l}_{i} & =\langle\tilde{e}_{i\, i+1},\tilde{e}_{i+2}\rangle, & l_{i} & =\langle e_{i\, i+1},e_{i+2}\rangle,\end{align*}
where we set $\tilde{e}{}_{ij}=-a\,\tilde{e}_{i}+\tilde{e}_{j}$,
$e_{ij}=-b\, e_{i}+e_{j}$ and the indices $i,j=1,...,5$ should be
read cyclically, i.e., by modulo $5$. 

Since the quintic $\Ssp$ has a rather simple defining equation (\ref{eq:defEqS}),
we can derive the following results by using\texttt{\textsl{ Macaulay2}}
or \texttt{\textsl{Singular}} \cite{Sing3}:
\begin{prop}
\label{pro:Lemma-S}For $(a,b)\in(\mathbb{C}^{*})^{2}$ with non-vanishing
discriminant (\ref{eq:dis-Z}), the determinantal quintic $\Ssp$
is singular along the lines $q_{i}(i=1,...,5)$ with singularities
of type $A_{2}$, and also singular along $l_{i}$ $(i=1,...,5)$
and additional 5 lines (see Remark \ref{rem:Singularities-of-S})
with singularities of type $A_{1}.$ Likewise $\Sspp$ is singular
along $\tilde{q}_{i}$ of $A_{2}$ singularity, and singular along
$\tilde{l}_{i}$ $(i=1,...,5)$ and additional 5 lines of $A_{1}$-singularities. \end{prop}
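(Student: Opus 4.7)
The plan is to apply the Jacobian criterion. Since $\Ssp$ is cut out by the single determinantal equation $\det M(w) = 0$, with $M(w)$ the $5 \times 5$ matrix displayed in (\ref{eq:defEqS}), a standard argument identifies $\mathrm{Sing}(\Ssp)$ with the locus on which $M(w)$ has corank at least $2$, i.e.\ with the zero locus of the ideal of $4 \times 4$ minors of $M(w)$. I would first exploit the cyclic $\mathbb{Z}/5$ symmetry $w_i \mapsto w_{i+1}$ (with the simultaneous cyclic relabelling of the rows of $M$), which permutes both the candidate singular lines and the $4 \times 4$ minors of $M$, so that it suffices to verify the claim on one representative of each of the three orbits $\{q_i\}$, $\{l_i\}$ and the additional orbit of length five.

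To locate the singular lines, I would restrict $M(w)$ to each candidate and read off its rank directly. On $q_1 = \langle e_1, e_2 \rangle$, for instance, setting $w_3 = w_4 = w_5 = 0$ kills the third and fourth columns of $M$ entirely and leaves only a lone entry $b w_1$ in the last row, giving $\mathrm{rank}\, M(w) = 3$ at every point of $q_1$ (using $a, b \neq 0$); entirely analogous linear-algebra checks handle $l_1$ and the representative of the additional orbit. The claim that there are no further singular curves is encoded in the primary decomposition of the ideal of $4 \times 4$ minors of $M$: its minimal primes should be exactly the fifteen lines listed. This is the step the paper delegates to \texttt{Macaulay2}/\texttt{Singular} and constitutes the main computational obstacle, since locating the five additional lines is not apparent from the coordinate structure and requires the actual decomposition.

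To distinguish $A_2$ along $q_i$ from $A_1$ along $l_i$ and the additional five lines, I would fix a generic point $p$ on each singular line and take a three-dimensional affine slice $S \cong \mathbb{C}^{3}$ of $\mathbb{P}^{4}$ transverse to the line at $p$. The intersection $\Ssp \cap S$ is then a germ of a surface singularity at $p$, and its analytic type can be read from the Taylor expansion of the restricted equation: a rank-$3$ quadratic part gives $A_1$, while a rank-$2$ quadratic part combined with a non-vanishing cubic along the (one-dimensional) kernel of the Hessian gives $A_2$. The non-vanishing of the discriminant (\ref{eq:dis-Z}) is precisely what prevents the cubic from vanishing (which would force a worse $A_{\geq 3}$) or the rank of the quadratic from dropping further.

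Finally, the assertion for $\Sspp$ follows from a manifest symmetry: using $\det M = \det M^{t}$ together with the identity $A_{k}^{t}(a,b) = A_{k}(b,a)$ visible from the explicit form of the $A_{k}$, the defining equation of $\Sspp$ in $z$ is obtained from that of $\Ssp$ in $w$ by the substitution $(a, w) \leftrightarrow (b, z)$. All arguments above therefore transpose to the $\tilde e$-basis and give the $A_{2}$ singularities along $\tilde q_{i}$ together with the $A_{1}$ singularities along $\tilde l_{i}$ and the corresponding five additional lines.
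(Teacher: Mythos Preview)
Your proposal is correct and follows essentially the same approach as the paper: Jacobian criterion with primary decomposition (delegated to \texttt{Macaulay2}/\texttt{Singular}) to locate the singular lines, followed by local normal-coordinate analysis to read off the $A_1$/$A_2$ types, and the $(a,w)\leftrightarrow(b,z)$ symmetry for $\Sspp$. The paper's proof is in fact terser than yours, omitting the cyclic-symmetry reduction and the explicit Hessian-rank criterion you spell out. One minor caution: the identification of $\mathrm{Sing}(\Ssp)$ with the corank-$\geq 2$ locus is not automatic for an arbitrary determinantal hypersurface (the Jacobian ideal always contains the $4\times4$-minors ideal, but the reverse inclusion can fail), so in practice you should either verify this coincidence directly or simply work with the Jacobian ideal itself---the paper does the latter, and indeed makes the analogous set-theoretic coincidence for $H_{sp}$ a separate statement (Proposition~\ref{pro:singular-loci-Hsp},~2)) proved by comparing radicals.
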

\begin{proof}
These are among the properties described in Proposition \ref{pro:singular-loci-S-and-H}.
For the derivations we use the Jacobian criteria and primary decompositions
for the corresponding ideals. For each lines, taking local coordinates
of the normal bundles, we can determine the claimed types of singularities.
Since calculations are straightforward, we omit the details.
\end{proof}
\vspace{1cm}

\subsection{\label{sub:sing-loci-Xsp}Singular loci of $\Xsp$ }

As we see in the diagram (\ref{eq:SHdiagram2}), $\Xsp$ is a partial
resolution of both of $\Sspp$ and $\Ssp.$ The map: $\pi_{2}:\Xsp\rightarrow\Ssp$
is birational since the inverse image $\pi_{2}^{-1}([w])$ of a point
$[w]\in\Ssp$ is given by the left kernel of the matrix $(A_{1}wA_{2}w...A_{5}w)$,
i.e., $([z],[w])$ s.t. $\,^{t}z(A_{1}wA_{2}w...A_{5}w)=0$, which
is uniquely determined for a generic $[w]\in\Ssp$. The birational
map $\pi_{2}$ has non-trivial fibers over the loci where the matrix
has co-rank $\geq2$, and $\Xsp$ naturally defines a blow-up along
these loci introducing the projective spaces spanned by the null spaces.
The same property holds for the first projection $\pi_{1}:\Xsp\rightarrow\Sspp.$ 
\begin{prop}
\label{pro:resol-by-pi2}The birational map $\pi_{2}$ has non-trivial
fibers over the 5 coordinate lines $q_{i}\,(i=1,..,5)$, and over
the complement of these, this is an isomorphism. The fiber $\pi_{2}^{-1}([e_{i}])$
is given by the plane $\langle\tilde{e}_{i\, i+1},\tilde{e}{}_{i+2},\tilde{e}_{i+3}\rangle\simeq\mathbb{P}^{2}$,
and the inverse image $\pi_{2}^{-1}(q{}_{i})$ of the line $q_{i}$,
more precisely the closure of the inverse image of $q_{i}\setminus\{[e_{i,}],[e_{i+1}]\}$,
is isomorphic to $\mathbb{P}^{1}\times\mathbb{P}^{1}.$ Similar properties
hold also for $\pi_{1}:\Xsp\rightarrow\Sspp$ with $\pi_{1}^{-1}([\tilde{e}_{i}])=\langle e_{i,i+1},e_{i+2},e_{i+3}\rangle$. \end{prop}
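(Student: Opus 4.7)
The plan is to study $\pi_2$ fiber-by-fiber via the matrix $A_w=(A_1 w\mid\cdots\mid A_5 w)$ appearing in (2.5): the fiber $\pi_2^{-1}([w])$ is the projectivization of the left kernel $\ker({}^t A_w)$, so the proposition reduces to a corank analysis of $A_w$ together with explicit kernel computations, and the cyclic symmetry in $i$ lets me treat only the case $i=1$.

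First I would establish the corank stratification. From the pentadiagonal cyclic shape of $A_w$, a direct computation with $4\times 4$ and $3\times 3$ minors (routinely verifiable in \texttt{Macaulay2}, with non-vanishing of the discriminant (2.14) excluding extraneous loci) shows that $A_w$ has corank exactly $1$ on $\Ssp\setminus\bigcup_i q_i$, corank $2$ along a dense open of each $q_i$, and corank $3$ at each coordinate point $[e_i]$. This immediately gives that $\pi_2$ is an isomorphism off $\bigcup_i q_i$, has a $\mathbb{P}^1$-fiber generically over each $q_i$, and has a $\mathbb{P}^2$-fiber over each $[e_i]$. The explicit description of the $\mathbb{P}^2$-fiber at $[e_1]$ is then immediate: only the first and fifth columns of $A_{e_1}$ are nonzero, so the left kernel is cut out by $z_1+az_2=0$ and $z_5=0$, recovering $\langle\tilde e_{12},\tilde e_3,\tilde e_4\rangle$. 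Along $q_1\setminus\{[e_1],[e_2]\}$, writing $[w]=[s:t:0:0:0]$, the kernel conditions reduce to $z_5=0$, $z_2+az_3=0$, and $(s+bt)z_1-a^2 s z_3=0$, cutting out a $\mathbb{P}^1$ in $\mathbb{P}^4$ for each such $[s:t]$.

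To identify the surface $V:=\overline{\pi_2^{-1}(q_1\setminus\{[e_1],[e_2]\})}\subset q_1\times\mathbb{P}^4$ with $\mathbb{P}^1\times\mathbb{P}^1$, I would exhibit two disjoint sections of the $\mathbb{P}^1$-bundle $V\to q_1$: the constant section $[s:t]\mapsto([s:t],[\tilde e_4])$, valid everywhere because $\tilde e_4$ annihilates $A_k w$ for every $[w]\in q_1$, and the section $[s:t]\mapsto([s:t],[a^2 s:-a(s+bt):(s+bt):0:0])$ obtained by setting $z_4=0$, which extends continuously to $[\tilde e_1]$ at $[s:t]=[-b:1]$. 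Two disjoint sections split the underlying rank-$2$ bundle as $\mathcal{O}\oplus L$, and a degree count on $L$ then identifies the Hirzebruch type of $V$. The analogous statements for $\pi_1\colon\Xsp\to\Sspp$ follow from the involution $(z,w,a,b)\leftrightarrow(w,z,b,a)$ exchanging the two factors of $\mathbb{P}^4\times\mathbb{P}^4$.

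The main obstacle is this last identification: two disjoint sections give a splitting but not the degrees of the summands, and one must carefully track the behaviour of the second section at $[s:t]=[-b:1]$ where the defining equation $(s+bt)z_1=a^2 s z_3$ degenerates, in order to verify that the resulting line sub-bundle has the degree compatible with $V\cong\mathbb{P}^1\times\mathbb{P}^1$. Everything else is linear algebra, elimination, and bookkeeping handled uniformly by the cyclic symmetry of the $A_k$'s.
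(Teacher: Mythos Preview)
Your approach is exactly the one the paper takes: the paper's proof is the single sentence ``By studying the left kernels of matrices $(A_1w\,A_2w\,\ldots\,A_5w)$ with $[w]\in\Ssp$, it is straightforward to obtain the claimed properties of $\pi_2$.'' Your corank stratification, the explicit kernel at $[e_1]$, and the family of lines along $q_1$ are precisely this computation written out, so on the level of method there is nothing to distinguish.

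Your flagged obstacle, however, is not mere bookkeeping, and carrying out the degree count you propose does \emph{not} give $\mathbb{P}^1\times\mathbb{P}^1$. In your own coordinates the closure $V$ sits inside $q_1\times\{z_5=0,\,z_2+az_3=0\}\cong\mathbb{P}^1_{[s:t]}\times\mathbb{P}^2_{[z_1:z_3:z_4]}$ as the irreducible $(1,1)$-divisor $s(z_1-a^2z_3)+t(bz_1)=0$. The two linear forms $z_1-a^2z_3$ and $bz_1$ are independent, so every line of this pencil passes through the base point $[0:0:1]=[\tilde e_4]$, and the second projection $V\to\mathbb{P}^2$ realizes $V$ as the blow-up of $\mathbb{P}^2$ at that point. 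Thus $V$ is the first Hirzebruch surface, with your constant section $[\tilde e_4]$ playing the role of the $(-1)$-curve, rather than $\mathbb{P}^1\times\mathbb{P}^1$. This appears to be a small slip in the stated proposition, but it is harmless for the rest of the paper: the only applications are the Euler-number identity $e(\Xsp)=e(\Ssp)+5(e(\mathbb{P}^2)-1)$ and the $\mathbb{F}_p$-point count, both of which depend only on $V\to q_1$ being a $\mathbb{P}^1$-bundle over $\mathbb{P}^1$.
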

\begin{proof}
By studying the left kernels of matrices $(A_{1}w$$A_{2}w$$A_{3}w$$A_{4}w$$A_{5}w$)
with $[w]\in\Ssp,$ it is straightforward to obtain the claimed properties
of $\pi_{2}.$ For the properties of $\pi_{1}$, we study the right
kernel of matrices $(\,^{t}zA_{1}\,^{t}zA_{2}\,...\,^{t}zA_{5}$)
with $[z]\in\Sspp$, where we use a convention $(\,^{t}zA_{1}\,^{t}zA_{2}\,...\,^{t}zA_{5}$):=$\,^{t}(\,^{t}A_{1}z\:^{t}A_{2}z\,...\,^{t}A_{5}z)$
for simplicity.
\end{proof}
The Jacobian criterion for the complete intersection $\Xsp$ is rather
involved, since we need to handle large ideal. In our case, however,
we can utilize the properties of the partial resolutions $\pi_{1}$and
$\pi_{2}$ efficiently. For example, we can deduce that the singular
loci of $\Xsp$ must be in the inverse images of the 15 lines in $\Sspp$(resp.
$\Ssp$) under $\pi_{1}$(resp. $\pi_{2}$) (see Proposition \ref{pro:Lemma-S}).
Combining this fact with the Jacobian criterion for $\Xsp$, we obtain
the following:
\begin{prop}
\label{pro:Singular-tilde-Xosp-A1s}For $(a,b)\in(\mathbb{C}^{*})^{2}$
with non-vanishing discriminant (\ref{eq:discXandU}), the complete
intersection $\Xsp$ is singular along the following 20 lines:\begin{equation}
\begin{aligned}Q_{i} & =\left\{ [a^{2}s\,\tilde{e}_{i}+(s+bt)\,\tilde{e}_{i+1,i+2}]\times[se_{i}+te_{i+1}]|\,[s,t]\in\mathbb{P}^{1}\right\} ,\\
\tilde{Q}_{i} & =\left\{ [s\tilde{e}_{i}+t\tilde{e}_{i+1}]\times[b^{2}s\, e_{i}+(s+at)\, e_{i+1,i+2}]|\,[s,t]\in\mathbb{P}^{1}\right\} ,\\
 & L_{i}=[\tilde{e}_{i}]\times\langle e_{i,i+1},e_{i+2}\rangle,\:\tilde{L}_{i}=\langle\tilde{e}_{i,i+1},\tilde{e}_{i+2}\rangle\times[e_{i}],(i=1,...,5),\end{aligned}
\label{eq:curves-paramet}\end{equation}
where $Q_{i}\subset\pi_{2}^{-1}(q_{i}),\;\tilde{Q}_{i}\subset\pi_{1}^{-1}(\tilde{q}_{i})$.
$L_{i}$ and $\tilde{L}_{i}$ are the proper transforms of the lines
$l_{i}$ and $\tilde{l}_{i}$ under $\pi_{2}$ and $\pi_{1}$, respectively.
The singularities along these 20 lines are of $A_{1}$ type for all,
and these lines intersect at 20 points. \end{prop}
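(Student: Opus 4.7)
The plan is to localise $\mathrm{Sing}(\Xsp)$ via the two partial resolutions $\pi_{1},\pi_{2}$ of diagram (\ref{eq:SHdiagram2}), and then read off the $A_{1}$ type from local models. Since $\pi_{2}\colon \Xsp\to\Ssp$ is an isomorphism over $\Ssp\setminus(q_{1}\cup\cdots\cup q_{5})$ by Proposition~\ref{pro:resol-by-pi2}, $\Xsp$ is smooth at any point that $\pi_{2}$ maps to a smooth point of $\Ssp$. Combined with Proposition~\ref{pro:Lemma-S}, this confines $\mathrm{Sing}(\Xsp)$ to $\pi_{2}^{-1}$ of the fifteen singular lines of $\Ssp$, and symmetrically to $\pi_{1}^{-1}$ of the fifteen singular lines of $\Sspp$. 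What remains is a case-by-case inspection of these finitely many candidate curves.

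First I treat the ten $A_{1}$ singular lines of $\Ssp$. Five of them are the $l_{i}$, and since $\pi_{2}$ is an isomorphism there, each $l_{i}$ lifts to its proper transform $L_{i}$ as a singular line of $\Xsp$. A direct computation shows that the remaining five $A_{1}$ lines of $\Ssp$ are exactly the lines $\langle e_{i},e_{i+1,i+2}\rangle = \pi_{2}(\tilde{Q}_{i})$, so their proper transforms under $\pi_{2}$ are the curves $\tilde{Q}_{i}$ parametrised in (\ref{eq:curves-paramet}). The symmetric analysis for $\pi_{1}$ produces $\tilde{L}_{i}$ from $\tilde{l}_{i}$ and recovers $Q_{i}$ as the lift of the extra $A_{1}$ line $\langle\tilde{e}_{i},\tilde{e}_{i+1,i+2}\rangle\subset\Sspp$. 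Thus $Q_{i}$ and $\tilde{Q}_{i}$ arise from two complementary points of view: as curves inside the $\mathbb{P}^{1}\times\mathbb{P}^{1}$ fibres $\pi_{1}^{-1}(\tilde{q}_{i})$ and $\pi_{2}^{-1}(q_{i})$, and as lifts of $A_{1}$ lines of the partner quintic.

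Second, over each $A_{2}$ coordinate line $q_{i}\subset\Ssp$ the fibre $\pi_{2}^{-1}(q_{i})$ is a full $\mathbb{P}^{1}\times\mathbb{P}^{1}$. Restricting the five defining $(1,1)$-equations to this surface yields two residual equations whose Jacobian ideal cuts out precisely the curve $Q_{i}$ of (\ref{eq:curves-paramet}); the dual calculation in $\pi_{1}^{-1}(\tilde{q}_{i})$ gives $\tilde{Q}_{i}$. At a generic point of each of the twenty lines I would then extract a transversal $\mathbb{C}^{3}$-slice and expand the defining equations to second order. In every case the residual quadratic form is non-degenerate of rank two, confirming $A_{1}$: along $L_{i}$ and $\tilde{L}_{i}$ the $A_{1}$ form on $\Ssp$, $\Sspp$ is preserved by the local isomorphism, while along $Q_{i}$ and $\tilde{Q}_{i}$ the $A_{2}$ form of the base quintic is partially resolved by $\pi_{2}$ or $\pi_{1}$ to an $A_{1}$ residue.

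The main obstacle is the enumeration of the twenty intersection points. I would run through the pairs among $\{Q_{i},\tilde{Q}_{i},L_{i},\tilde{L}_{i}\}_{i=1,\dots,5}$ using the explicit parametrisations in (\ref{eq:curves-paramet}), recording for each meeting pair the unique flag in $\mathbb{P}^{4}\times\mathbb{P}^{4}$ at which the two $\mathbb{P}^{1}$-parameters simultaneously match; the cyclic labelling will force several lines to pass through the same flag. I expect exactly twenty distinct intersection points to emerge, split $10+10$ between those carrying three line-types and those carrying only two; that split is precisely the $(3A_{1},\mathcal{U}_{1})/(2A_{1},\mathcal{U}_{2})$ dichotomy of Proposition~\ref{pro:Xosp-20-A1} and is the combinatorial input for the configuration to be recorded in Figure~\ref{fig:FigResolution-S}.
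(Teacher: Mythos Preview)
Your plan is essentially the paper's own argument: the text immediately preceding the proposition explains that the Jacobian ideal of $\Xsp$ is too large to handle directly, so one first confines $\mathrm{Sing}(\Xsp)$ to $\pi_{1}^{-1}$ and $\pi_{2}^{-1}$ of the fifteen singular lines of $\Sspp$ and $\Ssp$ via Propositions~\ref{pro:Lemma-S} and~\ref{pro:resol-by-pi2}, and only then applies the Jacobian criterion on these candidate curves. Your dual identification of $Q_{i},\tilde{Q}_{i}$ (as curves in the exceptional $\mathbb{P}^{1}\times\mathbb{P}^{1}$'s and simultaneously as lifts of the extra $A_{1}$ lines on the partner quintic) and your $10+10$ enumeration of the intersection points are exactly what the paper records in Remark~\ref{rem:Singularities-of-S} and Figure~\ref{fig:FigResolution-S}.
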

\begin{rem}
\textcolor{black}{\label{rem:Singularities-of-S}Now we may restate
Proposition \ref{pro:Lemma-S} as follows: For $(a,b)\in(\mathbb{C}^{*})^{2}$
with non-vanishing discriminant (\ref{eq:discXandU}), the determinantal
quintic $\Ssp$ is singular along the lines $q_{i}(i=1,...,5)$ with
singularities of type $A_{2}$, and also singular along $l_{i}$ and
$\pi_{2}(\tilde{Q}_{i})$ $(i=1,...,5)$ with singularities of type
$A_{1}.$ These 15 lines intersect at 15 points. Similarly, $\Sspp$
is singular along 15 lines $\tilde{q}_{i}$, $\tilde{l}_{i}$ and
$\pi_{1}(Q_{i})$ intersecting at 15 points. }
\end{rem}
We have depicted the schematic picture of the blow-up $\pi_{2}:\Xsp\rightarrow\Ssp$
in Fig.\ref{fig:FigResolution-S}. The intersection points should
be clear in this figure.

\begin{figure}
\includegraphics[scale=0.6]{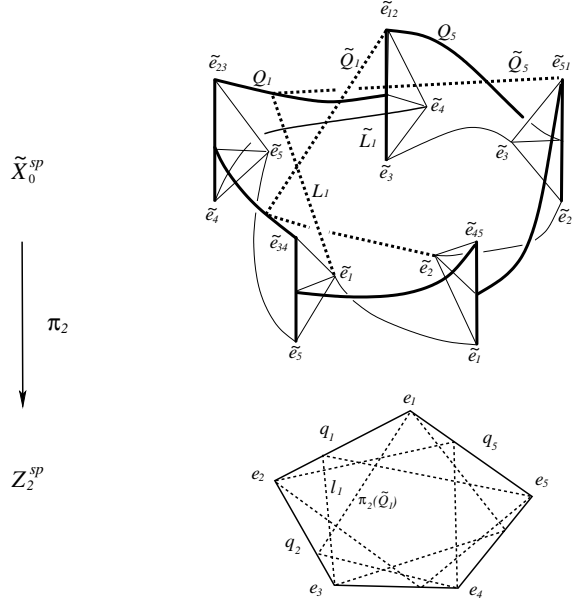}

\caption{\label{fig:FigResolution-S}The blow-up $\pi_{2}:\Xsp\rightarrow\Ssp$.
Bold lines and broken lines upstairs are lines with $A_{1}$ singularity.
Not all broken lines are drawn on the upstairs. }

\end{figure}

\medskip{}

We note that the structure of singularities in $\Xsp$ is quite similar
to that of the Barth-Nieto quintic \cite{Barth-Nieto}, where we see
20 lines of $A_{1}$ singularity intersecting at 15 points, in addition
to 10 isolated ordinary double points (called Segre points). In the
case of the Barth-Nieto quintic, the local geometries near 15 intersection
points are all isomorphic. In our case of the complete intersection
$\Xsp$, which is a partial resolution of the determinantal quintics
$\Sspp$ and $\Ssp$, the 20 intersection points of the 20 lines ($Q_{i},\tilde{Q}_{i},L_{i},\tilde{L}_{i})$
split into two isomorphic classes as we see below. Also we see in
the next sub-section a new isomorphic local geometry near the infinity
points of the 10 lines $L_{i},\tilde{L}_{i}$ (see Fig.$\,\ref{fig:The-local-affine}).$

\vspace{2cm}

\subsection{Blowing-ups of $\Xsp$}

There are two types of the intersections among the 20 singular lines
in $\Xsp$: 1) the point where 3 lines of the singularities meet,
2) the point where 2 lines meet. This should be clear from a careful
inspection of Fig.\ref{fig:FigResolution-S} and also from the symmetry
of the defining equations. We denote by $(3A_{1},\mathcal{U}_{1})$
and $(2A_{1,}\mathcal{U}_{2})$, respectively, the local geometries
around the points of type 1) and 2). These may be summarized as follows:\[
\begin{aligned}(3A_{1},\mathcal{U}_{1}) & \;\;\text{around }\tilde{Q}_{i}\cap\tilde{L}_{i}\cap Q_{i-1}\text{ and }Q_{i}\cap L_{i}\cap\tilde{Q}_{i-1}\;\;(i=1,...,5),\\
(2A_{1},\mathcal{U}_{2}) & \;\;\text{around }\tilde{L}_{i}\cap Q_{i}\text{ and }L_{i}\cap\tilde{Q}_{i}\;\;(i=1,...,5).\end{aligned}
\]
Also, from the reasons which will become clear soon (in the proof
of Proposition \ref{pro:blow-up-along-s-2A1}, 2)), we need to study
(isomorphic) local geometries around the points $[\tilde{e}_{i+2}]\times[e_{i}${]}
on $\tilde{L}_{i}$ and $[\tilde{e}_{i}]\times[e_{i+2}]$ on $L_{i}$
$(i=1,...,5)$, which we denote by $(\partial A_{1},\mathcal{U}_{3})$. 

It will be helpful to list the relevant local geometries on each lines
as follows:\begin{equation}
\begin{aligned}(\partial A_{1},\mathcal{U}_{3}),(2A_{1},\mathcal{U}_{2}),(3A_{1},\mathcal{U}_{1}) & \text{ on each }\tilde{L}_{i},\, L_{i},\\
(3A_{1},\mathcal{U}_{1}),(3A_{1},\mathcal{U}_{1}),(2A_{1},\mathcal{U}_{2}) & \text{ on each }\tilde{Q}_{i},\, Q_{i}.\end{aligned}
\label{eq:singularities-on-LQ}\end{equation}

In the following arguments, we will focus on the point $\tilde{Q}_{1}\cap\tilde{L}_{1}\cap Q_{5}=[\tilde{e}_{12}]\times[e_{1}]$
for $(3A_{1},\mathcal{U}_{1})$, and $Q_{1}\cap\tilde{L}_{1}=[-a\,\tilde{e}_{12}+\tilde{e}_{3}]\times[e_{1}]$
for $(2A_{1,}\mathcal{U}_{2})$. We will also focus on the point $[\tilde{e}_{3}]\times[e_{1}]$
on $\tilde{L}_{1}$ for $(\partial A_{1},\mathcal{U}_{3})$. See Fig.\ref{fig:The-local-affine}.
Also, in the following arguments in this subsection, we assume non-vanishing
discriminant (\ref{eq:discXandU}) and $ab\not=0$ for the parameters
$a$ and $b$.

\begin{figure}
\includegraphics[scale=0.6]{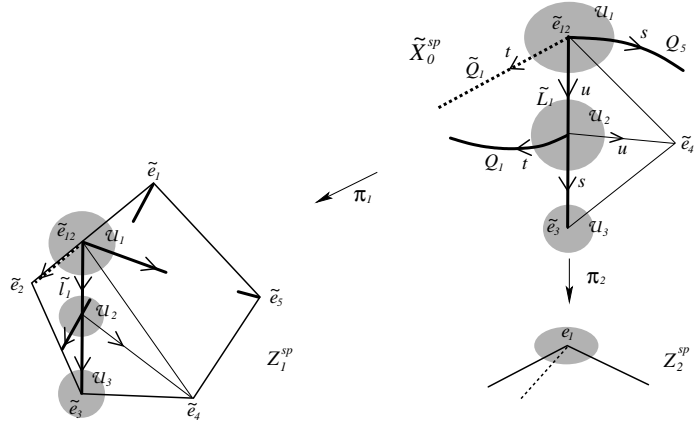}

\caption{\label{fig:The-local-affine}The local affine coordinates $\mathcal{U}_{1}$,
$\mathcal{U}_{2}$ and their coordinate axes along singular lines
with the projections to $\Sspp$ and $\Ssp$.}

\end{figure}

\begin{figure}
\includegraphics[scale=0.45]{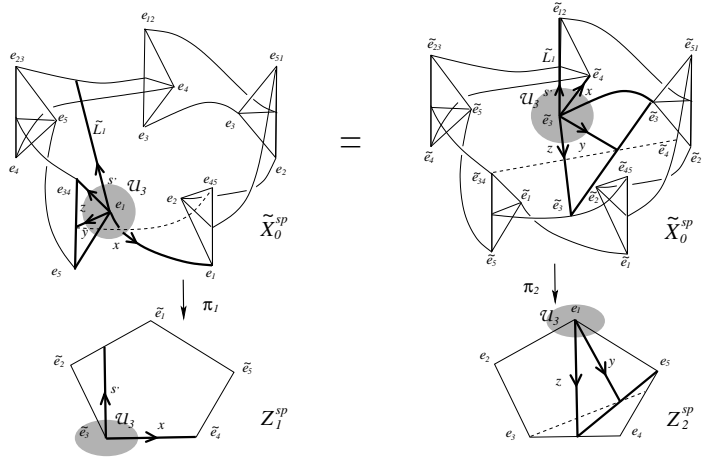}

\caption{\label{fig:pd-2A1}The local geometry $(\partial A_{1},\mathcal{U}_{3})$
with affine coordinates.}

\end{figure}

\bigskip{}

\subsubsection{Resolution of $(3A_{1},\mathcal{U}_{1})$}

We choose an affine coordinate $(s,t,u,v,\omega_{2,}\omega_{3,}\omega_{4,}\omega_{5})$
so that $s$, $t$ and $u$, respectively, coincide with the local
parameters of the curves $Q_{5,}$ $\tilde{Q}_{1}$ and $\tilde{L}_{1},$
and also the origin represents the point $[\tilde{e}_{12}]\times[e_{1}]=[\tilde{e}_{1}-\frac{1}{a}\tilde{e}_{2}]\times[e_{1}].$
For this, we use the parametrization given in (\ref{eq:curves-paramet}).
Explicitly, we write the points on $Q_{5}$ by \begin{align*}
\Bigl[a^{2}s\,\tilde{e}_{5}+(s+b)\tilde{e}_{12}\Bigr]\times\Bigl[se_{5}+e_{1}\Bigr] & =\Bigl[\tilde{e}_{1}-\frac{1}{a}\tilde{e}_{2}-\frac{as}{s+b}\tilde{e}_{5}\Bigr]\times\Bigr[e_{1}+se_{5}\Bigr]\\
 & =\Bigl[\tilde{e}_{1}-\frac{1}{a}\tilde{e}_{2}+s\tilde{e}_{5}\Bigr]\times\Bigl[e_{1}-\frac{bs}{s+a}e_{5}\Bigr],\end{align*}
where we have changed $-\frac{as}{s+b}$ in the middle to $s$ using
$Aut({\bf \mathbb{P}}^{1})$. Similarly, we can parametrize the points
on $\tilde{Q}_{1}$ and $\tilde{L}_{1}$, respectively, by \[
\Bigl[\tilde{e}_{1}-\frac{1}{a}\tilde{e}_{2}+t\tilde{e}_{2}\Bigr]\times\Bigl[e_{1}-\frac{a}{b}t\, e_{2}+\frac{a}{b^{2}}t\, e_{3}\Bigr]\;,\;\;\Bigl[\tilde{e}_{1}-\frac{1}{a}\tilde{e}_{2}+u\tilde{e}_{3}\Bigr]\times\Bigl[e_{1}\Bigr].\]
Introducing additional parameters $v,\omega_{2},\omega_{3},\omega_{4},\omega_{5}$,
we take an affine coordinate of $\mathbb{C}^{4}\times\mathbb{C}^{4}\subset\mathbb{P}^{4}\times\mathbb{P}^{4}$
by \[
\Bigl[\tilde{e}_{1}+\bigl(t-\frac{1}{a}\bigr)\tilde{e}_{2}+u\tilde{e}_{3}+v\tilde{e}_{4}+s\tilde{e}_{5}\Bigr]\times\Bigl[e_{1}+\bigl(\omega_{2}-\frac{a}{b}t\bigr)e_{2}+\bigl(\omega_{3}+\frac{a}{b^{2}}t\bigr)e_{3}+\omega_{4}e_{4}+\bigl(\omega_{5}-\frac{bs}{s+a}\bigr)e_{5}\Bigr].\]
 In order to see the local geometry about the origin, we work in the
local ring $\mathbb{C}[s,t,u,v,\omega_{2},..,\omega_{5}]_{m_{0}}$
with respect to the maximal ideal $m_{0}$ of the origin. Writing
the defining equations of $\Xsp$ in this ring, it is straightforward
to see that the three equations (1st, 2nd and 5th equations in (\ref{eq:defeqsCICY}))
may be solved as $\omega_{2}=\omega_{5}=0$ and $\omega_{3}=-\frac{a^{3}tu}{b^{2}(1-at)}$.
After substituting these into the remaining equation, we obtain\[
b^{3}(1-at)u\omega_{4}+at(u+av)(1-at-a^{2}u)=0,\;\omega_{4}(a+s)(as+v)-b^{2}sv=0.\]
Setting $\omega_{4}=w$, and focusing on the property near the origin,
we have:
\begin{prop}
\label{pro:g1-g2}The local geometry $(3A_{1},\mathcal{U}_{1})$ near
the singular point $[\tilde{e}_{12}]\times[e_{1}]$ is represented
by the germ $(\{g_{1},g_{2}\},\mathbb{C}^{5})$ near the origin $(s,t,u,v,w)=(0,...,0)$
with\[
g_{1}=a\, t(u+a\, v)+b^{3}uw,\;\; g_{2}=aw(v+a\, s)-b^{2}sv.\]
\end{prop}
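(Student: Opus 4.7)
The plan is to exploit the affine chart already set up in the discussion preceding the statement, in which the coordinates $s,t,u$ are designed to be uniformizers on the three singular lines $Q_5,\tilde{Q}_1,\tilde{L}_1$ passing through the point $[\tilde{e}_{12}]\times[e_1]$. My strategy has three main steps: first confirm that the chart really is a local coordinate system, then solve three of the five bilinear defining equations for $\omega_2,\omega_3,\omega_5$ in the local ring at the origin, and finally substitute the solutions into the remaining two equations and tidy them up to read off $g_1$ and $g_2$.

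First, I would check that the $\mathrm{Aut}(\mathbb{P}^1)$-reparametrizations used to bring the parametrizations in (\ref{eq:curves-paramet}) for $Q_5,\tilde{Q}_1,\tilde{L}_1$ into the listed form are mutually compatible at the base point, and that, after introducing the transverse parameters $v,\omega_2,\omega_3,\omega_4,\omega_5$, the eight-tuple $(s,t,u,v,\omega_2,\dots,\omega_5)$ is genuinely an affine coordinate system on $\mathbb{C}^4\times\mathbb{C}^4\subset\mathbb{P}^4\times\mathbb{P}^4$ near the origin, with $s=t=u=0$ being the origin and setting two of $s,t,u$ to zero recovering the corresponding singular line.

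Second, I would substitute the chart into the five bilinear equations $f_k={}^tzA_kw=0$ and inspect their linear parts at the origin. Under the standing assumption $ab\ne 0$, I expect the equations coming from $A_1,A_2,A_5$ to have linear terms in $\omega_5,\omega_2,\omega_3$ with coefficients that are units at the origin. The formal implicit function theorem in the localization $\mathbb{C}[s,t,u,v,\omega_2,\dots,\omega_5]_{m_0}$ then produces unique power-series solutions for $\omega_2,\omega_3,\omega_5$ in terms of the remaining variables; direct manipulation should collapse these solutions to the closed forms $\omega_2=\omega_5=0$ and $\omega_3=-a^3 tu/\bigl(b^2(1-at)\bigr)$, the denominator $1-at$ being a unit at the origin.

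Finally, I would substitute these expressions into the two remaining equations, clear the unit factor $1-at$, rename $\omega_4=w$, and verify by direct simplification that the output matches $g_1=a\,t(u+a\,v)+b^3 uw$ and $g_2=aw(v+a\,s)-b^2 sv$. I expect the only real obstacle to be organizational: carrying the rational expression for $\omega_3$ through the bilinear monomials of the surviving $f_k$'s and keeping track of which factors $(a+s),(1-at),\ldots$ are units at the origin and may therefore be absorbed without affecting the germ. The slight asymmetry between the $b^3 uw$ term in $g_1$ and the $b^2 sv$ term in $g_2$ should fall out naturally from the asymmetric placement of $a$'s and $b$'s in the matrices $A_k$ defining $\tilde{X}_0^{sp}$.
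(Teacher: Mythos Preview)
Your proposal is correct and follows essentially the same route as the paper: work in the local ring $\mathbb{C}[s,t,u,v,\omega_2,\dots,\omega_5]_{m_0}$, solve $f_1,f_2,f_5$ for $\omega_2,\omega_3,\omega_5$ to get $\omega_2=\omega_5=0$ and $\omega_3=-a^3tu/(b^2(1-at))$, substitute into $f_3,f_4$, set $w=\omega_4$, and extract the lowest-order terms after absorbing the unit factors $(1-at)$ and $(a+s)$. The paper in fact records the intermediate forms $b^{3}(1-at)u\omega_{4}+at(u+av)(1-at-a^{2}u)=0$ and $\omega_{4}(a+s)(as+v)-b^{2}sv=0$ before passing to the germ, which matches exactly what your last step would produce.
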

\begin{rem*}
The coordinate $w=w_{4}$ has a special meaning related to the blow-up
$\pi_{1}:\Xsp\rightarrow\Sspp.$ In fact, in our affine coordinate
$(s,t,u,v,\omega_{2,}\omega_{3,}\omega_{4,}\omega_{5})$, the exceptional
divisor over the line $\tilde{q}_{1}$ can be written as \[
\Bigl[\tilde{e}_{1}-\frac{1}{a}\tilde{e}_{2}+t\tilde{e}_{2}\Bigr]\times\Bigl[e_{1}-\frac{a}{b}t\, e_{2}+\frac{a}{b^{2}}t\, e_{3}+\omega_{4}e_{4}\Bigr]\;\;(t,\omega_{4}\in\mathbb{C}).\]
Based on this, after eliminating the variable $w$ from the local
equations by $\{g_{1},g_{2}\},$ we have a germ $(g_{3,}\mathbb{C}^{4})$
near the origin with \begin{align*}
g_{3} & =a^{2}\, t(u+a\, v)(v+a\, s)-b^{5}suv\\
 & =a^{3}\, stu+a^{4}\, stv+b^{5}\, suv+a^{2}\, tuv+a^{3}\, tv^{2},\end{align*}
where the polynomial $g_{3}$ coincides with the lowest oder terms
of the defining (quintic) polynomial $\Sspp$ represented by the local
parameters $(1,z_{2},z_{3},z_{4,}z_{5})=(1,t-\frac{1}{a},u,v,s)$.
The $A_{1}$-singularity along $\tilde{Q}_{1}$ is the partial resolution
of the $A_{2}$-singularity along $\tilde{q}_{1}$ in $\Ssp$. {[}{]} 
\end{rem*}
Let us consider the blowing-up $\tilde{\mathbb{C}}^{5}\rightarrow\mathbb{C}^{5}$
at the origin of the local geometry $(\{g_{1},g_{2}\},\mathbb{C}^{5})$,
and denote the exceptional divisor by $E_{1}$. $E_{1}$ is the surface
$\{g_{1}=g_{2}=0$\} considered in $\mathbb{P}^{4}$ with the homogeneous
coordinate $[S:T:U:V:W]$ corresponding to $(s,t,u,v,w)$. 
\begin{prop}
\label{pro:e(E1)}$E_{1}$ is a singular del Pezzo surface of degree
four with three nodal points, and has the Euler number $e(E_{1})=5$. \end{prop}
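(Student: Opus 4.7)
The plan is to recognize $E_1$ as a complete intersection of two quadrics in $\mathbb{P}^4$, locate its singular locus via the Jacobian criterion, verify each singularity is of $A_1$ type, and then compute the Euler number by comparison with the smooth del Pezzo surface of degree four.

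First I would note that $g_1$ and $g_2$ are irreducible quadratic forms in the homogeneous coordinates $[S:T:U:V:W]$ of $\mathbb{P}^4$ (a glance at the monomials shows they share no common factor and are linearly independent). Hence $E_1=V(g_1,g_2)$ is a complete intersection surface of degree $4$, and by adjunction $-K_{E_1}=H|_{E_1}$ is ample with $K_{E_1}^2=4$; so $E_1$ is a (possibly singular) del Pezzo surface of degree four.

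To find the singular locus I would apply the Jacobian criterion to the $2\times 5$ matrix $J=(\partial g_i/\partial x_j)$, splitting into three cases: $\nabla g_1=0$, $\nabla g_2=0$, and $\nabla g_1=\lambda\nabla g_2$ with $\lambda\neq 0$. Using $ab\neq 0$, the first forces $T=U=V=W=0$ and yields $p_1=[1:0:0:0:0]\in E_1$; the second forces $S=V=W=0$ and gives $p_2=[0:1:0:0:0]$ and $p_3=[0:0:1:0:0]$; the third forces $S=0$ and $\lambda=-b^3$, leading to a candidate point $[0:-b^5:-a^4:a^3:ab^2]$ where $g_1=-a^5b^5\neq 0$, hence off $E_1$. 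Thus $\mathrm{Sing}(E_1)=\{p_1,p_2,p_3\}$; these three points are geometrically the intersections of $E_1$ with the strict transforms of the three singular lines $\tilde{Q}_1$, $\tilde{L}_1$, $Q_5$ meeting at $[\tilde{e}_{12}]\times[e_1]$. To confirm each $p_i$ is a node, I would work in affine coordinates near $p_1$ (setting $S=1$): the linear part $a^2W-b^2V$ of $g_2$ lets me solve $W=(b^2/a^2)V+O(V^2)$, and substituting into $g_1$ gives a quadratic form in $(T,U,V)$ whose leading part is $aTU+a^2TV+(b^5/a^2)UV$. The associated symmetric matrix has nonzero determinant under $ab\neq 0$, so $p_1$ is analytically $\{x^2+y^2+z^2=0\}$, an $A_1$-singularity; $p_2$ and $p_3$ are handled analogously.

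For the Euler number, the minimal resolution $\widetilde{E}_1\to E_1$ is crepant (nodes are canonical), replacing each $p_i$ by a $(-2)$-curve; thus $\widetilde{E}_1$ is smooth with $K_{\widetilde{E}_1}^2=K_{E_1}^2=4$ and $-K_{\widetilde{E}_1}$ nef and big, i.e.\ a weak del Pezzo surface of degree four. Such a surface is rational with $\chi(\mathcal{O})=1$, so Noether's formula yields $e(\widetilde{E}_1)=12-4=8$. Since each node is replaced by a $\mathbb{P}^1$,
\[
e(E_1)=e(\widetilde{E}_1)-3\bigl(e(\mathbb{P}^1)-1\bigr)=8-3=5.
\]
The main obstacle is the Jacobian analysis: the $2\times 5$ matrix has ten $2\times 2$ minors and the case split must be organised cleanly to rule out stray singularities; alternatively, a \texttt{Macaulay2} primary decomposition of the singular ideal would settle the count directly.
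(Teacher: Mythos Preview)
Your proof is correct and follows essentially the same route as the paper: identify $E_1$ as a complete intersection of two quadrics in $\mathbb{P}^4$, locate the three nodes $[1:0:0:0:0]$, $[0:1:0:0:0]$, $[0:0:1:0:0]$ via the Jacobian criterion, and compute $e(E_1)=8-3=5$ by comparison with the smooth minimal resolution. The only cosmetic difference is that the paper invokes the classification of Gorenstein del Pezzo surfaces (citing Hidaka--Watanabe) to say $\widetilde{E}_1$ is $\mathbb{P}^2$ blown up at five points, whereas you reach $e(\widetilde{E}_1)=8$ via Noether's formula; these are equivalent.
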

\begin{proof}
The equations $g_{1}=g_{2}=0$ in $\mathbb{P}^{4}$ define a del Pezzo
surface of degree 4. By evaluating the Jacobian ideal, it is immediate
to see that this is singular at $[S:T:U:V:W]=[1:0:0:0:0],[0:1:0:0:0]$
and $[0:0:1:0:0]$, where the exceptional divisor $E_{1}$ intersects
with the $s$-, $t$- and $u$-axes of $A_{1}$-singularities. Since
$E_{1}$ is a singular Pezzo surface of degree 4 with three ordinary
double points, it can be given as $\mathbb{P}^{2}$ blown-up at 5
points and then contracting three $(-2)$ curves \cite{HW}. Therefore
we have $e(E_{1})=8-3=5$.\end{proof}
\begin{prop}
\label{pro:E1-3A1}After the blowing-up $(3A_{1},\mathcal{U}_{1})$
at the origin, the three singular lines separate from each other and
intersect with $E_{1}$ at the three nodal points.\end{prop}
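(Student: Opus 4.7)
The plan is to identify the strict transforms of the three singular lines inside the blow-up and verify that they hit $E_{1}$ precisely at the three nodes already located in Proposition \ref{pro:e(E1)}.

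First I would note that, by the very construction of the local coordinates $(s,t,u,v,w)$ preceding Proposition \ref{pro:g1-g2}, the three singular lines meeting at $[\tilde{e}_{12}]\times[e_{1}]$ are the $s$-, $t$- and $u$-coordinate axes. More precisely, substituting $\omega_{2}=\omega_{5}=0$ and $\omega_{3}=-a^{3}tu/(b^{2}(1-at))$ into the parametrizations of $Q_{5}$, $\tilde{Q}_{1}$, $\tilde{L}_{1}$ reduces them respectively to $\{t=u=v=w=0\}$, $\{s=u=v=w=0\}$ and $\{s=t=v=w=0\}$; thus the three branches of the singular locus are smooth coordinate lines at the origin with pairwise distinct tangent directions.

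Next I would use the fact that for the blow-up $\tilde{\mathbb{C}}^{5}\to\mathbb{C}^{5}$ at the origin, the strict transform of a smooth curve through the origin meets the exceptional $\mathbb{P}^{4}$ at the point corresponding to its tangent direction. Consequently the strict transforms of $Q_{5}$, $\tilde{Q}_{1}$, $\tilde{L}_{1}$ meet the exceptional $\mathbb{P}^{4}$ at the three distinct coordinate points $[1\!:\!0\!:\!0\!:\!0\!:\!0]$, $[0\!:\!1\!:\!0\!:\!0\!:\!0]$, $[0\!:\!0\!:\!1\!:\!0\!:\!0]$. A direct inspection of $g_{1}=atu+a^{2}tv+b^{3}uw$ and $g_{2}=a^{2}ws+awv-b^{2}sv$ shows that each of these three points also satisfies $g_{1}=g_{2}=0$, since every monomial involves at least two distinct homogeneous coordinates; hence all three lie on $E_{1}$. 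By Proposition \ref{pro:e(E1)} these are exactly the three nodes of $E_{1}$, so the three strict transforms separate and meet $E_{1}$ at its singular points, as claimed.

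The argument is essentially bookkeeping: the only mildly delicate point is the compatibility between blowing up the germ of $\Xsp$ at the origin and blowing up the ambient $\mathbb{C}^{5}$ and restricting, but this compatibility is precisely what realizes $E_{1}$ as $\{g_{1}=g_{2}=0\}\subset\mathbb{P}^{4}$ (already used in the statement of Proposition \ref{pro:e(E1)}), so no further work is needed and there is no genuine obstacle beyond matching parametrizations to coordinates.
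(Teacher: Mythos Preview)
Your proposal is correct and follows essentially the same approach as the paper's own proof: both identify the three singular lines with the $s$-, $t$-, $u$-coordinate axes by the choice of local coordinates, observe that the blow-up of $\mathbb{C}^{5}$ at the origin separates these axes and sends their strict transforms to the three coordinate points of $\mathbb{P}^{4}$, and then invoke Proposition~\ref{pro:e(E1)} to identify those points with the nodes of $E_{1}$. Your version simply spells out a few steps (checking $g_{1}=g_{2}=0$ at the coordinate points, the tangent-direction interpretation of strict transforms) that the paper leaves implicit.
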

\begin{proof}
We have chosen our coordinate of $\mathcal{U}_{1}$ so that $s$-,
$t$-, $u$-axes coincide with the lines of $A_{1}$-singularity.
The blowing-up $\tilde{\mathbb{C}}^{5}\rightarrow\mathbb{C}^{5}$
at the origin introduces the exceptional set $\mathbb{P}^{4}$, which
separate the coordinate axes. Hence the blowing-up separates the $s$-,
$t$-, $u$-axes of $A_{1}$-singularity from each other with introducing
the exceptional divisor $E_{1}$. The intersection points of the (proper
transforms of the) $s$-, $t$-, $u$-axes with $E_{1}$ coincides
with the three nodal points of $E_{1}$. (This is similar to the case
of the Barth-Nieto quintic \cite{Barth-Nieto}.)
\end{proof}
Now, we blow-up all the 10 local geometries of type $(3A_{1},\mathcal{U}_{1})$
at their origins, and denote the blow-ups by $\varphi_{1}:\Xspi\rightarrow\Xsp$.
Also we denote by $Q_{i}^{(1)},\tilde{Q}_{i}^{(1)},L_{i}^{(1)},\tilde{L}_{i}^{(1)}$
the proper transforms of the 20 lines $Q_{i},\tilde{Q}_{i},L_{i},\tilde{L}_{i}$
of $A_{1}$ singularity, respectively. Along these proper transforms
of lines, we still have $A_{1}$ singularities. Also, these lines
intersect at the origins of the 10 isomorphic local geometries of
type $(2A_{1},\mathcal{U}_{2}^{(1)})$, which is isomorphic to $(2A_{1,}\mathcal{U}_{1})$
in $\Xsp$. We also denote by $(\partial A_{1},\mathcal{U}_{3}^{(1)})\cong(\partial A_{1},\mathcal{U}_{3})$
the 10 isomorphic local geometries near the infinity points of the
lines $L_{i}^{(1)},\tilde{L}_{i}^{(1)}$(see Fig.$\,$\ref{fig:The-local-affine}).
The local geometries on each lines are now summarized as\begin{equation}
\begin{aligned}(\partial A_{1},\mathcal{U}_{3}^{(1)}),(2A_{1},\mathcal{U}_{2}^{(2)}) & \text{ on each }\tilde{L}_{i}^{(1)},L_{i}^{(1)},\\
(2A_{1},\mathcal{U}_{1}^{(1)}) & \text{ on each }\tilde{Q}_{i}^{(1)},Q_{i}^{(1)}.\end{aligned}
\label{eq:singularities-on-LQ-(1)}\end{equation}

\bigskip{}

\subsubsection{Resolution of $(2A_{1},\mathcal{U}_{2}^{(1)})$ }

As in the previous case, we choose an affine coordinate $(s,t,u,v,\omega_{2},\omega_{3},\omega_{4},\omega_{5})$
centered at $[-a\,\tilde{e}_{12}+\tilde{e}_{3}]\times[e_{1}]$ with
$s,t$ being along the lines $\tilde{L}_{1}^{(1)}$, $Q_{1}^{(1)}$
in $\Xspi$. For this we parametrize the line $\tilde{L}_{1}^{(1)}$
by \[
\bigl[-a\tilde{e}_{12}+(1+s)\tilde{e}_{3}\bigr]\times[e_{1}]=\bigl[a^{2}\tilde{e}_{1}-a\tilde{e}_{2}+(1+s)\tilde{e}_{3}\bigr]\times[e_{1}],\]
and also the line $Q_{1}^{(1)}$ by \[
\bigl[a^{2}\tilde{e}_{1}+(1+t)\tilde{e}_{23}\bigr]\times\Bigl[e_{1}+\frac{1}{b}te_{2}\Bigr]=\bigl[a^{2}\tilde{e}_{1}-a(1+t)\tilde{e}_{2}+(1+t)\tilde{e}_{3}\bigr]\times\Bigl[e_{1}+\frac{t}{b}e_{2}\Bigr].\]
 Taking these forms into account, we introduce the affine coordinate
by\begin{align*}
\bigl[a^{2}\tilde{e}_{1}-a(1+t)\tilde{e}_{2}+(1+s+t)\tilde{e}_{3}+u\tilde{e}_{4}+v\tilde{e}_{5}\bigr]\\
\times\Bigl[e_{1}+\bigl(\omega_{2}+\frac{t}{b}\bigr)e_{2}+\omega_{3}e_{3}+\omega_{4}e_{4}+\omega_{5}e_{5}\Bigr].\end{align*}
In the local ring $\mathbb{C}[s,t,u,v,\omega_{2},..,\omega_{5}]_{m_{0}}$,
four of the five defining equations of $\Xspi$ may be solved with
respect to $\omega_{2},\omega_{3},\omega_{4},\omega_{5}$ and one
equation leftover determines the germ about the origin. 
\begin{prop}
The local geometry $(2A_{1},\mathcal{U}_{2}^{(1)})$ near the singular
point $[-a\,\tilde{e}_{12}+\tilde{e}_{3}]\times[e_{1}]$ is represented
by a germ $(h,\mathbb{C}^{4})$ near the origin $(s,t,u,v)=(0,0,0,0)$
with\begin{equation}
h=b^{5}uv+a^{3}\, stu+a^{4}\, stv+b^{5}suv+2b^{5}tuv.\label{eq:h-def}\end{equation}

\end{prop}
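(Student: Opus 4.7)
The plan is to mimic the computation used for $(3A_1,\mathcal{U}_1)$ in Proposition \ref{pro:g1-g2}: substitute the affine parametrization into the five defining equations of $\Xsp$, solve four of them for the auxiliary coordinates $\omega_2,\omega_3,\omega_4,\omega_5$ in the local ring at the origin, and read off the germ from the remaining equation.

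First I would write out the $z_i$ and $w_i$ explicitly from the chosen affine chart: $z_1=a^2,\ z_2=-a(1+t),\ z_3=1+s+t,\ z_4=u,\ z_5=v$ and $w_1=1,\ w_2=\omega_2+t/b,\ w_3=\omega_3,\ w_4=\omega_4,\ w_5=\omega_5$. Substituting these into $f_k={}^{t}zA_kw=0$ for $k=1,\dots,5$ gives, after simple cancellations:
\begin{align*}
f_1 &= a^2 b\,\omega_2,\\
f_2 &= as(\omega_2+t/b)-ab(1+t)\omega_3,\\
f_3 &= \omega_3(1+s+t+au)+b(1+s+t)\omega_4,\\
f_4 &= (u+av)\omega_4+bu\,\omega_5,\\
f_5 &= (v+a^3)\omega_5+bv.
\end{align*}
In the local ring $\mathbb{C}[s,t,u,v,\omega_2,\dots,\omega_5]_{m_0}$, the factors $(1+t),(1+s+t),(v+a^3)$ and $u$ are units or can be made so near the origin (using $ab\neq 0$), so the equations $f_1=\cdots=f_4=0$ are solvable for $\omega_2,\omega_3,\omega_4,\omega_5$. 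Specifically, $\omega_2=0$, $\omega_3=\dfrac{st}{b^2(1+t)}$, $\omega_4=-\dfrac{st(1+s+t+au)}{b^3(1+t)(1+s+t)}$, and $\omega_5=\dfrac{st(1+s+t+au)(u+av)}{b^4 u(1+t)(1+s+t)}$.

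Substituting these into $f_5=0$ and clearing the denominator $b^4u(1+t)(1+s+t)$ yields the single equation
\[
H(s,t,u,v):=st(1+s+t+au)(u+av)(v+a^3)+b^5uv(1+t)(1+s+t)=0.
\]
Expanding, the terms of total degree $\le 3$ in $(s,t,u,v)$ are precisely
\[
b^{5}uv+a^{3}stu+a^{4}stv+b^{5}suv+2b^{5}tuv,
\]
which matches $h$ exactly, while all remaining terms in $H$ lie in the ideal $m_0^4$.

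The main obstacle is, strictly speaking, the last step: we must justify that the germ cut out by $H$ at the origin is analytically equivalent to the germ cut out by $h$, rather than just agreeing with $h$ to lowest order. This follows because $h$ has multiplicity $2$ at the origin with Newton diagram determined by the single monomial $b^5uv$, and the higher-order corrections $H-h$ lie in the ideal generated by $h$ together with $m_0\cdot h$; a direct weighted-homogeneity argument (or an application of the Mather--Yau/splitting principle for the $A_1$-type strata along the $s,t$-axes) absorbs those corrections by a local analytic change of the coordinates $u,v$. After that absorption, the germ is exactly $(h,\mathbb{C}^4)$ as claimed, and the two lines of $A_1$-singularity recorded in (\ref{eq:singularities-on-LQ-(1)}) appear precisely as the $s$- and $t$-axes, consistent with the local description of $(2A_1,\mathcal{U}_2^{(1)})$.
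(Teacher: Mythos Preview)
Your approach is the same as the paper's, and your final equation $H$ together with its degree-$\le 3$ truncation $h$ are correct. However, there is a genuine slip in the middle of the elimination: $u$ is \emph{not} a unit in the local ring at the origin (it vanishes there), so you cannot solve $f_4=0$ for $\omega_5$ as you do. Your expression
\[
\omega_5=\frac{st(1+s+t+au)(u+av)}{b^4 u(1+t)(1+s+t)}
\]
is not an element of $\mathbb{C}[s,t,u,v,\omega_2,\dots,\omega_5]_{m_0}$. Equivalently, the Jacobian of $(f_1,f_2,f_3,f_4)$ with respect to $(\omega_2,\omega_3,\omega_4,\omega_5)$ is singular at the origin because $\partial f_4/\partial\omega_4=u+av$ and $\partial f_4/\partial\omega_5=bu$ both vanish there. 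The easy fix, and what the paper does, is to solve $f_1,f_2,f_3,f_5$ for $\omega_2,\omega_3,\omega_4,\omega_5$: the coefficient of $\omega_5$ in $f_5$ is $v+a^3$, a genuine unit, giving $\omega_5=-bv/(v+a^3)$. Substituting this (together with your correct $\omega_2,\omega_3,\omega_4$) into $f_4$ and clearing the unit denominator $b^3(1+t)(1+s+t)(v+a^3)$ yields exactly your $H$, so the rest of your computation survives unchanged.

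On your final paragraph: you are being more careful than the paper requires. The proposition does not assert analytic equivalence of the full germ with its truncation $h$; the paper only uses $h$ to read off the exceptional divisors and the singularity types under the subsequent blow-ups, and it remarks explicitly in the proof of Theorem~\ref{thm:Thm-crepant-e100} that ``precisely $h$ should be understood with the higher order terms''. So exhibiting $H$ and noting that $h$ is its lowest-order part is all that is needed here; the Mather--Yau-type argument you sketch is not required for the statement as used.
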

We may derive the same form directly from the quintic equation of
$\Sspp$ since the projection $\pi_{1}:\,\Xsp\rightarrow\Sspp$ (composed
with the blow-up $\Xspi\rightarrow\Xsp)$ defines an isomorphism on
the neighborhood $\mathcal{U}_{2}$, see Fig. \ref{fig:The-local-affine}.
In the figure, the geometric meaning of the parameters $u$ and $v$
should be clear. By our choice of the coordinates, we have $A_{1}$-singularities
along $s$- and $t$-axes, i.e., along the lines $\tilde{L}_{1}^{(1)}$
and $Q_{1}^{(1)}$, respectively. We will consider the blowing-up
along $\tilde{L}_{1}^{(1)}$, which is locally described by the blowing-up
$\mathbb{C}\times\tilde{\mathbb{C}}^{3}\rightarrow\mathbb{C}\times\mathbb{C}^{3}$
along the $s$-axis. 
\begin{prop}
\label{pro:blow-up-along-s-2A1}1) The exceptional divisor of the
blow-up $\mathbb{C}\times\tilde{\mathbb{C}}^{3}\rightarrow\mathbb{C}\times\mathbb{C}^{3}$
of $(2\, A_{1},\mathcal{U}_{2}^{(1)})$ along the $s$-axis is a conic
bundle over $s\in\mathbb{C}$ $(|s|\ll1)$, which has a reducible
fiber over $s=0$. This conic bundle is singular only at an ODP over
$s=0$.

\noindent 2) The conic bundle over $\mathbb{C}$ extends to a conic
bundle $E_{2}\rightarrow\tilde{L}_{1}^{(1)}\cong\mathbb{P}^{1}$,
which has reducible fibers over $s=0$ and $s=\infty$. This conic
bundle is singular only at an ODP over $s=0,$ and also admits a section.

\noindent3) After the blowing-up of $(2A_{1},\mathcal{U}_{2}^{(1)})$,
the singularity leftover near the local geometry is the $A_{1}$ singularity
along the proper transform of the $t$-axis. The proper transform
of $Q_{1}^{(1)}$ intersects with the conic bundle $E_{2}$ at the
ODP over $s=0$. \end{prop}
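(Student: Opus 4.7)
My strategy is to perform the blow-up by hand in the chart of $\mathrm{Bl}_{s\text{-axis}}\mathbb{C}^4$ that contains the exceptional $\mathbb{P}^2$, to read off the surface $E_2$ as the projectivized tangent cone of $h$ in the normal directions to the $s$-axis, and then to apply the Jacobian criterion systematically to pinpoint all singular loci.

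For (1), work in the chart $u=tU'$, $v=tV'$. Since $h$ has lowest $(t,u,v)$-degree $2$, dividing by $t^2$ yields the strict transform
\[
\tilde h_T=b^5(1+s)\,U'V' + a^3 s\,U' + a^4 s\,V' + 2 b^5 t\,U'V' = 0,
\]
and the exceptional surface $E_2=\{t=0\}$ is cut out by the vanishing of the first three terms. In the homogeneous form $b^5(1+s)UV+a^3 s\,TU+a^4 s\,TV=0$ on $\mathbb{P}^2_{[T:U:V]}$, the fiber is a smooth conic for $s(1+s)\neq 0$ and degenerates to the pair of lines $\{U=0\}\cup\{V=0\}$ meeting at $[1:0:0]$ at $s=0$. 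The Jacobian vanishing for $E_2$ as a surface in $(s,U',V')$-space forces $s=U'=V'=0$ as the only singular candidate, and the Hessian there,
\[
\begin{pmatrix}0 & a^3 & a^4\\ a^3 & 0 & b^5\\ a^4 & b^5 & 0\end{pmatrix},
\]
has determinant $2a^7b^5\neq 0$, identifying the singularity as an ordinary double point.

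For (2), cover $\tilde L_1^{(1)}\cong\mathbb{P}^1$ by the above chart together with a second affine chart centered at the other distinguished point of $\tilde L_1^{(1)}$ (its intersection with $E_1$ from Proposition \ref{pro:E1-3A1}), and repeat the computation using the $V$-chart of the normal-bundle blow-up to cover the point $[T:U:V]=[0:-a:1]$ at which the second reducible fiber degenerates. A direct substitution gives the reducible fiber at this other end of $\mathbb{P}^1$, and a Jacobian check at its node shows $E_2$ is smooth there; hence the ODP from (1) is the unique singular point of $E_2$. Since every monomial of $h$ contains $u$ or $v$, the $(s,t)$-plane $\{u=v=0\}$ lies inside $\{h=0\}$; its strict transform $\{U=V=0\}$ therefore lies in the strict transform of $\{h=0\}$ and meets each fiber of $E_2$ in the unique point $[1:0:0]$, giving the desired section.

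For (3), apply the Jacobian criterion to $\tilde h_T=0$: the partial $\partial_t\tilde h_T=2b^5 U'V'$ forces $U'V'=0$, after which the remaining partials vanish only on the curve $\{s=U'=V'=0\}$, which is exactly the proper transform of the $t$-axis and hence of $Q_1^{(1)}$. At a point $(0,t_0,0,0)$ on this curve, the Hessian in $(s,U',V')$ has determinant proportional to $a^7b^5(1+2t_0)$, nonzero for $t_0$ near $0$, so the singularity is of type $A_1$. The proper transform of $Q_1^{(1)}$ meets $E_2=\{t=0\}$ at $(0,0,0,0)$, which is the ODP of (1). The main technical obstacle is part (2): the affine germ $(h,\mathbb{C}^4)$ only directly sees one end of $\tilde L_1^{(1)}$, so describing the second reducible fiber and verifying smoothness of $E_2$ at its node requires careful patching of charts (or an equivalent projective argument) together with a separate Jacobian computation.
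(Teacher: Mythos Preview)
Your work on parts (1) and (3) is correct and matches the paper's computation in the $T$-chart, with the added nicety of writing the Hessian explicitly.

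The gap is in part (2). Recall from (\ref{eq:singularities-on-LQ}) that $\tilde L_1$ carries \emph{three} distinguished points: the centers of $(3A_1,\mathcal U_1)$, $(2A_1,\mathcal U_2)$, and $(\partial A_1,\mathcal U_3)$. In your $s$-parametrization (centered at the $\mathcal U_2$ point), the $(3A_1,\mathcal U_1)$ center (and hence the intersection with $E_1$) sits at $s=-1$, not at $s=\infty$; the point missing from the $s$-chart is the $(\partial A_1,\mathcal U_3)$ center $[\tilde e_3]\times[e_1]$. So your proposed second chart is placed at the wrong point. More seriously, the polynomial $h$ in (\ref{eq:h-def}) is only the germ at $s=0$: it retains all terms of degree two in $(t,u,v)$ but only those of degree $\le 1$ in $s$. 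If you nevertheless extrapolate your $E_2$-equation $b^5(1+s)UV+a^3 sTU+a^4 sTV=0$ to $s\to\infty$, you obtain the \emph{smooth} conic $b^5UV+a^3TU+a^4TV=0$, so the germ cannot produce the second reducible fiber you claim at $[0:-a:1]$.

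The paper repairs this in two steps. First, it goes back to the global quintic equation of $\Sspp$ to obtain the exact degree-two-in-$(T,U,V)$ equation for $E_2$ over all of $s\in\mathbb C$, namely $(s+1)\bigl(b^5UV+a^3 sTU+a^4 sTV\bigr)=0$; this shows smooth fibers for $s\in\mathbb C\setminus\{0\}$ (the factor $s+1$ reflects only the intersection with $E_1$). Second, to cover $s=\infty$ it introduces the $(\partial A_1,\mathcal U_3)$ chart with coordinates $(s',x,y,z)$ centered at $[\tilde e_3]\times[e_1]$, derives the local equation $abxz+s'(bxz+a^4yz-a^4by^2)=0$, and reads off that the fiber at $s'=0$ is $XZ=0$ (reducible) while $E_2$ is smooth there. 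Your section argument via the strict transform of $\{u=v=0\}$ is fine and agrees with the paper's.
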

\begin{proof}
1) We introduce the coordinate $(s,[T,:U:V])$ for the exceptional
set $\mathbb{C}\times\mathbb{P}^{2}$ of the blow-up $\mathbb{C}\times\tilde{\mathbb{C}}^{3}\rightarrow\mathbb{C}\times\mathbb{C}^{3}$.
Then from the local equation of $(2A_{1},\mathcal{U}_{2})$, we have
the equation of the exceptional divisor as \[
b^{5}UV+a^{3}sTU+a^{4}sTV+b^{5}sUV=0.\]
This defines a family of conics in $\mathbb{P}^{2}$ over $s\in\mathbb{C}\,(|s|\ll1)$,
which is reducible at $s=0$. Also we see that the conic bundle is
singular only at an ODP over $s=0.$

2) To see the geometry of the exceptional divisor over $\mathbb{C}$($=\mathbb{P}^{1}\setminus\{s=\infty\}$),
we need to have the equation (\ref{eq:h-def}) in all order in $s$
but with homogeneous of degree two for $t,u,v$. It is easy to have
the equation from the defining equation of $\Sspp$. After some algebra,
we have the equation for the exceptional divisor:\begin{equation}
(s+1)\left(b^{5}UV+a^{3}sTU+a^{4}sTV\right)=0,\label{eq:E2-along-sTUV}\end{equation}
which defines a conic bundle over $s\in\mathbb{C}\,(s\not=-1)$ with
only one singular fiber over $s=0$. We see that $s=-1$ correspond
to the intersection point of the exceptional divisor $E_{1}$ and
the s-axis. Since this intersection point is one of the three nodal
points on $E_{1}$, we see that the conic bundle extends to $s=-1$
with smooth fiber over it. 

The point $s=\infty$ in the s-axis corresponds to the center of the
local geometry $(\partial A_{1},\mathcal{U}_{3}^{(1)})$. We introduce
the local parameters $s'=\frac{1}{s},x,y,z$ to represent the relevant
lines in this geometry, see Fig.$\,$\ref{fig:pd-2A1}. With other
parameters $v_{1},v_{5}$ and $\omega_{2},\omega_{4}$, we consider
the following affine coordinate centered at $[\tilde{e}_{3}]\times[e_{1}]$
of $\mathbb{P}^{4}\times\mathbb{P}^{4}:$ \[
\begin{alignedat}{1} & [(v_{1}-as')\tilde{e}_{1}+s'\tilde{e}_{2}+\tilde{e}_{3}+x\tilde{e}_{4}+v_{5}\tilde{e}_{5}]\times\\
 & [e_{1}+\omega_{2}e_{2}+(b^{2}y-z)e_{3}+(\omega_{4}-by+z)e_{4}+ye_{5}].\end{alignedat}
\]
Writing the defining equations (\ref{eq:defeqsCICY}) in this coordinate,
we can solve four equations with respect to $v_{1},v_{5},\omega_{2},\omega_{5}$
to obtain one equation $abxz+s'(bxz+a^{4}yz-a^{4}by^{2})=0$ which
describes the local geometry $(\partial A_{1},\mathcal{U}_{3}^{(1)})$
near the origin. Now we have the following local equation of the exceptional
divisor of the blowing up along $s'$-axis: \[
abXZ+s'\left(bXZ+a^{4}YZ-a^{4}bY^{2}\right)=0\;\;(|s'|\ll1),\]
where $(s',[X,Y,Z])$ represents the coordinates of the exceptional
set $\mathbb{C}\times\mathbb{P}^{2}$ of the blow-up $\mathbb{C}\times\tilde{\mathbb{C}}^{3}\rightarrow\mathbb{C}\times\mathbb{C}^{3}$.
From this equation, we see that the exceptional divisor is a conic
bundle with reducible fiber over $s'=0\,(s=\infty$) but smooth for
$|s'|\ll1$.

Finally, from the equation (\ref{eq:E2-along-sTUV}), we see that
$U=V=0$, for example, gives a section. 

3) Let $(s,t,\tilde{u},\tilde{v})=(s,t,\frac{U}{T},\frac{V}{T})$
be the one of the affine coordinates of the blow-up. Then we have
$u=\tilde{u}t,v=\tilde{v}t.$ Substituting these into the local equation
$h$ of $(2A_{1},\mathcal{U}_{2}),$ i.e., for $|s|,|t|\ll1$, we
obtain \[
\tilde{h}=b^{5}\tilde{u}\tilde{v}+a^{3}s\tilde{u}+a^{4}s\tilde{v}+b^{5}s\tilde{u}\tilde{v}+2b^{5}t\tilde{u}\tilde{v}\]
with $h=t^{2}\tilde{h}$. If we set $t=0,$ then we have the equation
of the exceptional divisor ($|s|\ll1$) above. When we set $s=0,$
then we have $\tilde{h}=\tilde{u}\tilde{v}(b^{5}+2b^{5}t)$. This
shows that the ODP of the exceptional divisor $E_{2}$ over $s=0$
merges to the $A_{1}$-singularity along the proper transform of the
$t$-axis ( see Fig. \ref{fig:E1-E2-E3}). Since the singularity along
the line $Q_{1}^{(1)}$ is of $A_{1}$-type except $t=0$, i.e., at
the intersection $Q_{1}^{(1)}\cap\tilde{L}_{1}^{(1)}$, we now see
that, near $t=0$, the singularity along the proper transform of $Q_{1}^{(1)}$
is of $A_{1}$-type.
\end{proof}
All the intersections of $Q_{i}^{(1)}$ and $\tilde{L}_{i}^{(1)}$
($\tilde{Q}_{i}^{(1)}$ and $L_{i}^{(1)}$) have the local geometries
isomorphic to $(2\, A_{1},\mathcal{U}_{2}^{(1)})$. We blow-up along
all the 10 lines $\tilde{L}_{i}^{(1)}$ and $L_{i}^{(1)}$, and denote
the blow-ups by $\varphi_{2}:\,\Xspii\rightarrow\Xspi$. We denote
the proper transforms of the 10 lines $Q_{i}^{(1)}$ and $\tilde{Q}_{i}^{(1)}$,
respectively, by $Q_{i}^{(2)}$ and $\tilde{Q}_{i}^{(2)}$. 

\bigskip{}

\subsubsection{Crepant Resolution $\cXsp\rightarrow\Xsp$ }

We finally construct a crepant resolution.
\begin{prop}
1) All the singularities of $\Xspii$ are along the non-intersecting
10 lines $Q_{i}^{(2)}$ and $\tilde{Q}_{i}^{(2)}$. 

\noindent 2) The singularities along $Q_{i}^{(2)}$ and $\tilde{Q}_{i}^{(2)}$
are of $A_{1}$-type. Blowing-up along each line of $Q_{i}^{(2)}$
and $\tilde{Q}_{1}^{(2)}$ resolves the singularity with introducing
the exceptional divisor $E_{3}$ which is a $\mathbb{P}^{1}$-bundle
with a section. \end{prop}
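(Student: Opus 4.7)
The plan is to reduce Part 1) to a local check at each of the singular strata of $\Xspi$ enumerated in \eqref{eq:singularities-on-LQ-(1)}, and to reduce Part 2) to the standard resolution of a transverse $A_1$ along a smooth rational curve. Since $\varphi_2$ is an isomorphism away from the 10 lines $L_i^{(1)}\cup\tilde L_i^{(1)}$, I only need to control what happens in a neighborhood of these lines, and all of the necessary local models have already been written down in the preceding propositions.

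First I would handle Part 1). Along a generic point of $L_i^{(1)}$ or $\tilde L_i^{(1)}$ the singularity is a transverse $A_1$, so the blow-up along the curve resolves it and produces a smooth $\mathbb{P}^1$-bundle locally; this leaves no singularity there. The only non-generic points on these lines are the intersections with a $Q_j^{(1)}$ or $\tilde Q_j^{(1)}$ (type $(2A_1,\mathcal{U}_2^{(1)})$) and the end-points of type $(\partial A_1,\mathcal{U}_3^{(1)})$. For the first, Proposition~\ref{pro:blow-up-along-s-2A1}(3) gives exactly the required local statement: after blowing up the $s$-axis the conic-bundle ODP is absorbed into the continuing $A_1$-line along the proper transform of the $t$-axis, so the only singularity leftover in $\mathcal{U}_2^{(1)}$ is a transverse $A_1$ along the proper transform $Q_i^{(2)}$ or $\tilde Q_i^{(2)}$. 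For the second, I would use the explicit local equation $abxz+s'(bxz+a^4yz-a^4by^2)=0$ derived in the proof of Proposition~\ref{pro:blow-up-along-s-2A1}(2) and check, by the same $t^2$-substitution trick, that blowing up along the $s'$-axis replaces the $(\partial A_1,\mathcal{U}_3^{(1)})$ germ by a smooth conic-bundle fiber (with a reducible fiber over $s'=0$) and leaves no further singularity. Taken together, these local statements show that the singular locus of $\Xspii$ is contained in $\bigcup_i (Q_i^{(2)}\cup\tilde Q_i^{(2)})$. Non-intersectedness then follows from the intersection pattern \eqref{eq:singularities-on-LQ-(1)}: in $\Xspi$ the only intersections of $Q_i^{(1)}, \tilde Q_i^{(1)}$ with any other singular line are the ten $(2A_1,\mathcal{U}_2^{(1)})$ points on some $L_j^{(1)}$ or $\tilde L_j^{(1)}$, and $\varphi_2$ replaces each such point by the exceptional divisor $E_2$, so $Q_i^{(2)}, \tilde Q_i^{(2)}$ each meet $E_2$ in a single point but no two of them meet.

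For Part 2), each $Q_i^{(2)}$ and $\tilde Q_i^{(2)}$ is a smooth rational curve along which $\Xspii$ has a transverse $A_1$, with normal model locally of the form $\{x^2+y^2+z^2=0\}\times\mathbb{C}_s$. Blowing up this line is the standard crepant small resolution of a curve of $A_1$ singularities in a threefold: the exceptional divisor is the projectivized normal cone, which at each point of the curve is a smooth plane conic $\cong \mathbb{P}^1$, so we obtain a smooth $\mathbb{P}^1$-bundle $E_3$ over $Q_i^{(2)}\cong\mathbb{P}^1$ (respectively over $\tilde Q_i^{(2)}$). A section can be produced either from the bundle structure $E_3\cong\mathbb{P}(\mathcal{O}\oplus\mathcal{O}(d))$ over $\mathbb{P}^1$ (which always admits a section), or concretely as the proper transform of a Cartier divisor passing through the curve in a prescribed way; I would choose the latter since it can be written down from the same local equations used in Part 1).

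The main obstacle, and the place where I would have to be careful, is the local analysis at the $(\partial A_1,\mathcal{U}_3^{(1)})$ points: unlike the $(2A_1,\mathcal{U}_2^{(1)})$ case, this new type of local model is not fully written out earlier, so I must verify directly from the equation $abxz+s'(bxz+a^4yz-a^4by^2)=0$ that the blow-up along the $s'$-axis leaves no singularity there (in particular, that no new $A_1$-line escapes into $\mathcal{U}_3^{(1)}$ to meet $Q_i^{(2)}$ or $\tilde Q_i^{(2)}$). Everything else is either a transverse $A_1$ computation or an application of the propositions already available.
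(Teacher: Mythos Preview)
Your argument is correct and structurally close to the paper's, but your treatment of Part 2) takes a genuinely different route. For Part 1) you and the paper argue in the same way: the $Q_i,\tilde Q_i$ meet each other only at the $(3A_1,\mathcal U_1)$-centers, which $\varphi_1$ separates (Proposition~\ref{pro:E1-3A1}), and the remaining $(2A_1,\mathcal U_2^{(1)})$ meetings with $L_j^{(1)},\tilde L_j^{(1)}$ are then separated by $\varphi_2$ (Proposition~\ref{pro:blow-up-along-s-2A1}(3)).

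For Part 2) the paper does not work upstairs in $\Xspii$ as you do. Instead it observes that away from the three blown-up centers on $\tilde Q_1$, the projection $\pi_2:\Xsp\to\Ssp$ is a local isomorphism near $\tilde Q_1$, so it suffices to write the defining equation of $\Ssp$ in affine coordinates adapted to the line $\pi_2(\tilde Q_1)$. This gives the single explicit equation \eqref{eq:localeq-Q1}, from which one reads off simultaneously that the transverse type is $A_1$ for all $t\neq 0,\frac{1}{b^2},\infty$ (these three values being exactly the centers handled by the earlier propositions), that the blow-up along the line yields a $\mathbb P^1$-bundle, and that a section exists (e.g.\ $U=V=0$ in the fiber coordinates). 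Your approach instead patches the generic transverse $A_1$ with the local results at each special point and then invokes the general fact that every $\mathbb P^1$-bundle over $\mathbb P^1$ admits a section. Both work; the paper's projection trick is more economical and produces the section explicitly, while yours is more modular but relies on a (standard) classification statement.

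One small remark on your stated obstacle: the $(\partial A_1,\mathcal U_3^{(1)})$ points lie only on the $L_i^{(1)},\tilde L_i^{(1)}$ and are not on any $Q_i^{(1)},\tilde Q_i^{(1)}$, so they cannot contribute a new singular line meeting $Q_i^{(2)}$ or $\tilde Q_i^{(2)}$. Your only task there is to confirm that $\varphi_2$ leaves the total space smooth near $s'=0$; this is a one-line Jacobian check in the $y$-chart of the blow-up of $abxz+s'(bxz+a^4yz-a^4by^2)=0$, where the $\partial_{s'}$-derivative equals $-a^4b\neq 0$ at the origin.
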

\begin{proof}
1) Since each line of $Q_{i}$ and $\tilde{Q}_{i}$ intersects with
others at the center of the local geometry $(3A_{1},\mathcal{U}_{1})$,
it is clear that $Q_{i}^{(2)}$ and $\tilde{Q}_{i}^{(2)}$ are separated
after the blow-ups (see Proposition \ref{pro:E1-3A1}). 

2) By symmetry, it suffices to show the properties for a line, say
$\tilde{Q}_{1}^{(2)}$. Note that $\tilde{Q}_{1}^{(2)}$ in $\Xspii$
is given by the successive proper transform of the line $\tilde{Q}_{1}$
in $\Xsp$ under the blow-ups of the local geometries, two $(3A_{1},\mathcal{U}_{1})$'s
and one $(2A_{1},\mathcal{U}_{2})$ on the line. Therefore, the local
geometry around $\tilde{Q}_{1}^{(2)}$ is isomorphic to that around
the line $\tilde{Q}_{1}$ except the three centers of the blowing-ups
on the line. We further note that the local geometry around the line
$\tilde{Q}_{1}$ except the three centers is projected isomorphically
to $Z_{2}^{sp}$ under the partial resolution $\pi_{2}:\Xsp\rightarrow Z_{2}^{sp}$.
The local geometry around $\pi_{2}(\tilde{Q}_{1})$ is easily analyzed
by introducing the following affine coordinate:\[
[w_{1}:w_{2}:...:w_{5}]=[e_{1}+(u-bt)e_{2}+te_{3}+ve_{4}+we_{5}],\]
where $t$ parametrizes the line $\pi_{2}(\tilde{Q}_{1})$. Substituting
this into the defining equation (\ref{eq:defEqS}) of $Z_{2}^{sp}$and
taking the polynomial of homogeneous degree up to two with respect
to $u,v,w$ but all for $t$, we obtain\begin{equation}
bt\bigl\{ a^{5}tvw-(1-b^{2}t)u(v+bw)\bigr\}=0,\label{eq:localeq-Q1}\end{equation}
which shows $A_{1}$-singularity along the $t$-axis except $t=0,\frac{1}{b^{2}}$
and $\infty$. These three values exactly correspond to the two local
geometries $(3A_{1},\mathcal{U}_{1})$'s and one $(2A_{1},\mathcal{U}_{2})$
on the line $\tilde{Q}_{1}$, whose blowing-up we studied in Proposition
\ref{pro:E1-3A1} and Proposition \ref{pro:blow-up-along-s-2A1}.
Combined with the results there, we conclude that the singularity
along $\tilde{Q}_{1}^{(2)}$ is of $A_{1}$-type, and it is resolved
by the blowing-up along the line with introducing an exceptional divisor
$E_{3}$ which is isomorphic to a $\mathbb{P}^{1}$-bundle over the
line. Also from the equation (\ref{eq:localeq-Q1}), it is easy to
see that $E_{3}$ has a section (cf. Proposition \ref{pro:blow-up-along-s-2A1}
2) ). 
\end{proof}
Let us now denote the blowing-up along the 10 lines by $\varphi_{3}:\Xspiii\rightarrow\Xspii$.
Defining $\cXsp:=\Xspiii$, we may summarize the whole process of
the blowing-ups by\[
\varphi:\;\cXsp=\Xspiii\xrightarrow[\varphi_{3}]{}\Xspii\xrightarrow[\varphi_{2}]{}\Xspi\xrightarrow[\varphi_{1}]{}\Xsp,\]
 where $\varphi:\cXsp\rightarrow\Xsp$ represents the composition. 
\begin{thm}
\label{thm:Thm-crepant-e100}For $(a,b)\in(\mathbb{C}^{*})^{2}$ with
non-vanishing discriminant (\ref{eq:discXandU}), the blowing-up $\varphi:\cXsp\rightarrow\Xsp$
is a crepant resolution and gives a smooth Calabi-Yau manifold with
the Euler number $e(\cXsp)=2(h^{1,1}(\cXsp)-h^{2,1}(\cXsp))=100.$ \end{thm}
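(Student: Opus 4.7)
The plan is to establish the three assertions --- crepancy, smoothness of $\cXsp$, and $e(\cXsp)=100$ --- separately. Smoothness is essentially already done: the last proposition preceding the theorem shows that after $\varphi_{2}$ the only remaining singularities are generic $A_{1}$ along the ten disjoint lines $Q_{i}^{(2)}$, $\tilde{Q}_{i}^{(2)}$, and these are resolved by $\varphi_{3}$ with exceptional $E_{3}$ a $\mathbb{P}^{1}$-bundle, so $\cXsp$ is smooth.

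For crepancy I verify that the discrepancy vanishes at each step. The crucial step is $\varphi_{1}$: by Proposition~\ref{pro:g1-g2} the germ at a $(3A_{1},\mathcal{U}_{1})$-point is the complete intersection $\{g_{1}=g_{2}=0\}\subset\mathbb{C}^{5}$ with each $g_{i}$ of multiplicity $2$ at the origin. Blowing up $\mathbb{C}^{5}$ at the origin with exceptional $E\cong\mathbb{P}^{4}$ gives relative canonical $4E$, while the pull-back of each hypersurface $\{g_{i}=0\}$ decomposes as $\tilde{g}_{i}+2E$ (strict transform plus $2E$); the complete-intersection adjunction therefore yields the discrepancy $K_{\Xspi}-\varphi_{1}^{*}K_{\Xsp}=(4-2-2)E=0$. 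For $\varphi_{2}$ and $\varphi_{3}$, each center is a smooth $\mathbb{P}^{1}$ along which the ambient threefold has generic $cA_{1}$ singularities (locally $xy+z^{2}=0$ times the curve direction), and blowing up such a curve is the standard crepant operation; the further degenerations of $E_{2}$ over the special points on the lines (reducible fibers and ODPs) do not affect the discrepancy computed at the generic point of the exceptional divisor.

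For the Euler number, I first compute $e(\Xsp)$ via the birational projection $\pi_{2}:\Xsp\to\Ssp$. By Proposition~\ref{pro:resol-by-pi2} this is an isomorphism outside the pentagon $C:=\bigcup_{i}q_{i}$, so
\[
e(\Xsp\setminus\pi_{2}^{-1}(C))=e(\Ssp)-e(C)=-10-5=-15,
\]
using $e(\Ssp)=-10$ from Proposition~\ref{pro:EulerS} and that $C$ is a cycle of five $\mathbb{P}^{1}$'s meeting at five points. Over each $q_{i}$ the preimage is the $\mathbb{P}^{1}\times\mathbb{P}^{1}$ of Proposition~\ref{pro:resol-by-pi2} glued along its two boundary rulings to the $\mathbb{P}^{2}$-fibers over $[e_{i}]$ and $[e_{i+1}]$, giving $e(\pi_{2}^{-1}(q_{i}))=4+2(3-2)=6$. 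Inclusion-exclusion over the five such surfaces, which pairwise share $\mathbb{P}^{2}$'s at the pentagon's vertices, yields $e(\pi_{2}^{-1}(C))=5\cdot 6-5\cdot 3=15$, whence $e(\Xsp)=0$.

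Finally I assemble the contributions of the three blow-ups via $e(\tilde{Y})=e(Y)-e(\text{center})+e(\text{exceptional})$. Step $\varphi_{1}$ contributes $10\cdot(e(E_{1})-1)=10\cdot(5-1)=40$ using $e(E_{1})=5$ from Proposition~\ref{pro:e(E1)}. Step $\varphi_{2}$ contributes $10\cdot(e(E_{2})-2)=40$ with $e(E_{2})=6$: a smooth $\mathbb{P}^{1}$-bundle over $\mathbb{P}^{1}$ has Euler number $4$, and each of the two reducible fibers of $E_{2}$ in Proposition~\ref{pro:blow-up-along-s-2A1}(2) contributes one extra. Step $\varphi_{3}$ contributes $10\cdot(4-2)=20$. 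Summing, $e(\cXsp)=0+40+40+20=100$, and the Calabi--Yau identity $e=2(h^{1,1}-h^{2,1})$ supplies the second equality. The main obstacle is the bookkeeping for $e(\Xsp)$, especially the inclusion-exclusion gluing of the $\mathbb{P}^{1}\times\mathbb{P}^{1}$'s to the $\mathbb{P}^{2}$-fibers at the vertices of the pentagon; all other Euler numbers follow directly from the structural propositions.
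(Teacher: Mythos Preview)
Your Euler-number computation is essentially the paper's: both obtain $e(\Xsp)=0$ from $e(\Ssp)=-10$ and the fibre structure of $\pi_{2}$, then add $40+40+20$ from the three blow-up steps. Your inclusion--exclusion for $e(\pi_{2}^{-1}(C))$ is more explicit than the paper's one-line $e(\Xsp)=e(\Ssp)+5\bigl(e(\mathbb{P}^{2})-1\bigr)$, but the latter is just your computation compressed via the observation that the $\mathbb{P}^{1}$-fibres over the open edges contribute nothing since $e(q_{i}\setminus\{[e_{i}],[e_{i+1}]\})=0$.

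For crepancy you take a genuinely different (and cleaner) route: the discrepancy-by-adjunction count, matching the multiplicities of $g_{1},g_{2}$ (resp.\ of $h$) against the relative canonical of the ambient blow-up of $\mathbb{C}^{5}$ at a point (resp.\ $\mathbb{C}^{4}$ along a line). The paper instead pulls the explicit residue $3$-form $\Omega(\Xsp)$ through each affine chart of each $\varphi_{i}$ and checks that it agrees with the natural residue form upstairs. Your argument is shorter; the paper's has the side benefit of exhibiting the local equations after each blow-up and hence confirming smoothness directly at the special points, whereas you rely on the irreducibility of each exceptional divisor to reduce the discrepancy check to the generic point.

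One genuine omission: you invoke ``the Calabi--Yau identity $e=2(h^{1,1}-h^{2,1})$'' and assert that $\cXsp$ is Calabi--Yau, but you never verify $h^{1}(\mathcal{O}_{\cXsp})=h^{2}(\mathcal{O}_{\cXsp})=0$. Crepancy together with $K_{\Xsp}\cong\mathcal{O}_{\Xsp}$ only gives $K_{\cXsp}\cong\mathcal{O}_{\cXsp}$. The paper supplies this step: Grauert--Riemenschneider vanishing gives $R^{i}\varphi_{*}\mathcal{O}_{\cXsp}=0$ for $i>0$, Leray then identifies $H^{i}(\mathcal{O}_{\cXsp})\cong H^{i}(\mathcal{O}_{\Xsp})$, and the Koszul resolution of $\mathcal{O}_{\Xsp}$ on $\mathbb{P}^{4}\times\mathbb{P}^{4}$ kills $H^{1}$ and $H^{2}$. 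Without this, neither the word ``Calabi--Yau'' in the theorem nor the equality $e=2(h^{1,1}-h^{2,1})$ is justified.
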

\begin{proof}
For the proof of $K_{\tilde{X}_{0}^{*}}\cong\mathcal{O}_{\tilde{X}_{0}^{*}}$,
we show the existence of a nowhere vanishing holomorphic 3-form explicitly,
although an abstract argument is possible. We first consider the blow-up,
$\varphi_{1}:\Xspi\rightarrow\Xsp$ at the origin of the local geometries
$(3A_{1},\mathcal{U}_{1})$. As before we introduce the affine coordinate
$(s,t,u,v,w_{2},\cdots,w_{5})$. We start with the standard form of
a nowhere vanishing holomorphic 3-form $\Omega(\Xsp)$ for the complete
intersection Calabi-Yau variety $\Xsp$ given in (\ref{eq:OmegaXsp}).
In this affine coordinate, we have\[
\Omega(\Xsp)|_{\mathcal{U}_{1}}=Res_{f_{1}=\cdots=f_{5}=0}\Bigl(\frac{ds\wedge dt\wedge du\wedge dv\wedge dw_{2}\wedge\dots\wedge dw_{5}}{f_{1}f_{2}f_{3}f_{4}f_{5}}\Bigr).\]
Evaluating the Jacobian $\frac{\partial(w_{2},w_{3},w_{5})}{\partial(f_{1},f_{2},f_{5})}=\frac{-a}{b^{2}(a+s)(1-at)}$,
we calculate the residue as\begin{equation}
\Omega(\Xsp)|_{\mathcal{U}_{1}}=\frac{-1}{a}Res_{g_{1}=g_{2}=0}\Bigl(\frac{ds\wedge dt\wedge du\wedge dv\wedge dw}{g_{1}g_{2}}\Bigr),\label{eq:Omega-g1-g2}\end{equation}
where $w=w_{4}$ and $g_{1,}g_{2}$ are given in Proposition \ref{pro:g1-g2}
(precisely $g_{1,}g_{2}$ here contain all higher order terms, but
this does not affect the following arguments). Consider the blow-up
$\varphi_{1}:\tilde{\mathbb{C}}^{5}\rightarrow\mathbb{C}^{5}$ at
the origin, and one of the affine coordinate $(s,\tilde{t},\tilde{u},\tilde{v,}\tilde{w})=(s,\frac{T}{S},\frac{U}{S},\frac{V}{S},\frac{W}{S}$)
with $t=\tilde{t}s,u=\tilde{u}s,v=\tilde{v}s,w=\tilde{w}s$. Then,
pulling back the 3-form, it is immediate to have\begin{equation}
\varphi_{1}^{*}\Omega(\Xsp)\Bigl|_{\mathcal{U}_{1}^{(1)}}=\frac{-1}{a}Res_{\tilde{g}_{1}=\tilde{g}_{2}=0}\Bigl(\frac{ds\wedge d\tilde{t}\wedge d\tilde{u}\wedge d\tilde{v}\wedge d\tilde{w}}{\tilde{g}_{1}\tilde{g}_{2}}\Bigr),\label{eq:3-form-local-(1)}\end{equation}
where $g_{1}=s^{2}\tilde{g}_{1},g_{2}=s^{2}\tilde{g}_{2}$ and $\tilde{g}_{1}=\tilde{g}_{2}=0$
is the defining equation of the blow-up. Up to the non-vanishing constant,
the right hand side is the holomorphic 3-form $\Omega(\Xspi)$ of
$\Xspi$. Calculations are similar for other affine coordinates, and
we see that the pull-back $\varphi_{1}^{*}\Omega(\Xsp)$ coincides
with $\Omega(\Xspi),$ i.e., $\varphi_{1}$ is crepant. The next step
$\varphi_{2}:\Xspii\rightarrow\Xspi$ has an effect on (\ref{eq:3-form-local-(1)})
as the blowing-up along the $s$-axis. Again, it is straightforward
to see that $\Omega(\Xspii)\bigl|_{\mathcal{U}_{1}^{(2)}}=\varphi_{2}^{*}\Omega(\Xspi)\bigl|_{\mathcal{U}_{1}^{(2)}}$
holds up to a non-vanishing constant on all the affine coordinates.
Doing similar calculations for the blow-up $\varphi_{3}$, we finally
verify that $\Omega(\Xspiii)\bigl|_{\mathcal{U}_{1}^{(3)}}=\varphi_{3}^{*}\Omega(\Xspii)\bigl|_{\mathcal{U}_{1}^{(3)}}$.
Thus near the 10 points of the local geometry $(3A_{1,}\mathcal{U}_{1})$,
we see that $\varphi:\cXsp\rightarrow\Xsp$ is crepant. 

For the local geometry $(2A_{1},\mathcal{U}_{2})$, since the first
blow-up $\varphi_{1}$ has no effect, we start with $\Omega(\Xspi)\bigl|_{\mathcal{U}_{2}}=\Omega(\Xsp)\bigr|_{\mathcal{U}_{2}}$.
As in the previous subsection, we introduce the affine coordinate
$(s,t,u,v,w_{2},w_{3},w_{4})$. Evaluating the Jacobian $\frac{\partial(w_{2},w_{3},w_{4},w_{5})}{\partial(f_{1},f_{2},f_{3},f_{5})}$,
we have\[
\Omega(\Xsp)\bigr|_{\mathcal{U}_{2}}=\frac{-1}{a}Res_{h=0}\Bigl(\frac{ds\wedge dt\wedge du\wedge dv}{h}\Bigr),\]
where $h$ is given in (\ref{eq:h-def}) (again, precisely $h$ should
be understood with the higher order terms). Then $\varphi_{2}$ is
the blow-up along the $s$-axis, see Proposition \ref{pro:blow-up-along-s-2A1}.
Using one of the affine coordinate of the blow-up, $(s,t,\tilde{u},\tilde{v})=(s,t,\frac{U}{T},\frac{V}{T})$
with $u=\tilde{u}t,v=\tilde{v}t$, we evaluate the pull-back as \[
\varphi_{2}^{*}\Omega(\Xspi)\bigl|_{\mathcal{U}_{2}^{(2)}}=\frac{-1}{a}Res_{\tilde{h}=0}\Bigl(\frac{ds\wedge dt\wedge d\tilde{u}\wedge d\tilde{v}}{\tilde{h}}\Bigr),\]
with $h=t^{2}\tilde{h}$. Since $\tilde{h}=0$ is the local equation
of the blow-up $\Xspii$, we see that $\Omega(\Xspii)\bigl|_{\mathcal{U}_{2}^{(2)}}=\varphi_{2}^{*}\Omega(\Xspi)\bigl|_{\mathcal{U}_{2}^{(2)}}$
up to a non-vanishing constant. The next blow-up $\varphi_{3}$ is
along the $t$-axis, and this is done locally by $(s,t,\tilde{u}',\tilde{v}')=(s,t,\frac{\tilde{U}}{\tilde{S}},\frac{\tilde{V}}{\tilde{S}})$
with $\tilde{u}=\tilde{u}'s,\tilde{v}=\tilde{v}'s.$ The local equation
of the blow-up is given by $\tilde{h}'=0$ with $\tilde{h}=s^{2}\tilde{h}'$,
and we have $\Omega(\Xspiii)\bigl|_{\mathcal{U}_{2}^{(3)}}=\varphi_{3}^{*}\Omega(\Xspii)\bigl|_{\mathcal{U}_{2}^{(3)}}$,
up to a non-vanishing constant. From the local equation $\tilde{h}'=0$,
we see that $\Xspiii=\cXsp$ is smooth. The calculations are valid
for all the 10 points of the local geometry $(2A_{1},\mathcal{U}_{2})$. 

Combined with the results for $(3A_{1,}\mathcal{U}_{1})$, we conclude
that $\varphi:\cXsp\rightarrow\Xsp$ is a crepant resolution. 

Next we show that $\tilde{X}_{0}^{*}$ is a Calabi-Yau manifold, namely,
i) $K_{\tilde{X}_{0}^{*}}\cong\mathcal{O}_{\tilde{X}_{0}^{*}}$ and
ii) $h^{1}(\mathcal{O}_{\tilde{X}_{0}^{*}})=h^{2}(\mathcal{O}_{\tilde{X}_{0}^{*}})=0$.
For the property i), we note that $K_{\Xsp}\cong\mathcal{O}_{\Xsp}$
since $\Xsp$ is a complete intersection of $5$ divisors of $(1,1)$-type
in $\mathbb{P}^{4}\times\mathbb{P}^{4}$. Then $K_{\tilde{X}_{0}^{*}}=\varphi^{*}K_{\Xsp}\cong\mathcal{O}_{\tilde{X}_{0}^{*}}$
is immediate since $\varphi$ is crepant. For the second ii), we note
that all the higher direct images $R^{i}\varphi_{*}\mathcal{O}_{\tilde{X}_{0}^{*}}\;(i>0)$
vanish by the Grauert-Riemenschneider vanishing since $\varphi$ is
crepant. Then, by the Leray spectral sequence, we have $H^{i}(\mathcal{O}_{\tilde{X}_{0}^{*}})\cong H^{i}(\mathcal{O}_{\Xsp})$$\,(i=1,2)$.
Hence we have only to show that the r.h.s vanishes. Note that $\Xsp$
is a complete intersection of $5$ divisors of $(1,1)$-type in $\mathbb{P}^{4}\times\mathbb{P}^{4}$,
and consider the following Koszul resolution of $\mathcal{O}_{\Xsp}$
:\begin{align*}
0\to\mathcal{O}_{\mathbb{P}^{4}\times\mathbb{P}^{4}}(-5,-5)\to\mathcal{O}_{\mathbb{P}^{4}\times\mathbb{P}^{4}}(-4,-4)^{\oplus4}\to\mathcal{O}_{\mathbb{P}^{4}\times\mathbb{P}^{4}}(-3,-3)^{\oplus10}\to\\
\mathcal{O}_{\mathbb{P}^{4}\times\mathbb{P}^{4}}(-2,-2)^{\oplus10}\to\mathcal{O}_{\mathbb{P}^{4}\times\mathbb{P}^{4}}(-1,-1)^{\oplus5}\to\mathcal{O}_{\mathbb{P}^{4}\times\mathbb{P}^{4}}\to\mathcal{O}_{\Xsp}\to0.\end{align*}
As for the sheaves in this exact sequence except $\mathcal{O}_{\Xsp}$,
all the cohomology groups vanish except $H^{5}(\mathcal{O}_{\mathbb{P}^{4}\times\mathbb{P}^{4}}(-5,-5))\cong\mathbb{C}$
and $H^{0}(\mathcal{O}_{\mathbb{P}^{4}\times\mathbb{P}^{4}})\cong\mathbb{C}$
by the Kodaira vanishing theorem and the Serre duality. Now it is
standard to see that $H^{2}(\mathcal{O}_{\Xsp})\cong H^{5}(\mathcal{O}_{\mathbb{P}^{4}\times\mathbb{P}^{4}}(-5,-5))\cong\mathbb{C}$,
$H^{0}(\mathcal{O}_{\Xsp})\cong H^{0}(\mathcal{O}_{\mathbb{P}^{4}\times\mathbb{P}^{4}})\cong\mathbb{C}$
and $H^{i}(\mathcal{O}_{\Xsp})\,(i=1,2)$ vanish.%
\begin{figure}
\includegraphics[scale=0.6]{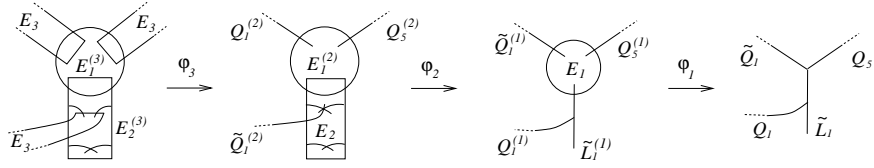}

\caption{\label{fig:E1-E2-E3}Exceptional divisors $E_{1},E_{2},E_{3}$ of
the blowing-ups $\varphi_{1},\varphi_{2},\varphi_{3}$, respectively.
Only the local geometries around the line $\tilde{L}_{1}$ in $\Xsp$
are depicted.}

\end{figure}

For the calculation of the Euler number, let us first note that we
have $e(\Xsp)=e(\Ssp)+5\times\bigl(e(\mathbb{P}^{2})-1\bigr)=-10+10=0$.
This follows form Proposition \ref{pro:EulerS} and Proposition \ref{pro:resol-by-pi2},
see also Fig. \ref{fig:FigResolution-S}. Now we note that, under
the blow-up, the origin of $(3A_{1,}\mathcal{U}_{1})$ is replaced
by the exceptional divisor $E_{1}$ with its Euler number $e(E_{1})=5.$
Similarly for $(2\, A_{1},\mathcal{U}_{2}^{(1)})$, one line is replaced
by a conic bundle $E_{2}$ over $\mathbb{P}^{1}$ with two reducible
fibers, hence $e(E_{2})=6$. Since we have 10 isomorphic geometries
for $(3A_{1,}\mathcal{U}_{1})$ and 10 for $(2\, A_{1},\mathcal{U}_{2}^{(1)})$,
taking into account the final blow-ups of 10 lines, we evaluate the
Euler number $e(\cXsp$) as \begin{align*}
e(\cXsp) & =10\times\bigl(e(E_{1})-1\bigr)+10\times\bigl(e(E_{2})-e(\mathbb{P}^{1})\bigr)+10\times\bigl(e(E_{3})-e(\mathbb{P}^{1})\bigr)\\
 & =40+10\times(6-2)+10\times2=100.\end{align*}

\end{proof}

\subsection{Hodge numbers}

Recall that the crepant resolution is obtained as the composite of
the blowing-ups $\varphi_{1}:\Xspi\rightarrow\Xsp$, $\varphi_{2}:\Xspii\rightarrow\Xspi$,
$\varphi_{3}:\Xspiii\rightarrow\Xspii$. The first blow-up $\varphi_{1}$
introduces the exceptional divisors $E_{1}(=:E_{1}^{(1)})$ in $\Xspi$
which is a del Pezzo surfaces of degree 4 with three lines are contracted
to three points. One of the three points is resolved in the proper
transform $E_{1}^{(2)}$ under $\varphi_{2}$, and the other two are
resolved in the proper transform $E_{1}^{(3)}$ under $\varphi_{3}$.
Similarly, the resolution $\varphi_{2}$ introduces the conic bundle
$E_{2}(=:E_{2}^{(2)})$ over $\mathbb{P}^{1}$ which has an ordinary
double point (over $s=0$), and $\varphi_{3}$ resolves this singularity
to have smooth ruled surface $E_{2}^{(3)}$ in $\Xspiii$. The final
blow-up $\varphi_{3}$ introduces the divisor $E_{3}=E_{3}^{(3)}$
which is a $\mathbb{P}^{1}$-bundle over $\mathbb{P}^{1}$ with a
section. Note that all these divisors $E_{1}^{(3)},E_{2}^{(3)}$ and
$E_{3}^{(3)}$ are smooth in $\Xspiii=\cXsp$. 

In this subsection, following \cite{HulekEtAl}, we apply the Weil
conjecture to determine the Hodge numbers of the resolution $\cXsp.$
We set our parameters to $a=b=1$ and consider the mod $p$ reduction
of $\cXsp.$ We write $\mathbb{F}_{p}=\mathbb{Z}/p\mathbb{Z}$. 
\begin{lem}
For all but finite primes, the reduction of $\cXsp$ modulo $p$ is
smooth over $\mathbb{F}_{p}$. \end{lem}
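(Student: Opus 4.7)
The plan is to upgrade the entire construction of $\cXsp$ at $a=b=1$ to an arithmetic family over $\mathrm{Spec}\,\mathbb{Z}$ and then invoke openness of the smooth locus. With $a=b=1$, the five defining $(1,1)$-forms in (\ref{eq:defeqsCICY}) have integer coefficients, so $\Xsp$ extends to a closed subscheme $\mathcal{X}_{0} \subset \mathbb{P}^{4}_{\mathbb{Z}} \times \mathbb{P}^{4}_{\mathbb{Z}}$, flat over $\mathrm{Spec}\,\mathbb{Z}$. The 20 singular lines $Q_{i}, \tilde{Q}_{i}, L_{i}, \tilde{L}_{i}$ of Proposition \ref{pro:Singular-tilde-Xosp-A1s} are cut out by linear forms with $\mathbb{Z}$-coefficients (as one sees by substituting $a=b=1$ into (\ref{eq:curves-paramet})), as are the 10 triple-intersection points carrying the local geometry $(3A_{1},\mathcal{U}_{1})$, the 10 lines $\tilde{L}_{i}^{(1)} \cup L_{i}^{(1)}$ blown up by $\varphi_{2}$, and the 10 lines $\tilde{Q}_{i}^{(2)} \cup Q_{i}^{(2)}$ blown up by $\varphi_{3}$.

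First, I would verify that each blow-up center, regarded as a closed subscheme of the appropriate ambient space, is reduced and flat over $\mathrm{Spec}\,\mathbb{Z}$. Since each center is a disjoint union of points or projective lines defined by integer linear equations, this is a direct check. Iterating the three blow-ups over $\mathrm{Spec}\,\mathbb{Z}$ yields a finite-type $\mathbb{Z}$-scheme $\mathcal{X} \to \mathrm{Spec}\,\mathbb{Z}$ whose base change to $\mathbb{C}$ recovers $\cXsp$.

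Next, by Theorem \ref{thm:Thm-crepant-e100} the complex fiber $\mathcal{X} \otimes_{\mathbb{Z}} \mathbb{C}$ is smooth; hence the generic fiber $\mathcal{X} \otimes_{\mathbb{Z}} \mathbb{Q}$ is smooth. The non-smooth locus of the structure morphism $\mathcal{X} \to \mathrm{Spec}\,\mathbb{Z}$ is closed by openness of smoothness, and its image in $\mathrm{Spec}\,\mathbb{Z}$ is constructible and misses the generic point. A constructible subset of $\mathrm{Spec}\,\mathbb{Z}$ avoiding the generic point consists of finitely many closed points, giving the claimed finite set of bad primes.

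The main (mild) obstacle is bookkeeping in the local charts. In Propositions \ref{pro:g1-g2} and \ref{pro:blow-up-along-s-2A1} several of the $\omega_{i}$ were eliminated using Jacobian factors such as $1/(a+s)$, $1/(1-at)$, etc. These operations are performed after inverting the corresponding polynomials in the structure sheaf of the chart and so are legitimate over $\mathbb{Z}$; they restrict only the chart on which one works and do not enlarge the set of excluded primes. The genuinely excluded primes are those dividing the specialization at $a=b=1$ of the discriminant (\ref{eq:discXandU}), namely the integers $\prod_{k,l=0}^{4}(\mu^{k}+\mu^{l}+1)$ (with $\mu^{5}=1, \mu\neq 1$), together with $p=5$ (which collapses the cyclotomic combinatorics) and any other finite set of primes arising from chart-by-chart denominators; this is a finite set, as required.
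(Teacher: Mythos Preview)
Your argument is correct and is in fact a cleaner, more formal packaging of what the paper sketches. The paper's own proof is two sentences: it observes that at $a=b=1$ the discriminant (\ref{eq:discXandU}) specializes to the integer $3\times 11^{3}$, so smoothness on the torus part $(\mathbb{C}^{*})^{4}\times(\mathbb{C}^{*})^{4}$ can fail only at $p\mid 3\cdot 11$, and then remarks that the exceptional divisors $E_{1},E_{2}$ become smooth surfaces $E_{1}^{(3)},E_{2}^{(3)}$ in $\cXsp$, so only finitely many further primes can be bad. Your spreading-out argument over $\mathrm{Spec}\,\mathbb{Z}$ together with openness of the smooth locus subsumes both of these observations at once, without needing to compute the discriminant value or analyze the exceptional divisors separately. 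The trade-off is that the paper's version is more explicit about which primes are excluded (and this matters later, when one must check that $p=59,61,71,73,89,97$ are good primes for the inequality (\ref{eq:Np-ineq})), whereas your version gives finiteness cleanly but leaves the actual bad set implicit. One small remark: since $\mathcal{X}\to\mathrm{Spec}\,\mathbb{Z}$ is projective (iterated blow-ups of a closed subscheme of $\mathbb{P}^{4}_{\mathbb{Z}}\times\mathbb{P}^{4}_{\mathbb{Z}}$), the image of the non-smooth locus is in fact closed, not merely constructible, which slightly streamlines your last step.
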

\begin{proof}
The smoothness of $\cXsp$ in the tori $(\mathbb{C}^{*})^{4}\times(\mathbb{C}^{*})^{4}$
follows from the discriminant (in Proposition \ref{pro:dis-X-S-H})
$dis(\Xsp|_{(\mathbb{C}^{*})^{4}})=3\times11^{3}$ for $a=b=1.$ The
exceptional divisors $E_{1},E_{2}$ of the blowing-ups $\varphi_{1}$
and $\varphi_{2}$, respectively, are blown-up to smooth surfaces
$E_{1}^{(3)}$ and $E_{2}^{(3)}$ in $\cXsp$, hence the resolution
$\cXsp$ is smooth over $\mathbb{F}_{p}$ except finite primes $p$. 
\end{proof}
Let $\cXsp(\mathbb{F}_{p})$ be the set of points in $\cXsp$ which
are rational over $\mathbb{F}_{p}.$ We use the Lefschetz fixed point
formula due to Grothendieck, \begin{equation}
\#\cXsp(\mathbb{F}_{p})=1-t_{1}+t_{2}-t_{3}+t_{4}-t_{5}+t_{6},\label{eq:Np-fixed-pt-formula}\end{equation}
with $t_{j}=\text{tr (\text{Frob}}_{p}^{*}\bigl|H_{\acute{e}t}^{j}(\cXsp,\mathbb{Q}_{\ell}))$
and $\text{Frob}_{p}:\cXsp\rightarrow\cXsp$ the Frobenius morphism.
Since $\cXsp$ is a Calabi-Yau threefold, we have $t_{0}=1,t_{1}=t_{5}=0,t_{6}=p^{3}$.
By the Weil conjecture (see \cite[Appendix C]{Har} for example),
the eigenvalues of $\text{Frob}_{p}$ on $H_{\acute{e}t}^{j}(\cXsp,\mathbb{Q}_{\ell})$
are algebraic integers, which do not dependent on $\ell,$ with absolute
values $p^{j/2}$. Also, by the Weil conjecture again, $t_{j}$'s
are (ordinary) integers and satisfy $|t_{j}|\leq b_{j}(\cXsp)\; p^{j/2}$.
We derive the following property following the arguments in \cite[Prop. 2.4]{HulekEtAl}
made for the Barth-Nieto quintic.
\begin{prop}
\label{pro:mod-p-H2}For every good prime $p$, all eigenvalues of
$\text{Frob}_{p}$ on $H_{\acute{e}t}^{2}(\cXsp,\mathbb{Q}_{\ell})$
are equal to $p$. \end{prop}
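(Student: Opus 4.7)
The strategy follows the one used by Hulek--Spandaw for the Barth--Nieto quintic \cite{HulekEtAl}: exhibit enough algebraic divisor classes defined over $\mathbb{F}_p$ to fill out the entire $52$-dimensional $H^2_{\acute{e}t}(\cXsp,\mathbb{Q}_\ell)$. For any divisor $D$ defined over $\mathbb{F}_p$, the cycle-class map sends $[D]$ to a Frobenius-fixed element of $H^2_{\acute{e}t}(\cXsp,\mathbb{Q}_\ell(1))$, equivalently to a Frobenius eigenvector in $H^2_{\acute{e}t}(\cXsp,\mathbb{Q}_\ell)$ with eigenvalue exactly $p$. Since $b_2(\cXsp) = h^{1,1}(\cXsp) = 52$ (by Main Result 1, transferred to $\ell$-adic cohomology via smooth proper base change at any good prime), producing $52$ linearly independent such classes immediately forces every eigenvalue on $H^2$ to be $p$.

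The key observation is that at the symmetric point $a=b=1$, every geometric ingredient of the resolution $\varphi=\varphi_3\circ\varphi_2\circ\varphi_1$ is defined over $\mathbb{Q}$: the 20 singular lines $Q_i,\tilde Q_i,L_i,\tilde L_i$ parametrized in \eqref{eq:curves-paramet}, the 10 points of type $(3A_1,\mathcal{U}_1)$, the intersection $(2A_1,\mathcal{U}_2)$ centers, and the reducible fibers of the conic bundles $E_2$ all have integer coordinates. Consequently the three blow-ups, together with all their exceptional divisors and strict transforms, descend to schemes over $\mathbb{Z}[1/N]$ for some $N$, and every component of every exceptional locus is $\mathbb{F}_p$-rational for every good $p$. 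I then assemble $\mathbb{F}_p$-rational divisor classes from: (a) the two hyperplane pullbacks $H_1,H_2$ from $\mathbb{P}^4\times\mathbb{P}^4$; (b) the $10+10+10$ principal exceptional divisors $E_1^{(3)},E_2^{(3)},E_3^{(3)}$ introduced by the three stages of $\varphi$; (c) the proper transforms of the $\pi_k$-exceptional divisors $\pi_k^{-1}([e_i])\cong\mathbb{P}^2$ and $\pi_k^{-1}(q_i)\cong\mathbb{P}^1\times\mathbb{P}^1$ already present in $\Xsp$; and (d) the further irreducible components created when the three nodes of each del Pezzo $E_1$ are resolved by $\varphi_2,\varphi_3$ and when each reducible fiber of the conic bundle $E_2$ is split by $\varphi_3$. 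A bookkeeping argument parallel to the Euler-number count in the proof of Theorem~\ref{thm:Thm-crepant-e100} should produce the required $52$ classes.

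Linear independence in $H^2_{\acute{e}t}$ can be verified numerically: by the Lefschetz $(1,1)$-theorem for the complex Calabi--Yau threefold $\cXsp_{\mathbb{C}}$, one has $\mathrm{rk}\,\mathrm{Pic}(\cXsp_{\mathbb{C}})\otimes\mathbb{Q}_\ell = h^{1,1}=52$, and the above divisors can be tested against explicit curves (the proper transforms of $Q_i,\tilde Q_i,L_i,\tilde L_i$, the rulings of the $E_2^{(3)}$, and the $(-2)$-curves resolving the nodes of $E_1$) whose intersection matrix is computable stage-by-stage from the local models in Propositions~\ref{pro:g1-g2} and~\ref{pro:blow-up-along-s-2A1}.

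The main obstacle is exactly this linear-independence bookkeeping across the three blow-ups. Since each stage blows up a singular subvariety of a singular variety, the standard smooth blow-up formula for cohomology does not apply directly; one must track which components of the strict transforms of $E_1$ and $E_2$ contribute genuinely new classes in $H^2$ versus classes already accounted for by pull-backs from the previous stage. Once this combinatorial step is completed, the $52$ algebraic $\mathbb{F}_p$-rational classes exhaust $H^2$ and the proposition follows.
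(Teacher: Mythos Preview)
Your proposal has a genuine circularity problem. You invoke $b_2(\cXsp)=h^{1,1}(\cXsp)=52$ ``by Main Result 1,'' but in the paper Main Result 1 packages together Theorem~\ref{thm:Thm-crepant-e100} (which gives only $e(\cXsp)=100$ and the Calabi--Yau property) and Theorem~\ref{thm:mirror-Hodge} (which gives the Hodge numbers). The proof of Theorem~\ref{thm:mirror-Hodge} is \emph{downstream} of Proposition~\ref{pro:mod-p-H2}: the proposition is used to derive the inequality (\ref{eq:Np-ineq}), and only then is $h^{2,1}=2$ (hence $h^{1,1}=52$) extracted from the point counts. From $e=100$ alone you only know $h^{1,1}-h^{2,1}=50$, not $b_2$. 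So even if you succeed in exhibiting $52$ independent $\mathbb{F}_p$-rational divisor classes, you cannot conclude they span $H^2_{\acute{e}t}$ without an independent upper bound on $b_2$---and that is precisely the quantity the paper is using this proposition to compute.

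The paper's argument avoids this trap by never needing $b_2$. It proceeds inductively through the tower $\Xsp\leftarrow\Xspi\leftarrow\Xspii\leftarrow\Xspiii=\cXsp$ via Leray spectral sequences. The base case for $\Xsp$ comes from the Lefschetz hyperplane theorem for \'etale cohomology (Lemma~\ref{lem:lefschets-Fp} arranges the five linear sections so that $\mathrm{Sing}(X_{i-1})\subset X_i$, which is what the singular version of Lefschetz requires). Each blow-up step then reduces to understanding $R^2{\varphi_k}_*\mathbb{Q}_\ell$: for $\varphi_1$ and $\varphi_3$ this is the standard argument of \cite[Lemma~2.16]{HulekEtAl}, while for $\varphi_2$ the exceptional divisor is a conic bundle rather than a $\mathbb{P}^1$-bundle, and Lemma~\ref{lem:conic} handles this by contracting $E_2^{(3)}$ to a genuine $\mathbb{P}^1$-bundle with a section defined over $\mathbb{Q}$. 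At no point does the argument count classes or appeal to the value of $b_2$. Your approach could in principle be salvaged if you gave a direct topological computation of $b_2(\cXsp)$ from the resolution data---but as you yourself note, the blow-ups are along singular loci of singular varieties, so this is nontrivial, and once you set up the machinery to do it carefully you are essentially redoing the Leray spectral sequence analysis anyway.
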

\begin{proof}
Due to Lemma \ref{lem:lefschets-Fp} below, we can use the Lefschetz
hyperplane theorem \cite[Corollary I.9.4]{FK} and have the claimed
property for $\Xsp$. Then from the Leray spectral sequence associated
to $\varphi_{1}:\Xspi\rightarrow\Xsp,$ we obtain the claimed property
for $\Xspi$ (see \cite[Lemma 2.16]{HulekEtAl}). To go further to
$\Xspii$, we use the Leray spectral sequence associated to $\varphi_{2}:\Xspii\rightarrow\Xspi$,\[
E_{2}^{j,2-j}=H_{\acute{e}t}^{j}(\Xspi,R^{2-j}{\varphi_{2}}_{*}(\mathbb{Q}_{\ell}))\Rightarrow H_{\acute{e}t}^{2}(\Xspii,\mathbb{Q}_{\ell}),\]
where $E_{2}^{2,0}=H_{\acute{e}t}^{2}(\Xspi,\mathbb{Q}_{\ell}),\; E_{2}^{1,1}=0$
and $E_{2}^{0,2}=H_{\acute{e}t}^{2}(\Xspi,R^{2}{\varphi_{2}}_{*}(\mathbb{Q}_{\ell}))$.
Due to Lemma \ref{lem:conic} below, we have the claimed property
for $E_{2}^{0,2}$ as well as $E_{2}^{2,0}$, hence for $H_{\acute{e}t}^{2}(\Xspii,\mathbb{Q}_{\ell})$,
too. To go from $\Xspii$ to $\Xspiii=\cXsp$, we can use the argument
in {[}\textit{ibid}, Lemma 2.16{]} since the exceptional divisor $E_{3}(=E_{3}^{(3)})$
of $\varphi_{3}:\Xspiii\rightarrow\Xspii$ is a $\mathbb{P}^{1}$-bundle
over $\mathbb{P}^{1}$. Thus we obtain the claimed property for $H_{\acute{e}t}^{2}(\cXsp,\mathbb{Q}_{\ell})$.\end{proof}
\begin{lem}
\label{lem:lefschets-Fp}Consider $\Xsp$ as the linear section $(\mathbb{P}^{4}\times\mathbb{P}^{4})\cap H_{1}\cap...\cap H_{5}$
in $\mathbb{P}^{24}$ by the Segre embedding with $H_{k}$ representing
the defining equation $f_{k}=\,^{t}zA_{k}w\,(k=1,...,5)$. Then for
all but finite primes $p$, there exists a sequence linear forms $H_{1}',H_{2}',...,H_{5}'$
over $\mathbb{Z}$ with the following properties over $\mathbb{F}_{p}$:
1) Sing$(X_{i-1})$$\subset X_{i}$ holds for $i=2,..,5,$ where $X_{i}=(\mathbb{P}^{4}\times\mathbb{P}^{4})\cap H_{1}'\cap..\cap H_{i}'$
and Sing$(X_{i-1})$ is the singular loci of $X_{i-1}$. 2) $X_{5}=\Xsp$. \end{lem}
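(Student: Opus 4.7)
The plan is to build all five forms $H_i'$ as suitable $\mathbb{Z}$-linear combinations of the original $f_1,\dots,f_5$. Write $V:=\mathrm{span}_{\mathbb{Q}}(f_1,\dots,f_5)$ for the $5$-dimensional linear system of $(1,1)$-forms on $\mathbb{P}^4\times\mathbb{P}^4$ that cuts out $\tilde{X}_{0}^{sp}$. Because any basis of $V$ has the same common zero scheme, property $2)$ ($X_5=\tilde{X}_{0}^{sp}$) is automatic for every basis; all the content is in property $1)$.

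First I would work over $\mathbb{Q}$. By iterated Bertini applied to the linear system $V$ on the smooth Segre variety $\mathbb{P}^4\times\mathbb{P}^4\subset\mathbb{P}^{24}$, for a Zariski-generic choice of ordered basis $H_1',\dots,H_5'$ of $V$ the successive partial intersections $X_i=(\mathbb{P}^4\times\mathbb{P}^4)\cap H_1'\cap\dots\cap H_i'$ satisfy
\[
\mathrm{Sing}(X_i)\;\subset\;\mathrm{Bs}(V)\;=\;\tilde{X}_{0}^{sp}\qquad(i=1,\dots,4).
\]
Indeed, at step $i$ one applies Bertini on the smooth locus $X_{i-1}^{\mathrm{sm}}$ using the restriction $V|_{X_{i-1}^{\mathrm{sm}}}$; since $\dim X_{i-1}=9-i\geq 5>3=\dim\tilde{X}_{0}^{sp}$ for $i\leq 4$, this restriction is non-trivial and Bertini gives smoothness of $X_i$ on $X_{i-1}^{\mathrm{sm}}\setminus\mathrm{Bs}(V)$, while points of $\mathrm{Sing}(X_{i-1})$ already lie in $\mathrm{Bs}(V)$ by induction. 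Since the flag is decreasing and $X_{i+1}\supset X_5=\tilde{X}_{0}^{sp}=\mathrm{Bs}(V)$, the inclusion $\mathrm{Sing}(X_i)\subset X_{i+1}$ demanded in $1)$ follows.

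Next I would descend the chosen basis to $\mathbb{Z}$. The genericity above is encoded by the non-vanishing of finitely many polynomial expressions $P_\alpha$ in the coefficients of $H_1',\dots,H_5'$. Picking a $\mathbb{Q}$-rational point in the Zariski-open good locus and clearing denominators produces an integral basis for which every $P_\alpha(H_1',\dots,H_5')$ is a non-zero integer; for every prime $p$ not dividing any of these integers (and not in the exceptional set of the previous lemma) the reduction mod $p$ still satisfies the same open conditions, so properties $1)$ and $2)$ persist over $\mathbb{F}_p$.

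The main obstacle I foresee is the passage to characteristic $p$: classical Bertini-smoothness assumes characteristic zero. However, one only needs the containment of singular loci, not smoothness of the strata themselves. The clean way is to spread the flag $X_\bullet$ out as a scheme over $\mathrm{Spec}\,\mathbb{Z}[\tfrac{1}{N}]$ for a suitable $N$, then invoke constructibility of the singular locus and generic flatness of the fiberwise singular scheme: the set of primes where the inclusion $\mathrm{Sing}(X_{i-1})\subset X_i$ could break down is constructible in $\mathrm{Spec}\,\mathbb{Z}$ and does not contain the generic point (by the characteristic-$0$ argument above), hence is finite. Enlarging $N$ by the primes dividing any $P_\alpha$ and those appearing in the discriminant (\ref{eq:discXandU}) at $a=b=1$ gives the claimed finite set of bad primes.
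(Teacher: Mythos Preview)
Your argument is correct and reaches the same conclusion, but by a genuinely different route from the paper. The paper's proof is entirely explicit: it simply writes down the specific integral combinations
\[
H_1',\dots,H_5' \;\longleftrightarrow\; f_1+f_3+f_5,\; f_2+f_4,\; 3f_2+f_5,\; 5f_3+f_4,\; f_5
\]
and asserts (on the strength of a direct computer check over $\mathbb{C}$) that this particular flag satisfies $\mathrm{Sing}(X_{i-1})\subset X_i$; the reduction to $\mathbb{F}_p$ is then the same spreading-out step you describe. You instead give a pure existence proof: iterated Bertini over $\mathbb{Q}$ forces $\mathrm{Sing}(X_i)\subset\mathrm{Bs}(V)=\tilde{X}_0^{sp}\subset X_{i+1}$ for generic ordered bases, a $\mathbb{Q}$-point in the good locus is chosen, and constructibility over $\mathrm{Spec}\,\mathbb{Z}[1/N]$ handles almost all primes. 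The paper's approach is effective and checkable (one could in principle list the bad primes from the explicit resultants), while yours is conceptual and would work verbatim for any complete intersection in a smooth ambient variety. One small note: your third paragraph, phrasing the good locus as the non-vanishing set of finitely many $P_\alpha$ and arguing directly from that, is not quite solid as written, since the fiberwise condition ``$\mathrm{Sing}(X_{i-1})\subset X_i$'' is a priori only constructible in the parameter, and the $\mathbb{F}_p$-good locus need not be the reduction of the $\mathbb{Q}$-good locus. You correctly flag this and supply the right repair in your final paragraph via spreading out and Chevalley constructibility, which is the argument that actually carries the weight.
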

\begin{proof}
Since $f_{k}$'s are defined over $\mathbb{Z}$, it suffices to have
the properties 1) and 2) over $\mathbb{C}$. We can verify explicitly
that the sequence $H_{1}',H_{2}',...,H_{5}'$ corresponding to $f_{1}+f_{3}+f_{5},f_{2}+f_{4},3f_{2}+f_{5},5f_{3}+f_{4},f_{5}$
satisfies the desired properties over $\mathbb{C}$. \end{proof}
\begin{lem}
\label{lem:conic} All the eigenvalues of $\mathrm{Frob}_{p}$ on
$H_{\acute{e}t}^{0}(\Xspi,R^{2}{\varphi_{2}}_{*}(\mathbb{Q}_{l}))$
are equal to $p$ for every good prime $p$. 
\end{lem}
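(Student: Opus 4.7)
The plan is to apply proper base change to reduce the question to a fiber-by-fiber analysis of $\varphi_2$, and then to verify that the irreducible components of every reducible fiber are individually defined over the prime field, so that $\mathrm{Frob}_p$ acts diagonally by multiplication by $p$ on each $\mathbb{Q}_\ell(-1)$ summand of the stalks.

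First I would invoke the proper base change theorem for the proper morphism $\varphi_2$ to identify $(R^2 {\varphi_2}_* \mathbb{Q}_\ell)_{\bar{x}} \cong H^2_{\acute{e}t}(\varphi_2^{-1}(\bar x), \mathbb{Q}_\ell)$ at every geometric point $\bar x$ of $\Xspi$. Since $\varphi_2$ blows up the ten lines $\tilde L_i^{(1)}, L_i^{(1)}$ and is an isomorphism elsewhere, the sheaf $R^2 {\varphi_2}_* \mathbb{Q}_\ell$ is supported on these lines. Over a point of a line that is not one of the two special points, the fiber is a smooth $\mathbb{P}^1$ and the stalk is $\mathbb{Q}_\ell(-1)$; over the $(2A_1,\mathcal U_2^{(1)})$ center (the $s=0$ point of Proposition \ref{pro:blow-up-along-s-2A1}) and the $(\partial A_1,\mathcal U_3^{(1)})$ center (the $s=\infty$ point), the fiber is a reducible conic consisting of two lines, giving $\mathbb{Q}_\ell(-1)^{\oplus 2}$. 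Proposition \ref{pro:blow-up-along-s-2A1} 2) moreover tells us that the conic bundle has smooth fiber over the intersection $s=-1$ with $E_1$, so no further special contribution enters.

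Next I would read off the Frobenius action at the special stalks from the explicit local equations. At $s=0$, equation (\ref{eq:E2-along-sTUV}) degenerates to $b^5UV=0$, whose components $\{U=0\}$ and $\{V=0\}$ are plainly defined over the prime field; at $s=\infty$, the local equation established inside the proof of Proposition \ref{pro:blow-up-along-s-2A1} degenerates to $abXZ=0$, whose components $\{X=0\}$ and $\{Z=0\}$ are likewise defined over the prime field. Both factorizations persist modulo every good prime $p$, so $\mathrm{Frob}_p$ preserves each branch and acts on the associated $\mathbb{Q}_\ell(-1)$ by multiplication by $p$. Consequently $R^2{\varphi_2}_* \mathbb{Q}_\ell$ is the Tate twist by $\mathbb{Q}_\ell(-1)$ of a constructible $\mathbb{Q}_\ell$-sheaf carrying trivial Galois action, hence $\mathrm{Frob}_p$ acts on $H^0_{\acute{e}t}(\Xspi, R^2 {\varphi_2}_* \mathbb{Q}_\ell)$ by the scalar $p$.

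The main obstacle will be the step in the previous paragraph, namely checking that the reducible fibers of the conic bundle $E_2$ have components rational over the prime field at \emph{every} one of the ten lines and at \emph{both} endpoints ($s=0, \infty$) of each. In practice the cyclic $\mathbb{Z}/5$ symmetry of the defining equations (\ref{eq:defeqsCICY}) and the $z \leftrightarrow w$ symmetry between $\tilde L_i^{(1)}$ and $L_i^{(1)}$ reduce the verification to the two local models already worked out explicitly in Proposition \ref{pro:blow-up-along-s-2A1}, and one need only observe that the binomial factorizations displayed there have integer coefficients.
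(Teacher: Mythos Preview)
Your approach is correct and takes a genuinely different route from the paper. You work stalk-by-stalk via proper base change, checking that at each of the two special points on every line the reducible conic fiber factors over $\mathbb{F}_p$ (the explicit degenerations $b^5UV=0$ at $s=0$ and $abXZ=0$ at $s=\infty$), so that Frobenius preserves each component and acts by $p$ on every $\mathbb{Q}_\ell(-1)$ summand. The paper instead argues globally: it resolves the ODP of $E_2^{(2)}$ to get $E_2^{(3)}$, contracts three $(-1)$-curves to reach a genuine $\mathbb{P}^1$-bundle $E_2'\to\mathbb{P}^1$ with a section defined over $\mathbb{Q}$, deduces that $H^2_{\acute{e}t}(E_2',\mathbb{Q}_\ell)$ is spanned by $\mathbb{Q}$-rational divisor classes, and then transports this conclusion back through two Grothendieck spectral sequences. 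Your argument is more elementary and avoids the auxiliary surfaces $E_2^{(3)}$ and $E_2'$ entirely; the paper's packages all the rationality into a single global datum (the section) rather than inspecting each degenerate fiber.

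One step in your write-up should be tightened: the implication ``trivial Galois action on all stalks $\Rightarrow$ Frobenius acts by the scalar $p$ on $H^0$'' is not automatic in general (think of the constant sheaf on $\mathrm{Spec}\,\mathbb{F}_{p^2}$, where Frobenius swaps the two geometric points). What makes it work here is that the ten lines are pairwise disjoint and each defined over $\mathbb{F}_p$, the two special points on each line are $\mathbb{F}_p$-rational, and on the open complement the sheaf is the \emph{constant} sheaf $\mathbb{Q}_\ell(-1)$. A clean way to finish is the excision sequence $0\to j_!j^*\mathcal G\to\mathcal G\to i_*i^*\mathcal G\to 0$ on each line: since $H^0_c$ of the constant sheaf on the open curve vanishes, $H^0(\mathcal G)$ injects Frobenius-equivariantly into the direct sum of the stalks at the special points, where your component-rationality check then gives the eigenvalue $p$.
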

\begin{proof} Set $\rho_{1}:=\varphi_{2}|_{E_{2}^{(2)}}$. First
note that $H_{\acute{e}t}^{0}(\Xspi,R^{2}{\varphi_{2}}_{*}\mathbb{Q}_{l})\simeq H_{\acute{e}t}^{0}(E_{2}^{(2)},R^{2}{\rho_{1}}_{*}\mathbb{Q}_{l})$.
Let $\rho_{2}\colon E_{2}^{(3)}\to E_{2}^{(2)}$ be the blow-up of
the ordinary double point of $E_{2}^{(2)}$ on the fiber of $E_{2}^{(2)}\to\mathbb{P}^{1}$
over $s=0$ (see Proposition \ref{pro:blow-up-along-s-2A1}). Denote
by $\rho$ the composite of $\rho_{2}$ and $\rho_{1}$. We have the
spectral sequence: \begin{equation}
E_{2}^{i,j}:=R^{i}{\rho_{1}}_{*}(R^{j}{\rho_{2}}_{*}\mathbb{Q}_{l})\Longrightarrow R^{i+j}\rho_{*}(\mathbb{Q}_{l}).\label{eq:spec1}\end{equation}
 By standard calculations, we have 
\begin{itemize}
\item $E_{2}^{2,0}=R^{2}{\rho_{1}}_{*}(\mathbb{Q}_{l}$). 
\item Since the nontrivial fiber of $\rho_{2}$ is a $\mathbb{P}^{1}$,
we have $R^{1}{\rho_{2}}_{*}(\mathbb{Q}_{l})=0$. Hence $E_{2}^{1,1}=0$. 
\item $E_{2}^{0,2}\simeq H_{\acute{e}t}^{2}(\mathbb{P}^{1},\mathbb{Q}_{l})$,
where $\mathbb{P}^{1}$ is the nontrivial fiber of $\rho_{2}$, and
we consider $H_{\acute{e}t}^{2}(\mathbb{P}^{1},\mathbb{Q}_{l})$ as
a skyscraper sheaf supported on $s=0$. 
\end{itemize}
Then, by standard properties of the spectral sequence, we have the
following exact sequence: \[
0\to R^{2}{\rho_{1}}_{*}(\mathbb{Q}_{l})\to R^{2}{\rho}_{*}(\mathbb{Q}_{l})\to H_{\acute{e}t}^{2}(\mathbb{P}_{1},\mathbb{Q}_{l})\to0.\]
 Therefore, to show the claimed property for $H_{\acute{e}t}^{0}(\Xspi,R^{2}{\varphi_{2}}_{*}(\mathbb{Q}_{l}))$,
we have only to show that the claimed property holds for $H_{\acute{e}t}^{0}(E_{2}^{(2)},R^{2}{\rho}_{*}(\mathbb{Q}_{l}))$.

Let $\rho_{3}\colon E_{2}^{(3)}\to E_{2}'$ be the contraction of
three $(-1)$-curves on $E_{2}^{(3)}$, two of which are the strict
transforms of the components of the fiber of $E_{2}^{(2)}\to\mathbb{P}^{1}$
over $s=0$, and the remaining one of which is one component of the
fiber of $E_{2}^{(2)}\to\mathbb{P}^{1}$ over $s=\infty$ (see Proposition
\ref{pro:blow-up-along-s-2A1}). Denote by $\rho_{4}\colon E_{2}'\to\mathbb{P}^{1}$
the natural induced morphism, which defines a $\mathbb{P}^{1}$-bundle
structure. We have the spectral sequence: \begin{equation}
R^{i}{\rho_{4}}_{*}(R^{j}{\rho_{3}}_{*}(\mathbb{Q}_{l}))\Longrightarrow R^{i+j}\rho_{*}(\mathbb{Q}_{l}).\label{eq:spec2}\end{equation}
 By similar considerations to those for (\ref{eq:spec1}), we have
the following exact sequence: \[
0\to R^{2}{\rho_{4}}_{*}(\mathbb{Q}_{l})\to R^{2}{\rho}_{*}(\mathbb{Q}_{l})\to H_{\acute{e}t}^{2}(\mathbb{P}^{1},\mathbb{Q}_{l})^{\oplus3}\to0.\]
 Note that all eigenvalues of $\mathrm{Frob}_{p}$ on $H_{\acute{e}t}^{2}(\mathbb{P}_{1},\mathbb{Q}_{l})^{\oplus3}$
are equal to $p$. Therefore, to show that the claimed property holds
for $H_{\acute{e}t}^{0}(E_{2}^{(2)},R^{2}{\rho}_{*}(\mathbb{Q}_{l}))$,
we have only to show that the claimed property holds for $H_{\acute{e}t}^{0}(E_{2}',R^{2}{\rho_{4}}_{*}(\mathbb{Q}_{l}))$.

Now we consider the Leray spectral sequence: \[
H_{\acute{e}t}^{i}(\mathbb{P}^{1},R^{j}{\rho_{4}}_{*}(\mathbb{Q}_{l}))\Longrightarrow H_{\acute{e}t}^{i+j}(E_{2}',\mathbb{Q}_{l}).\]
 Since $\rho_{4}$ is a $\mathbb{P}^{1}$-bundle, we have $R^{1}{\rho_{4}}_{*}(\mathbb{Q}_{l})=0$.
Therefore, in a similar way as above, we have the following exact
sequence: \[
0\to H_{\acute{e}t}^{2}(\mathbb{P}^{1},\mathbb{Q}_{l})\to H_{\acute{e}t}^{2}(E_{2}',\mathbb{Q}_{l})\to H_{\acute{e}t}^{0}(E_{2}',R^{2}{\rho_{4}}_{*}(\mathbb{Q}_{l}))\to0.\]
 Since $\rho_{1}\colon E_{2}^{(2)}\to\mathbb{P}^{1}$ has a section
defined over $\mathbb{Q}$, due to 2) in Proposition \ref{pro:blow-up-along-s-2A1}
applied to $a,b\in\mathbb{Z}$, so does $\rho_{4}\colon E_{2}'\to\mathbb{P}^{1}$.
Therefore $H_{\acute{e}t}^{2}(E_{2}',\mathbb{Q}_{l})$ is generated
by the classes of divisors defined over $\mathbb{Q}$, which are a
section and a fiber. Hence all eigenvalues of $\mathrm{Frob}_{p}$
on $H_{\acute{e}t}^{2}(E_{2}',\mathbb{Q}_{l})$ are equal to $p$
\cite{vanG}, and then the claimed property holds for $H_{\acute{e}t}^{0}(E_{2}',R^{2}{\rho_{4}}_{*}(\mathbb{Q}_{l}))$. 

\end{proof}

\begin{figure}
\includegraphics[scale=0.5]{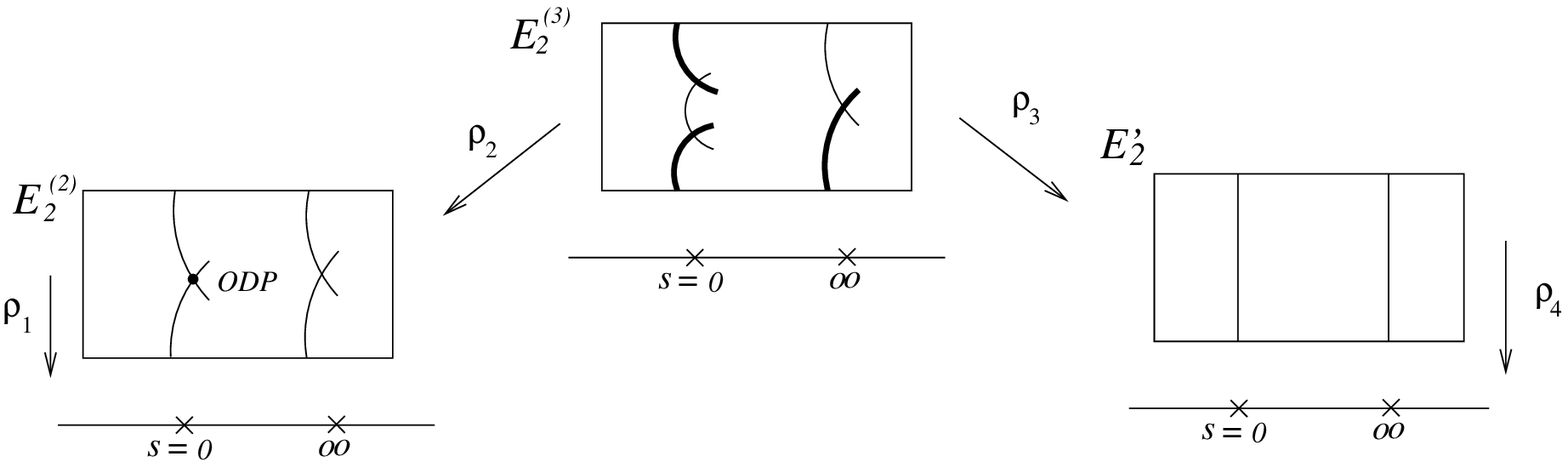}

\caption{}

\end{figure}
From the Proposition \ref{pro:mod-p-H2} and the fixed point formula
(\ref{eq:Np-fixed-pt-formula}), we have

\begin{equation}
\bigl|1+(50+h^{21})(p+p^{2})+p^{3}-\#\cXsp(\mathbb{F}_{p})\bigr|\leq(2+2\, h^{21})p^{\frac{3}{2}},\label{eq:Np-ineq}\end{equation}
where we have used $b_{2}=b_{4}$ by the Poincar\'e duality and also
expressed $b_{2}=h^{11}=(50+h^{21})$ from $e(\cXsp)=2(h^{11}-h^{21})=100.$
\begin{prop}
The number of rational points $\#\cXsp(\mathbb{F}_{p})$ is given
by \[
\#\cXsp(\mathbb{F}_{p})=\#\Ssp(\mathbb{F}_{p})+10\times\#E_{1}(\mathbb{F}_{p})+30\, p^{2}+40\, p-10,\]
where $\#\Ssp(\mathbb{F}_{p})$ and $\#E_{1}(\mathbb{F}_{p})$ are
the numbers of rational points over $\mathbb{F}_{p}$ for the determinantal
quintic (\ref{eq:defEqS}) and the singular del Pezzo surface in Proposition
\ref{pro:g1-g2}, respectively, with $a=b=1$. \end{prop}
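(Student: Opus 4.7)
The plan is to build up $\#\cXsp(\mathbb{F}_p)$ in four stages, following the chain $\Ssp \xleftarrow{\pi_2} \Xsp \xleftarrow{\varphi_1} \Xspi \xleftarrow{\varphi_2} \Xspii \xleftarrow{\varphi_3} \Xspiii = \cXsp$. At each stage the blow-up is along a center $Z$ whose locus and exceptional divisor $E$ are explicitly defined over $\mathbb{Q}$ when $a=b=1$, so for every good prime $p$ we have the clean stratification formula
\[
\#\text{(blow-up)}(\mathbb{F}_p) = \#(\text{base})(\mathbb{F}_p) - \#Z(\mathbb{F}_p) + \#E(\mathbb{F}_p).
\]
Before doing anything else, I would verify (by inspection of the explicit coordinate descriptions given in Section~\ref{sec:A-Crepant-resolution}) that the 10 centers of $\varphi_1$, the 10 lines blown up by $\varphi_2$, the 10 lines blown up by $\varphi_3$, and each exceptional divisor, are defined and split over $\mathbb{F}_p$ for $a=b=1$; likewise the 5 coordinate lines $q_i$ and the fibers $\pi_2^{-1}([e_i])\cong\mathbb{P}^2$, $\overline{\pi_2^{-1}(q_i\setminus\{[e_i],[e_{i+1}]\})}\cong\mathbb{P}^1\times\mathbb{P}^1$ of Proposition~\ref{pro:resol-by-pi2}.

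Given these rationality facts, I would compute the four contributions in order. First, using Proposition~\ref{pro:resol-by-pi2} together with the pentagonal incidence of the $q_i$'s, a direct inclusion-exclusion yields
\[
\#\Xsp(\mathbb{F}_p) \;=\; \#\Ssp(\mathbb{F}_p) \;-\; \#\!\bigl(\textstyle\bigcup_i q_i\bigr)(\mathbb{F}_p) \;+\; \#\pi_2^{-1}(\textstyle\bigcup_i q_i)(\mathbb{F}_p) \;=\; \#\Ssp(\mathbb{F}_p) + 10p^2,
\]
where $\#(\bigcup_i q_i)(\mathbb{F}_p)=5p$ and the preimage decomposes as five $\mathbb{P}^1\times\mathbb{P}^1$'s each glued to two $\mathbb{P}^2$'s along a rational $\mathbb{P}^1$. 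Second, $\varphi_1$ replaces $10$ rational points by $10$ copies of $E_1$, so
\[
\#\Xspi(\mathbb{F}_p) \;=\; \#\Xsp(\mathbb{F}_p) \;+\; 10\,\#E_1(\mathbb{F}_p) \;-\; 10.
\]

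Third, $\varphi_2$ blows up the $10$ lines $\tilde{L}_i^{(1)}, L_i^{(1)}\cong\mathbb{P}^1$ and replaces each by the conic bundle $E_2\to\mathbb{P}^1$ of Proposition~\ref{pro:blow-up-along-s-2A1}. Since $E_2$ has two reducible $\mathbb{F}_p$-rational fibers (each contributing $2p+1$ instead of $p+1$ points), $\#E_2(\mathbb{F}_p)=p^2+4p+1$, giving
\[
\#\Xspii(\mathbb{F}_p) \;=\; \#\Xspi(\mathbb{F}_p) \;-\; 10(p+1) \;+\; 10(p^2+4p+1) \;=\; \#\Xspi(\mathbb{F}_p) + 10p^2 + 30p.
\]
Fourth, $\varphi_3$ blows up the $10$ lines $Q_i^{(2)},\tilde Q_i^{(2)}\cong\mathbb{P}^1$ with exceptional divisor a $\mathbb{P}^1$-bundle $E_3$ admitting a section, so $\#E_3(\mathbb{F}_p)=(p+1)^2$ and
\[
\#\cXsp(\mathbb{F}_p) \;=\; \#\Xspii(\mathbb{F}_p) \;-\; 10(p+1) \;+\; 10(p+1)^2 \;=\; \#\Xspii(\mathbb{F}_p) + 10p^2 + 10p.
\]
Adding the four identities telescopes to the claimed
\[
\#\cXsp(\mathbb{F}_p) = \#\Ssp(\mathbb{F}_p) + 10\,\#E_1(\mathbb{F}_p) + 30p^2 + 40p - 10.
\]

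The main potential obstacle is the bookkeeping in the third step: the line $\tilde L_i^{(1)}$ meets the exceptional divisor $E_1^{(1)}$ at a nodal point and continues through a neighborhood of type $(\partial A_1,\mathcal{U}_3^{(1)})$ at the other end, so one must be sure that the $(p+1)$ rational points subtracted genuinely account for the entire line, including its singular intersection points, without any double-counting against the contribution from $E_1$; similarly for $\varphi_3$, where $Q_i^{(2)}$ passes through two remaining nodes of $E_1^{(2)}$ and an ODP of $E_2^{(2)}$. The consistency check that the formula specializes at $p=1$ to $100 = -10 + 50 + 30 + 40 - 10 = e(\cXsp)$, matching the Euler-number calculation at the end of Theorem~\ref{thm:Thm-crepant-e100}, provides a useful sanity check on the combinatorics.
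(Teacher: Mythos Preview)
Your proof is correct and follows essentially the same four-stage decomposition as the paper, arriving at the identical intermediate counts $\#\Xsp(\mathbb{F}_p)=\#\Ssp(\mathbb{F}_p)+10p^2$, $\#E_2(\mathbb{F}_p)=p^2+4p+1$, and $\#E_3(\mathbb{F}_p)=(p+1)^2$. The bookkeeping concern you flag at $\varphi_2$ and $\varphi_3$ is legitimate to raise but not an actual obstacle: the paper's analysis in Proposition~\ref{pro:blow-up-along-s-2A1} shows that the conic bundle $E_2$ extends with a \emph{smooth} fiber over the point $s=-1$ where $\tilde L_1^{(1)}$ meets $E_1$, so the two reducible fibers really are only at $s=0,\infty$ and the subtraction of $p+1$ points per line is clean.
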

\begin{proof}
The projection $\pi_{2}:\Xsp\rightarrow\Ssp$ is isomorphic outside
the coordinate lines $q_{i}$ ( see Fig. \ref{fig:FigResolution-S}).
Since the fibers over the coordinate point $[e_{i}]$ and $q_{i}\setminus\{[e_{i}],[e_{i+1}]\}$
are $\mathbb{P}^{2}$ and $\mathbb{P}^{1},$ respectively, we obtain\begin{align*}
\#\Xsp(\mathbb{F}_{p}) & =\#\Ssp(\mathbb{F}_{p})+5\times(N_{\mathbb{P}^{2}}-1)+5\times(N_{\mathbb{P}^{1}}-1)(N_{\mathbb{P}^{1}}-2)\\
 & =\#\Ssp(\mathbb{F}_{p})+10p^{2},\end{align*}
where $N_{\mathbb{P}^{2}}=p^{2}+p+1$ and $N_{\mathbb{P}^{1}}=p+1$,
respectively, count the number of rational points in $\mathbb{P}^{2}$
and $\mathbb{P}^{1}$ over $\mathbb{F}_{p}$. We count the number
of rational points on the conic bundle $E_{2}$ (with two reducible
fibers) over $\mathbb{P}^{1}$ as \[
\#E_{2}(\mathbb{F}_{p})=(p+1)(p-1)+(2p+1)\times2=p^{2}+4p+1.\]
The counting for $E_{3}$ is given by $\#E_{3}(\mathbb{F}_{p})=(p+1)^{2}$.
Now summarizing all, we obtain \begin{align*}
\#\cXsp(\mathbb{F}_{p}) & =\#\Xsp(\mathbb{F}_{p})+10\times\bigl(\#E_{1}(\mathbb{F}_{p})-1\bigr)\\
 & \qquad\qquad+10\times\bigl(\#E_{2}(\mathbb{F}_{p})-(p+1)\bigr)+10\times\bigl(\#E_{3}(\mathbb{F}_{p})-(p+1)\bigr)\\
 & =\#\Ssp(\mathbb{F}_{p})+10\times\#E_{1}(\mathbb{F}_{p})+30\, p^{2}+40\, p-10.\end{align*}

\end{proof}
Writing a straightforward computer codes, we have evaluated the number
$\#\cXsp(\mathbb{F}_{p})$. After the computations in several minutes,
we verify the inequality (\ref{eq:Np-ineq}) for $p\leq97$ with $h^{2,1}=2$
or $3.$ For example, we obtain $\#\cXsp(\mathbb{F}_{p})=669880,\,1118250$
and 1408330 for $p=73,\,89$ and $97$, respectively. We observe that
the inequality (\ref{eq:Np-ineq}) holds only if $h^{2,1}=2$ for
$p=59,61,71,73,89,97$. Also we can verify that these are good primes
by analyzing the Jacobian ideals over the field $\mathbb{F}_{p}.$
Since the inequality holds for all good primes, we conclude that:
\begin{thm}
\label{thm:mirror-Hodge}The smooth Calabi-Yau manifold $\cXsp$ has
Hodge numbers; \[
h^{1,1}(\cXsp)=52,\;\;\; h^{2,1}(\cXsp)=2.\]
In particular this is mirror symmetric to the generic complete intersection
$\tilde{X}_{0}$ with $h^{1,1}(\tilde{X}_{0})=2,\, h^{2,1}(\tilde{X}_{0})=52$.
\end{thm}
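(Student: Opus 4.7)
The plan is to deduce the Hodge numbers from the arithmetic data gathered in the preceding propositions, following the strategy used for the Barth--Nieto quintic in \cite{HulekEtAl}. First I would observe that the crepant resolution $\cXsp$ is a smooth simply connected Calabi--Yau threefold (via $K$-triviality and the vanishing $h^{1}(\mathcal{O})=h^{2}(\mathcal{O})=0$ established in Theorem \ref{thm:Thm-crepant-e100}). Combined with $e(\cXsp)=100$ and Poincar\'e duality, this reduces the problem to a single unknown: $h^{2,1}(\cXsp)$, since $h^{1,1}(\cXsp)=50+h^{2,1}(\cXsp)$ and $b_{2}=b_{4}=50+h^{2,1}(\cXsp)$.

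Next I would invoke the Lefschetz fixed point formula
\[
\#\cXsp(\mathbb{F}_{p})=1-t_{1}+t_{2}-t_{3}+t_{4}-t_{5}+t_{6}
\]
with $t_{1}=t_{5}=0$, $t_{6}=p^{3}$. Proposition \ref{pro:mod-p-H2} forces every eigenvalue of $\mathrm{Frob}_{p}$ on $H_{\acute{e}t}^{2}$ to equal $p$, hence $t_{2}=b_{2}\,p=(50+h^{2,1})p$. By the Poincar\'e duality pairing compatible with Frobenius, $t_{4}=(50+h^{2,1})p^{2}$. The Weil bound $|t_{3}|\leq b_{3}\,p^{3/2}=(2+2h^{2,1})p^{3/2}$ then yields the inequality
\[
\bigl|1+(50+h^{2,1})(p+p^{2})+p^{3}-\#\cXsp(\mathbb{F}_{p})\bigr|\leq(2+2\,h^{2,1})\,p^{\frac{3}{2}},
\]
which is exactly the constraint (\ref{eq:Np-ineq}) already recorded.

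The remaining step is to produce explicit values of $\#\cXsp(\mathbb{F}_{p})$ for enough good primes $p$ so that the inequality distinguishes $h^{2,1}=2$ from all larger candidates. Using the fibration formula
\[
\#\cXsp(\mathbb{F}_{p})=\#\Ssp(\mathbb{F}_{p})+10\,\#E_{1}(\mathbb{F}_{p})+30\,p^{2}+40\,p-10
\]
established in the proposition just above the theorem, one reduces the computation to counting $\mathbb{F}_{p}$-points on the quintic $\Ssp$ of (\ref{eq:defEqS}) and on the singular degree-$4$ del Pezzo $E_{1}$ of Proposition \ref{pro:g1-g2}, both of which are trivially parallelizable by direct enumeration. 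I would run such a routine at $a=b=1$ for the primes $p=59,61,71,73,89,97$ (verifying each is a good prime via the Jacobian criterion over $\mathbb{F}_{p}$); at these primes $p^{3/2}$ is small enough relative to the gap $p+p^{2}$ that the inequality admits only a single integer solution.

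The main obstacle is the \emph{sharpness} of the numerical test rather than any conceptual difficulty: the bound (\ref{eq:Np-ineq}) only constrains $h^{2,1}$ to an interval of width $\approx 4p^{1/2}/(p+p^{2})$, which is vacuous for small $p$, so one must push $p$ high enough that only $h^{2,1}=2$ survives at every tested prime. The expectation, based on the analogous calculation for the Barth--Nieto quintic, is that primes in the range $60$--$100$ suffice; the computations above are expected to show that $h^{2,1}=3$ is already incompatible with the data at $p=59$, thereby pinning down $h^{2,1}(\cXsp)=2$ and $h^{1,1}(\cXsp)=52$. The mirror symmetry statement with $\tilde{X}_{0}$ is then a direct comparison with the Hodge numbers $(2,52)$ of the generic complete intersection recalled in Section \ref{sub:summary-Reye-I}.
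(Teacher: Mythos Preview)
Your proposal is correct and follows essentially the same route as the paper: reduce to the single unknown $h^{2,1}$ via $e(\cXsp)=100$, use Proposition~\ref{pro:mod-p-H2} together with Poincar\'e duality to get $t_{2}=(50+h^{2,1})p$ and $t_{4}=(50+h^{2,1})p^{2}$, derive the inequality~(\ref{eq:Np-ineq}) from the Weil bound on $t_{3}$, and then pin down $h^{2,1}=2$ by the explicit point counts at the primes $59,61,71,73,89,97$. One small inaccuracy: you assert $\cXsp$ is simply connected, but only $b_{1}=0$ (which follows from $h^{1}(\mathcal{O}_{\cXsp})=0$) is established or needed here.
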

\vspace{2cm}
\vfill$\,$\pagebreak{}

\section{\textbf{\textup{\label{sec:Picard-Fuchs-equations}Picard-Fuchs equations
and monodromy matrices}}}

\subsection{Picard-Fuchs differential equations}

We consider a family of Calabi-Yau manifolds $\cXsp$ defined over
$(\mathbb{C}^{*})^{2}\ni(a,b)$. Here we briefly introduce a natural
compactification of $(\mathbb{C}^{*})^{2}$ to $\mathbb{P}^{2}$ which
follows from the differential equations satisfied by the period integrals,
see \cite{HKTY} and \cite{HoTa} for details. To formulate the set
of differential operators, we slightly modify the defining equations
(\ref{eq:defeqsCICY}) to\[
f_{i}=c_{i}z_{i}w_{i}+a_{i}z_{i+1}w_{i}+b_{i}z_{i}w_{i+1}\;\;(i=1,...,5),\]
where the indices are considered modulo five as before. Clearly, the
original forms are recovered by setting $a_{i}=a,b_{i}=b,c_{i}=1$.
Since we have $\cXsp|_{(\mathbb{C}^{*})^{4}\times(\mathbb{C}^{*})^{4}}\simeq\Xsp|_{(\mathbb{C}^{*})^{4}\times(\mathbb{C}^{*})^{4}}$
for $(\mathbb{C}^{*})^{4}\times(\mathbb{C}^{*})^{4}\subset\mathbb{P}^{4}\times\mathbb{P}^{4}$,
a holomorphic 3-form of the crepant resolution $\cXsp$ may be given
by the corresponding 3-form of $\Xsp$ if the 3-cycles of the period
integrals are contained in $(\mathbb{C}^{*})^{4}\times(\mathbb{C}^{*})^{4}$.
For the complete intersection $\Xsp$, the following expression of
a holomorphic 3-form is well-known \cite{Griffiths}:\begin{equation}
\Omega(\Xsp)=Res_{f_{1}=...=f_{5}=0}\left(\frac{d\mu_{1}\wedge d\mu_{2}}{f_{1}f_{2}\cdots f_{5}}\right),\label{eq:OmegaXsp}\end{equation}
where \[
d\mu_{1}=-\sum_{i=1}^{5}(-1)^{i}z_{i}dz_{1}\wedge\cdots\wedge\widehat{dz_{i}}\wedge\cdots\wedge dz_{5},\]
and similar definition for $d\mu_{2}$ with the coordinates $w_{k}$'s
. The period integral $\int_{\Gamma}\Omega(\cXsp)$ for a 3-cycle
$\Gamma\in H_{3}(\cXsp,\mathbb{Z})$ satisfies a system of differential
equations, the so-called Picard-Fuchs differential equations, see
\cite{DMo}, \cite{DGJ} for example. In the present case, assuming
that the cycle $\Gamma$ is contained in $(\mathbb{C}^{*})^{4}\times(\mathbb{C}^{*})^{4}$,
we can describe the system by noting rather trivial algebraic relations
represented in terms of differential operators, e.g., \[
\left\{ \frac{\partial\;}{\partial c_{1}}\frac{\partial\;}{\partial c_{2}}\frac{\partial\;}{\partial c_{3}}\frac{\partial\;}{\partial c_{4}}\frac{\partial\;}{\partial c_{5}}-\frac{\partial\;}{\partial a_{1}}\frac{\partial\;}{\partial a_{2}}\frac{\partial\;}{\partial a_{3}}\frac{\partial\;}{\partial a_{4}}\frac{\partial\;}{\partial a_{5}}\right\} \Omega(\Xsp)=0,\]
which represents $\Pi_{i=1}^{5}z_{i}w_{i}-\Pi_{i=1}^{5}z_{i+1}w_{i}=0$.
We should also note that the holomorphic 3-form is invariant under
the $(\mathbb{C}^{*})^{4}$-action $z_{i}\mapsto t_{i}z_{i}$, $(t_{1}t_{2}\cdots t_{5}=1)$
and similar $(\mathbb{C}^{*})^{4}$-action on the coordinates $w_{i}$'s.
We note further that $\Omega(\Xsp)$ has a simple scaling property
under $f_{i}\mapsto r_{i}f_{i}$ $(r_{i}\in\mathbb{C}^{*})$. All
these properties of invariance (or covariance) may be expressed by
the corresponding linear differential operators, and may be used to
reduce the enlarged parameters to the original $a$ and $b$. The
system of differential operators which we obtain in this way is an
example of the Gel'fand-Kapranov-Zelevinski (GKZ) system \cite{GKZ1}
for which a natural compactification of the parameters is known. In
the present case, from the $\mathbb{C}^{*}$-actions above and the
form of the defining equations (\ref{eq:defeqsCICY}), it is rather
easy to deduce that $(\mathbb{C}^{*})^{2}\ni(a,b)$ is compactified
to $\mathbb{P}^{2}\ni[a^{5}:b^{5}:1]$. According to the mirror symmetry
calculations formulated in \cite{HKTY}, we actually come to the affine
charts $\{(x,y),\mathcal{A}_{0}\},\{(x_{1},y_{1}),\mathcal{A}_{1}\}$
and $\{(x_{2},y_{2}),\mathcal{A}_{2}\}$ defined by \[
x=-a^{5},y=-b^{5};\; x_{1}=-\frac{b^{5}}{a^{5}},y_{1}=-\frac{1}{a^{5}};\; x_{2}=-\frac{a^{5}}{b^{5}},y_{2}=-\frac{1}{b^{5}}.\]
Up to signs, these relations are in accord with the standard relations
$[a^{5}:b^{5}:1]=[1:\frac{b^{5}}{a^{5}}:\frac{1}{a^{5}}]=[\frac{a^{5}}{b^{5}}:1:\frac{1}{b^{5}}]$
of the affine coordinates of $\mathbb{P}^{2}$. The extra minus signs
follows from the general definition given in \cite{HKTY}.
\begin{prop}
\label{pro:PF-equations}On the affine chart $\{(x,y),\mathcal{A}_{0}\}$$,$
the following differential operators determine the period integrals
as the solutions:\begin{align*}
\mathcal{D}_{1}(x,y) & =2\theta_{x}^{3}-3\theta_{x}^{2}\theta_{y}+3\theta_{x}\theta_{y}^{2}-2\theta_{y}^{3}-(\theta_{x}+\theta_{y})^{2}\bigl\{(2\theta_{x}+3\theta_{y})x-(3\theta_{x}+2\theta_{y})y\bigr\},\\
\mathcal{D}_{2}(x,y) & =2\theta_{x}^{2}-3\theta_{x}\theta_{y}+2\theta_{y}^{2}-(2\theta_{x}^{2}+7\theta_{x}\theta_{y}+7\theta_{y}^{2})x-(7\theta_{x}^{2}+7\theta_{x}\theta_{y}+2\theta_{y}^{2})y,\end{align*}
where $\theta_{x}=x\frac{\partial\;}{\partial x},\theta_{y}=y\frac{\partial\;}{\partial y}.$
On the other affine charts the differential operators are given by
the following gauge transforms of the operators $\mathcal{D}_{1}(x,y),\mathcal{D}_{2}(x,y)$:
\[
\mathcal{D}'_{1}(x_{1},y_{1}):=x_{1}\mathcal{D}_{1}(x_{1},y_{1})x_{1}^{-1},\;\mathcal{D}'_{2}(x_{1},y_{1}):=\mathcal{D}_{2}(x_{1},y_{1})\;\;\text{on\;}\{(x_{1},y_{1}),\mathcal{A}_{1}\}\]
and \[
\mathcal{D}''_{1}(x_{2},y_{2}):=x_{2}\mathcal{D}_{1}(x_{2},y_{2})x_{2}^{-1},\;\mathcal{D}''_{2}(x_{2},y_{2}):=\mathcal{D}_{2}(x_{2},y_{2})\;\;\text{on}\;\{(x_{2},y_{2}),\mathcal{A}_{2}\}.\]
\end{prop}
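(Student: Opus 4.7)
The plan is to derive the differential equations by applying the Gel'fand-Kapranov-Zelevinski (GKZ) construction to the extended family with parameters $(a_i, b_i, c_i)$, following the procedure established in \cite{HKTY} for complete intersection Calabi-Yau manifolds in products of projective spaces. Since $\varphi\colon \cXsp \to \Xsp$ is an isomorphism on $(\mathbb{C}^*)^4 \times (\mathbb{C}^*)^4$ by the construction in Section \ref{sec:A-Crepant-resolution}, for period integrals over cycles contained in this open torus we may compute using the Griffiths residue $\Omega(\Xsp)$ of (\ref{eq:OmegaXsp}). The proof thus has three stages: writing the extended GKZ system, reducing it by torus invariances, and passing to the other affine charts.

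First I would write down the box operators and Euler operators for the extended system. The $15$ monomials $z_i w_i,\ z_{i+1} w_i,\ z_i w_{i+1}$ $(i=1,\dots,5)$ appearing in the defining equations satisfy multiplicative relations such as $\prod_i (z_i w_i) = \prod_i (z_{i+1} w_i) = \prod_i (z_i w_{i+1})$, which yield box operators
\begin{equation*}
\Bigl(\prod_{i=1}^{5}\partial_{c_i} - \prod_{i=1}^{5}\partial_{a_i}\Bigr)\Omega(\Xsp)=0,\quad \Bigl(\prod_{i=1}^{5}\partial_{c_i} - \prod_{i=1}^{5}\partial_{b_i}\Bigr)\Omega(\Xsp)=0,
\end{equation*}
together with analogous box operators coming from further lattice relations among the $15$ exponent vectors. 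The invariance of $\Omega(\Xsp)$ under the two $(\mathbb{C}^*)^4$ actions on $[z]$ and $[w]$ and its covariance under the scalings $f_i \mapsto r_i f_i$ provide $4+4+5=13$ Euler operators, cutting the effective parameter space down to two dimensions. After using the Euler operators to set $c_i = 1$, $a_i = a$, $b_i = b$, the substitution $x = -a^5$, $y = -b^5$ (the Batyrev--Borisov sign convention of \cite{HKTY}) converts the torus-direction logarithmic derivatives to $5\theta_x$ and $5\theta_y$. A row-reduction of the restricted box operators modulo the Euler operators yields a system equivalent to $\mathcal{D}_1(x,y)\Pi=\mathcal{D}_2(x,y)\Pi=0$; verifying this reduction amounts to a direct computation that I expect to match the operators stated.

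For the charts $\mathcal{A}_1,\mathcal{A}_2$, the transition from $\mathcal{A}_0$ is given by $(x_1,y_1)=(y/x,\,1/x)$ and $(x_2,y_2)=(x/y,\,1/y)$ corresponding to the standard transitions of the three affine charts on $\mathbb{P}^2 \supset (\mathbb{C}^*)^2 \ni [a^5:b^5:1]$. The multiplicative gauge factor $x_1$ (resp. $x_2$) arises from the change of the natural normalization of $\Omega(\Xsp)$ when centering the family at the new boundary point: if $\Pi(x,y)$ is a period on $\mathcal{A}_0$, then $x_1\,\Pi(x_1,y_1)$ is the corresponding period on $\mathcal{A}_1$, and a direct conjugation of $\mathcal{D}_1,\mathcal{D}_2$ by $x_1$ produces the claimed $\mathcal{D}'_1,\mathcal{D}'_2$ (the operator $\mathcal{D}_2$ is unaffected because it has balanced degree in $\theta_x,\theta_y$). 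The main obstacle is the bookkeeping in the second stage: isolating from the full extended GKZ system exactly the two generators $\mathcal{D}_1,\mathcal{D}_2$ (and showing that these already cut out the correct $4$-dimensional solution space, matching $2h^{2,1}(\cXsp)+2 = 4$ from Theorem \ref{thm:mirror-Hodge}) requires a careful Gröbner-basis--type reduction; once this is done, the chart transformations follow by substitution.
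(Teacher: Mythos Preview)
Your approach is essentially the same as the paper's: the paper's own proof consists entirely of a reference to \cite{HKTY} and to Prop.~2.6 of \cite{HoTa}, i.e.\ exactly the GKZ box-operator / Euler-operator reduction you outline. So the strategy is correct and matches the source.

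Two points need correction. First, an arithmetic slip: the solution space has dimension $2h^{2,1}(\cXsp)+2=2\cdot 2+2=6$, not $4$; this is visible in the paper, where $\Pi(x,y)$ has six components and six independent local solutions $\varphi_0,\dots,\varphi_5$ are constructed at $[1{:}1{:}1]$. Second, your explanation for why $\mathcal{D}'_2=\mathcal{D}_2(x_1,y_1)$ needs no conjugation (``balanced degree in $\theta_x,\theta_y$'') is not correct: conjugation by $x_1$ sends $\theta_{x_1}\mapsto\theta_{x_1}-1$, and $\mathcal{D}_2$ is not literally invariant under that shift. What actually happens (consistent with the paper's relation $\varphi'_i(s_1,t_1)=(s_1-1)\varphi_i(s_1,t_1)$, i.e.\ $\varphi'_i=x_1\varphi_i$) is that $x_1\mathcal{D}_2 x_1^{-1}$ agrees with $\mathcal{D}_2$ only modulo the left $D$-module ideal generated by $\mathcal{D}_1,\mathcal{D}_2$; equivalently, the symmetry of the GKZ data under permuting the three boundary points forces the reduced system on $\mathcal{A}_1$ to have the same form as on $\mathcal{A}_0$, and the gauge factor $x_1$ accounts for the change of normalization of the holomorphic $3$-form. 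You should state it that way rather than appeal to a degree argument.
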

\begin{proof}
For the derivation of the differential operators $\mathcal{D}_{1},\mathcal{D}_{2}$,
we refer to \cite{HKTY}. Also see Prop.2.6 in \cite{HoTa}. Note
that the parameters $(a_{i},b_{i},c_{i})$ in \cite[(2.6) ]{HoTa}
should be read as $(c_{i},a_{i},b_{i})$ here (see the defining equations
$f_{i}$ ).
\end{proof}
\vspace{1cm}

\subsection{Determinantal quintics}

For the determinantal quintics $\Ssp,$ and $\Hsp$, we have the following
standard forms of holomorphic 3-forms:\begin{equation}
\Omega(\Ssp)=Res_{F_{w}=0}\left(\frac{d\mu_{2}}{F_{w}}\right),\;\;\Omega(\Hsp)=Res_{F_{\lambda}=0}\left(\frac{d\mu_{\lambda}}{F_{\lambda}}\right),\label{eq:OmegaSH}\end{equation}
where $\Ssp=\{F_{w}=0\}$ and $\Hsp=\{F_{\lambda}=0\}$ (see (\ref{eq:defEqS})).
We may derive these holomorphic 3-forms from (\ref{eq:OmegaXsp})
by evaluating the residue integrals: Let us take an affine coordinate
$[z_{1}:z_{2}:z_{3}:z_{4}:1]$ of $\mathbb{P}^{4}$, and regard the
relations $f_{1}=\cdots=f_{4}=0$ as linear equations for $z_{1,}...,z_{4}$
with fixed $w_{k}$'s, i.e.,\[
B\left(\begin{smallmatrix}z_{1}\\
z_{2}\\
z_{3}\\
z_{4}\end{smallmatrix}\right)=\left(\begin{smallmatrix}0\\
0\\
0\\
-aw_{4}\end{smallmatrix}\right).\]
Then, changing the variables to $\,^{t}(\xi_{1},...,\xi_{4})=B\,^{t}(z_{1,}...,z_{4})$
and taking into account the Jacobian factor $dz_{1}\wedge...\wedge dz_{4}=\frac{1}{detB}d\xi_{1}\wedge...\wedge d\xi_{4},$
we obtain \[
\Omega(\Xsp)=Res_{f_{5}=0}\left(\frac{d\mu_{2}}{\det B\; f_{5}}\right).\]
Since we can verify the equality $\det B\; f_{5}=F_{w}$, we see that
$\Omega(\Xsp)=\Omega(\Ssp)$ holds. By changing the roles of $z_{k}$'s
with $w_{k}$'s, we have a similar result for $\Omega(\Sspp).$ The
threefolds $\Uspp$ and $\Usp$ in the diagram (\ref{eq:SHdiagram2})
also have the form of complete intersections of five $(1,1)$-divisors.
The same formal arguments as above apply to the cases of $\Uspp$
and $\Usp$ starting from $\Omega(\Uspp)$ and $\Omega(\Usp)$, respectively.
By evaluating the residues, the holomorphic 3-forms $\Omega(\tilde{X}_{i}^{sp})\,(i=1,2)$
can also be connected to the holomorphic 3-form $\Omega(\Hsp)$ as
well as $\Omega(\tilde{Z}_{i}^{sp})$. Noting that there are 3-cycles
contained in the tori (see the next subsection), we have: 
\begin{prop}
The period integrals of $\Sspp,\Ssp$,$\Uspp,\Usp$, and $\Hsp$ with
the holomorphic 3-forms $\Omega(\Sspp),\,\Omega(\Ssp),\,\Omega(\Uspp),\,\Omega(\Usp)$
and $\Omega(\Hsp)$, respectively, satisfy the same Picard-Fuchs differential
equations as in Proposition \ref{pro:PF-equations}. 
\end{prop}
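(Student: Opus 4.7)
The plan is to reduce everything to the single statement that the various holomorphic 3-forms agree, modulo residue operations, when restricted to cycles that live inside the common open torus $(\mathbb{C}^{*})^{4}\times(\mathbb{C}^{*})^{4}$ (respectively $(\mathbb{C}^{*})^{4}$). Once this is established, the fact that $\int_{\Gamma}\Omega(\Xsp)$ satisfies $\mathcal{D}_{1},\mathcal{D}_{2}$ (Proposition \ref{pro:PF-equations}) immediately transfers to all five varieties in the diagram (\ref{eq:SHdiagram2}).

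First I would finish the residue argument already begun in the paragraph preceding the proposition. The computation there exhibits the identity $\det B\cdot f_{5}=F_{w}$ and hence $\Omega(\Xsp)=\Omega(\Ssp)$ as a meromorphic form; by exchanging the roles of the $z_{k}$'s and $w_{k}$'s (and $a\leftrightarrow b$) the completely symmetric computation gives $\Omega(\Xsp)=\Omega(\Sspp)$. This uses only the fact that $\Xsp$ is the complete intersection of the five $(1,1)$-divisors $f_{i}$ and that one can solve four of them linearly for four of the affine coordinates of one factor.

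Next I would run the same type of residue reduction for $\Usp$ and $\Uspp$, which by definition are also complete intersections of five $(1,1)$-divisors in $\mathbb{P}^{4}\times\mathbb{P}_{\lambda}^{4}$. Starting from the analogue of (\ref{eq:OmegaXsp}) with measures $d\mu_{1}\wedge d\mu_{\lambda}$ and the five linear forms $\,^{t}zA_{\lambda}=0$, one can eliminate either the $\lambda$-variables or the $z$-variables by inverting a $4\times 4$ block; the Jacobian is exactly the remaining determinant needed to recover the defining equation of $\Hsp$ (resp.\ $\Sspp$, $\Ssp$). This yields the chain of equalities
\begin{equation*}
\Omega(\Xsp)\;=\;\Omega(\Ssp)\;=\;\Omega(\Sspp)\;=\;\Omega(\Usp)\;=\;\Omega(\Uspp)\;=\;\Omega(\Hsp)
\end{equation*}
as residues of the same ambient rational 10- or 9-form, modulo the elimination of dependent coordinates.

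Having the 3-forms identified, the final step is the cycle-matching: for each variety one must exhibit a basis of $H_{3}$-cycles that lies inside the relevant torus so that the iterated residues are computed against the same cohomology class. Here one invokes the toric mirror picture of \cite{HKTY} and of Sect.\ \ref{sec:Picard-Fuchs-equations}, which supplies the ``fundamental'' torus cycle $\Gamma_{0}\subset(\mathbb{C}^{*})^{4}\times(\mathbb{C}^{*})^{4}$ for $\Xsp$ and its pushforwards/projections to cycles in the smaller tori for $\Ssp,\Sspp,\Usp,\Uspp,\Hsp$. Because the Picard--Fuchs system is a GKZ system adapted to the monomial structure of $f_{1},\dots,f_{5}$ (which is preserved under all of these residue eliminations), every period $\int_{\Gamma}\Omega$ on each variety is annihilated by $\mathcal{D}_{1},\mathcal{D}_{2}$, and the proposition follows. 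The main obstacle I anticipate is the bookkeeping of the Jacobian factors and the verification that the scaling and torus-invariance properties used to derive the GKZ operators for $\Xsp$ survive each residue reduction; once this is checked in one case, the remaining four cases are completely parallel.
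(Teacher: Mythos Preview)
Your proposal is correct and follows essentially the same route as the paper: the paper's argument, sketched in the paragraph immediately preceding the proposition, is precisely the residue reduction $\Omega(\Xsp)=\Omega(\Ssp)$ via $\det B\cdot f_{5}=F_{w}$, its symmetric counterpart for $\Sspp$, the parallel computation for $\Uspp,\Usp$ leading to $\Omega(\Hsp)$, together with the remark that the relevant 3-cycles can be taken inside the tori (the paper defers this last point to the torus cycle $\Gamma_{0}$ constructed in the next subsection). Your explicit mention of the GKZ structure and the Jacobian bookkeeping is slightly more detailed than what the paper writes out, but the argument is the same.
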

\vspace{1cm}

\subsection{Integral, symplectic basis and monodromy matrices}

As in \cite{Candelas1}, we can evaluate the period integral of $\Omega(\Ssp)$
over certain torus cycles. Let us first note that $\Gamma_{0}=\{[w_{1}:w_{2}:w_{3}:w_{4}:1]\in S_{sp}||w_{1}|=|w_{2}|=|w_{3}|=\varepsilon\}$
defines a 3-cycle in $\Ssp$. This simply follows by observing that
the substitution of $w_{k}=\varepsilon e^{\sqrt{-1}\theta_{k}}(k=1,2,3)$
(in the affine coordinate $w_{5}=1$) into the defining equation of
$\Ssp$ entails a quadratic equation for $w_{4}$, and one of the
two roots goes to zero when $\varepsilon\rightarrow0.$ Choosing this
vanishing root defines a 3-cycle $\Gamma_{0}$. Combined with the
residues contained in the definition of $\Omega(\Ssp)$, one obtain
\begin{equation}
\int_{\Gamma_{0}}\Omega(\Ssp)=\frac{1}{(2\pi i)^{4}}\int_{\gamma_{0}}\frac{d\mu_{2}}{F_{w}},\label{eq:period-int-gamma0}\end{equation}
where $\gamma_{0}=\{|w_{1}|=\cdots=|w_{4}|=\varepsilon,w_{5}=1\}$
is a torus cycle in $\mathbb{P}^{4}.$ 
\begin{prop}
The period integral (\ref{eq:period-int-gamma0}) can be evaluated
in three different ways depending on the (relative) magnitudes of
$a$ and $b$: \[
\int_{\Gamma_{0}}\Omega(\Ssp)=w_{0}\bigl(-a^{5},-b^{5}\bigr),\;\;\frac{1}{a^{5}}w_{0}\bigl(\frac{-1}{a^{5}},-\frac{b^{5}}{a^{5}}\bigr),\;\;\frac{1}{b^{5}}w_{0}\bigl(-\frac{a^{5}}{b^{5}},\frac{-1}{b^{5}}\bigr),\]
 where we set $w_{0}(x,y)=\sum_{n,m\geq0}\frac{((n+m)!)^{5}}{(n!)^{5}(m!)^{5}}x^{n}y^{m}$.
The series $w_{0}(x,y)$ converges absolutely for $|x|,|y|<\frac{1}{2^{5}}$.\end{prop}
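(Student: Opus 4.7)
The plan is to explicitly evaluate the residue integral $\frac{1}{(2\pi i)^{4}}\int_{\gamma_{0}}\frac{d\mu_{2}}{F_{w}}$ in the chart $w_{5}=1$, where $d\mu_{2}=dw_{1}\wedge\cdots\wedge dw_{4}$, by expanding $1/F_{w}$ as a convergent Laurent series on the torus $\gamma_{0}=\{|w_{i}|=\varepsilon\}$ and reading off the coefficient of $(w_{1}w_{2}w_{3}w_{4})^{-1}$. The central algebraic observation is the factorization
\[
F_{w}\bigl|_{w_{5}=1}=\pi\bigl(a^{5}+Q(w,b)\bigr),\quad\pi=w_{1}w_{2}w_{3}w_{4},\quad Q(w,b)=\prod_{i=1}^{5}\Bigl(1+b\,\tfrac{w_{i+1}}{w_{i}}\Bigr),
\]
(indices mod $5$, $w_{5}=1$), which comes from extracting a $w_{i}$ from each of the five factors of the product in (\ref{eq:defEqS}). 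The prefactor $\pi^{-1}$ is what supplies the simple pole in every variable needed by the Cauchy residue; everything else is a power-series correction.

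For the first expansion I would choose $\varepsilon$ with $|b|<\varepsilon<1$ and $|a|<1$, so that the two geometric expansions
\[
\frac{1}{a^{5}+Q}=\sum_{n\geq0}\frac{(-a^{5})^{n}}{Q^{n+1}},\qquad\frac{1}{(1+bw_{i+1}/w_{i})^{n+1}}=\sum_{k_{i}\geq0}\binom{-n-1}{k_{i}}\Bigl(b\,\tfrac{w_{i+1}}{w_{i}}\Bigr)^{k_{i}}
\]
both converge on $\gamma_{0}$. Substituting into $1/F_{w}=\pi^{-1}\sum_{n}(-a^{5})^{n}Q^{-(n+1)}$ and collecting monomials produces a Laurent series whose $(k_{1},\dots,k_{5})$-term carries the weight $w_{1}^{k_{5}-k_{1}-1}w_{2}^{k_{1}-k_{2}-1}w_{3}^{k_{2}-k_{3}-1}w_{4}^{k_{3}-k_{4}-1}$. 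The residue condition then selects precisely the diagonal $k_{1}=k_{2}=k_{3}=k_{4}=k_{5}=:k$; the five binomial coefficients collapse to $\binom{n+k}{k}^{5}$ and the overall sign becomes $(-1)^{n+k}$, reproducing $w_{0}(-a^{5},-b^{5})$ term by term.

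The remaining two formulas come from running the same computation in the other two regimes distinguished by the $\mathbb{P}^{2}$-compactification of $(a,b)$. For the chart $\mathcal{A}_{1}$, assuming $|a|\gg 1$, I would rewrite $F_{w}=a^{5}\pi\bigl(1+Q/a^{5}\bigr)$ and expand in $Q/a^{5}$; now $Q^{n}$ is a finite multinomial sum, the residue again forces $k_{1}=\cdots=k_{5}=k$ (with the extra constraint $k\leq n$), and the substitution $m=n-k$ converts the resulting double sum into $\frac{1}{a^{5}}w_{0}(-1/a^{5},-b^{5}/a^{5})$. For the chart $\mathcal{A}_{2}$, assuming $|b|\gg 1$, the analogous trick is to extract $bw_{i+1}$ (rather than $w_{i}$) from each factor of $\prod(w_{i}+bw_{i+1})$, giving the alternative factorization $F_{w}=\pi(a^{5}+b^{5}\tilde{Q})$ with $\tilde{Q}=\prod(1+w_{i}/(bw_{i+1}))$; the parallel manipulations yield $\frac{1}{b^{5}}w_{0}(-a^{5}/b^{5},-1/b^{5})$.

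Finally, absolute convergence of $w_{0}(x,y)$ on $\{|x|,|y|<2^{-5}\}$ follows from the elementary majorization $\binom{n+m}{n}^{5}\leq 2^{5(n+m)}$, which reduces the series to a product of two convergent geometric series. I expect the main bookkeeping obstacle to be the careful tracking of signs and the identification of the correct hypergeometric argument in each regime, particularly in the $\mathcal{A}_{1}$ and $\mathcal{A}_{2}$ cases, where the change of the dominant monomial together with the reindexing $m=n-k$ must be organised consistently; the analytic manipulations themselves are otherwise mechanical.
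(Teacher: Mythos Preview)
Your argument is correct and follows the same residue-extraction strategy as the paper: work in the chart $w_{5}=1$, pull out the dominant monomial ($w_{1}w_{2}w_{3}w_{4}$, resp.\ $a^{5}w_{1}\cdots w_{4}$, resp.\ $b^{5}w_{1}\cdots w_{4}$) so that $1/F_{w}$ becomes $\pi^{-1}$ times a geometric series, and then read off the constant Laurent coefficient. Your two-step organisation (first expand in $a^{5}/Q$, then use the negative binomial on each factor of $Q$) makes the emergence of $\binom{n+k}{k}^{5}$ more transparent than the paper's single geometric expansion $(1+(F_{w}/\pi-1))^{-1}$ with a reference to Batyrev--Cox, and your convergence bound $\binom{n+m}{n}\le 2^{\,n+m}$ is cleaner than the paper's appeal to the $\Gamma$-duplication formula; but these are differences of presentation rather than of method. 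One small tightening: for the first regime the condition ``$|a|<1$'' is not quite enough to guarantee $|a^{5}|<|Q|$ uniformly on $\gamma_{0}$; you should say that $|a|$ is taken small relative to the lower bound $(1-|b|)^{3}(1-|b|/\varepsilon)(1-|b|\varepsilon)$ for $|Q|$, which is harmless since the claim is local near $(a,b)=(0,0)$.
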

\begin{proof}
Since the cycle $\gamma_{0}$ is contained in the affine coordinate
$w_{5}=1$ (in fact $\gamma_{0}\subset(\mathbb{C}^{*})^{4}$), we
may use $\frac{d\mu_{2}}{F_{w}}=\frac{dw_{1}\wedge\cdots\wedge dw_{4}}{F_{w}(w_{1},\cdots,w_{4},1)}$
for the evaluations. The claimed expansions follow from the three
different ways of handling $\frac{1}{F_{w}}$: The first one is obtained
by \[
\frac{dw_{1}\wedge\cdots\wedge dw_{4}}{F_{w}}=\left(1+\left(\frac{F_{w}}{w_{1}w_{2}w_{3}w_{4}}-1\right)\right)^{-1}\frac{dw_{1}\wedge\cdots\wedge dw_{4}}{w_{1}w_{2}w_{3}w_{4}}\]
and taking the residue integrals about $|w_{k}|=\varepsilon$, see
\cite{Bat-Cox} for example. Similarly, the second one follows from
\[
\frac{dw_{1}\wedge\cdots\wedge dw_{4}}{F_{w}}=\frac{1}{a^{5}}\left(1+\left(\frac{F_{w}}{a^{5}w_{1}w_{2}w_{3}w_{4}}-1\right)\right)^{-1}\frac{dw_{1}\wedge\cdots\wedge dw_{4}}{w_{1}w_{2}w_{3}w_{4}}.\]
For the third one, we simply replace the $a^{5}$'s by $b^{5}$'s
in the above equation.

Since the convergence follows from the standard estimates using the
duplication formula of the $\Gamma$-functions, our derivation may
be brief here. Assume $|x|,|y|<r,$ then we have \[
\sum_{d\geq0}\sum_{{n+m=d\atop n,m\geq0}}c_{n,m}|x^{n}||y^{m}|\leq1+\sum_{d\geq1}(d+1)c_{[\frac{d}{2}],[\frac{d+1}{2}]}r^{d}\leq1+\sum_{d\geq1}(d+1)\left(\frac{2^{d}}{\sqrt{\pi}}\right)^{5}r^{d},\]
where $c_{n,m}=\bigl(\frac{(n+m)!}{n!m!}\bigr)^{5}$, and the duplication
formula is used to have the second inequality. Since the last series
converges for $2^{5}r<1,$ we obtain the claim.
\end{proof}
It should be clear that the three different series expansions of the
period integral originate from the symmetry of the defining equations
$f_{i}$ of $\Xsp$, which we started with. Also, we can observe here
the natural compactification of the deformations by $(a,b)\in(\mathbb{C}^{*})^{2}$
to $[a^{5}:b^{5}:1]\in\mathbb{P}^{2}$ discussed above. Moreover,
we may observe that the three infinity points $[0:0:1],[0:1:0]$ and
$[1:0:0]$ are all isomorphic up to suitable factors or {}``gauge''
transformations as claimed in Proposition \ref{pro:PF-equations}. 

In the next subsection, we set up a canonical integral, symplectic
basis for the solutions which follows from the mirror symmetry. 

\bigskip{}

\subsubsection{Canonical integral and symplectic basis}

The space of the solutions of the Picard-Fuchs differential equation
is endowed with an integral and symplectic structure in their monodromy
property which come from those in $H_{3}(\cXsp,\mathbb{Z})$. Using
the mirror symmetry of $\tilde{X}_{0}^{*}$ to $\tilde{X}_{0}$, we
have a canonical form of the (conjectural) integral and symplectic
basis of the solutions \cite[Prop.1]{IIAmonod}, \cite[Conj.2.2]{CentralCh}.

Recall that, under the mirror symmetry, the integral and symplectic
structure in $H_{3}(\cXsp,\mathbb{Z})$ is conjecturally isomorphic
to those in the (numerical) Grothendieck group $K(\tilde{X}_{0})$
of the mirror Calabi-Yau manifold $\tilde{X}_{0}$ to $\cXsp$ \cite{Ko}.
Note that the Euler characteristic $\chi(\mathcal{E},\mathcal{F})=\sum_{i}(-1)^{i}\dim Ext_{\mathcal{O}_{\tilde{X}_{0}}}^{i}(\mathcal{E},\mathcal{F})$
of coherent sheaves $\mathcal{E},$$\mathcal{F}$ on $\tilde{X}_{0}$
defines a skew symmetric form on the Grothendieck group $K(\tilde{X}_{0})$
due to the fact that $\tilde{X}_{0}$ is a Calabi-Yau threefold. This
skew symmetric form (as well as the integral structure) in $K(\tilde{X}_{0})$
may be transferred into $H^{even}(\tilde{X}_{0},\mathbb{Q})$ by the
Chern character homomorphism: $\text{ch}:K(\tilde{X}_{0})\rightarrow H^{even}(\tilde{X}_{0},\mathbb{Q})$
and the Riemann-Roch formula for $\chi(\mathcal{E},\mathcal{F})$.
Explicitly, the skew form on $H^{even}(\tilde{X}_{0},\mathbb{Q})$
may be written by $(\alpha,\beta)=\int_{\tilde{X}_{0}}(\alpha_{0}-\alpha_{2}+\alpha_{4}-\alpha_{6})\cup(\beta_{0}+\beta_{2}+\beta_{4}+\beta_{6})\cup Td_{\tilde{X}_{0}}$,
where $Td_{\tilde{X}_{0}}$ represents the Todd class and $\alpha=\alpha_{0}+\alpha_{2}+\alpha_{4}+\alpha_{6}$
represents the decomposition with respect to $H^{even}(\tilde{X}_{0},\mathbb{Q})=\oplus_{i=0}^{3}H^{2i}(\tilde{X}_{0},\mathbb{Q})$
and similarly for $\beta=\beta_{0}+\beta_{2}+\beta_{4}+\beta_{6}$. 

The Calabi-Yau manifold $\tilde{X}_{0}$ is a smooth complete intersection
of five generic $(1,1)$-divisors in $\mathbb{P}^{4}\times\mathbb{P}^{4}$.
The cohomology $H^{even}(\tilde{X}_{0},\mathbb{Q})$ is generated
by the hyperplane classes $J_{1},J_{2}$ from the respective projective
spaces $\mathbb{P}^{4}$ with the ring structure compatible with their
intersection numbers $(\int_{\tilde{X}_{0}}J_{1}^{3},\int_{\tilde{X}_{0}}J_{1}^{2}J_{2},\int_{\tilde{X}_{0}}J_{2}J_{1}^{2},\int_{\tilde{X}_{0}}J_{2}^{3})=(5,10,10,5).$
Using this ring structure in $H^{even}(\tilde{X}_{0},\mathbb{Q})$,
the mirror symmetry stated above can be summarized into the following
cohomology-valued hypergeometric series \cite[Sect.2]{CentralCh}:
\begin{equation}
\omega\left(x,y;\frac{J_{1}}{2\pi i},\frac{J_{2}}{2\pi i}\right)=\sum_{n,m\geq0}\frac{\Gamma(1+n+m+\frac{J_{1}}{2\pi i}+\frac{J_{2}}{2\pi i})^{5}}{\Gamma(1+n+\frac{J_{1}}{2\pi i})^{5}\Gamma(1+m+\frac{J_{2}}{2\pi i})^{5}}x^{n+\frac{J_{1}}{2\pi i}}y^{m+\frac{J_{2}}{2\pi i}},\label{eq:cohomology-valued-hyp}\end{equation}
where the right hand side is defined by the series expansion with
respect to the nilpotent elements $J_{1},J_{2}$ in the cohomology
ring. By this series expansion in the cohomology ring, we effectively
generate the solutions of the Picard-Fuch differential equations formulated
in \cite{HLY}, \cite{HKTY}. Then the (conjectural) claim made in
\cite[Prop.1]{IIAmonod}, \cite[Conj.2.2]{CentralCh} is as follows:
In this form of the cohomology-valued hypergeometric series, the integral
and symplectic structure in $H^{even}(\tilde{X}_{0},\mathbb{Q})$
is transformed canonically to that of the hypergeometric series representing
the period integrals. The canonical integral, symplectic structure
may be read by arranging $\omega\left(x,y;\frac{J_{1}}{2\pi i},\frac{J_{2}}{2\pi i}\right)$
as follows: \[
w_{0}(x,y)1+\sum_{k}w_{k}^{(1)}(x,y)\bigl(J_{k}-\sum_{l}C_{kl}K_{l}\bigr)Td_{\tilde{X}}^{-1}+\sum_{k}w_{k}^{(2)}(x,y)K_{k}+w^{(3)}(x,y)V_{\tilde{X}},\]
where $Td_{\tilde{X}_{0}}=1+\frac{c_{2}(\tilde{X}_{0})}{12}$ is the
Todd class and $K_{k}=\frac{1}{5}J_{k}^{2},V_{\tilde{X}_{0}}=-\frac{1}{10}(J_{1}^{3}+J_{2}^{3}).$
Here, $K_{k}$ and $V_{\tilde{X}_{0}}$ are defined so that we have
$\int_{\tilde{X}_{0}}J_{k}K_{l}=\delta_{kl}$ and $\int_{\tilde{X}_{0}}V_{\tilde{X}_{0}}=-1.$
$C_{kl}$'s are constants satisfying $C_{kl}=C_{lk}$ which must be
fixed (by hand) from the explicit monodromy calculations of the hypergeometric
series (Proposition \ref{pro:Monodromy-Matrices}). The integral structure
on $H^{even}(\tilde{X}_{0},\mathbb{Q})$ can be introduced through
the basis $\{1,\bigl(J_{k}-\sum_{l}C_{kl}K_{l}\bigr)Td_{\tilde{X}}^{-1},K_{k},V_{\tilde{X}}\}$
by noting ${\rm ch}(\mathcal{O}_{\tilde{X}_{0}})=1$, ${\rm ch}(\mathcal{O}_{p})=-V_{\tilde{X}_{0}}$,
etc. Then, with respect to this basis, the symplectic form $(*,*):H^{even}(\tilde{X}_{0},\mathbb{Q})\times H^{even}(\tilde{X}_{0},\mathbb{Q})\rightarrow\mathbb{Z}$
described above takes the following form: \begin{equation}
\Sigma_{0}=\left(\begin{smallmatrix}0 & 0 & 0 & 0 & 0 & 1\\
0 & 0 & 0 & 0 & 1 & 0\\
0 & 0 & 0 & 1 & 0 & 0\\
0 & 0 & -1 & 0 & 0 & 0\\
0 & -1 & 0 & 0 & 0 & 0\\
-1 & 0 & 0 & 0 & 0 & 0\end{smallmatrix}\right)\label{eq:symplectic-form}\end{equation}
 with no dependence on $C_{kl}$. From the above calculations of the
cohomology-valued hypergeometric series, we read the (conjectural)
integral, symplectic basis of the period integrals as \[
\Pi(x,y)=\,^{t}(w_{0}(x,y),w_{1}^{(1)}(x,y),w_{2}^{(1)}(x,y),w_{2}^{(2)}(x,y),w_{1}^{(2)}(x,y),w^{(3)}(x,y)).\]
 For notational simplicity, we will understand by\[
\Pi(x,y)=\,^{t}(w_{0}(x,y),w_{k}^{(1)}(x,y),w_{l}^{(2)}(x,y),w^{(3)}(x,y)),\;\;\;(k,l=1,2)\]
 the period integrals arranged in the above order. \bigskip{}

We observed in Proposition \ref{pro:PF-equations} that there appear
two other local structures on $\{(x_{1},y_{1}),\mathcal{A}_{1}\}$
and $\{(x_{2},y_{2}),\mathcal{A}_{2}\}$. It has been noted in \cite{HoTa}
that these local structures correspond to $\tilde{X}_{1}$ and $\tilde{X}_{2}$,
respectively, both of which are smooth complete intersections of $(1,1)$-divisors
and birational to $\tilde{X}_{0}(\not\simeq\tilde{X}_{i},i=1,2)$.
By symmetry, up to the gauge transformations, we have the corresponding
cohomology valued hypergeometric series\[
x_{1}\omega\left(x_{1},y_{1};\frac{J'_{1}}{2\pi i},\frac{J'_{2}}{2\pi i}\right),\;\; x_{2}\omega\left(x_{2},y_{2};\frac{J''_{1}}{2\pi i},\frac{J''_{2}}{2\pi i}\right)\;\]
under the integral, symplectic structures on $H^{even}(\tilde{X}_{1},\mathbb{Q})$
and $H^{even}(\tilde{X}_{2},\mathbb{Q})$, respectively. The definitions
and the calculations of these cohomology valued hypergeometric series
are parallel to (\ref{eq:cohomology-valued-hyp}) with the corresponding
generators $J'_{k}$ and $J''_{k}$. We read the canonical symplectic
form $\mathtt{\Sigma_{0}}$ as above, and the canonical integral,
symplectic basis of the period integrals as\begin{align*}
\Pi'(x_{1,}y_{1}) & =\,^{t}(x_{1}w_{0}(x_{1},y_{1}),x_{1}w_{k}^{(1)}(x_{1},y_{1}),x_{1}w_{l}^{(2)}(x_{1},y_{1}),x_{1}w^{(3)}(x_{1},y_{1})),\\
\Pi''(x_{2},y_{2}) & =\,^{t}(x_{2}w_{0}(x_{2},y_{2}),x_{2}w_{k}^{(1)}(x_{2},y_{2}),x_{2}w_{l}^{(2)}(x_{2},y_{2}),x_{2}w^{(3)}(x_{2},y_{2})).\end{align*}
Note that $\Pi(x,y),\Pi'(x_{1},y_{1})$ and $\Pi''(x_{2},y_{2})$
contain the same unknown constants $C_{kl}$ in common, which will
be fixed later in Proposition \ref{pro:Monodromy-Matrices}.

To make the Taylor expansion of the cohomology valued hypergeometric
series (\ref{eq:cohomology-valued-hyp}), let us introduce the following
notation:\begin{align*}
\partial_{\rho_{k}}w(x,y) & =\frac{\partial\;}{\partial\rho_{k}}w\bigl(x,y;\frac{\rho_{1}}{2\pi i},\frac{\rho_{2}}{2\pi i}\bigr)\bigl|{}_{\rho=0},\\
\partial_{\rho_{k}}\partial_{\rho_{l}}w(x,y) & =\frac{\partial^{2}\;}{\partial\rho_{k}\partial\rho_{l}}w\bigl(x,y;\frac{\rho_{1}}{2\pi i},\frac{\rho_{2}}{2\pi i}\bigr)\bigl|_{\rho=0},\cdots\end{align*}
with formal variables $\rho_{1},\rho_{2}$. Using the intersection
numbers $\int_{\tilde{X}_{0}}J_{1}^{3}=\int_{\tilde{X}_{0}}J_{2}^{3}=5,$
$\int_{\tilde{X}_{0}}J_{1}^{2}J_{3}=\int_{\tilde{X}_{0}}J_{1}J_{2}^{2}=10$,
and also the values $\int_{\tilde{X}_{0}}c_{2}J_{1}=\int_{\tilde{X}_{0}}c_{2}J_{2}=50,$
we have the explicit form of the period integral $\Pi(x,y)$:

\begin{equation}
\Pi(x,y)=\left(\begin{smallmatrix}w_{0}(x,y)\\
\partial_{\rho_{1}}w(x,y)\\
\partial_{\rho_{2}}w(x,y)\\
5\partial_{\rho_{1}}^{2}w+10\partial_{\rho_{1}}\partial_{\rho_{2}}w+\frac{5}{2}\partial_{\rho_{2}}^{2}w+\sum_{b}C_{2b}\partial_{\rho_{b}}w\\
\frac{5}{2}\partial_{\rho_{1}}^{2}w+10\partial_{\rho_{1}}\partial_{\rho_{2}}w+5\partial_{\rho_{2}}^{2}w+\sum_{b}C_{1b}\partial_{\rho_{b}}w\\
-\frac{5}{6}(\partial_{\rho_{1}}^{3}w+\partial_{\rho_{2}}^{3}w)-5(\partial_{\rho_{1}}^{2}\partial_{\rho_{2}}w+\partial_{\rho_{1}}\partial_{\rho_{2}}^{2}w)-\frac{50}{12}(\partial_{\rho_{1}}w+\partial_{\rho_{2}}w)\end{smallmatrix}\right),\label{eq:Pi-xy}\end{equation}
and similar forms for $\Pi'(x_{1},y_{1})$ and $\Pi''(x_{2,}y_{2})$.
In the following calculations, we use the powerseries expansions of
these period integrals to sufficiently higher orders.

\bigskip{}

\subsubsection{Analytic continuations }

Let us consider the analytic continuations of the three isomorphic
local structures noted in Proposition \ref{pro:PF-equations} to the
'center' $[1:1:1]$ of $\mathbb{P}^{2}.$ We introduce a local coordinate
$s=x+1,t=y+1$ of $\mathcal{A}_{0}$ which locates the center $[1:1:1]$
at the origin, and write the Picard-Fuchs differential equations as
$\mathcal{D}_{1}\varphi_{k}(s,t)=\mathcal{D}_{2}\varphi_{k}(s,t)=0$
$(k=0,...,5).$ We arrange the solutions into the column vector \[
\varphi(s,t)=\,^{t}(\varphi_{0}(s,t),\varphi_{1}(s,t),\varphi_{2}(s,t),\varphi_{3}(s,t),\varphi_{4}(s,t),\varphi_{5}(s,t)).\]
Similarly we consider the local solutions satisfying $\mathcal{D}'_{1}\varphi_{k}'(s_{1},t_{1})=\mathcal{D}'_{2}\varphi_{k}'(s_{1},t_{1})=0$
with the local coordinates $s_{1}=x_{1}+1,t_{1}=y_{1}+1$ of $\mathcal{A}_{1}$,
and also $\mathcal{D}''_{1}\varphi_{k}''(s_{2},t_{2})=\mathcal{D}''_{2}\varphi_{k}''(s_{2},t_{2})=0$
with $s_{2}=x_{2}+1,t_{2}=y{}_{2}+1$ of $\mathcal{A}_{2}$. Since
the center $[1:1:1]$ is a regular point of the differential equations
$\mathcal{D}_{1}\varphi_{k}(s,t)=\mathcal{D}_{2}\varphi_{k}(s,t)=0$
(see Proposition \ref{pro:PF-dis0}), we have 6 power series solutions.
After some calculations, we see that the following leading behaviors
determine the local solutions uniquely: \begin{align}
\varphi_{0}(s,t) & =1+c_{11}^{(0)}st+\cdots, & \varphi_{1}(s,t) & =t+c_{11}^{(1)}st+\cdots,\nonumber \\
\varphi_{2}(s,t) & =s+c_{11}^{(2)}st+\cdots, & \varphi_{3}(s,t) & =s^{2}+c_{11}^{(3)}st+\cdots,\label{eq:solutions-leading-behavior}\\
\varphi_{4}(s,t) & =t^{2}+c_{11}^{(4)}st+\cdots, & \varphi_{5}(s,t) & =s^{3}+\cdots,\nonumber \end{align}
where $\cdots$ represent higher order terms (degree $\geq3$) which
do not contain the $s^{3}$-term. Since the differential operators
$\mathcal{D}_{i}'$ and $\mathcal{D}_{i}''$ are related to $\mathcal{D}_{i}$
as in Proposition \ref{pro:PF-equations}, the corresponding local
solutions are simply given by\begin{equation}
\varphi_{i}'(s_{1},t_{1})=(s_{1}-1)\varphi_{i}(s_{1},t_{1})\;,\;\;\varphi_{i}''(s_{2},t_{2})=(s_{2}-1)\varphi_{i}(s_{2},t_{2}).\label{eq:connect-Phis}\end{equation}

\begin{prop}
\label{pro:The-local-solutions-at-111}The three local solutions are
related by \[
\varphi'(s_{1},t_{1})=M_{1}\;\varphi(s,t)\;,\;\;\varphi''(s_{2},t_{2})=M_{2}\;\varphi(s,t)\,,\]

with \begin{align*}
M_{1}=\left(\begin{smallmatrix}-1 & 0 & -1 & -\frac{7}{11} & 0 & -\frac{58}{121}\\
0 & -1 & 1 & 0 & 0 & -\frac{3}{11}\\
0 & 0 & 0 & 1 & 0 & -\frac{6}{11}\\
0 & 0 & 0 & 1 & 0 & \frac{26}{11}\\
0 & 0 & 0 & 1 & -1 & \frac{13}{11}\\
0 & 0 & 0 & 0 & 0 & -1\end{smallmatrix}\right), & M_{2}=\left(\begin{smallmatrix}-1 & -1 & 0 & 0 & -\frac{7}{11} & 0\\
0 & 1 & -1 & 0 & 0 & \frac{3}{11}\\
0 & 1 & 0 & 0 & 0 & -3\\
0 & 0 & 0 & 0 & 1 & \frac{13}{11}\\
0 & 0 & 0 & -1 & 1 & -\frac{13}{11}\\
0 & 0 & 0 & 0 & 0 & -1\end{smallmatrix}\right).\end{align*}
\end{prop}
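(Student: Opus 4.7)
The plan is to exploit the fact that $[1:1:1]$ is a regular point of the Picard--Fuchs system (cf.\ Proposition~\ref{pro:PF-dis0}), so that the six functions $\varphi_0,\dots,\varphi_5$ with the prescribed leading terms (\ref{eq:solutions-leading-behavior}) form a basis of the local solution space. By (\ref{eq:connect-Phis}), each $\varphi'_i(s_1,t_1)=(s_1-1)\varphi_i(s_1,t_1)$ is again a holomorphic local solution of $\mathcal{D}_1\varphi=\mathcal{D}_2\varphi=0$ once its arguments are written as power series in $(s,t)$, so it admits a unique expansion $\varphi'_i(s_1,t_1)=\sum_j(M_1)_{ij}\varphi_j(s,t)$; an analogous statement yields $M_2$. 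The proposition is thereby reduced to identifying the entries of $M_1,M_2$ by matching Taylor coefficients at $(s,t)=(0,0)$.

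The first concrete step is to convert the projective identifications $[a^5:b^5:1]=[1:b^5/a^5:1/a^5]=[a^5/b^5:1:1/b^5]$ into local coordinate changes about the center. Setting $x=-a^5=s-1$ and $y=-b^5=t-1$ gives
\begin{align*}
s_1 &= \frac{t-s}{1-s}, & t_1 &= \frac{-s}{1-s}, & s_1-1 &= \frac{t-1}{1-s},\\
s_2 &= \frac{s-t}{1-t}, & t_2 &= \frac{-t}{1-t}, & s_2-1 &= \frac{s-1}{1-t},
\end{align*}
each of which expands as a power series about the origin. Using $\mathcal{D}_1,\mathcal{D}_2$ of Proposition~\ref{pro:PF-equations} rewritten in $(s,t)$ via $x=s-1,y=t-1$, one generates the Taylor coefficients of each $\varphi_j(s,t)$ beyond its leading term (\ref{eq:solutions-leading-behavior}) by the standard power-series recursion, which is non-degenerate because the origin is a regular point. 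Substituting $s_1(s,t),t_1(s,t)$ into $\varphi_i$ and multiplying by $(t-1)/(1-s)$ expands $\varphi'_i(s_1,t_1)$ as a series in $(s,t)$, and matching coefficients of $1,s,t,s^2,st,t^2,s^3,\dots$ on both sides of $\varphi'_i(s_1,t_1)=\sum_j(M_1)_{ij}\varphi_j(s,t)$ determines $(M_1)_{ij}$ as the unique solution of a finite linear system. The computation of $M_2$ is strictly parallel, with $(s_2-1)=(s-1)/(1-t)$ in place of $(s_1-1)=(t-1)/(1-s)$.

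The principal obstacle is purely computational, not conceptual. The denominators $11$ and $121$ in the entries $7/11,\,26/11,\,13/11,\,6/11,\,3/11,\,58/121$ arise from the fact that the constant part of $\mathcal{D}_2$, namely its value at $(x,y)=(-1,-1)$, equals $11(\theta_x^2+\theta_x\theta_y+\theta_y^2)$, so a factor of $11$ is introduced whenever the recursion is inverted to read off a new coefficient of $\varphi_j$. Consequently the Taylor expansions must be carried to order $\geq 3$ in $(s,t)$, and one order further to pin down the $\varphi_5$-column (whose leading term appears only at $s^3$); once enough coefficients are computed the verification of both $M_1$ and $M_2$ is routine. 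The observation that $M_1$ and $M_2$ differ by more than a column permutation is already forced by the fact that the basis (\ref{eq:solutions-leading-behavior}) is asymmetric in $s$ and $t$: only $\varphi_5$ has a cubic leading term, and it is $s^3$ rather than $t^3$, so the $(s,t)\leftrightarrow(t,s)$ swap does not act as a permutation on $\{\varphi_0,\dots,\varphi_5\}$.
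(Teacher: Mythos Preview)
Your approach is exactly the paper's: express $(s_1,t_1)$ and $(s_2,t_2)$ as rational functions of $(s,t)$ via the projective identification, use $\varphi'_i(s_1,t_1)=(s_1-1)\,\varphi_i(s_1,t_1)$ from (\ref{eq:connect-Phis}), expand about the center, and match Taylor coefficients against the basis (\ref{eq:solutions-leading-behavior}) to read off $M_1,M_2$. One bookkeeping caution: your formulas $s_1=(t-s)/(1-s),\ t_1=-s/(1-s)$ are interchanged relative to those printed in the paper's proof (which has $s_1=-s/(1-s),\ t_1=(t-s)/(1-s)$); yours are the ones consistent with the earlier definitions $x_1=-b^5/a^5,\ y_1=-1/a^5$, so check carefully which convention actually reproduces the displayed $M_1,M_2$.
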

\begin{proof}
Since $[-x:-y:1]=[1:-y_{1}:-x_{1}]=[-y_{2}:1:-x_{2}]$ by definition,
we have $[1-s:1-t:1]=[1:1-t_{1}:1-s_{1}]=[1-t_{2}:1:1-s_{2}]$ and
\[
s_{1}=\frac{-s}{1-s},\; t_{1}=\frac{t-s}{1-s}\;;\;\; s_{2}=\frac{-t}{1-t},\; t_{2}=\frac{s-t}{1-t}.\]
 Then we should have \[
\varphi'\bigl(\frac{-s}{1-s},\frac{t-s}{1-s}\bigr)=M_{1}\varphi(s,t)\;,\;\;\varphi''\bigl(\frac{-t}{1-t},\frac{s-t}{1-t}\bigr)=M_{2}\varphi(s,t),\]
for $|s|,|t|\ll1$. Using the relations (\ref{eq:connect-Phis}) for
the left hand sides, we obtain the claimed form of the matrices $M_{1},M_{2}$.
\end{proof}
~

~

Now by Proposition \ref{pro:The-local-solutions-at-111}, the connection
problems of the three period integrals $\Pi(x,y),\Pi'(x_{1},y_{1})$,
$\Pi''(x_{2},y_{2})$ to each other may be solved by the analytic
continuations of each to the corresponding local solutions around
the center. By symmetry, we note that connecting $\Pi(x,y)$ to $\varphi(s,t)$
is sufficient for our purpose. 
\begin{prop}
\label{pro:PF-dis0}The singular loci of the Picard-Fuchs differential
equations consist of the three coordinate lines of $\mathbb{P}^{2}$and
an irreducible nodal rational curve of genus 6. The defining equation
of the nodal curve in the affine chart $\{(x,y),\mathcal{A}_{0}\}$
has the following form \[
dis_{0}=(1-x-y)^{5}-5^{4}xy(1-x-y)^{2}+5^{5}xy(xy-x-y).\]
\end{prop}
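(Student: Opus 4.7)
My approach is to compute the singular locus as the projection of the characteristic variety of the Picard--Fuchs $\mathcal{D}$-module, and then to analyse the resulting plane quintic geometrically.

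In the chart $\mathcal{A}_0$ I will extract the principal symbols $\sigma(\mathcal{D}_1), \sigma(\mathcal{D}_2)$ by replacing $\theta_x = x\partial_x$, $\theta_y = y\partial_y$ throughout and keeping only the highest-order-in-$\partial$ contributions. In the natural coordinates $u := x\xi_x$, $v := y\xi_y$ these become polynomials of $(u,v)$-degree $3$ and $2$ with coefficients in $\mathbb{Q}[x,y]$. The singular locus on $\mathcal{A}_0$ is the projection of the characteristic variety $\{\sigma(\mathcal{D}_1)=\sigma(\mathcal{D}_2)=0\}\setminus\{u=v=0\}$ to the $(x,y)$-plane, hence is cut out by the resultant $R(x,y) := \mathrm{Res}_{[u:v]}\bigl(\sigma(\mathcal{D}_1),\sigma(\mathcal{D}_2)\bigr)$. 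A direct elimination (Macaulay2) should show that $R(x,y)$ factors as $x^\alpha y^\beta\cdot dis_0(x,y)$ for some $\alpha,\beta\ge 0$, so within $\mathcal{A}_0$ the singular locus consists of the two coordinate axes $\{x=0\}$, $\{y=0\}$ together with the plane quintic $\{dis_0 = 0\}$. To see the third coordinate line of $\mathbb{P}^2$ (the line at infinity in $\mathcal{A}_0$) and to check that no further component has been missed, I will repeat the same computation in the charts $\mathcal{A}_1,\mathcal{A}_2$ using the gauge-equivalent operators of Proposition \ref{pro:PF-equations}; after substituting the coordinate changes $x_1 = -b^5/a^5$, $y_1 = -1/a^5$ (and analogously for $\mathcal{A}_2$) the computation should reproduce $dis_0$ consistently across charts and supply the missing coordinate line at infinity.

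For the geometric claim, note that $\deg dis_0 = 5$, so the plane curve $C := \{dis_0 = 0\}\subset\mathbb{P}^2$ has arithmetic genus $(5-1)(5-2)/2 = 6$. Irreducibility of $dis_0$ is checked by observing that none of the obvious linear forms $x, y, 1-x-y$ divides $dis_0$ (direct substitution gives a nonzero residue for each) and by ruling out the remaining $(2,3)$ and $(1,4)$ factorizations over $\mathbb{Q}$ via a computer-algebra irreducibility test, e.g.\ reduction modulo a small prime. To identify the singularities, I will compute the primary decomposition of the Jacobian ideal $(dis_0,\partial_x dis_0,\partial_y dis_0)$; this is expected to yield exactly six distinct reduced points, and at each of them the Hessian quadratic form of $dis_0$ is non-degenerate, which identifies the singularity as an ordinary node. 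Since six nodes drop the geometric genus by six, one obtains $p_g(C) = 6 - 6 = 0$, confirming $C$ is rational.

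The main obstacle is conceptual rather than computational: one must correctly interpret the ``singular locus of the Picard--Fuchs equations'' as the support of the characteristic variety of the $\mathcal{D}$-module (not as the vanishing locus of a single leading coefficient), and one must perform the calculation in all three affine charts of $\mathbb{P}^2$ so as not to miss the component at infinity. The remaining steps---the resultant calculation in the first part, and the primary decomposition plus Hessian evaluations in the second---are routine symbolic computations.
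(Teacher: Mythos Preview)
Your approach is correct and, for the determination of the singular locus itself, is essentially what the paper does: the paper's proof is a one-line reference to ``calculating the characteristic variety of the differential operators $\mathcal{D}_1(x,y),\mathcal{D}_2(x,y)$'', and your resultant computation of the principal symbols is exactly an implementation of that. Your care about working in all three charts and about distinguishing the characteristic-variety support from a naive leading-coefficient locus is appropriate and matches the spirit of the cited computation.

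Where you genuinely diverge from the paper is in establishing that the quintic $\{dis_0=0\}$ is rational with six nodes. You argue intrinsically: arithmetic genus $6$ for a plane quintic, Jacobian-ideal computation yielding six reduced points with non-degenerate Hessian, hence $p_g=0$. The paper instead observes (in the discussion immediately following the proposition) that $dis_0(x,y)=0$ is obtained from the discriminant $\prod_{k,l}(\mu^k a+\mu^l b+1)=0$ by eliminating $a,b$ under $x=-a^5,\ y=-b^5$; since one factor is the line $a+b+1=0$, this line furnishes an explicit global parametrization of the quintic by $\mathbb{P}^1$, from which rationality and the six nodes (as images of the branch-type coincidences under the degree-$25$ map $(a,b)\mapsto(-a^5,-b^5)$) can be read off directly. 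Your route is cleaner if one only wants the statement; the paper's route has the advantage of supplying the explicit parametrization that is actually used later in the monodromy analysis of Section~\ref{sec:Picard-Fuchs-equations}.
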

\begin{proof}
This follows from calculating the characteristic variety of the differential
operators $\mathcal{D}_{1}(x,y)$, $\mathcal{D}_{2}(x,y)$, see \cite[Remark 2.7]{HoTa}.
\end{proof}
Since the irreducible component $dis_{0}=0$ is rational, this can
be parametrized globally by $\mathbb{P}^{1}$. In fact, we can verify
that the equation $dis_{0}=0$ follows from the discriminant $dis(\Xsp|_{(\mathbb{C}^{*})^{4}})=0$
determined in Proposition \ref{pro:dis-X-S-H} eliminating the variables
$a$ and $b$ under the relations $x=-a^{5},y=-b^{5}.$ Hence as a
global parameter of the curve we can adopt an affine line $a+b+1=0$
in $(\mathbb{C}^{*})^{2}$ (which we compactify to $\mathbb{P}^{1}$
with infinity). Using this, we have depicted a schematic picture of
the singular loci in Fig.\ref{fig:discriminant-PF}. In the figure,
the curve $dis_{0}(x,y)=0$ of complex-one dimension is reduced to
the corresponding real curve by imposing a condition ${\rm Im}(x)={\rm Im}(y)$.
The real plane curves drawn in the figure are the projection of the
space curve $\{({\rm Re}(x),{\rm Re}(y),{\rm Im}(x))|\, dis_{0}(x,y)=0\}$
to the first two coordinates. Also, the three affine coordinates are
taken {}``outward direction'' from the standard right-triangular
shape of the moment polytope of $\mathbb{P}^{2}$ whose vertices represent
the three affine coordinates $[a^{5}:b^{5}:1]=[1:\frac{b^{5}}{a^{5}}:\frac{1}{a^{5}}]=[\frac{a^{5}}{b^{5}}:1:\frac{1}{b^{5}}]$.

\begin{figure}
\includegraphics[scale=0.4]{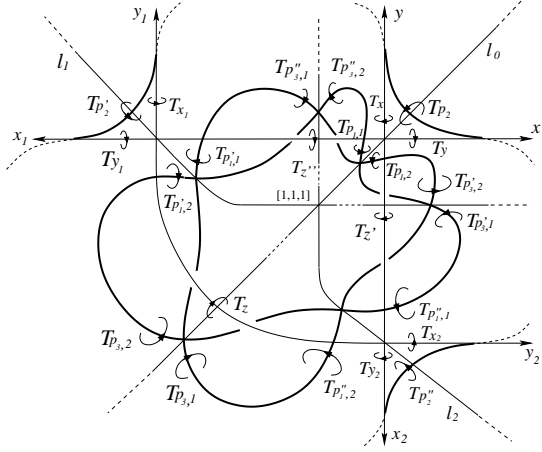}

\caption{\label{fig:discriminant-PF}Singular loci of the Picard-Fuchs differential
equations. Each loop represents the monodromy transformation with
a base point near $(x,y)=(0,0)$ and a path taken over the lines $\ell_{i}$
(${\rm Im}(x)={\rm Im}(y)>0)$. }

\end{figure}

\bigskip{}

\subsubsection{Monodromy transformations shown in Fig.\ref{fig:discriminant-PF}}

As explained above, the defining equation $dis_{0}(x,y)=0$ can be
solved by the line $a+b+1=0$. We set $a=\frac{1}{2}+(\alpha+i\beta),$
$b=\frac{1}{2}-(\alpha+i\beta)$ and solve the additional condition
${\rm Im}(x)={\rm Im}(y)$ $(x=-a^{5},y=-b^{5})$ for $\beta$ to
have the space curve

\[
\{({\rm Re}(x(\alpha)),{\rm Re}(y(\alpha)),{\rm Im}(x(\alpha)))|-\infty<\alpha<+\infty\}.\]
Solving the equation ${\rm Im}(x)={\rm Im}(y)$ for $\beta$ introduces
five branches for the solutions. Each of the solutions determines
a partial parametrization of the curve by $\alpha$. As shown in Fig.\ref{fig:discriminant-PF},
we have two connected components for the real space (plane) curve
in this way. One component comes from the obvious solution $\beta=0$,
and this is represented by the component that consists of 3 solid-bold
(hyperbola-shaped) lines and 3 broken lines. Due to the repetition
of the regions in the coordinate planes, each of the 3 broken lines
should be identified with the solid-bold line in the opposite side.
The other component contains the 6 nodes. It is left to readers to
draw a picture of real Riemannian surface of genus 6 with 6 nodes
whose real hyperplane section is given by the plane curves shown in
Fig.\ref{fig:discriminant-PF}.

In Fig.$\,$\ref{fig:discriminant-PF}, we have also drawn three lines
$\ell_{i}$ :\[
\ell_{0}:(x=y),\;\ell_{1}:(x_{1}=y_{1}),\;\ell_{2}:(x_{2}=y_{2})\]
which intersect at the center $[1,:1:1]$. Each line intersects with
the curve at the two nodal points, and transversally at one point,
as shown in the figure. We name all these points of the intersection
by $p_{1},p_{2},p_{3};p'_{1},p_{2}',p_{3}';p_{1}'',p_{2}'',p_{3}''$
with their explicit coordinates:\begin{alignat*}{2}
p_{1}:[-\rho_{_{-}}:-\rho_{_{-}}:1], & \;\; p_{2}:\bigl[\frac{-1}{32}:\frac{-1}{32}:1\bigr], & \;\; p_{3}:[-\rho_{_{_{+}}}:-\rho_{_{_{+}}}:1],\\
p_{1}':[1:-\rho_{_{-}}:-\rho_{_{-}}], & \;\; p_{2}':\bigl[1:\frac{-1}{32}:\frac{-1}{32}\bigr], & \;\; p_{3}':[1:-\rho_{_{+}}:-\rho_{_{+}}],\\
p_{1}'':[-\rho_{_{-}}:1:-\rho_{_{-}}], & \;\; p_{2}'':\bigl[\frac{-1}{32}:1:\frac{-1}{32}\bigr], & \;\; p_{3}'':[-\rho_{_{+}}:1:-\rho_{_{+}}],\end{alignat*}
where $\rho_{_{\mp}}=\frac{11\mp\sqrt{5}}{2}$. 

For the monodromy calculation of the period integral $\Pi(x,y)$,
we take a base point $\mathbf{o}$ near the origin $(x,y)=(0,0)$.
We fix it to be a real point near $(0,0)$ and $1\gg{\rm Im}(x)>0$
in the figure. Starting this base point, we define the monodromy transformations
$T_{x,}T_{y}$ around the coordinate axes via the loops shown. Similarly
we define monodromy transformations $T_{p_{1},1,}T_{p_{1},2};$$T_{p_{2}};$
$T_{p_{3},1},T_{p_{3},2};$$T_{z}$ by connecting the small loops
shown in the figure with the paths 'over' the line $\ell_{0}$ (a
line near $\ell_{0}$ with ${\rm Im}(x)={\rm Im}(y)>0\text{)}$ from
the base point. We define the monodromy representation, $\rho:\pi_{1}(\mathbb{P}^{2}\setminus\mathcal{D}_{PF},\mathbf{o})\rightarrow Sp(6,\mathbb{Z})$
with $\mathcal{D}_{PF}$ representing the singular loci of the Picard-Fuchs
differential operators and \[
Sp(6,\mathbb{Z})=\{\, P\in GL(6,\mathbb{Z})\,\vert\,\,^{t}P\Sigma_{0}P=\Sigma_{0}\,\}\]
with respect to the symplectic form $\Sigma_{0}$ in (\ref{eq:symplectic-form}).
We adopt the convention that, for example, $T_{x}.\Pi(x,y)=\rho(T_{x})\Pi(x,y)$
represents the analytic continuation $T_{x}.\Pi(x,y)$ of the local
solution $\Pi(x,y)$ along the path with the loop $T_{x}$ in terms
of the local solution $\Pi(x,y)$. Thus under our convention, the
monodromy representation $\rho$ is an anti-homomorphism satisfying
$\rho(T_{1}T_{2})=\rho(T_{2})\rho(T_{1})$. 
\begin{prop}
\label{pro:Monodromy-Matrices}When we take $C_{11}=C_{22}=-\frac{1}{2},C_{12}=C_{21}=0$
in the canonical integral, symplectic basis $\Pi(x,y)$ in (\ref{eq:Pi-xy}),
all the monodromy transformations above are represented by the elements
$\rho(T_{*})$ in $Sp(6,\mathbb{Z})$. Explicitly, the corresponding
monodromy matrices acting on the period integral $\Pi(x,y)$ are given
by: \begin{gather*}
T_{x}:\left(\begin{smallmatrix}1 & 0 & 0 & 0 & 0 & 0\\
1 & 1 & 0 & 0 & 0 & 0\\
0 & 0 & 1 & 0 & 0 & 0\\
5 & 10 & 10 & 1 & 0 & 0\\
2 & 5 & 10 & 0 & 1 & 0\\
\ms5 & \ms3 & \ms5 & 0 & \ms1 & 1\end{smallmatrix}\right),T_{y}:\left(\begin{smallmatrix}1 & 0 & 0 & 0 & 0 & 0\\
1 & 1 & 0 & 0 & 0 & 0\\
1 & 0 & 1 & 0 & 0 & 0\\
2 & 10 & 5 & 1 & 0 & 0\\
5 & 10 & 10 & 0 & 1 & 0\\
\ms5 & \ms5 & \ms3 & \ms1 & 0 & 1\end{smallmatrix}\right),T_{z}:\left(\begin{smallmatrix}41 & \ms17 & \ms17 & 6 & 6 & 15\\
4 & 0 & \ms6 & \ms2 & 3 & 1\\
4 & \ms6 & 0 & 3 & \ms2 & 1\\
\ms72 & 28 & 23 & \ms13 & \ms9 & \ms28\\
\ms72 & 23 & 28 & \ms9 & \ms13 & \ms28\\
\ms30 & 18 & 18 & \ms4 & \ms4 & \ms9\end{smallmatrix}\right),\\
T_{p_{1},1}:\left(\begin{smallmatrix}\ms4 & 5 & 2 & \ms1 & 0 & \ms1\\
0 & 1 & 0 & 0 & 0 & 0\\
\ms5 & 5 & 3 & \ms1 & 0 & \ms1\\
\ms10 & 10 & 4 & \ms1 & 0 & \ms2\\
\ms25 & 25 & 10 & \ms5 & 1 & \ms5\\
25 & \ms25 & \ms10 & 5 & 0 & 6\end{smallmatrix}\right),T_{p_{1},2}:\left(\begin{smallmatrix}6 & \ms2 & \ms5 & 0 & 1 & 1\\
5 & \ms1 & \ms5 & 0 & 1 & 1\\
0 & 0 & 1 & 0 & 0 & 0\\
25 & \ms10 & \ms25 & 1 & 5 & 5\\
10 & \ms4 & \ms10 & 0 & 3 & 2\\
\ms25 & 10 & 25 & 0 & \ms5 & \ms4\end{smallmatrix}\right),T_{p_{2}}:\left(\begin{smallmatrix}1 & 0 & 0 & 0 & 0 & 1\\
0 & 1 & 0 & 0 & 0 & 0\\
0 & 0 & 1 & 0 & 0 & 0\\
0 & 0 & 0 & 1 & 0 & 0\\
0 & 0 & 0 & 0 & 1 & 0\\
0 & 0 & 0 & 0 & 0 & 1\end{smallmatrix}\right),\\
T_{p_{3},1}:\left(\begin{smallmatrix}41 & \ms4 & \ms20 & 0 & 12 & 16\\
30 & \ms2 & \ms15 & 0 & 9 & 12\\
0 & 0 & 1 & 0 & 0 & 0\\
50 & \ms5 & \ms25 & 1 & 15 & 20\\
10 & \ms1 & \ms5 & 0 & 4 & 4\\
\ms100 & 10 & 50 & 0 & \ms30 & \ms39\end{smallmatrix}\right),T_{p_{3},2}:\left(\begin{smallmatrix}\ms39 & 20 & 4 & \ms12 & 0 & \ms16\\
0 & 1 & 0 & 0 & 0 & 0\\
\ms30 & 15 & 4 & \ms9 & 0 & \ms12\\
\ms10 & 5 & 1 & \ms2 & 0 & \ms4\\
\ms50 & 25 & 5 & \ms15 & 1 & \ms20\\
100 & \ms50 & \ms10 & 30 & 0 & 41\end{smallmatrix}\right).\end{gather*}
\end{prop}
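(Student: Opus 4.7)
The plan is to split the nine monodromy matrices into three groups and treat each in a different way: (i) the unipotent loops $T_x, T_y$ around the large complex structure point $(0,0)$, which come directly from the hypergeometric structure; (ii) the five Picard--Lefschetz loops $T_{p_2}, T_{p_1,1}, T_{p_1,2}, T_{p_3,1}, T_{p_3,2}$ around points of the discriminant curve that are accessible from the central patch, which require analytic continuation to the regular point $[1:1:1]$; and (iii) the remaining loop $T_z$, which will be handled via the chart symmetries encoded in $M_1, M_2$ from Proposition~\ref{pro:The-local-solutions-at-111}. The free constants $C_{kl}$ play the role of a calibration: they will be fixed in step (ii) so that all nine transformations preserve the integral lattice and the symplectic form $\Sigma_0$.

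For $T_x$ and $T_y$, the entries of $\Pi(x,y)$ in (\ref{eq:Pi-xy}) are by construction the Taylor coefficients in $J_1, J_2$ of the cohomology-valued series $\omega(x,y;J_1/2\pi i,J_2/2\pi i)$. The factor $x^{n+J_1/(2\pi i)}$ makes the loop $x \mapsto x\,e^{2\pi i}$ act as cup product with $\exp(J_1)$ on $H^{\mathrm{even}}(\tilde X_0,\mathbb Q)$, and similarly for $y$. Using the intersection numbers $J_1^3 = J_2^3 = 5,\ J_1^2 J_2 = J_1 J_2^2 = 10$ and $c_2\cdot J_k = 50$, one expands $\exp(J_k\cup -)$ in the basis $\{1,(J_k-\sum_l C_{kl}K_l)\mathrm{Td}_{\tilde X_0}^{-1},K_k,V_{\tilde X_0}\}$ and reads off the matrices $T_x, T_y$ as displayed. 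Since tensoring by the ample line bundle $\mathcal{O}(J_k)$ preserves the integral $K$-theory lattice on the mirror side, these matrices are automatically integer symplectic regardless of the choice of $C_{kl}$.

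The main step is group (ii). Introduce local coordinates $(s,t)=(x+1,y+1)$ at the center $[1:1:1]$ and the local holomorphic basis $\varphi(s,t)$ normalized by the leading behavior (\ref{eq:solutions-leading-behavior}); the coefficients of each $\varphi_i(s,t)$ are computed to arbitrary order by solving the Picard--Fuchs system of Proposition~\ref{pro:PF-equations} recursively. Simultaneously, $\Pi(x,y)$ is analytically continued from the base point $\mathbf o$ along a path lying above $\ell_0$ in Figure~\ref{fig:discriminant-PF}, and the result is matched against $\varphi(s,t)$ to produce a constant connection matrix $C$ with $\Pi = C\,\varphi$; this matrix is the content of Proposition~\ref{pro:connection-matrix-C} and depends linearly on the $C_{kl}$. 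Near the smooth intersection point $p_2$ of $\ell_0$ with the discriminant, and near each of the two local branches at the nodes $p_1$ and $p_3$, the local monodromy is a Picard--Lefschetz symplectic reflection $v \mapsto v \pm \langle v, \delta\rangle\,\delta$; in the $\varphi$-basis the vanishing cycle $\delta$ is identified with the unique local solution of the Picard--Fuchs system that vanishes on the corresponding branch. Conjugating these reflections by $C$ transports them to the $\Pi$-basis as matrices depending on $C_{kl}$. Demanding that all entries be integers uniquely forces $C_{11}=C_{22}=-\tfrac12,\ C_{12}=C_{21}=0$ and simultaneously reproduces the explicit matrices $T_{p_2}, T_{p_i,j}$; symplecticity with respect to $\Sigma_0$ is automatic from the Picard--Lefschetz form.

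Finally $T_z$ is extracted from the $S_3$-symmetry among the three charts $\mathcal A_0, \mathcal A_1, \mathcal A_2$: the loop can be transported into one of the other two charts via the connection matrices $M_1$ or $M_2$ and the gauge factors from (\ref{eq:connect-Phis}) to become a loop of the already-computed type. The principal technical obstacle lies in step (ii): the series for $\Pi(x,y)$ converges only within the region bounded by the discriminant in Figure~\ref{fig:discriminant-PF}, so the connection matrix $C$ has to be obtained by chaining power-series expansions along a carefully chosen path, and sufficiently many terms must be carried to resolve the integrality conditions on the $C_{kl}$ as exact rational equalities rather than numerical approximations. Once $C$ is pinned down, the verification of all nine displayed matrices reduces to linear algebra.
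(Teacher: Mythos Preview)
Your approach and the paper's differ substantially in organization, though both ultimately rest on numerical analytic continuation. The paper proves Proposition~\ref{pro:Monodromy-Matrices} by brute force: except for $T_x,T_y$ (handled exactly as you describe), every remaining monodromy matrix, including $T_z$, is computed by direct numerical continuation of $\Pi(x,y)$ along a path subdivided into about 200 segments plus a 100-arc loop, re-expanding the Picard--Fuchs solutions locally at each node and chaining the transition matrices. No Picard--Lefschetz structure or chart symmetry is invoked at this stage; the integral symplectic matrices are simply read off to accuracy $10^{-5}$--$10^{-6}$. The relations you propose to exploit for $T_z$ are established only \emph{afterwards}, in Proposition~\ref{pro:Monodromy-relations-1)-3)}, as consistency checks rather than as a means of computation.

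Your route is more structured: identify each discriminant loop as a Picard--Lefschetz reflection, compute only connection matrices, and recover $T_z$ from the $S_3$-symmetry. This is conceptually cleaner and would work, but two of the shortcuts are less effective than they look. First, the matrix $C$ with $\Pi=C\varphi$ is \emph{not} the content of Proposition~\ref{pro:connection-matrix-C}: that proposition records $\mathcal{C}_{10},\mathcal{C}_{20}$ relating $\Pi',\Pi''$ to $\Pi$, and its proof already requires precisely the chained numerical continuation you hope to economize on. Second, the nodes $p_1,p_3$ sit at $s\approx -3.4$ and $s\approx -5.6$, well outside the disk of convergence of $\varphi(s,t)$ about the center, so locating the vanishing cycles there still forces a multi-step continuation; moreover the paper deliberately deforms the path \emph{off} the diagonal $\ell_0$ to avoid the degeneration $w_1^{(k)}=w_2^{(k)}$ at $x=y$, a practical obstacle your plan does not address. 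Your framing of the $C_{kl}$ as being pinned down by integrality is correct in spirit; in the paper the values $C_{11}=C_{22}=-\tfrac12$, $C_{12}=C_{21}=0$ are simply imposed from the start and integrality is then verified numerically.
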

\begin{proof}
Our proof is based on numerical calculations except for $T_{x}$ and
$T_{y}$. To have the matrix of $T_{p_{1},1}$, for example, we generate
the power series for $\Pi(x,y)$ in (\ref{eq:Pi-xy}) up to total
degree 60. From the base point to a small loop for $T_{p_{1},1}$,
we may take a path over the line $\ell_{0}$, i.e., a real line near
$\ell_{0}$ with ${\rm Im}(x)={\rm Im}(y)>0.$ This choice of path,
however, is not efficient to attain numerically high accuracy due
to the 'degeneration' of the period integrals which we see in $w_{1}^{(k)}(x,y)=w_{2}^{(k)}(x,y)$
when $x=y$. To avoid this degeneration, we deform the path satisfying
${\rm Im}(x)={\rm Im}(y)$ to that satisfying ${\rm Im}(y)=0$ by
making use of the homotopy $\varepsilon{\rm Im}(x)={\rm Im}(y),\varepsilon\in[0,1]$.
Thereby, we verify that the path does not intersect the singular loci
$\mathcal{D}_{PF}$ at any $\varepsilon\in[0,1]$. The path for our
actual calculation is a path over the $\ell_{0}$ satisfying ${\rm Im}(y)=0$.
We divide the deformed line into 200 segments and also the small loop
into 100 arcs. Then, for each endpoint of them, we have constructed
the local solutions imposing the same leading behavior in (\ref{eq:solutions-leading-behavior}).
The monodromy matrix, by definition, follows by relating these solutions
at each ends along the path. We obtained the claimed integral, symplectic
matrix for $T_{p_{1},1}$ in the accuracy $10^{-5}\sim10^{-6}.$ Other
monodromies are determined in the same way with the same level of
accuracy in their numerical calculations.
\end{proof}
We now consider the analytic continuation of the local solution $\Pi(x,y)$
from the base point to the center $[1,:1:1]$ along (over) the line
$\ell_{0},$ and further continue to a point near $(x_{1},y_{1})=(0,0)$
along (over) the line $\ell_{1}$. We express the local solution $\Pi'(x_{1,}y_{1})$
in terms of the analytically continued solution $\Pi(x,y)$ by $\Pi'(x_{1},y_{1})=\mathcal{C}_{10}\Pi(x,y)$.
In a similar way, we consider the analytic continuation of $\Pi(x,y)$
along the line $\ell_{0}$ followed by $\ell_{2}$, and define the
relation $\Pi''(x_{2},y_{2})=\mathcal{C}_{20}\Pi(x,y)$. 
\begin{prop}
\label{pro:connection-matrix-C}The above relations $\Pi'(x_{1},y_{1})=\mathcal{C}_{10}\Pi(x,y)$
and $\Pi''(x_{2},y_{2})=\mathcal{C}_{20}\Pi(x,y)$ are solved by 

\begin{align*}
\mathcal{C}_{10}=\left(\begin{matrix}\ms4 & 8 & 4 & \ms2 & 1 & 0\\
\ms4 & 4 & 2 & \ms1 & 0 & \ms1\\
3 & 2 & 1 & 0 & 1 & 2\\
6 & 4 & 0 & 1 & 2 & 4\\
17 & 0 & \ms4 & 2 & 4 & 8\\
0 & \ms17 & \ms6 & 3 & \ms4 & \ms4\end{matrix}\right),\;\; & \mathcal{C}_{20}=\left(\begin{matrix}4 & \ms4 & \ms8 & \ms1 & 2 & 0\\
4 & \ms2 & \ms4 & 0 & 1 & 1\\
\ms3 & \ms1 & \ms2 & \ms1 & 0 & \ms2\\
\ms6 & 0 & \ms4 & \ms2 & \ms1 & \ms4\\
\ms17 & 4 & 0 & \ms4 & \ms2 & \ms8\\
0 & 6 & 17 & 4 & \ms3 & 4\end{matrix}\right).\end{align*}
\end{prop}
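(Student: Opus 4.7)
The plan is to combine the numerical analytic continuation technique already used in the proof of Proposition \ref{pro:Monodromy-Matrices} with the explicit linear algebra at the center $[1:1:1]$ recorded in Proposition \ref{pro:The-local-solutions-at-111}. The center is a regular point of the Picard--Fuchs system (Proposition \ref{pro:PF-dis0}), so the space of solutions there is canonically described by the Frobenius-type basis $\varphi(s,t)$ whose leading behavior is prescribed by (\ref{eq:solutions-leading-behavior}), and analogously by $\varphi'(s_1,t_1)$ and $\varphi''(s_2,t_2)$ on the $\mathcal{A}_1$- and $\mathcal{A}_2$-sides. The connection matrices $\mathcal{C}_{10}, \mathcal{C}_{20}$ will be obtained by passing $\Pi(x,y)$, $\Pi'(x_1,y_1)$, $\Pi''(x_2,y_2)$ through this common basis.

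Concretely, starting from the base point $\mathbf{o}$ near $(x,y)=(0,0)$, I would analytically continue each component of $\Pi(x,y)$ in (\ref{eq:Pi-xy}) along a path over the line $\ell_0$ to a small neighborhood of $[1:1:1]$, and re-expand the result in the basis $\varphi(s,t)$. This produces a matrix $A_0 \in GL(6,\mathbb{C})$ with $\Pi(x,y) = A_0\,\varphi(s,t)$ after continuation. By symmetry of the compactification, the same procedure for $\Pi'$ along $\ell_1$ and $\Pi''$ along $\ell_2$ yields matrices $A_1, A_2$ with $\Pi'(x_1,y_1) = A_1\,\varphi'(s_1,t_1)$ and $\Pi''(x_2,y_2) = A_2\,\varphi''(s_2,t_2)$. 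Combining with Proposition \ref{pro:The-local-solutions-at-111} gives
\begin{equation*}
\Pi'(x_1,y_1) \;=\; A_1\,\varphi'(s_1,t_1) \;=\; A_1 M_1\,\varphi(s,t) \;=\; A_1 M_1 A_0^{-1}\,\Pi(x,y),
\end{equation*}
so $\mathcal{C}_{10} = A_1 M_1 A_0^{-1}$, and analogously $\mathcal{C}_{20} = A_2 M_2 A_0^{-1}$. The claimed integer matrices should then appear after rounding the numerically obtained products.

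The main obstacle is the numerical step: obtaining $A_0, A_1, A_2$ to enough digits of accuracy that the entries of $\mathcal{C}_{10}$ and $\mathcal{C}_{20}$ can be unambiguously recognized as the specific integers displayed. As in Proposition \ref{pro:Monodromy-Matrices}, I would truncate the hypergeometric series defining $\Pi(x,y)$ to total degree of order $60$, slightly perturb the path off $\ell_0$ via the homotopy $\varepsilon\,\mathrm{Im}(x) = \mathrm{Im}(y)$ to sidestep the degeneracy $w^{(k)}_1(x,y) = w^{(k)}_2(x,y)$ along the diagonal $x=y$ (which would cause columns of the transition matrices to collapse), divide the deformed path into a few hundred segments, and at each segment endpoint re-expand the continued solution against the basis of Proposition \ref{pro:The-local-solutions-at-111} by imposing the leading behavior (\ref{eq:solutions-leading-behavior}). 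A final consistency check is that the resulting $\mathcal{C}_{10}$ and $\mathcal{C}_{20}$ are integral and symplectic with respect to $\Sigma_0$, and are compatible with the monodromies of Proposition \ref{pro:Monodromy-Matrices} under conjugation by the corresponding loops in $\pi_1(\mathbb{P}^2\setminus \mathcal{D}_{PF},\mathbf{o})$; any failure of these checks would pinpoint a numerical error to chase down.
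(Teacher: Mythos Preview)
Your proposal is correct and follows essentially the same route as the paper: numerically continue $\Pi(x,y)$ to $\varphi(s,t)$ along $\ell_0$ and $\Pi'(x_1,y_1)$ to $\varphi'(s_1,t_1)$ along $\ell_1$, then invoke Proposition~\ref{pro:The-local-solutions-at-111} to pass between the two center bases, and similarly for $\mathcal{C}_{20}$. Your write-up is in fact more explicit than the paper's, spelling out the factorization $\mathcal{C}_{10}=A_1 M_1 A_0^{-1}$ and the consistency checks, but the underlying method is identical.
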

\begin{proof}
As in the previous proposition, we do numerically the analytic continuation
of $\Pi(x,y)$ to $\varphi(s,t)$ along $\ell_{0}$, and $\Pi'(x_{1},y_{1})$
to $\varphi'(s_{1},t_{1})$ along $\ell_{1}$. Then use Proposition
\ref{pro:The-local-solutions-at-111} to relate $\varphi(s,t)$ and
$\varphi'(s_{1},t_{1})$, and obtain the claimed matrix $\mathcal{C}_{10}$.
The matrix $\mathcal{C}_{20}$ follows in the same way. 
\end{proof}
Since the three local forms of the period integral $\Pi(x,y),\Pi'(x_{1},y_{1})$
and $\Pi''(x_{2},y_{2})$ are governed by the isomorphic system of
differential equations (see Proposition \ref{pro:PF-equations}) and
also from the obvious symmetry in Fig.\ref{fig:discriminant-PF},
the entire monodromy properties of the period integral $\Pi(x,y)$
can be described by the monodromy transformations 

\[
\{\xi_{m}\}:=\{T_{x,}T_{y},T_{p_{1},1,}T_{p_{1},2},T_{p_{2}},T_{p_{3},1},T_{p_{3},2},T_{z}\},\]
or the corresponding transformations: \begin{align*}
\{\xi'_{m}\} & :=\{T_{x_{1}},T_{y_{1}},T_{p'_{1},1,}T_{p_{'1},2},T_{p'_{2}},T_{p'_{3},1},T_{p'_{3},2},T_{z'}\},\text{ or}\\
\{\xi''_{m}\} & :=\{T_{x_{2}},T_{y_{2}},T_{p''_{1},1,}T_{p''_{1},2},T_{p''_{2}},T_{p''_{3},1},T_{p''_{3},2},T_{z''}\}.\end{align*}

\begin{prop}
\label{pro:Monodromy-relations-1)-3)}1) For the monodromy matrices
we have \[
\rho(\xi'_{m})=\mathcal{C}_{10}^{-1}\rho(\xi_{m})\mathcal{C}_{10},\;\;\rho(\xi''_{m})=\mathcal{C}_{20}^{-1}\rho(\xi_{m})\mathcal{C}_{20}.\]

\noindent 2) The following relations can be observed: \begin{align*}
\rho(T_{p_{1},1}) & =\rho(T_{y}^{-1}T_{p_{2}}^{-1}T_{y})\;\;,\;\;\rho(T_{p_{1},2})=\rho(T_{x}^{-1}T_{p_{2}}T_{x})\\
\rho(T_{p_{3},1}) & =\mathcal{C}_{10}\,\rho(T_{x}T_{y}^{-1}T_{p_{2}}T_{y}T_{x}^{-1})\,\mathcal{C}_{10}^{-1},\\
\rho(T_{p_{3},2}) & =\mathcal{C}_{20}\,\rho(T_{x}T_{y}^{-1}T_{p_{2}}^{-1}T_{y}T_{x}^{-1})\,\mathcal{C}_{20}^{-1}.\end{align*}

\noindent3) We have $\rho(T_{z})=\rho(T_{p_{1}',2}^{-1}T_{x_{1}}T_{p_{1}',2})=\rho(T_{p_{1}'',2}^{-1}T_{x_{2}}T_{p_{1}'',2})$.

\noindent4) The image of the monodromy transformations in $Sp(6,\mathbb{Z})$
is given by \[
\langle\rho(T_{x}^{\pm1}),\rho(T_{y}^{\pm1}),\rho(T_{p_{2}}^{\pm1}),\mathcal{C}_{10}^{\pm1},\mathcal{C}_{20}^{\pm1}\rangle.\]
\end{prop}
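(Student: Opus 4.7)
The plan is to establish the four parts of the proposition in order 1), 2), 3), 4), with the bulk of the content being a mix of formal base-point-change manipulation and geometric loop analysis in $\mathbb{P}^{2}\setminus\mathcal{D}_{PF}$.

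First I would dispose of part 1), which is essentially a base-point-change formula. The relations $\Pi'(x_{1},y_{1})=\mathcal{C}_{10}\Pi(x,y)$ and $\Pi''(x_{2},y_{2})=\mathcal{C}_{20}\Pi(x,y)$ hold along a fixed analytic-continuation path from the base point $\mathbf{o}$ near the origin of $\mathcal{A}_{0}$ to the corresponding base points in $\mathcal{A}_{1},\mathcal{A}_{2}$ (going over the center along $\ell_{0}$ then $\ell_{1}$ or $\ell_{2}$). Since $\xi'_{m}$ and $\xi''_{m}$ are, by definition, the analogues of $\xi_{m}$ but based at the $\mathcal{A}_{1}$- and $\mathcal{A}_{2}$-origins, the two monodromy matrices must be related by conjugation through the connection matrices; the specific sign convention in $\mathcal{C}_{10}^{-1}\rho(\xi_{m})\mathcal{C}_{10}$ reflects that $\rho$ is an anti-homomorphism acting by left multiplication on column vectors, and it is enough to verify the formula on one transparent case such as $\rho(T_{x_{1}})$ against the conjugate of $\rho(T_{x})$.

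Next, for parts 2) and 3), I would argue geometrically in $\mathbb{P}^{2}\setminus\mathcal{D}_{PF}$ using the picture of Fig.\ref{fig:discriminant-PF}. The small loops $T_{p_{1},1}$ and $T_{p_{1},2}$ encircle the nodal point $p_{1}$ on $\ell_{0}$ from opposite sides; pulling the connecting path from $\mathbf{o}$ past the coordinate axis $\{y=0\}$ (respectively $\{x=0\}$) deforms these into a conjugate of the loop $T_{p_{2}}$ around the transverse intersection point, the outer conjugation by $T_{y}$ (resp.\ $T_{x}$) recording the axis crossing and the internal inverse recording orientation reversal. The identities for $T_{p_{3},1}$ and $T_{p_{3},2}$ involve an additional chart change to $\mathcal{A}_{1}$ or $\mathcal{A}_{2}$, since $p_{3}$ lies close to the respective boundary of $\mathcal{A}_{0}$, and this is exactly what produces the outer $\mathcal{C}_{10}$ and $\mathcal{C}_{20}$. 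Part 3) has the same flavor: the loop $T_{z}$ around the transverse intersection of $\ell_{0}$ with the discriminant that sits far from the $(x,y)$-chart origin is nothing but a loop around the coordinate line $\{x_{1}=0\}$ in $\mathcal{A}_{1}$, conjugated by $T_{p_{1}',2}$ to transport the base point back, and the symmetric statement in $\mathcal{A}_{2}$ reads the same way. Each of the resulting identities is then cross-checked by direct multiplication of the integer matrices in Propositions \ref{pro:Monodromy-Matrices} and \ref{pro:connection-matrix-C}.

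Part 4) then drops out as a combinatorial consequence: parts 2) and 3) express every $\rho(T_{p_{1},i})$, $\rho(T_{p_{3},i})$ and $\rho(T_{z})$ inside the group generated by $\rho(T_{x})$, $\rho(T_{y})$, $\rho(T_{p_{2}})$ together with $\mathcal{C}_{10}^{\pm1},\mathcal{C}_{20}^{\pm1}$, while part 1) supplies the monodromies in the other two charts as conjugates by the same chart-change matrices. The principal obstacle is the rigorous justification of the loop homotopies in 2) and 3): because the matrices in Proposition \ref{pro:Monodromy-Matrices} are only computed numerically to accuracy $10^{-5}\sim10^{-6}$, direct matrix equalities yield only strong evidence rather than a proof, and the actual content must come from tracing loops in $\pi_{1}(\mathbb{P}^{2}\setminus\mathcal{D}_{PF},\mathbf{o})$, which requires a careful understanding of how paths over $\ell_{0}$ with ${\rm Im}(x)={\rm Im}(y)>0$ thread through the combined nodal-and-transverse intersection pattern of the rational curve $\mathrm{dis}_{0}=0$ with the three coordinate lines of $\mathbb{P}^{2}$.
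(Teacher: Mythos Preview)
Your overall strategy is sound and matches the paper's in spirit for parts 1) and 4), but you invert the logical priority that the paper uses for parts 2) and 3). The paper's proof of 2) is simply: ``we verify directly the claimed relations using the monodromy matrices in Proposition \ref{pro:Monodromy-Matrices},'' noting only the anti-homomorphism convention $\rho(T_{\alpha}T_{\beta})=\rho(T_{\beta})\rho(T_{\alpha})$. For 3) the paper again verifies the matrix identity (now invoking 1) and 2)), and only mentions contour deformation as an auxiliary check. Part 4) is then declared to follow from 1)--3), exactly as you say.

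The one substantive point where your proposal diverges is your claim that direct matrix verification ``yields only strong evidence rather than a proof'' because the entries were computed to $10^{-5}\sim10^{-6}$ accuracy. This is not quite right: the numerical computation in Proposition \ref{pro:Monodromy-Matrices} is used to \emph{identify} the integer matrices, but once those matrices are fixed as explicit elements of $Sp(6,\mathbb{Z})$, checking an identity like $\rho(T_{p_{1},1})=\rho(T_{y}^{-1}T_{p_{2}}^{-1}T_{y})$ is an exact computation with integers, hence a genuine proof of the statement as phrased (which is about the matrices $\rho(\cdot)$, not about loops in $\pi_{1}$). Any residual uncertainty lives in Proposition \ref{pro:Monodromy-Matrices} itself, not in the present proposition. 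Your geometric loop-homotopy argument is more conceptual and would explain \emph{why} the relations hold, but it is also harder to carry out rigorously given the intricate nodal geometry of $\mathrm{dis}_{0}=0$; the paper opts for the cheaper route of exact matrix algebra.
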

\begin{proof}
1) By the symmetry summarized in Proposition \ref{pro:PF-equations}
and the definitions of $\mathcal{C}_{10}$ and $\mathcal{C}_{20}$,
the first claim follows. For 2), we verify directly the claimed relations
using the monodromy matrices in Proposition \ref{pro:Monodromy-Matrices}.
When doing this, we should note that $\rho$ is defined as an anti-homomorphism,
$\rho(T_{\alpha}T_{\beta})=\rho(T_{\beta})\rho(T_{\alpha})$. Using
the results 1) and 2), we verify the relation 3). We can also verify
3) by deforming the contours of the analytic continuations (see Fig.$\,$\ref{fig:discriminant-PF}).
The property 4) follows from 1) to 3).\end{proof}
\begin{rem*}
In the claim 3) of Proposition \ref{pro:Monodromy-relations-1)-3)},
not all monodromy relations which we read from Fig.$\,$\ref{fig:discriminant-PF}
are written out. By deforming the paths in the figure, it is easy
to deduce relations, for example:\[
\rho(T_{p_{1},1}T_{y}T_{p_{1},1}^{-1})=\rho(T_{z''}),\;\rho(T_{z}^{-1}T_{p_{3},1}T_{z})=\rho(T_{p_{1}',1}^{-1}),\;\rho(T_{z''}^{-1}T_{p_{3}'',2}T_{z''})=\rho(T_{p_{3,2}}).\]
We can also observe relations among the generators in the claim 4),
for example, \[
\rho(T_{p_{1},1})\mathcal{C}_{10}\mathcal{C}_{20}\rho(T_{p_{1},2})=\mathcal{C}_{10}\mathcal{C}_{20}.\]
The determination of the minimal set of relations is left for a future
study. Also some simplifications in the matrix expressions, like $\mathcal{C}_{10}\mathcal{C}_{20}=(\ms1)\oplus\left(\begin{smallmatrix}0 & \ms1\\
\ms1 & 0\end{smallmatrix}\right)\oplus\left(\begin{smallmatrix}0 & \ms1\\
\ms1 & 0\end{smallmatrix}\right)\oplus(\ms1)$, may have some interpretations. \hfill  {[}{]}
\end{rem*}
\vspace{1cm}

\subsection{Mirror symmetry of Reye congruences}

Over the line $\ell_{0}:x=y=0$ the six period integrals contained
in $\Pi(x,y)$ reduce to four independent integrals due to the degeneration
$w_{1}^{(k)}(x,x)=w_{2}^{(k)}(x,x)$ $(k=1,2)$. This is related to
the symmetry under the exchange $z_{i}\leftrightarrow w_{i}$ of the
defining equations $f_{i}=0$ of $\Xsp$ when $a=b$. More generally,
taking the automorphisms of $\Xsp$ into account, this symmetry appears
when $a=\mu^{k}b$ with $\mu^{5}=1$, i.e., when $x=y$. 
\begin{prop}
\label{pro:mirror-Reye-X}When $x=y\not=\frac{1}{32}$, the involution
$z_{i}\leftrightarrow w_{i}$($\cong\mathbb{Z}_{2}$) acting on $\Xsp$
has no fixed point. This action naturally lifts to a fixed point free
$\mathbb{Z}_{2}$ action on the crepant resolution $\cXsp$. Taking
a quotient by this, we obtain a Calabi-Yau threefold $X^{*}=\cXsp/\mathbb{Z}_{2}$
with the Hodge numbers $h^{1,1}(X^{*})=26,\, h^{2,1}(X^{*})=1.$\end{prop}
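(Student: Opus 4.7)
With $a=b$, the defining equations (2.3) are symmetric under $z \leftrightarrow w$, so the involution $\sigma: ([z],[w]) \mapsto ([w],[z])$ preserves $\tilde{X}_0^{sp}$. A fixed point is necessarily of the form $w = \lambda z$ with $\lambda \in \mathbb{C}^*$; substituting into $f_i = 0$ yields $\lambda z_i(z_i + 2a z_{i+1}) = 0$ for every $i$. If some $z_j$ vanishes, then $w_j = \lambda z_j = 0$ as well, and the equation $f_{j-1}=0$ collapses to $\lambda z_{j-1}^2 = 0$; cycling backwards forces every $z_i = 0$, which is absurd. Hence $z_{i+1} = -z_i/(2a)$ for every $i$, and cyclicity $z_6 = z_1$ requires $(-1/(2a))^5 = 1$, equivalently $x = -a^5 = 1/32$. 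Therefore $\sigma$ acts freely on $\tilde{X}_0^{sp}$ whenever $x = y \neq 1/32$.

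Next I would lift $\sigma$ to a free involution on $\tilde{X}_0^*$. Inspecting the parameterizations in Proposition 4.4, when $a=b$ the swap $z \leftrightarrow w$ exchanges $Q_i \leftrightarrow \tilde{Q}_i$ and $L_i \leftrightarrow \tilde{L}_i$, so $\sigma$ permutes the twenty singular lines in ten pairs of disjoint lines. In particular, the ten $(3A_1,\mathcal{U}_1)$-centers and the ten $(2A_1,\mathcal{U}_2)$-centers each split into five $\sigma$-orbits of length two, so each of the blow-ups $\varphi_1,\varphi_2,\varphi_3$ of Section 4.3 is $\sigma$-equivariant and the composite $\varphi: \tilde{X}_0^* \to \tilde{X}_0^{sp}$ carries a canonical lift $\tilde\sigma$. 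A fixed point of $\tilde\sigma$ must either project to a fixed point on $\tilde{X}_0^{sp}$ (excluded above) or lie in an exceptional divisor; but at every stage the exceptional divisor decomposes into $\tilde\sigma$-exchanged pairs of disjoint components, so any fixed point would lie in the intersection of a component with its distinct $\tilde\sigma$-image, which is empty. Thus $\tilde\sigma$ is fixed-point free.

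Set $X^* := \tilde{X}_0^*/\mathbb{Z}_2$. The holomorphic 3-form of (5.2) descends since $\tilde\sigma^*(d\mu_1 \wedge d\mu_2) = (-1)^{4 \cdot 4}\, d\mu_1 \wedge d\mu_2 = d\mu_1 \wedge d\mu_2$ and $\tilde\sigma^* f_i = f_i$ when $a=b$; hence $X^*$ is a smooth Calabi-Yau threefold. Freeness gives $e(X^*) = e(\tilde{X}_0^*)/2 = 50$, so $h^{1,1}(X^*) - h^{2,1}(X^*) = 25$. For the individual numbers, the free quotient yields $H^{p,q}(X^*) = H^{p,q}(\tilde{X}_0^*)^{\tilde\sigma}$, and the two-dimensional $H^{2,1}(\tilde{X}_0^*) \cong H^1(T\tilde{X}_0^*)$ is spanned by the deformations in the $a$- and $b$-directions (equivalently the moduli coordinates $x,y$ on $\mathbb{P}^2$), which $\tilde\sigma$ exchanges; the invariant subspace is one-dimensional, giving $h^{2,1}(X^*) = 1$ and hence $h^{1,1}(X^*) = 26$. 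As a consistency check from the periods, the degeneration $w_1^{(k)}(x,x) = w_2^{(k)}(x,x)$ $(k=1,2)$ noted at the start of this subsection collapses the six-component vector $\Pi(x,x)$ to four independent periods, matching $\dim H^3(X^*) = 2(h^{2,1}(X^*)+1) = 4$. The technical heart of the argument is the middle step: verifying the equivariance of the three-step blow-up and excluding fixed points on the exceptional divisors requires careful tracking of the $\tilde\sigma$-action in the local coordinates $\mathcal{U}_1,\mathcal{U}_2,\mathcal{U}_3$ of Section 4.3.
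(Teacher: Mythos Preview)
Your fixed-point analysis on $\tilde{X}_0^{sp}$ and the lift to the resolution agree with the paper's (the paper is terser: it just asserts the $z_i = w_i$ check is straightforward and that the involution, acting on the singular loci, lifts freely). One simplification: once $\sigma$ is free on $\tilde{X}_0^{sp}$, the lift $\tilde\sigma$ is automatically free, since any fixed point of $\tilde\sigma$ projects via the equivariant $\varphi$ to a fixed point of $\sigma$; your separate treatment of the exceptional divisors is correct but redundant.

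The genuine difference is in the Hodge numbers. The paper does not use your equivariant-cohomology argument; instead it reruns the Weil-conjecture point-counting machinery from the Hodge-number subsection of Section~\ref{sec:A-Crepant-resolution}, asserting that $\#X^*(\mathbb{F}_p) = \tfrac{1}{2}\#\tilde{X}_0^*(\mathbb{F}_p)$ for the free quotient and that the proof of Proposition~\ref{pro:mod-p-H2} carries over to $X^*$, so the inequality~(\ref{eq:Np-ineq}) with halved point-counts pins down $h^{2,1}(X^*)=1$. Your route via $H^{2,1}(X^*) = H^{2,1}(\tilde{X}_0^*)^{\tilde\sigma}$ and the swap action on the $(a,b)$-deformations is more direct. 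Its one soft spot is the implicit claim that the Kodaira--Spencer map for the two-parameter family is an isomorphism at $a=b$; this follows because $h^{2,1}(\tilde{X}_0^*)=2$ (Theorem~\ref{thm:mirror-Hodge}) equals the base dimension and the family is locally effective (the periods $\Pi(x,y)$ depend nontrivially on both coordinates). Granting that, your identification of the $\tilde\sigma$-action on $H^{2,1}$ with the coordinate swap $\partial_a \leftrightarrow \partial_b$ is correct, and the one-dimensional invariant subspace yields $h^{2,1}(X^*)=1$ without re-invoking the \'etale machinery.
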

\begin{proof}
As above, it is clear from the form of the defining equations that
the involution $z_{i}\leftrightarrow w_{i}$ acts on $\Xsp$ when
$a=b$ ($a=\mu^{k}b$ in general). It is also straightforward to see
that if $a=b\not=-\frac{1}{2}$, there is no solution for $f_{i}=0$
with $z_{i}=w_{i}$ except $z_{i}=w_{i}=0\,(i=1,...,5)$. Clearly
the involution acts on the singular loci. Hence it lifts to a fixed
point free $\mathbb{Z}_{2}$ action on the crepant resolution $\cXsp$
when $x=y\not=\frac{1}{32}$. Since $h^{0,1}(\cXsp)=h^{0,2}(\cXsp)=0$
for the resolution, we have $h^{0,1}(X^{*})=h^{0,2}(X^{*})=0$ for
the quotient. The calculations $e(X^{*})=e(\cXsp)/2=50$ and $\#X^{*}(\mathbb{F}_{p})=\#\cXsp(\mathbb{F}_{p})/2$
are valid for the free quotient. Hence the proof of Proposition \ref{pro:mod-p-H2}
applies to the present case, and we have $h^{1,1}(X^{*})=26,h^{2,1}(X^{*})=1$.
\end{proof}
In \cite[Propositions 2.9,2.10]{HoTa}, we observed that, when $x=y=\frac{1}{32}$,
one ordinary double point appears in $\Xsp$ as a fixed point of the
involution, and this results in a singular point of $X^{*}$ where
a lens space ($\cong S_{3}/\mathbb{Z}_{2}$) vanishes. In fact, this
property has been predicted by noting a specific form of the Picard-Lefschetz
monodromy \cite{Enckv-Straten} in their study of 4th order differential
equations (see also \cite{AlmkcistEtAl}). In order to connect the
vanishing lens space directly with the Picard-Lefschetz monodromy,
let us introduce the following monodromy matrices:\begin{equation}
\begin{matrix}R_{\alpha_{1}}=\rho(T_{p_{1},1}^{-1}T_{p_{1},2}),\; R_{0}=\rho(T_{x}T_{y}),\; R_{\frac{1}{32}}=\rho(T_{p_{2}}),\;\\
R_{\alpha_{2}}=\rho(T_{p_{3},2}^{-1}T_{p_{3},1}),\; R_{\infty}=\rho(T_{z}).\end{matrix}\label{eq:monod-matrix-R}\end{equation}
 As we see in Fig.\ref{fig:discriminant-PF}, these represent the
monodromy transformations of $\Pi(x,y)$ around the intersections
of $\ell_{0}\cong\mathbb{P}^{1}$ with the discriminant, and satisfy
a relation\[
R_{\infty}R_{\alpha_{2}}R_{\alpha_{1}}R_{0}R_{\frac{1}{32}}=id.\]
These correspond to the matrices $M_{\alpha_{1}},M_{0},M_{\frac{1}{32}},M_{\alpha_{2}},M_{\infty}$
of $\Pi(x)$ given in \cite[Table 1]{HoTa}. Explicitly we evaluate
the matrices (\ref{eq:monod-matrix-R}) as follows:

\begin{gather*}
R_{\alpha_{1}}:\left(\begin{smallmatrix}11 & \ms7 & \ms7 & 1 & 1 & 2\\
5 & \ms1 & \ms5 & 0 & 1 & 1\\
5 & \ms5 & \ms1 & 1 & 0 & 1\\
35 & \ms20 & \ms29 & 3 & 5 & 7\\
35 & \ms29 & \ms20 & 5 & 3 & 7\\
\ms50 & 35 & 35 & \ms5 & \ms5 & \ms9\end{smallmatrix}\right),\; R_{0}:\left(\begin{smallmatrix}1 & 0 & 0 & 0 & 0 & 0\\
1 & 1 & 0 & 0 & 0 & 0\\
1 & 0 & 1 & 0 & 0 & 0\\
17 & 20 & 15 & 1 & 0 & 0\\
17 & 15 & 20 & 0 & 1 & 0\\
\ms20 & \ms18 & \ms18 & \ms1 & \ms1 & 1\end{smallmatrix}\right),\; R_{\frac{1}{32}}:\left(\begin{smallmatrix}1 & 0 & 0 & 0 & 0 & 1\\
0 & 1 & 0 & 0 & 0 & 0\\
0 & 0 & 1 & 0 & 0 & 0\\
0 & 0 & 0 & 1 & 0 & 0\\
0 & 0 & 0 & 0 & 1 & 0\\
0 & 0 & 0 & 0 & 0 & 1\end{smallmatrix}\right),\\
R_{\infty}R_{\alpha_{2}}R_{\infty}^{-1}:\left(\begin{smallmatrix}1 & 5 & 5 & 0 & 0 & 2\\
0 & 2 & \ms1 & \ms2 & 2 & 0\\
0 & \ms1 & 2 & 2 & \ms2 & 0\\
0 & \ms12 & \ms13 & 0 & 1 & \ms5\\
0 & \ms13 & \ms12 & 1 & 0 & \ms5\\
0 & 0 & 0 & 0 & 0 & 1\end{smallmatrix}\right),\; R_{\infty}:\left(\begin{smallmatrix}41 & \ms17 & \ms17 & 6 & 6 & 15\\
4 & 0 & \ms6 & \ms2 & 3 & 1\\
4 & \ms6 & 0 & 3 & \ms2 & 1\\
\ms72 & 28 & 23 & \ms13 & \ms9 & \ms28\\
\ms72 & 23 & 28 & \ms9 & \ms13 & \ms28\\
\ms30 & 18 & 18 & \ms4 & \ms4 & \ms9\end{smallmatrix}\right),\end{gather*}
where we consider $R_{\infty}R_{\alpha_{2}}R_{\infty}^{-1}$ instead
of $R_{\alpha_{2}}$ since the matrices in \cite[Table 1]{HoTa} satisfy
$M_{\alpha_{2}}M_{\infty}M_{\alpha_{1}}M_{0}M_{\frac{1}{32}}=id$.
Now we define $\tilde{\Pi}(x,y)$ by \[
\,^{t}\Bigl(w_{0},\frac{1}{2}(w_{1}^{(1)}+w_{2}^{(1)}),\frac{1}{2}(w_{1}^{(2)}+w_{2}^{(2)}),\frac{1}{2}w^{(3)}\Bigr)\oplus\,^{t}\Bigl(\frac{1}{2}(w_{1}^{(1)}-w_{1}^{(1)}),\frac{1}{2}(w_{1}^{(2)}-w_{2}^{(2)})\Bigr)\]
 so that the second summand becomes $\,^{t}(0,0)$ on the line $\ell_{0}$($x=y$).
It is straightforward to see the following property:
\begin{prop}
\label{pro:R-M-decomposition}In terms of the period integral $\tilde{\Pi}(x,y)=\mathcal{P}\,\Pi(x,y)$,
we have the decomposition \[
\tilde{R}_{k}=M_{k}\oplus N_{k}\;\;(k=1,..,5),\]
where $N_{k}$'s are $2\times2$ matrices and we set $\tilde{R}_{k}=\mathcal{P}R_{k}\mathcal{P}^{-1}$
with $\{R_{k}\}_{k=1}^{4}=\{R_{\alpha_{1}},R_{0},$ $R_{\frac{1}{32}},R_{\infty}R_{\alpha_{2}}R_{\infty}^{-1},R_{\infty}\}$
and $\{M_{k}\}_{k=1}^{4}=\{M_{\alpha_{1}},M_{0},M_{\frac{1}{32}},M_{\alpha_{2}},M_{\infty}\}$.
\end{prop}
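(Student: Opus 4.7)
\medskip\noindent\textbf{Proof plan.} The plan is to derive the block decomposition from the $\mathbb{Z}_{2}$-symmetry $\sigma:(x,y)\mapsto(y,x)$ on the base $\mathbb{P}^{2}$, which fixes the line $\ell_{0}$ pointwise and preserves both the discriminant $\mathcal{D}_{PF}$ and each of the five intersection points $0,p_{1},p_{2},p_{3},\infty$ of $\ell_{0}\cap\mathcal{D}_{PF}$. Each of the five monodromies $R_{k}$ is the transport around one of these $\sigma$-fixed points, so it can be represented by a $\sigma$-invariant loop and must therefore commute with the induced action of $\sigma$ on the period vector. In an eigenbasis of that action, the $\tilde R_{k}=\mathcal{P}R_{k}\mathcal{P}^{-1}$ will split as $M_{k}\oplus N_{k}$ automatically.

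\medskip\noindent First I would compute the action of $\sigma$ on $\Pi$. From the identity $\omega(y,x;\rho_{2},\rho_{1})=\omega(x,y;\rho_{1},\rho_{2})$ for the cohomology-valued hypergeometric series, from the $J_{1}\leftrightarrow J_{2}$ symmetry of the intersection numbers $\int_{\tilde X_{0}}J_{1}^{3}=\int_{\tilde X_{0}}J_{2}^{3}$, $\int_{\tilde X_{0}}J_{1}^{2}J_{2}=\int_{\tilde X_{0}}J_{1}J_{2}^{2}$, $\int_{\tilde X_{0}}c_{2}J_{i}$, and from the symmetric choice $C_{11}=C_{22}$, $C_{12}=C_{21}$ fixed in Proposition \ref{pro:Monodromy-Matrices}, one reads directly from (\ref{eq:Pi-xy}) that $\sigma^{*}\Pi=S\,\Pi$ with
\[
S=(1)\oplus\left(\begin{smallmatrix}0&1\\1&0\end{smallmatrix}\right)\oplus\left(\begin{smallmatrix}0&1\\1&0\end{smallmatrix}\right)\oplus(1),
\]
swapping $w_{1}^{(1)}\leftrightarrow w_{2}^{(1)}$ and $w_{1}^{(2)}\leftrightarrow w_{2}^{(2)}$. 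The matrix $\mathcal{P}$ defining $\tilde{\Pi}$ from $\Pi$ is then precisely the change of basis which diagonalizes $S$ as $\mathcal{P}S\mathcal{P}^{-1}=\operatorname{diag}(1,1,1,1,-1,-1)$, with the first four rows spanning the $\sigma$-invariant subspace (which reduces to the Reye periods on $\ell_{0}$ by Proposition \ref{pro:mirror-Reye-X}) and the last two the anti-invariant subspace (identically zero along $\ell_{0}$).

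\medskip\noindent Next, I would argue that each representative loop underlying $R_{k}$ is $\sigma$-invariant up to homotopy: $T_{p_{2}}$ is already such; at the nodes $p_{1},p_{3}$ the involution $\sigma$ fixes the points but exchanges their two local branches, so the antisymmetric combinations $T_{p_{1},1}^{-1}T_{p_{1},2}$ and $T_{p_{3},2}^{-1}T_{p_{3},1}$ appearing in $R_{\alpha_{1}}$ and $R_{\alpha_{2}}$ are $\sigma$-invariant; the product $T_{x}T_{y}$ is manifestly $\sigma$-invariant at the origin; and $T_{z}$ is invariant at infinity of $\ell_{0}$. Hence $S R_{k}S^{-1}=R_{k}$ as abstract elements of $Sp(6,\mathbb{Z})$, and conjugating by $\mathcal{P}$ immediately yields the block form $\tilde R_{k}=M_{k}\oplus N_{k}$. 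As a fallback, this commutation can be checked directly by multiplying the five matrices $R_{k}$ listed above the statement by $S$. The identification of the $4\times 4$ blocks $M_{k}$ with the five Reye monodromy matrices of \cite[Table~1]{HoTa} then follows from Proposition \ref{pro:mirror-Reye-X}: the restriction of $\tilde\Pi$ to $\ell_{0}$ is an integral symplectic basis of the mirror periods of $X$ annihilated by the same fourth-order Picard--Fuchs operator used in \cite{HoTa}, and the canonical basis of \cite[Conj.~2.2]{CentralCh} compared to \cite[Prop.~2.10]{HoTa} produces the match.

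\medskip\noindent The main obstacle is this last identification. While the block decomposition itself is essentially tautological once the $\sigma$-action on $\Pi$ has been pinned down, matching the canonical integral, symplectic basis for the $\sigma$-invariant part of $H^{\mathrm{even}}(\tilde X_{0},\mathbb{Q})$ with the Reye canonical basis for $H^{\mathrm{even}}(X,\mathbb{Q})$ under the quotient map $\tilde X_{0}^{*}\to X^{*}$ is not purely formal; in practice one verifies it by direct entry-by-entry comparison of the two $4\times 4$ matrices, which is what determines that the correct $\sigma$-invariant combination of loops producing $M_{\alpha_{2}}$ is $T_{p_{3},2}^{-1}T_{p_{3},1}$ conjugated by $T_{z}$ rather than the naive $T_{p_{3},2}^{-1}T_{p_{3},1}$.
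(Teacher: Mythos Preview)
The paper offers no argument beyond the sentence ``It is straightforward to see the following property,'' i.e.\ one is meant to multiply out $\mathcal{P}R_{k}\mathcal{P}^{-1}$ for the five explicit $6\times6$ matrices displayed immediately before the proposition and observe the $4\oplus2$ block form with upper block matching \cite[Table~1]{HoTa}. Your proposal gives something strictly richer: you explain \emph{why} the block form must occur, via the involution $S$ induced by $(x,y)\mapsto(y,x)$, and reduce the paper's computation to checking $SR_{k}S^{-1}=R_{k}$. This is a genuine improvement over the bare assertion in the paper.

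One caution on the conceptual layer. Your claim that the composite loops at the nodes are $\sigma$-invariant because $\sigma$ ``exchanges the two local branches'' is not quite the right bookkeeping: with the base point chosen off the diagonal (${\rm Im}(x)>0$, ${\rm Im}(y)=0$ after the homotopy in Proposition~\ref{pro:Monodromy-Matrices}) and the connecting paths specified there, one actually finds $S\rho(T_{p_{1},1})S^{-1}=\rho(T_{p_{1},2})^{-1}$ rather than $\rho(T_{p_{1},2})$. The commutation $SR_{\alpha_{1}}S^{-1}=R_{\alpha_{1}}$ still follows, but only because $\rho(T_{p_{1},1})$ and $\rho(T_{p_{1},2})$ commute (both are Picard--Lefschetz transvections around the two local branches of a node). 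So the homotopy argument as you phrase it is incomplete; you should either record the commutation of the two branch loops explicitly, or simply invoke your stated fallback and verify $SR_{k}=R_{k}S$ on the five listed matrices, which is what the paper implicitly does. Your final paragraph on matching the $4\times4$ blocks with the Reye matrices is accurate: that step is a direct numerical comparison and not a formal consequence of the symmetry alone.
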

From the explicit forms of (\ref{eq:monod-matrix-R}), it is clear
that $R_{\frac{1}{32}}$ represents the Picard-Lefschetz monodromy
of the vanishing cycle which appears in the fiber over $x=y=\frac{1}{32}$,
from which we identify $w^{(3)}(x,y)$ as the period integral of the
vanishing cycle. We note that $w^{(3)}(x,y)$ is contained in $\tilde{\Pi}(x,y)$
with the prefactor $\frac{1}{2}$. (If this prefactor were taken to
be 1, $\mathcal{P}$ should be symplectic with respect to $\Sigma_{0}$.)
From the above proposition and Proposition \ref{pro:mirror-Reye-X},
we can now identify $M_{\frac{1}{32}}$ with the Picard-Lefschetz
monodromy of the vanishing lens space ($S_{3}/\mathbb{Z}_{2}$) in
$X^{*}$ for $x=y=\frac{1}{32}$ which we described above.

Finally we remark that both $R_{0}$ and $R_{\infty}$ have the same
Jordan normal form $J(1,4)\oplus J(1,2)$ with eigenvalues $1$. Proposition
\ref{pro:R-M-decomposition} implies that the period integral $\tilde{\Pi}(x,y)$
is compatible with the Jordan decomposition and the first summand
of $\tilde{\Pi}(x,y)$ shows the maximally unipotent monodromies both
at $x=0$ and $\infty$. The mirror geometry which arises from $x=0$
has been identified with the Reye congruence Calabi-Yau threefold,
and that from $x=\infty$ has been identified with a new Calabi-Yau
manifold which doubly covers the generic Hessian quintic with ramification
locus being a smooth curve of genus 26 and degree 20. 

\vspace{2cm}

\vfill \;

\pagebreak{}

\section{\textbf{\textup{\label{sec:Xs-and-Ys-discussions}Special families
of Steinerian and Hessian quintics}}}

\subsection{Steinerian and Hessian quintics. }

Here we discuss the special family of the Steinerian and Hessian quintics
defined by (\ref{eq:defEqS}) and (\ref{eq:def-eq-HessAB}), respectively,
for $a=b$. Together with the mirror family $\mathfrak{X}_{\mathbb{P}^{1}}^{*}$,
we summarize the related families over $\mathbb{P}^{1}$ by writing
the generic fibers$:$ \begin{equation}
\begin{matrix}\xymatrix{\tilde{X}_{0}^{*}\ar[r]_{\varphi}\ar[d]^{/\mathbb{Z}_{2}} & \Xsp\ar[dr] &  & \ar[dl]U_{sp}\ar[dr] & Y_{sp}\ar[d]^{2:1} & \ar[l]^{\rho}Y^{*}\\
X^{*} &  & S_{sp} &  & H_{sp}}
\end{matrix}.\label{eq:Xs-Ys-diagram}\end{equation}
The Steinerian quintic $S_{sp}$ is defined as the determinantal quintics
$\Sspp=\Ssp$ for $a=b$. $U_{sp}$ in the diagram provides a partial
resolutions of $S_{sp}$, and is given as $\Usp$ for $a=b.$ There
is a natural projection from $U_{sp}$ to the Hessian quintic $H_{sp}$,
i.e., the determinantal quintic $\Hsp$ for $a=b$. 

In what follows, we describe the singularity of the Hessian quintic
$H_{sp}$ for generic $a=b$, i.e., $a=b\in\mathbb{C}^{*}$ with $\prod_{k,l=0}^{4}(\mu^{k}\, a+\mu^{l}\, b+1)\not=0$.
Then we define the double covering $Y_{sp}\rightarrow H_{sp}$ branched
along the singular loci of $H_{sp}$. It is expected that there is
a crepant resolution $\rho:Y_{sp}^{*}\rightarrow Y_{sp}$.

\subsection{Singular loci of $H_{sp}$}

The Hessian quintic $H_{sp}$ is defined in $\mathbb{P}_{\lambda}^{4}$
by the equation (\ref{eq:def-eq-HessAB}) with $a=b\in\mathbb{C}^{*}$,
which may be written $\det A_{\lambda}=0$. We note that this is actually
defined for $x=-a^{5}\in\mathbb{P}^{1}$ due to the automorphism $H_{sp}\subset\mathbb{P}_{\lambda}^{4}$.
Since $H_{sp}$ is a hypersurface, it is rather easy to determine
the singular loci by the Jacobian criterion. To describe the singular
loci, let us denote the projective space by $\mathbb{P}_{\lambda}^{4}=\langle e_{1}^{*},e_{2}^{*},...,e_{5}^{*}\rangle$
choosing a $\mathbb{C}$-bases $e_{i}^{*}(i=1,...,5)$. As before,
we denote the coordinate points and lines, respectively, by $[e_{i}^{*}]$
and $L_{ij}=\langle e_{i}^{*},e_{j}^{*}\rangle$. We also introduce
the lines:\[
M_{i}=\langle e_{i-2}^{*}+ab\, e_{i-1}^{*},e_{i}^{*}\rangle\;(i=1,2,...,5),\]
where the indices are considered cyclic or modulo 5. 
\begin{prop}
For generic $a=b$, we have: 1) the singular loci of $H_{sp}$ contain
a component of a curve $C_{E}$ given by the $4\times4$ Pfaffians
of \begin{equation}
\left(\begin{array}{ccccc}
0 & \lambda_{2} & -a\lambda_{5} & -a\lambda_{1} & \lambda_{5}\\
-\lambda_{2} & 0 & \lambda_{4} & -a\lambda_{2} & -a\lambda_{3}\\
a\lambda_{5} & -\lambda_{4} & 0 & \lambda_{1} & -a\lambda_{4}\\
a\lambda_{1} & a\lambda_{2} & -\lambda_{1} & 0 & \lambda_{3}\\
-\lambda_{5} & a\lambda_{3} & a\lambda_{4} & -\lambda_{3} & 0\end{array}\right).\label{eq:elliptic-normal-q}\end{equation}
2) $C_{E}$ is a smooth genus one curve of degree 5 in $\mathbb{P}_{\lambda}^{5}$
, i.e., an elliptic normal quintic. \end{prop}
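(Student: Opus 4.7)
The plan is to handle parts~1) and 2) separately. For part~1), note that when $a=b$ each $A_k$ is symmetric, and therefore so is the pencil $A_\lambda=\sum_k\lambda_k A_k$. By the chain rule, $\partial_{\lambda_k}\det A_\lambda = \mathrm{tr}(\mathrm{adj}(A_\lambda)\,A_k)$, so every $\partial_{\lambda_k}\det A_\lambda$ vanishes wherever $\mathrm{adj}(A_\lambda)=0$, i.e.\ wherever $\mathrm{rank}(A_\lambda)\le 3$. Consequently
\[
\{\lambda\in\mathbb{P}^4_\lambda:\mathrm{rank}(A_\lambda)\le 3\}\ \subseteq\ \mathrm{Sing}(H_{sp}),
\]
and it suffices to show that $C_E$ is contained in the rank-$\le 3$ locus of $A_\lambda$, equivalently that every $4\times 4$ minor of $A_\lambda$ lies in the ideal $\mathrm{Pf}_4(N)=(P_1,\dots,P_5)$ generated by the five Pfaffians of $N$.

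To prove this containment, my approach would be to exhibit a $2\times 5$ matrix $K=K(\lambda)$ of polynomials such that, modulo $\mathrm{Pf}_4(N)$, the rows of $K$ span a $2$-dimensional subspace of $\ker A_\lambda$ and the $2\times 2$ minors of $K$ coincide (up to sign and permutation) with the Pfaffians $P_i$. Such a $K$ is the object supplied by the Buchsbaum-Eisenbud structure theorem for Gorenstein codimension-$3$ ideals applied to $\mathrm{Pf}_4(N)$. Once it is in hand, $A_\lambda\,K^T$ vanishes modulo $\mathrm{Pf}_4(N)$, hence the $4\times 4$ minors of $A_\lambda$ (the entries of the adjugate) annihilate the span of the rows of $K$ and lie in $\mathrm{Pf}_4(N)$. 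As a backup and cross-check, the containment can be confirmed purely computationally with \texttt{Macaulay2} for a specific value of $a\in\mathbb{C}^*$ and extended by semicontinuity; the particular matrix in (\ref{eq:elliptic-normal-q}) has apparently been reverse-engineered from such a calculation.

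For part~2), once $\mathrm{Pf}_4(N)$ is verified (by a dimension count or direct \texttt{Macaulay2} check) to define a subscheme of pure codimension $3$ in $\mathbb{P}^4_\lambda$, the Buchsbaum-Eisenbud structure theorem furnishes the self-dual Pfaffian resolution
\begin{equation*}
0\to\mathcal{O}_{\mathbb{P}^4}(-5)\to\mathcal{O}_{\mathbb{P}^4}(-3)^{5}\to\mathcal{O}_{\mathbb{P}^4}(-2)^{5}\to\mathcal{O}_{\mathbb{P}^4}\to\mathcal{O}_{C_E}\to 0,
\end{equation*}
from which the Hilbert polynomial reads $p(n)=5n$. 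This immediately gives $\deg C_E=5$ and $p_a(C_E)=1$, so $C_E$ is an elliptic normal quintic once smoothness is shown. Smoothness I would verify by applying the Jacobian criterion to the five quadric Pfaffians: one checks that the rank of their Jacobian matrix equals $3$ at every point of $C_E$, for instance by a \texttt{Macaulay2} computation treating $a\in\mathbb{C}^*$ as a generic parameter, or alternatively by a Bertini-type argument on the family of Pfaffian curves parametrized by $a$.

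The main obstacle will be the conceptual step of exhibiting the matrix $K$ in part~1): a direct computer algebra verification of the ideal containment is routine, but a clean algebraic derivation of $K$ — and hence of the form of $N$ in (\ref{eq:elliptic-normal-q}) — would reveal why an elliptic normal curve emerges as a new component of $\mathrm{Sing}(H_{sp})$ precisely at the specialization $a=b$. One expects the construction to proceed by first identifying the $2$-dimensional kernel of $A_\lambda$ along $C_E$ as a rank-$2$ sheaf (necessarily isomorphic, up to a twist, to a direct sum of line bundles on the elliptic curve) and then pulling back a global presentation of this sheaf to $\mathbb{P}^4$ to read off $K$.
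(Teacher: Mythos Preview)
Your proposal is essentially correct and arrives at the same endpoints as the paper, but the route for part~1) is genuinely different. The paper works \emph{forward}: it computes the Jacobian ideal of $H_{sp}$, takes the primary decomposition, repeatedly saturates away the ten line components, and then recognizes the residual component as a Pfaffian ideal by reading off its minimal free resolution --- so the skew matrix $N$ is \emph{discovered}, not posited. You work \emph{backward}: given $N$, you verify that $C_E=V(\mathrm{Pf}_4(N))$ lies inside $\mathrm{Sing}(H_{sp})$ via the clean observation $\{\mathrm{rank}\,A_\lambda\le 3\}\subseteq\mathrm{Sing}(H_{sp})$ (valid because $A_\lambda$ is symmetric), reducing to the ideal containment $(\text{$4\times4$ minors of }A_\lambda)\subseteq\mathrm{Pf}_4(N)$. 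Your route is cleaner for \emph{proving the stated proposition}; the paper's route explains where $N$ comes from.

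One caveat: the Buchsbaum--Eisenbud structure theorem does not hand you the $2\times 5$ matrix $K$ you describe. What BE supplies is the self-dual resolution of $\mathrm{Pf}_4(N)$ with $N$ itself as the middle syzygy map (equivalently, the single relation $N\cdot\mathrm{Pf}(N)=0$ among the Pfaffians). A matrix $K$ with $A_\lambda K^T\equiv 0\pmod{\mathrm{Pf}_4(N)}$ and $\wedge^2 K=\mathrm{Pf}_4(N)$ encodes a specific link between $A_\lambda$ and $N$ that is not part of the BE package; producing it is exactly the reverse-engineering step you correctly flag as the main obstacle. Since you fall back on a direct computer-algebra check of the ideal containment, this does not break the proof, but the sentence invoking BE for $K$ should be dropped or rephrased.

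For part~2), your Pfaffian-resolution computation of the Hilbert polynomial and the paper's identification of $C_E$ as a linear section of $G(2,5)\subset\mathbb{P}^9$ are equivalent ways to read off $\deg C_E=5$ and $p_a(C_E)=1$; both finish smoothness with the Jacobian criterion, as you do.
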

\begin{proof}
Our proof of 1) is based on the calculations of the primary decomposition
of the Jacobian ideal. As we describe below, there appear 10 lines
in the irreducible components of the singular loci. It is efficient
to take the saturations repeatedly with respect to the ideals representing
these lines. The claimed matrix form of the ideal may be deduced from
the minimal resolution of the ideal, and has been determined by taking
suitable linear combinations of the generators.

The claim 2) is a consequence of 1), since $C_{E}$ is a Pfaffian
variety of $5\times5$ anti-symmetric matrices which may be identified
as a linear section of the Grassmannian $G(2,5)$. The smootheness
is verified by the Jacobian criterion. \end{proof}
\begin{rem}
When $a^{10}+11a^{5}-1=0$, $C_{E}$ becomes nodal at one point (at
$[1:1:1:1:1]$ up to automorphisms of $H_{sp}\subset\mathbb{P}_{\lambda}^{4}$
for every solution $a$). When $a=0$ (resp. $\infty$), $C_{E}$
becomes reducible: $C_{E}=L_{13}\cup L_{35}\cup L_{52}\cup L_{24}\cup L_{41}$
(reps. $C_{E}=L_{12}\cup L_{23}\cup L_{34}\cup L_{45}\cup L_{51}$).
See \cite{Huleck} for the geometry of elliptic normal quintics. \hfill{[}{]}

\begin{figure}
\includegraphics[scale=0.6]{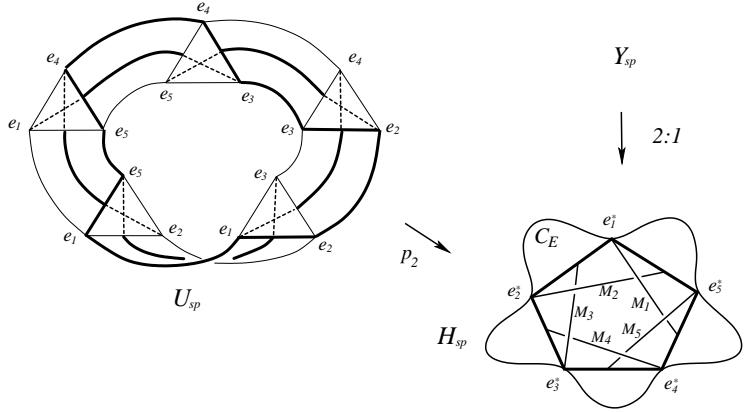}\caption{\label{fig:U-Hessian}The partial resolution $U_{sp}\rightarrow H_{sp}$
of the Hessian quintic and the double covering $Y_{sp}\rightarrow H_{sp}$
for a generic $a=b\in\mathbb{C}^{*}$. $C_{E}$ is an elliptic normal
quintic. By the partial resolution, the $A_{1}$ singularities along
the lines $M_{i}(i=1,..,5)$ and $C_{E}$ are resolved in $U_{sp}$.
The $A_{3}$ singularities along the coordinate lines $L_{i\, i+1}$
are blown up to $A_{1}$ singularities along two lines for each $L_{i\, i+1}$.
In each $p_{2}^{-1}([e_{i}^{*}])\cong\mathbb{P}^{2}$, there exist
$A_{1}$-singularities along the broken lines and the coordinate line
$\langle e_{i+2},e_{i+3}\rangle$. }

\end{figure}
\vspace{0.3cm}
By studying the Jacobian ideal of $H_{sp}$ in details, we obtain
the structure of the singularities in the special Hessian quintic
for generic $a=b$ as follows:\end{rem}
\begin{prop}
\label{pro:singular-loci-Hsp}1) For generic $a=b\in\mathbb{C}^{*}$,
the special Hessian quintic $H_{sp}$ is singular along the 5 coordinate
lines $L_{i\, i+1}$ and 5 lines $M_{i}$ (i=1,..,5) and also the
smooth elliptic normal quintic $C_{E}$. The type of singularities
are of $A_{3}$-type along the lines $L_{i\, i+1}$ and of $A_{1}$-type
along the lines $M_{i}$ and $C_{E}$. These irreducible components
intersect at 10 points as shown schematically in Fig.$\,\ref{fig:U-Hessian}$.

\noindent 2) The singular loci of $H_{sp}$ coincide set-theoretically
with $\{[\lambda]\in\mathbb{P}_{\lambda}^{4}\mid rk(A_{\lambda})\leq3\}$.\end{prop}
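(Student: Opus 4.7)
The plan is to establish part 2) first, as a direct consequence of general facts about determinantal hypersurfaces, and then deduce part 1) by combining a primary decomposition of the Jacobian ideal with a local normal-form analysis at a generic point of each component. Throughout, $a=b\in\mathbb{C}^{*}$ is generic in the sense of the proposition.

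For part 2), $H_{sp}=\{F=0\}$ with $F=\det A_{\lambda}$, so the Jacobian ideal is $(\partial F/\partial\lambda_{1},\ldots,\partial F/\partial\lambda_{5})$. Each partial derivative $\partial F/\partial\lambda_{k}$ is a signed sum of $4\times 4$ minors of $A_{\lambda}$, and, because $A_{\lambda}$ depends linearly on $\lambda$ with the five coefficient matrices $A_{k}$ being linearly independent, the five partials together generate the full ideal of $4\times 4$ minors of $A_{\lambda}$. Hence $[\lambda]\in\mathrm{Sing}(H_{sp})$ if and only if $\mathrm{rk}(A_{\lambda})\le 3$, which is exactly the claim of part 2).

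For part 1), first identify the irreducible components. By the preceding proposition, the Pfaffian locus $C_{E}$ is contained in $\mathrm{Sing}(H_{sp})$. A \texttt{Macaulay2} computation of the primary decomposition of the ideal of $4\times 4$ minors of $A_{\lambda}$ (working, say, over $\mathbb{C}(a)$ and saturating successively by the ideals of the suspected linear components) exhibits the remaining components as the five coordinate lines $L_{i,i+1}$ and the five lines $M_{i}$; the symmetry of $A_{\lambda}$ under the cyclic shift $\lambda_{i}\mapsto\lambda_{i+1}$ collapses the check to one representative of each type. Next, determine the singularity type along each component by a local analysis at a generic point. By cyclic symmetry it suffices to treat $L_{12}$, $M_{1}$, and a single generic point of $C_{E}$. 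For $p=[1:\lambda_{2}:0:0:0]\in L_{12}$ with $\lambda_{2}$ generic, dehomogenize $\lambda_{1}=1$ and use $(u_{3},u_{4},u_{5})$ as transverse coordinates; expanding $F$ to order four one finds, after a linear change of the transverse variables, a local equation of the form $u^{2}+v^{4}+(\text{smooth})\,w$, i.e.\ an $A_{3}$-singularity. For generic points of $M_{1}$ and $C_{E}$, the transverse Hessian of $F$ is of rank two with a quadratic form non-degenerate on the normal direction, giving $A_{1}$.

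Finally, to verify the intersection pattern of Fig.~\ref{fig:U-Hessian}, intersect the defining ideals pairwise: the five coordinate lines meet only at the coordinate points $[e_{i}^{*}]$; each $M_{i}$ meets the coordinate lines at $[e_{i}^{*}]$ and $[e_{i-2}^{*}+ab\,e_{i-1}^{*}]$; and the incidences of $C_{E}$ with the linear components are read off from the Pfaffian presentation~(\ref{eq:elliptic-normal-q}), accounting for the remaining points and giving ten intersection points in total. The main obstacle is the $A_{3}$ classification along the coordinate lines: unlike the $A_{1}$ cases, one must expand $F$ to order four in the transverse coordinates and check that both the rank-one quadratic piece and the quartic term are non-degenerate in the appropriate normal directions at a generic point. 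This is a finite but algebraically heavier computation, whose outcome is then propagated by cyclic symmetry to the remaining four coordinate lines.
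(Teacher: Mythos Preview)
Your treatment of part 1) follows the same route as the paper: primary decomposition of the Jacobian ideal (saturating against the suspected linear components and using the cyclic symmetry to reduce to one representative of each orbit), followed by a local normal-form computation in transverse coordinates to read off the singularity type along each component.

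Your argument for part 2), however, has a genuine gap. The assertion that linear independence of $A_{1},\dots,A_{5}$ forces the five partials $\partial F/\partial\lambda_{k}=\mathrm{tr}\bigl(A_{k}\,\mathrm{adj}(A_{\lambda})\bigr)$ to generate the full ideal of $4\times4$ minors is not justified: each partial is a single linear functional on the $15$-dimensional space of symmetric cofactor matrices $(\Delta_{ij})$, and five such functionals will not in general cut out all fifteen $\Delta_{ij}$. What is actually needed for the set-theoretic statement is only the inclusion $\mathrm{Sing}(H_{sp})\subset\{\mathrm{rk}\,A_{\lambda}\le3\}$. If $\mathrm{rk}\,A_{\lambda}=4$ with kernel spanned by $v$, then $\mathrm{adj}(A_{\lambda})$ is a nonzero multiple of $v\,{}^{t}v$, and all five partials vanish iff ${}^{t}vA_{k}v=0$ for every $k$, i.e.\ iff $[v]$ is a base point of the linear system of quadrics $|A_{1},\dots,A_{5}|$. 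One then has to check that this special system is base-point-free for generic $a$ (it fails precisely when $(-2a)^{5}=1$, i.e.\ at $x=\tfrac{1}{32}$); this is the missing step, and it does not follow from linear independence of the $A_{k}$ alone. The paper does not take this conceptual route either: it proves 2) by computing the primary decompositions of the Jacobian ideal and of the ideal of $4\times4$ minors separately and verifying that the radicals of the respective components coincide.
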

\begin{proof}
1) Singular loci are determined by studying the Jacobian ideal as
described in the proof of the previous proposition. The type of the
singularities are determined by taking the local coordinates of the
normal bundle at generic points of the irreducible components. 2)
The loci of $rk(A_{\lambda})\leq3$ are determined by the ideal generated
by the $4\times4$ minors of $A_{\lambda}$. We compare the primary
decompositions of this ideal with that of the Jacobian ideal. The
claim follows since we verify that the radicals of each component
coincide. \end{proof}
\begin{rem}
\label{rem:rem-U-X2-Hab}The Hessian quintic $H_{sp}$ is the special
quintic hypersurface $\Hsp$ with $a=b$. If $a\not=b$ (more generally
$a\not=\mu^{k}b$ with $\mu^{5}=1)$, it is easy to observe that the
irreducible component $C_{E}$ disappears from the singular loci.
This explains the additional factor $\prod_{k=0}^{4}(a-\mu^{k}b)$
in the discriminant (\ref{eq:discriminant-Hessian-quintic}). We note
that the $A_{1}$-singularities in $H_{sp}$ (resp. $\Hsp$) are resolved
by the partial resolution $p_{2}:U_{sp}\rightarrow H_{sp}$ (resp.
$p_{2}:\Usp\rightarrow\Hsp$). The configuration of the singularities
in $U_{sp}$ is depicted in Fig.$\,\ref{fig:U-Hessian}$. As depicted
in the figure, there appear the following 25 lines along which $U_{sp}$
has $A_{1}$-singularities: Three lines in each fiber $p_{2}^{-1}([e_{i}^{*}])=\langle e_{i+2}^{*},e_{i+3}^{*},e_{i+4}^{*}\rangle$
given by \[
\langle e_{i+2},-be_{i+3}+e_{i+4}\rangle,\;\langle-be_{i+2}+e_{i+3},e_{i+4}\rangle,\;\langle e_{i+2},e_{i+3}\rangle\;(i=1,...,5),\]
and two lines in each inverse image $p_{2}^{-1}(L_{i\, i+1})\,(i=1,...,5)$
given by \[
[e_{i+3}]\times L_{i\, i+1},\;[-b\, e_{i+3}+e_{i+4}]\times L_{i\, i+1}\;(i=1,...,5).\]
Since determining these singular loci is essentially the same as we
did for $\Xsp$ in Proposition \ref{pro:Singular-tilde-Xosp-A1s},
we omit the details. Inspecting the configuration shown in Fig.$\,\ref{fig:U-Hessian}$,
it is immediate to have the Euler number of $U_{sp}$ by using $e(H_{sp})$.
$\qquad[]$\end{rem}
\begin{prop}
The Euler number $e(U_{sp})=0$. \end{prop}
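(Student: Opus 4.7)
The approach is to stratify the partial resolution $p_2 : U_{sp} \to H_{sp}$ by the corank of the matrix $A_\lambda$ and apply additivity of the Euler number, using $e(H_{sp}) = -5$ from Proposition~\ref{pro:Ds3}. Since $U_{sp} = \{([w],[\lambda]) : A_\lambda w = 0\}$, the fiber of $p_2$ over $[\lambda]$ is the projectivized kernel $\mathbb{P}(\ker A_\lambda)$, whose Euler number equals $\mathrm{corank}(A_\lambda)$. Over the smooth locus of $H_{sp}$ the corank is $1$ and $p_2$ is an isomorphism; by Proposition~\ref{pro:singular-loci-Hsp}(2) the singular locus is exactly where the corank jumps to $\geq 2$. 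A direct rank computation on the matrices $A_i$ shows that $A_{e_j^*}=A_j$ has rank $2$, so the fiber over each coordinate point $[e_j^*]$ is a copy of $\mathbb{P}^2$, while at every other point of $\mathrm{Sing}(H_{sp})$ -- on the generic parts of $L_{j,j+1}$, $M_j$, $C_E$, and at the five extra intersections $[e_{j-2}^* + ab\, e_{j-1}^*] = M_j\cap L_{j-2,j-1}$ -- the rank of $A_\lambda$ equals $3$ and the fiber is $\mathbb{P}^1$.

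Next, one computes $e(\mathrm{Sing}(H_{sp}))$ by inclusion--exclusion on the configuration of its $5+5+1=11$ irreducible components. The crucial incidence to establish is that the elliptic normal quintic $C_E$ passes through each coordinate point $[e_j^*]$: substituting $\lambda=e_1^*$ into the antisymmetric matrix in~(\ref{eq:elliptic-normal-q}) yields a matrix whose five principal $4\times 4$ Pfaffians all vanish, and the remaining four cases follow by the cyclic symmetry of the defining equations. Combined with the obvious incidences read off from Fig.~\ref{fig:U-Hessian} -- each $[e_j^*]$ lies on four components ($L_{j-1,j}$, $L_{j,j+1}$, $M_j$, and $C_E$); each $[e_{j-2}^* + ab\, e_{j-1}^*]$ lies on exactly two ($L_{j-2,j-1}$ and $M_j$); and all other pairwise intersections are empty -- inclusion--exclusion gives
\[
e(\mathrm{Sing}(H_{sp})) = e(C_E) + 5\cdot e(\mathbb{P}^1) + 5\cdot e(\mathbb{P}^1) - 5\cdot 3 - 5\cdot 1 = 0 + 10 + 10 - 15 - 5 = 0.
\]

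Finally, writing $\Sigma_0 = \{[e_j^*]\}_{j=1}^5$ with $e(\Sigma_0)=5$ and $\Sigma_1 = \mathrm{Sing}(H_{sp})\setminus\Sigma_0$ with $e(\Sigma_1)=0-5=-5$, additivity of the Euler number over the stratification of $p_2$ yields
\[
e(U_{sp}) = \bigl(e(H_{sp}) - e(\mathrm{Sing}(H_{sp}))\bigr) + 2\, e(\Sigma_1) + 3\, e(\Sigma_0) = (-5 - 0) + 2(-5) + 3\cdot 5 = 0.
\]

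The main obstacle is the Pfaffian incidence check $C_E \ni [e_j^*]$: without it, the count $e(\mathrm{Sing}(H_{sp}))$ would be $5$ rather than $0$, and the answer would come out to $5$ instead of $0$. One must also verify that $C_E$ meets the lines $L_{j,j+1}$ and $M_j$ only at the coordinate points (a similar Pfaffian computation at $[e_{j-2}^* + ab\, e_{j-1}^*]$ produces a nonzero Pfaffian, so these points are not on $C_E$, and the expected-dimension count in $\mathbb{P}^4$ rules out other transverse intersections for generic $a=b$); this is the technical core of the argument.
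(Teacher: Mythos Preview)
Your approach is correct and is essentially the same as the paper's: both compute $e(U_{sp})$ by stratifying the projection $p_2:U_{sp}\to H_{sp}$ according to the corank of $A_\lambda$ and invoking $e(H_{sp})=-5$ from Proposition~\ref{pro:Ds3}. Your treatment is in fact cleaner than the paper's. You make explicit the intermediate result $e(\mathrm{Sing}\,H_{sp})=0$ via inclusion--exclusion on the eleven components, whereas the paper's displayed formula
\[
e(U_{sp})=e(H_{sp})+5\bigl(e(\mathbb{P}^2)-1\bigr)+\bigl(e(C_E)-5\bigr)\bigl(e(\mathbb{P}^1)-1\bigr)
\]
records only the contributions from the five coordinate points and from $C_E\setminus\{[e_i^*]\}$, leaving implicit the (cancelling) contributions from the open parts of the lines $L_{i,i+1}$ and $M_i$ and from the five points $[e_{j-2}^*+a^2 e_{j-1}^*]$. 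Your Pfaffian check that $[e_j^*]\in C_E$ is exactly what is needed to pin down the incidence structure, and once Proposition~\ref{pro:singular-loci-Hsp} tells you there are only ten intersection points, your identification of which components meet at each of them is complete; the ``expected-dimension'' remark is unnecessary.
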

\begin{proof}
We evaluate $e(U_{sp})$ from the projection $p_{2}:U_{sp}\rightarrow H_{sp}$
shown in Fig.$\,\ref{fig:U-Hessian}$. We note that $e(p_{2}^{-1}([e_{i}^{*}])=e(\mathbb{P}^{2})=3$.
Also we note that for the generic point $z$ of the coordinate line
$L_{i\, i+1}$ (resp. the line $M_{i}$), we have $e(p_{2}^{-1}(z))=3e(\mathbb{P}^{1})-2=4$
(resp. $e(p_{2}^{-1}(z))=e(\mathbb{P}^{1})=2$ ). We also note that
$M_{i}\cap M_{j}=\phi$. Now inspecting the Fig.$\,\ref{fig:U-Hessian}$
carefully, we can evaluate the Euler number as\[
e(U_{sp})=e(H_{sp})+5\{e(\mathbb{P}^{2})-1\}+\{e(C_{E})-5\}\{e(\mathbb{P}^{1})-1\}=-5+10-5=0,\]
where we use the result $e(H_{sp})=-5$ obtained in Proposition \ref{pro:Ds3}.
\end{proof}

\subsection{The ramified covering $Y_{sp}\rightarrow H_{sp}$}

For the generic Hessian quintic $H$, we have found a ramified double
covering $Y\rightarrow H$ which gives us a smooth Calabi-Yau threefold
\cite[Theorem 3.14]{HoTa} with $h^{2,1}(Y)=h^{2,1}(X)=26$. The diagram
(\ref{eq:ReyeX-Y-diagram}) shows relations among the generic fibers
of the related families over the $26$-dimensional deformation space.
We obtain the diagram (\ref{eq:Xs-Ys-diagram}) by restricting (\ref{eq:ReyeX-Y-diagram})
to the special families over $\mathbb{P}^{1}$. 

We recall that our special family of the Hessian is defined by $H_{sp}=\bigl\{[\lambda]\in\mathbb{P}_{\lambda}^{4}\mid$${\rm det}A_{\lambda}=0\bigr\}$
with $A_{\lambda}=\sum_{k=1}^{5}\lambda_{k}A_{k}$ for generic $a=b\in\mathbb{C}^{*}$.
\begin{defn}
\label{def:def-Ysp}Consider the weighted projective space $\mathbb{P}^{9}(2^{5},1^{5})$
with its (weighted) homogeneous coordinate $[\xi:\lambda]=[\xi_{1}:...:\xi_{5}:\lambda_{1}:...:\lambda_{5}]$.
We denote by $\bar{\varphi}_{\lambda}:\mathbb{P}^{9}(2^{5},1^{5})\dashrightarrow\mathbb{P}_{\lambda}^{4}$
the natural projection to the second half of the components. We define
$Y_{sp}\subset\mathbb{P}^{9}(2^{5},1^{5})$ by the following (weighted)
homogeneous equations:\begin{equation}
\begin{array}{l}
\xi_{i}\xi_{j}=\Delta(A_{\lambda})_{ij}\;\;(1\leq i,j\leq5),\\
A_{\lambda}\xi=\mathbf{0},\end{array}\label{eq:defeq-Ysp}\end{equation}
where $\Delta(A_{\lambda})_{ij}(=:\Delta_{ij})$ represents the $ij$-cofactor
of the symmetric matrix $A_{\lambda}$. $\:[]$
\end{defn}
We read the above definition from \cite[Sect.3]{Catanese}. We note
that if ${\rm det}(A_{\lambda})\not=0$, then we have $\xi=0$ hence
$\Delta(A_{\lambda})_{ij}=0\,(1\leq i,j\leq5)$, which is a contradiction.
Therefore we have a map $\varphi_{\lambda}:Y_{sp}\rightarrow H_{sp}$
which follows from $\bar{\varphi}{}_{\lambda}:\mathbb{P}^{9}(2^{5},1^{5})\dashrightarrow\mathbb{P}_{\lambda}^{4}$. 
\begin{prop}
1) The map $\varphi_{\lambda}:Y_{sp}\rightarrow H_{sp}$ is surjective.
Moreover it is a double covering ramified along the singular loci
of $H_{sp}$ (see Proposition \ref{pro:singular-loci-Hsp}). 

\noindent2) The singular loci of $Y_{sp}$ consist of 5 lines $\tilde{L}_{i\, i+1}(i=1,...,5)$
of $A_{1}$-singularities, where $\tilde{L}_{i\, i+1}$ represents
the coordinate line $L_{i\, i+1}\subset H_{sp}$ considered in $Y_{sp}$. 

\noindent3) The Euler number of $Y_{sp}$ is given by $e(Y_{sp})=-10.$\end{prop}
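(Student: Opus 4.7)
All three claims rest on the adjugate identity $A_\lambda\cdot \mathrm{adj}(A_\lambda)=\det(A_\lambda)\,I$, which on $H_{sp}$ reduces to $A_\lambda\cdot \mathrm{adj}(A_\lambda)=0$.

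For (1), the plan is as follows. Given $[\lambda_0]\in H_{sp}$ with $\mathrm{rk}(A_{\lambda_0})=4$, the adjugate $\mathrm{adj}(A_{\lambda_0})=(\Delta_{ij})$ is a symmetric rank-one matrix, hence factors as $\xi\otimes\xi$ for some nonzero $\xi\in\mathbb{C}^5$, unique up to sign. The adjugate identity then forces $A_{\lambda_0}\xi=0$, so $[\xi:\lambda_0]\in Y_{sp}$. In $\mathbb{P}^9(2^5,1^5)$ the pairs $(\xi,\lambda_0)$ and $(-\xi,\lambda_0)$ are inequivalent (the scaling $t\lambda_0=\lambda_0$ forces $t=1$), giving two preimages over the rank-$4$ locus. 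Along the singular locus of $H_{sp}$, Proposition \ref{pro:singular-loci-Hsp}(2) gives $\mathrm{rk}(A_\lambda)\le 3$, so every $\Delta_{ij}$ vanishes and the equations $\xi_i\xi_j=\Delta_{ij}$ force $\xi=0$, giving a unique preimage. Hence $\varphi_\lambda$ is surjective, \'etale $2$-to-$1$ off the singular locus, and ramified along it.

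For (2), I would carry out a local Jacobian analysis at a generic point of each branch component, followed by a separate check at the 10 intersection points of Proposition \ref{pro:singular-loci-Hsp}(1). A model calculation in the $2\times 2$ symmetric case $A=\bigl(\begin{smallmatrix}s&t\\t&u\end{smallmatrix}\bigr)$, where the spinor equations $\xi_1^2=u$, $\xi_2^2=s$, $\xi_1\xi_2=-t$ parametrize $Y$ smoothly by $\mathbb{C}^2_{\xi_1,\xi_2}$, shows that the spinor construction desingularizes the transverse $A_1$ of $H_{sp}$; therefore $Y_{sp}$ is smooth along $M_i$ and $C_E$. For the transverse $A_3$ along $L_{i\,i+1}$, the analogous analysis applied to the relevant $3\times 3$ block of $A_\lambda$ produces a single $A_1$ singularity on the cover, giving the five lines $\tilde L_{i\,i+1}$ of the claim. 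The main technical obstacle is to verify, in explicit affine charts, that at the 10 intersection points---especially at the five vertices $[e_i^*]$, where three components of the branch locus come together---no additional singularities of $Y_{sp}$ arise beyond those already on $\tilde L_{i\,i+1}$.

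For (3), the map $\varphi_\lambda$ is $2$-to-$1$ off the branch locus $B:=\mathrm{Sing}(H_{sp})$ and $1$-to-$1$ over $B$, so $e(Y_{sp})=2e(H_{sp})-e(B)$. Proposition \ref{pro:Ds3} gives $e(H_{sp})=-5$. Writing $B=\mathcal{L}\cup\mathcal{M}\cup C_E$ with $\mathcal{L}=\bigcup_i L_{i\,i+1}$ (a pentagon with $e(\mathcal{L})=5$), $\mathcal{M}=\bigcup_i M_i$ (five disjoint $\mathbb{P}^1$'s with $e(\mathcal{M})=10$), and $e(C_E)=0$, the Pfaffian description (\ref{eq:elliptic-normal-q}) shows that each vertex $[e_i^*]$ lies on $C_E$, that $L_{i\,i+1}\cap C_E=\{[e_i^*],[e_{i+1}^*]\}$, and that $M_i\cap C_E=\{[e_i^*]\}$. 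A direct parametrization of $M_i$ further shows that $M_i$ meets the pentagon at the vertex $[e_i^*]$ and at the non-vertex point $[e_{i-2}^*+ab\,e_{i-1}^*]\in L_{i-2\,i-1}$, accounting for all 10 intersection points of Proposition \ref{pro:singular-loci-Hsp}(1). Inclusion--exclusion then gives $e(B)=5+10+0-10-5-5+5=0$, whence $e(Y_{sp})=2(-5)-0=-10$.
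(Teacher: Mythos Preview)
Your parts (1) and (3) are correct and essentially match the paper's proof. For (1) the paper argues exactly as you do, via the rank of $A_\lambda$ and the rank-one factorization of the adjugate. For (3) the paper uses the same double-cover formula $e(Y_{sp})=2e(H_{sp})-e(\mathrm{Sing}\,H_{sp})$ and the input $e(H_{sp})=-5$; it simply asserts $e(\mathrm{Sing}\,H_{sp})=0$, whereas you supply the inclusion--exclusion that verifies this, so your argument is in fact more detailed here.

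The difference is in part (2). The paper does not use a local-model argument at all: it works in the affine charts $\{\lambda_i\not=0\}\subset\mathbb{P}^9(2^5,1^5)$, eliminates one $\xi$-variable via a row of $A_\lambda\xi=0$, writes down the $8\times19$ Jacobian of the remaining equations, and computes its $5\times5$-minor ideal directly (with computer assistance); this identifies the singular locus as $\cup_i\tilde L_{i\,i+1}$ and a separate local elimination near $[\lambda]=[a^2:0:0:0:1]$ pins down the $A_1$ type. Your route via transverse models is more conceptual and your $2\times2$ spinor model correctly explains smoothness over the $A_1$ branches $M_i$ and $C_E$. However, the step ``$A_3\to A_1$ via the relevant $3\times3$ block'' is not quite the right model: at a generic point of $L_{i\,i+1}$ one has $\mathrm{rk}\,A_\lambda=3$, so after diagonalizing one gets $A_\lambda\simeq I_3\oplus B$ with $B$ a $2\times2$ symmetric matrix whose entries are \emph{non-generic} functions of the three transverse parameters (this non-genericity is precisely what produces $A_3$ rather than $A_1$ in $H_{sp}$). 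The spinor equations then reduce to the $2\times2$ model with these degenerate $b_{ij}$, and one must check by hand that the resulting surface germ in $(\xi_4,\xi_5,\text{transverse})$ has an $A_1$; this is not automatic from the generic $2\times2$ calculation. So your proposal is viable, but that step needs an explicit local computation rather than an appeal to a ``$3\times3$ block'', and once you do that computation you are effectively carrying out the paper's approach at a generic point of $\tilde L_{i\,i+1}$. The paper's direct Jacobian computation has the advantage of handling the generic points of all components and the $10$ intersection points uniformly in one pass.
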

\begin{proof}
1) The singular loci $Sing\, H_{sp}$ of $H_{sp}$ coincides with
the loci with $rk(A_{\lambda})\leq3$ due to 2) of Proposition \ref{pro:singular-loci-Hsp}.
If $rk(A_{\lambda})\leq3$, then $\Delta_{ij}=0\,(1\leq i,j\leq5)$.
Hence, from (\ref{eq:defeq-Ysp}), we have $\xi_{i}=0$, which implies
that $\varphi_{\lambda}$ is bijective over $Sing\, H_{sp}$. Now
take a point $[\lambda]\in H_{sp}$ such that $rk(A_{\lambda})=4$,
then we have $rk(\Delta_{ij})=1$ for the matrix of the cofactors.
Then there exists $\xi$ such that $(\Delta_{ij})=(\xi_{i}\xi_{j})$.
We may assume, without loss of generality, that $\Delta_{11}\not=0$.
Solving $\xi_{1}^{2}=\Delta_{11},$ we obtain $\varphi_{\lambda}^{-1}([\lambda])=\bigl\{[\pm\frac{\Delta_{11}}{\sqrt{\Delta_{11}}}:...:\pm\frac{\Delta_{15}}{\sqrt{\Delta_{11}}}:\lambda_{1}:...:\lambda_{5}]\bigr\}$
. This completes the proof.

\noindent2) Let us denote by $\mathcal{S}_{i}(i=1,...,5)$ the affine
subsets $\{[\xi,\lambda]\mid\lambda_{i}\not=0\}$ in the weighted
projective space $\mathbb{P}^{9}(2^{5},1^{5})$. We note that $\mathcal{S}_{i}\simeq\mathbb{C}^{9}(i=1,...,5)$
are in the smooth loci of the weighted projective space, and $Y_{sp}$
is contained in the union $\cup_{i}\mathcal{S}_{i}$ of these affine
subsets. Here we study the singular loci of $Y_{sp}$ on the affine
subset $\mathcal{S}_{5}$ with the affine coordinates $[\xi_{1}:...:\xi_{5}:\lambda_{1}:...:\lambda_{4}:1]$,
but the calculations on the other $\mathcal{S}_{i}$'s are quite parallel
due to the symmetry of the defining equations $Y_{sp}$ and $H_{sp}$. 

We obtain 20 equation from the defining equations (\ref{def:def-Ysp})
expressed by the affine coordinates of $\mathcal{S}_{5}$. We observe
that one of the 5 equations from $A_{\lambda}\xi=0$ can be solved
by $\xi_{5}=-\xi_{1}-\lambda_{4}\xi_{4}$. Eliminating $\xi_{5}$
by this, we have 19 equations for $(\xi_{1},\xi_{2},\xi_{3},\xi_{4},\lambda_{1},\lambda_{2},\lambda_{3},\lambda_{4})$
and make the Jacobian matrix of size $8\times19$. Since $\dim Y_{sp}$=3,
the Jacobian ideal of $Y_{sp}$ is generated by $5\times5$ minors
of the Jacobi matrix. By studying this Jacobian ideal in a straightforward
way, we obtain $\tilde{L}_{45}\cup\tilde{L}_{51}$ for the singular
loci of $Y_{sp}$ restricted on $\mathcal{S}_{5}$. Combined with
the similar calculations for the other $\mathcal{S}_{i}$'s, we determine
the singular loci of $Y_{sp}$ as claimed. 

To determine the type of singularity, we work with the affine coordinates
$[\xi_{1}:...:\xi_{5}:$ $\lambda_{1}:...:\lambda_{4}:1]$ and focus
on the line $\tilde{L}_{51}$. Note that the corresponding line $L_{51}\subset H_{sp}$
intersects with the line $M_{2}$ at $[\lambda]=[a^{2}:0:0:0:1].$
We describe the local geometry around the point $[\xi:\lambda]=[0^{5}:a^{2}:0:0:0:1]$
by introducing the affine coordinates by $[x_{1}:...:x_{5}:a^{2}+y_{1}:y_{2}:y_{3}:y_{4}:1]$.
We work on the defining equations (\ref{def:def-Ysp}) in the local
ring $\mathbb{C}[x_{1},x_{2},x_{3},x_{4},x_{5,}y_{1},y_{2},y_{3},y_{4}]_{m_{0}}$
at the origin. By inspecting the defining equations of $Y_{sp}$,
we see that $x_{1},x_{2},y_{3}$ as well as $x_{5}$ can be solved
by other variables in the local ring. After eliminating these variables,
we study the (eliminated) ideal of $Y_{sp}$ in the local ring $\mathbb{C}[x_{3},x_{4},y_{1},y_{2},y_{4}]_{m'_{0}}$
at the origin. It turns out that the local geometry of $Y_{sp}$ is
described by the two equations $g_{1}=g_{2}=0$ with $g_{1},g_{2}\in\mathbb{C}[x_{3},x_{4},y_{1},y_{2},y_{4}]_{m'_{0}}$
and $g_{2}$ is quadric with respect to $y_{4}$. Solving the quadric
equation $g_{2}=0$, we eliminate $y_{4}$ from $g_{1}$. Choosing
suitable branch of the solutions, we obtain\[
g_{1}=x_{3}x_{4}+x_{4}^{2}+y_{1}y_{2}^{2}+y_{2}x_{3}^{2}-y_{1}y_{2}x_{3}^{2}+\cdots,\]
where $\cdots$ represents the higher order terms of the total degree
greater than four. From this local equation, we read the singularities
along the line $\tilde{L}_{51}$ are of $A_{1}$-type generically.
(We may also observe that, as before, the blowing-up along $\tilde{L}_{51}$
introduces a smooth conic bundle with reducible fiber at the origin,
i.e., over the intersection point of $L_{51}$ and $M_{2}$. ) 

\noindent3) The singular loci $Sing\, H_{sp}$, i.e., the ramification
loci of $Y_{sp}\rightarrow H_{sp}$, consists of 10 lines $L_{i\, i+1},M_{i}$
and the curve $C_{E}$ which intersect as shown in Fig.$\,\ref{fig:U-Hessian}$.
Combined with $e(H_{sp})=-5$ in Proposition \ref{pro:Ds3}, we have
\[
e(Y_{sp})=2\left\{ e(H_{sp})-e(Sing\, H_{sp})\right\} +e(Sing\, H_{sp})=2\times(-5-0)+0=-10.\]
\end{proof}
\begin{rem}
\label{rem:def-covering-Y}The $A_{1}$-singularities along the ramification
loci of $H_{sp}$ are resolved by the covering $Y_{sp}\rightarrow H_{sp}.$
This is, in fact, a general property valid for a normal variety $Y_{sp}$.
If we consider the generic Hessian quintic $H$ with a regular linear
system $P=|A_{1},A_{2},...,A_{k}|$ and $A_{\lambda}=\sum_{k=1}^{5}\lambda_{k}A_{k}$,
then the singular loci of $H$ consist of a curve of $A_{1}$-singularity.
Hence, Definition \ref{def:def-Ysp} applied for generic Hessian quintic
$H$ gives us a smooth Calabi-Yau threefold ramified along the curve,
which is $Y$ in the theorem of the subsection \ref{sub:summary-Reye-I},
see also the diagram (\ref{eq:SHdiagam1}). This explicit realization
of the geometry $Y$ in $\mathbb{P}^{4}(2^{5},1^{5})$ should have
corresponding descriptions in physics \cite{Hori}, \cite{Jo}. \hfill {[}{]} 
\end{rem}
As is shown in the diagram (\ref{eq:Xs-Ys-diagram}), we expect a
crepant resolution $\rho:Y_{sp}^{*}\rightarrow Y_{sp}$ with a Calabi-Yau
manifold $Y_{sp}^{*}$ which satisfies $(h^{1,1}(Y_{sp}^{*}),h^{2,1}(Y_{sp}^{*}))=(h^{1,1}(X^{*}),$
$h^{2,1}(X^{*}))=(26,1)$ for the Hodge numbers. The existence of
$Y_{sp}^{*}$ with these expected properties is left for future study. 

\vspace{2cm}

\vspace{0.5cm}

{\footnotesize Graduate School of Mathematical Sciences, University
of Tokyo, Meguro-ku, Tokyo 153-8914, Japan }{\footnotesize \par}

{\footnotesize e-mail addresses: hosono@ms.u-tokyo.ac.jp, takagi@ms.u-tokyo.ac.jp}

\begin{thebibliography}{Yau}
\bibitem[AEvSZ]{AlmkcistEtAl}G. Almkvist, C. van Enckevort, D. van
Straten, W. Zudilin, \textit{Tables of Calabi--Yau equations}, arXiv:math/0507430.

\bibitem[AGV]{AGV}V.I. Arnold, S.M. Gusein-Zade and A.N. Varchenko,
\textit{Singularities of Differential Maps}, Volume I. Birkh\"auser
(1985).

\bibitem[BaN]{Barth-Nieto}W. Barth and I. Nieto, \textit{Abelian
Surfaces of type (1,3) and Quartic Surfaces with 16 Skew Lines}, J.
Alg. Geom. \textbf{3} (1994) 173-222.

\bibitem[Ba]{BatPdual}V. Batyrev, \textit{Dual polyhedra and mirror
symmetry for Calabi-Yau hypersurfaces in toric varieties}, J. Alg.
Geom. 3(1994), 493-535.

\bibitem[BaBo]{BatBo}V. Batyrev and L. Borisov, \textit{On Calabi-Yau
Complete Intersections in Toric Varieties}, Higher-dimensional complex
varieties (Trento, 1994), 39--65, de Gruyter, Berlin, 1996. 

\bibitem[BaCo]{Bat-Cox}V. Batyrev and D.A. Cox, \textit{On the Hodge
structure of projective hypersurfaces in toric varieties}, Duke Math.
J. 75 (1994) 293--338. 

\bibitem[BeH]{BergHub}P. Berglund and T. H\"ubsch, \textit{A generalized
construction of mirror manifolds}, Nuclear Phys. B 393 (1993), no.
1-2, 377\textendash{}391.

\bibitem[BoCa]{BoCa}L. Borisov and A. Caldararu, \textit{The Pfaffian-Grassmannian
derived equivalence}, J. Algebraic Geom. 18 (2009), no. 2, 201--222.math/0608404. 

\bibitem[CdOGP]{Candelas1}P. Candelas, X.C. de la Ossa, P.S. Green,
and L.Parkes, \textit{A pair of Calabi-Yau manifolds as an exactly
soluble superconformal theory}, Nucl.Phys. B356(1991), 21-74. 

\bibitem[Ca]{Catanese} F. Catanese, \textit{Commutative algebra methods
and equations of regular surfaces}. Algebraic geometry, Bucharest
1982 (Bucharest, 1982), 68--111, Lecture Notes in Math., 1056, Springer,
Berlin, 1984.

\bibitem[Co]{Cossec}F. Cossec, \textit{Reye congruence}, Transactions
of the A.M.S. 280 (1983), 737--751.

\bibitem[DGPS]{Sing3}W. Decker, G.-M. Greuel, G. Pfister and H. Sch\"onemann,
Singular 3-1-3 --- A computer algebra system for polynomial computations,
http://www.singular.uni-kl.de (2011).

\bibitem[DGJ]{DGJ}C. Doran, B. Greene and S. Judes, \textit{Families
of quintic Calabi-Yau 3-folds with discrete symmetries}, Comm. Math.
Phys. 280 (2008), no. 3, 675\textendash{}725

\bibitem[EvS]{Enckv-Straten}C. Enckevort and D. van Straten, \textit{Monodromy
calculations of fourth order equations of Calabi-Yau type},in Mirror
symmetry. V, 539--559, AMS/IP Stud. Adv. Math., 38, Amer. Math. Soc.,
Providence, RI, 2006, math.AG/0412539. 

\bibitem[FK]{FK}E. Freitag and R. Kiehl, \textit{Etale Cohomology
and The Weil Conjecture}, Springer-Verlag, Berlin and New York (1988).

\bibitem[GS]{M2}D. R. Grayson and M. E. Stillman, Macaulay2, a software
system for research in algebraic geometry, Available at http://www.math.uiuc.edu/Macaulay2/. 

\bibitem[vGN]{vanG}B. van Geemen and N. O. Nygaard, On the Geometry
and Arithmetics of Some Siegel Modular Threefolds, Jour. of Number
Theory 53(1995), 45--87.

\bibitem[Gep]{Gepner}D.Gepner, \textit{Exactly solvable string compactifications
on manifolds of SU(n) holonomy}, Phys.Lett.199B(1987)380. 

\bibitem[GP]{Greene-Pressor}B.R.Greene and M.R.Plesser, \textit{Duality
in Calabi-Yau moduli space}, Nucl.Phys. B338 (1990) 15-37.

\bibitem[GKZ]{GKZ1}I.M. Gel'fand, A. V. Zelevinski, and M.M. Kapranov,
\textit{Equations of hypergeometric type and toric varieties}, Funktsional
Anal. i. Prilozhen. 23 (1989), 12--26; English transl. Functional
Anal. Appl. 23(1989), 94--106. 

\bibitem[Gr]{Griffiths}P. A. Griffiths, On the periods of certain
rational integrals. I, II. Ann. of Math. (2) 90 (1969), 460-495; ibid.
(2) 90 1969 496\textendash{}541.

\bibitem[Har]{Har}R. Hartshorne, \textit{Algebraic geometry}. Graduate
Texts in Mathematics 52, Springer-Verlag, New York, Heidelberg, Berlin,
1977.

\bibitem[Ho1]{IIAmonod}S. Hosono,\textit{ Local Mirror Symmetry and
Type IIA Monodromy of Calabi-Yau Manifolds}, Adv. Theor. Math. Phys.
4 (2000), 335--376. 

\bibitem[Ho2]{CentralCh}S. Hosono, \textit{Central charges, symplectic
forms, and hypergeometric series in local mirror symmetry}, in ``Mirror
Symmetry V'', S.-T.Yau, N. Yui and J. Lewis (eds), IP/AMS (2006),
405--439. 

\bibitem[HKTY]{HKTY}S. Hosono, A. Klemm, S. Theisen and S.-T. Yau,
\textit{Mirror Symmetry, Mirror Map and Applications to complete Intersection
Calabi-Yau Spaces}, Nucl. Phys. B433(1995)501--554. 

\bibitem[HLY]{HLY}S. Hosono, B.H. Lian, and S.-T. Yau, \textit{GKZ-Generalized
hypergeometric systems in mirror symmetry of Calabi-Yau hypersurfaces},
Commun. Math. Phys. 182 (1996) 535--577. 

\bibitem[HT]{HoTa} S. Hosono and . H. Takagi, \textit{Mirror Symmetry
and Projective Geometry of Reye congruences I}, preprint arXiv:1101.2746(2011)
to appear in J. Alg. Geom.

\bibitem[HW]{HW}F. Hidaka, K. Watanabe, \textit{Normal Gorenstein
surfaces with ample anti-canonical divisor}, Tokyo J. Math. 4 (1981),
no. 2, 319\textendash{}330.

\bibitem[Hor]{Hori}K. Hori, \textit{Duality In Two-Dimensional (2,2)
Supersymmetric Non-Abelian Gauge Theories}, arXiv:1104.2853, hep-th.(2011).

\bibitem[Hu]{Huleck}K. Hulek, Projective geometry of elliptic curves,
Ast\'erisque No. 137 (1986), 143 pp.

\bibitem[HSvGvS]{HulekEtAl}K. Hulek, J. Spandaw, B. van Geemen, and
D. van Straten, \textit{The modularity of the Barth-Nieto quintic
and its relatives}, Adv. Geom. 1 (2001) 263\textendash{}289. 

\bibitem[JKLMR]{Jo}H. Jockers, V. Kumar, J. M. Lapan, D. R. Morrison,
M. Romo, \textit{Nonabelian 2D Gauge Theories for Determinantal Calabi-Yau
Varieties}, arXiv:hep-th/1205.3192.

\bibitem[Ko]{Ko}M. Kontsevich, \textit{Homological algebra of mirror
symmetry}, Proceedings of the Interna- tional Congress of Mathematicians
(Z\"urich, 1994) Birkh\"auser (1995) pp. 120 --139.

\bibitem[Ku]{Kuz}A. Kuznetsov, \textit{Homological projective duality
for Grassmannians of lines}, arXiv:math/0610957. 

\bibitem[Mo]{DMo}D. R. Morrison, \textit{Picard-Fuchs equations and
mirror maps for hypersurfaces,} in \textit{{}``Essays on Mirror Manifolds}'',
Ed. S.-T.Yau, International Press, Hong Kong (1992) 241-264.

\bibitem[Ol]{Oliva}C. Oliva, \textit{Algebraic cycles and Hodge theory
on generalized Reye congruences}, Compositio Math. \textbf{\textit{92}}(1994),
1--22. 

\bibitem[Ro]{Rod}E.A. R\text{\o}dland, \textit{The Pfaffian Calabi-Yau,
its Mirror and their link to the Grassmannian $G(2,7)$}, Compositio
Math. 122 (2000), no. 2, 135--149, math.AG/9801092. 

\bibitem[Ty]{Tyurin}A.N. Tyurin, \textit{On intersections of quadrics},
Russian Math. Surveys \textbf{30} (1975), 51--105. 

\bibitem[Yau]{essay} \textit{{}``Essays on Mirror Manifolds}'',
Ed. S.-T.Yau, International Press, Hong Kong (1992).

\end{thebibliography}
\end{document}